\theoremstyle{plain}
\newtheorem{thm}{Theorem}[section]
\newtheorem*{thm*}{Theorem}
\newtheorem{coro}[thm]{Corollary}
\newtheorem{prop}[thm]{Proposition}
\newtheorem{lemm}[thm]{Lemma}
\theoremstyle{definition}
\newtheorem{deff}[thm]{Definition}
\newtheorem{examp}[thm]{Example}
\newtheorem*{oq}{Open question}
\theoremstyle{remark}
\newtheorem{rema}[thm]{Remark}
\newtheorem{conv}{Convention}
\newcommand\twoscript[2]{\substack{{#1} \\ {#2}}}
\newcommand\threescript[3]{\substack{{#1} \\ {#2} \\ {#3}}}
\newcommand\rmi{\mathrm{i}}
\newcommand\rme{\mathrm{e}}
\newcommand\dimn{\mathfrak{n}}
\newcommand\dimm{\mathfrak{m}}
\newcommand\legendre[2]{\genfrac{(}{)}{}{}{#1}{#2}}
\newcommand\tbtmat[4]{\left(\begin{smallmatrix}{#1} & {#2} \\ {#3} & {#4}\end{smallmatrix}\right)}
\newcommand*\abs[1]{\lvert#1\rvert}
\newcommand\etp[1]{\mathfrak{e}\left(#1\right)}
\newcommand\vol[1]{\mathop{\mathrm{vol}}\left(#1\right)}
\newcommand\diff{\,\mathrm{d}}
\newcommand\dodt{\mathop{\frac{\mathrm{d}}{\mathrm{d}\tau}}}
\newcommand\dodth[1]{\mathop{\frac{\mathrm{d}^{#1}}{\mathrm{d}\tau^{#1}}}}
\newcommand\dodtha[2]{\frac{\mathop{\mathrm{d}^{#1}}{#2}}{\mathop{\mathrm{d}}\tau^{#1}}}
\newcommand\emb{\mathop{emb}_{\mathfrak{B}}}
\newcommand\sgn[1]{\mathop{\mathrm{sgn}}\left(#1\right)}
\newcommand\numZ{\mathbb{Z}}
\newcommand\numQ{\mathbb{Q}}
\newcommand\numR{\mathbb{R}}
\newcommand\numC{\mathbb{C}}
\newcommand\projQ{\mathbb{P}^1(\mathbb{Q})}
\newcommand\mulindZn{\mathbb{Z}^{\mathfrak{n}}}
\newcommand\mulindRn{\mathbb{R}^{\mathfrak{n}}}
\newcommand\mulindCn{\mathbb{C}^{\mathfrak{n}}}
\newcommand\halfint{\frac{1}{2}\mathbb{Z}}
\newcommand\numgeq[2]{\mathbb{#1}_{\geq #2}}
\newcommand\slZ{\mathrm{SL}_2(\mathbb{Z})}
\newcommand\slR{\mathrm{SL}_2(\mathbb{R})}
\newcommand\pslZ{\mathrm{PSL}_2(\mathbb{Z})}
\newcommand\glpR{\mathrm{GL}_2^{+}(\mathbb{R})}
\newcommand\glptR{\widetilde{\mathrm{GL}_2^{+}(\mathbb{R})}}
\newcommand\sltR{\widetilde{\mathrm{SL}_2(\mathbb{R})}}
\newcommand\sltZ{\widetilde{\mathrm{SL}_2(\mathbb{Z})}}
\newcommand\GlW{\mathrm{GL}(\mathcal{W})}
\newcommand\mymatset[3]{{#1}^{{#2}\times{#3}}}
\newcommand\heigrp[1]{H(\underline{#1})}
\newcommand\heigrpa[2]{H(\underline{#1},{#2})}
\newcommand\actgrp[1]{\glptR \ltimes \heigrp{#1}}
\newcommand\Jacgrp[2]{\widetilde{#1} \ltimes \heigrpa{#2}{1}}
\newcommand\uhp{\mathfrak{H}}
\newcommand\mydom{\uhp \times \mathcal{V}}
\newcommand\myran{\mathcal{W}}
\newcommand\contfns{C(\mydom, \myran)}
\newcommand\smofns{C^\infty(\mydom, \myran)}
\newcommand\holfns{\mathcal{O}(\mydom, \myran)}
\newcommand\holfnsm{\mathcal{O}(\uhp, \myran)}
\newcommand\holfnsmC{\mathcal{O}(\uhp, \mathbb{C})}
\newcommand\JacFormCont[4]{J_{#1, \underline{#2}}^{cont.}(#3,#4)}
\newcommand\JacFormHol[4]{J_{#1, \underline{#2}}^{\mathcal{O}}(#3,#4)}
\newcommand\JacFormWHol[4]{J_{#1, \underline{#2}}^{!}(#3,#4)}
\newcommand\JacFormWeak[4]{J_{#1, \underline{#2}}^{weak}(#3,#4)}
\newcommand\JacForm[4]{J_{#1, \underline{#2}}(#3,#4)}
\newcommand\JacFormCusp[4]{J_{#1, \underline{#2}}^{cusp}(#3,#4)}
\newcommand\ModFormHol[3]{M_{#1}^{\mathcal{O}}(#2,#3)}
\newcommand\ModFormWHol[3]{M_{#1}^{!}(#2,#3)}
\newcommand\ModForm[3]{M_{#1}(#2,#3)}
\newcommand\ModFormCusp[3]{S_{#1}(#2,#3)}
\newcommand\FormalSeriesSet{\sum_{\mathbf{j}\in \mulindZn}\holfnsm T^\mathbf{j}}
\newcommand\FormalSeriesSetLb{\sum_{\mathbf{j} \gg -\infty}\holfnsm T^\mathbf{j}}
\newcommand\FormalSeriesSetLbf[1]{\sum_{\mathbf{j} \geq {#1}}\holfnsm T^\mathbf{j}}
\newcommand\FormalModularFormSet[4]{M_{#1, #2}^{\mathfrak{B}}(#3, #4)}
\newcommand\FormalModularFormSubset[5]{M_{#1, #2}^{\mathfrak{B},\,{#5}}(#3, #4)}
\newcommand\FormalSeries[1]{\sum_{\mathbf{j}\in \mulindZn}{#1}_{\mathbf{j}} T^\mathbf{j}}
\newcommand\FormalSeriesi[2]{\sum_{\mathbf{#2}\in \mulindZn}{#1}_{\mathbf{#2}} T^\mathbf{#2}}
\newcommand\eleglptRa[5]{\left(\left(\begin{smallmatrix}{#1} & {#2} \\ {#3} & {#4}\end{smallmatrix}\right),{#5}\left({#3}^\prime \tau+{#4}^\prime \right)^{\frac{1}{2}}\right)}
\newcommand\eleglptRsimple[5]{\left(\left(\begin{smallmatrix}{#1} & {#2} \\ {#3} & {#4}\end{smallmatrix}\right),{#5}\right)}
\newcommand\eleactgrp[8]{\left(\left(\begin{smallmatrix}{#1} & {#2} \\ {#3} & {#4}\end{smallmatrix}\right),{#5}\left({#3}^\prime \tau+{#4}^\prime \right)^{\frac{1}{2}},[#6,#7],#8\right)}
\newcommand\eleactgrpnsimple[8]{\left(\left(\begin{smallmatrix}{#1} & {#2} \\ {#3} & {#4}\end{smallmatrix}\right),{#5},[#6,#7],#8\right)}
\title{Taylor expansions of Jacobi forms and linear relations among theta series}
\author{Xiao-Jie Zhu}
\begin{document}

\begin{abstract}
We study Taylor expansions of Jacobi forms of lattice index. As the main result, we give an embedding from certain space of such forms, whether scalar-valued or vector-valued, integral-weight or half-integral-weight, of any level, with any character, into a product of finitely many spaces of modular forms. As an application, we investigate linear relations among Jacobi theta series of lattice index. Many linear relations among the second powers of such theta series associated with the $D_4$ lattice and $A_3$ lattice are obtained, along with relations among the third powers of series associated with the $A_2$ lattice. We present the complete SageMath code for the $D_4$ lattice.
\end{abstract}

\maketitle
\let\oldthefootnote=\thefootnote
\let\thefootnote\relax\footnotetext{\textsl{2010 Mathematics Subject Classification.} 11F50, 11F37, 11F11, 11F27.}
\footnotetext{\textsl{Key words and phrases.} Jacobi forms, lattices, theta series, modular forms, Weil representation, quadratic forms}
\footnotetext{This work is supported by the Fundamental Research Funds for the Central Universities of China (Grant No. 22120210556).}
\footnotetext{\textsl{Address.} School of Mathematical Sciences, Tongji University, 1239 Siping Road, Shanghai, P.R. China}
\footnotetext{\textsl{E-mail.} zhuxiaojiemath@outlook.com}
\let\thefootnote=\oldthefootnote

\tableofcontents

\section{Introduction}
\label{sec:Introduction}

\subsection{Taylor expansions of Jacobi forms}
\label{subsec:Taylor expansions of Jacobi forms}
The concept of Jacobi forms of lattice index, or equivalently, Jacobi forms on $\uhp \times \numC^\dimn$ ($\uhp$ denotes the upper half plane), was first investigated by Gritsenko \cite{Gri88}, where the author studied their relationship to modular forms on the orthogonal group of a quadratic space of signature $(2,\dimn)$, and the action of Hecke operators on them. This generalized the theory of classical Jacobi forms developed by Eichler and Zagier \cite{EZ85}. A further generalization to Siegel-Jacobi forms on $\uhp_j \times \numC^{j \times \dimn}$ was obtained by Ziegler \cite{Zie89} in the spirit of Eichler and Zagier. The present paper focuses on Jacobi forms of lattice index, so by the term ``Jacobi forms'' we mean forms of lattice index. Ajouz, under supervision of Skoruppa and Walling, developed systematically the Hecke theory of such forms, and the relation to elliptic modular forms, in his doctoral thesis \cite{Ajo15}.

Ajouz investigated liftings to and from elliptic modular forms via many tools, such as a generalization of Skoruppa-Zagier liftings \cite{SZ88}, Shimura correspondence (\cite{Shi73}, \cite{Niw75} and \cite{Cip83}), stable isomorphisms between lattices (\cite{Nik79}), theta decompositions and Weil representations (\cite[\S 2.3, \S 2.4]{Ajo15}). On the contrary, in the present paper, we shall investigate the relation between Jacobi forms and elliptic modular forms via Taylor expansions with respect to the point $0 \in \numC^\dimn$. In this direction, there are many excellent works. Eichler and Zagier has worked out the scalar index case. Ibukiyama \cite{Ibu12} investigated Taylor expansions of Jacobi forms on $\uhp_j \times \numC^{j \times 1}$. Bringmann, Mahlburg, and Rhoades \cite{BMR14} considered those of mock-Jacobi forms. Later Bringmann \cite{Bri18} considered those of non-holomorphic Jacobi forms. More recently, Ittersum \cite{vI21}, motivated by the problem of finding complex-valued functions on all partitions that are quasimodular forms for congruence subgroups, developed a theory of Taylor expansions of strictly meromorphic quasi-Jacobi forms.

We describe briefly our main result (Theorem \ref{thm:mainEmbed}). For any weight (integral or half-integral), any integral lattice (even or odd), any finite index subgroup of the modular group, and any representation with a finite index kernel, we obtain embeddings of the space of Jacobi forms (vector-valued or scalar-valued) into the direct product of finite number of spaces of elliptic modular forms. A key point is how to choose these spaces of modular forms, for which we give a computational criterion (Proposition \ref{prop:whenFLP0Nonzero}, realized as a SageMath program \cite{Sage}).

To avoid overlapping the main text, we illustrate the main theorem with a typical example (scalar-valued and integral-weight). Let $L=\numZ^2$, equipped with a quadratic form\footnote{The reason why we choose such a ``strange'' quadratic form is that for more familiar form of determinant $3$, such as $x^2-xy+y^2$, we need at least five spaces of modular forms to construct an embedding to the best of our ability, while for the form presented here, we need only three spaces.} $Q(x, y)=\frac{1}{2}x^2+xy+2y^2$. The bilinear form associated with this quadratic form is $B((x_1,y_1), (x_2,y_2))=Q(x_1+x_2, y_1+y_2)-Q(x_1,y_1)-Q(x_2,y_2)$. The Gram matrix is $G_{\mathfrak{B}}=\tbtmat{1}{1}{1}{4}$, so $Q(x,y)=\frac{1}{2}(x,y)G_{\mathfrak{B}}(x,y)^{T}$. Put $\underline{L}=(L, B)$. Let $k$ be a positive integer, $G$ be a finite index subgroup of $\slZ$, and $\chi \colon G \rightarrow \numC^\times$ be a group character whose kernel is of finite index. The space $\JacForm{k}{L}{G}{\chi}$, by definition, consists of all holomorphic functions $\phi(\tau,z)$ on $\uhp \times \numC^2$ subject to the following conditions:
\begin{enumerate}
\item For any $\tbtmat{a}{b}{c}{d} \in G$,
\begin{equation*}
\phi\left(\frac{a\tau+b}{c\tau+d},\frac{z}{c\tau+d}\right)=(c\tau+d)^k\etp{\frac{cQ(z)}{c\tau+d}}\chi\tbtmat{a}{b}{c}{d}\phi(\tau),
\end{equation*}
\item For any $v,w \in L$,
\begin{equation*}
\phi\left(\tau,z+\tau v+w\right)=\etp{-\tau Q(v)-B(v,z)}\phi(\tau,z),
\end{equation*}
\item For any $\tbtmat{a}{b}{c}{d} \in \slZ$, we have
\begin{multline*}
(c\tau+d)^{-k}\etp{-\frac{cQ(z)}{c\tau+d}}\phi\left(\frac{a\tau+b}{c\tau+d},\frac{z}{c\tau+d}\right) \\
=\sum_{\twoscript{n \in \numgeq{Q}{0},\, t \in L^\sharp}{Q(t) \leq n}}c(n,t)\etp{n\tau + B(t,z)},
\end{multline*}
where $c(n,t) \in \numC$, and the series converges absolutely.
\end{enumerate}
The notation $\etp{x}$ means $\exp(2\uppi\rmi x)$, and $L^\sharp$ denotes the dual lattice of $L$. The reader may also compare this definition to \cite[\S 9, Definition]{GSZ19}, and should notice a subtle difference: there is no factor $\etp{Q(v+w)}$ in the second condition in our definition, but there is such one in \cite{GSZ19}. (As a side effect, in \cite{GSZ19}, the variable $t$ in $c(n,t)$ ranges over $L_{ev}^\sharp$, not $L^\sharp$.) Both definitions make sense, but they correspond to different spaces! For the definition in broad generalities, including the case of half-integral weights, of even or odd lattices, and of scalar-valued or vector-valued forms, see Definition \ref{deff:JacobiLikeForm} and \ref{deff:JacobiForm}. Put $q^n=\etp{n\tau}$ for tradition. Let $\ModForm{k}{G}{\chi}$ and $\ModFormCusp{k}{G}{\chi}$ denote the space of modular forms, and that of cusp forms of weight $k$ on the group $G$ with character $\chi$ respectively. Then our main result, for this special case, says that:
\begin{thm*}[A special case of Corollary \ref{coro:embeddingDet3}]
The map
\begin{align*}
\JacForm{k}{L}{G}{\chi} &\rightarrow \ModForm{k}{G}{\chi} \times \ModFormCusp{k+1}{G}{\chi} \times \ModFormCusp{k+2}{G}{\chi} \\
\phi &\mapsto (D_0(\phi), D_1(\phi), D_2(\phi))
\end{align*}
is a $\numC$-linear embedding, where
\begin{align*}
\phi(\tau,z)&=\sum_{n,t}c(n,t)q^n\etp{B(t,z)}, \\
D_0(\phi) &= \sum_n \left(\sum_t c(n,t)\right)q^n, \\
D_1(\phi) &= \sum_n \left(\sum_t c(n,t)\cdot(t_1+4t_2)\right)q^n, \\
D_2(\phi) &= \sum_n \left(\sum_t c(n,t)\cdot\left(k(t_1+4t_2)^2-4n\right)\right)q^n.
\end{align*}
\end{thm*}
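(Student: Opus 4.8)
The plan is to obtain this as the special case $G_{\mathfrak B}=\tbtmat{1}{1}{1}{4}$ of the general embedding theorem (Theorem~\ref{thm:mainEmbed}); to make the mechanism visible in this instance I would proceed as follows, following the Taylor-expansion idea of Eichler and Zagier \cite{EZ85} adapted to lattice index. Set $e_2=(0,1)\in L$, so that $B(t,e_2)=t_1+4t_2=:\ell(t)$, and write $z=(z_1,z_2)$. From condition (3) we have $\phi(\tau,z)=\sum_{n,t}c(n,t)q^n\etp{B(t,z)}$; restricting to $z=\zeta e_2$ and expanding in $\zeta$ produces, up to elementary constants, the functions
\begin{equation*}
C_j(\phi)(\tau):=\sum_n\left(\sum_t c(n,t)\,\ell(t)^j\right)q^n,\qquad j\ge 0.
\end{equation*}
One then reads off $D_0(\phi)=C_0(\phi)$, $D_1(\phi)=C_1(\phi)$, and $D_2(\phi)=k\,C_2(\phi)-4q\tfrac{d}{dq}C_0(\phi)$. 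Since $k>0$, this is a triangular relation, so $(D_0,D_1,D_2)(\phi)=0$ if and only if $(C_0,C_1,C_2)(\phi)=0$; hence it suffices to understand the $C_j$.

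First I would verify the target. Setting $z=0$ in condition (1) gives at once $C_0(\phi)\in\ModForm{k}{G}{\chi}$. Applying $\partial_{z_2}$ to condition (1) and then putting $z=0$: because $\etp{cQ(z)/(c\tau+d)}=1+O(|z|^2)$, its first-order term in $z$ vanishes, so $C_1(\phi)$ transforms with weight $k+1$ and character $\chi$. Applying $\partial_{z_2}^2$ produces, besides the weight-$(k+2)$ term, an extra contribution from the second-order term of $\etp{cQ(z)/(c\tau+d)}$, proportional to $c(c\tau+d)^{k+1}C_0(\phi)$; subtracting $4q\tfrac{d}{dq}C_0(\phi)$ with the coefficient $k$ cancels it exactly, the constants $k$ and $-4$ being forced by $Q(e_2)=2$. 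This is precisely the Eichler--Zagier development-coefficient correction and an instance of the general computation. For cuspidality: condition (3) holds for all of $\slZ$, so at each cusp $\phi$ has an expansion of the same shape with $n\in\numgeq{Q}{0}$, $Q(t)\le n$; the constant term forces $n=0$, hence $Q(t)\le 0$, hence $t=0$ by positive definiteness of $Q$, and there both $\ell(t)$ and $k\ell(t)^2-4n$ vanish. Thus $D_1(\phi)\in\ModFormCusp{k+1}{G}{\chi}$ and $D_2(\phi)\in\ModFormCusp{k+2}{G}{\chi}$.

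The hard part is injectivity, and here I would invoke the theta decomposition $\phi=\sum_{\mu\in L^\sharp/L}h_\mu\,\theta_{L,\mu}$, a $\numC$-linear isomorphism onto its image in a space of vector-valued modular forms, with $|L^\sharp/L|=\det G_{\mathfrak B}=3$ components. Differentiating in the $z_2$-direction gives $C_j(\phi)=\sum_\mu h_\mu\,\theta_{L,\mu}^{(j)}$ where $\theta_{L,\mu}^{(j)}(\tau):=(\partial_{z_2}^j\theta_{L,\mu})(\tau,0)$, that is
\begin{equation*}
\bigl(C_0(\phi),C_1(\phi),C_2(\phi)\bigr)^{T}=\Theta(\tau)\cdot\bigl(h_{\mu_1},h_{\mu_2},h_{\mu_3}\bigr)^{T},\qquad \Theta(\tau)=\bigl(\theta_{L,\mu}^{(j)}(\tau)\bigr)_{0\le j\le 2,\ \mu}.
\end{equation*}
If $\det\Theta\not\equiv 0$, then $\Theta(\tau)$ is invertible outside a discrete set, so $C_0(\phi)=C_1(\phi)=C_2(\phi)=0$ forces every $h_\mu\equiv 0$, hence $\phi=0$, which gives injectivity. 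The remaining task is therefore exactly to show $\det\Theta\not\equiv 0$: this is a Wronskian-type determinant of theta series attached to the monomials $\ell^0,\ell^1,\ell^2$ in the single linear form $\ell=B(\,\cdot\,,e_2)$, i.e. precisely the kind of quantity whose non-vanishing is characterized by Proposition~\ref{prop:whenFLP0Nonzero}; for $G_{\mathfrak B}=\tbtmat{1}{1}{1}{4}$ I would confirm it by that criterion, implemented in SageMath \cite{Sage} (equivalently, by exhibiting a nonzero leading Fourier coefficient). Establishing this non-vanishing — that powers of one linear form already separate the three theta components — is the real content; for the form $x^2-xy+y^2$ of the same determinant it fails, and one must enlarge the family of polynomials, which is why five spaces of modular forms are needed in that case.
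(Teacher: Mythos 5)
Your proposal is correct and follows essentially the same route as the paper: your $C_j$/$D_j$ (including the heat-type correction $kC_2-4q\frac{d}{dq}C_0$, whose constants you rightly trace to $B(e_2,e_2)=4$) are exactly the operators $D_{k,\mathbf{p}}$ for $\mathbf{p}=\mathbf{0},(0,1),(0,2)$ of Theorem \ref{thm:ModularFormFromJacobiForm} and Remark \ref{rema:explicitFormulaDk2}, and your injectivity argument via theta decomposition and the non-vanishing of $\det\Theta$ is precisely the mechanism of Theorem \ref{thm:mainEmbed} and Corollary \ref{coro:LinearIndependence}. The only divergence is the final non-vanishing, which you defer to Proposition \ref{prop:whenFLP0Nonzero}/SageMath: the paper's proof of Corollary \ref{coro:embeddingDet3} settles it by hand, using the symmetry $v\mapsto -v$ to reduce the $3\times 3$ determinant to $2a_{22}\left(a_{11}a_{32}-a_{12}a_{31}\right)$ and exhibiting a nonzero lowest-order $q$-coefficient, the hypotheses being immediate for $G_{\mathfrak{B}}=\tbtmat{1}{1}{1}{4}$, so you should carry out this short verification rather than leave it to the computer.
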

It is proper in this place to explain the relation of ``Taylor expansions'' in the title and the main theorem: each coefficient of terms of $D_0(\phi)$, $D_1(\phi)$, $D_2(\phi)$ is some modified Taylor coefficient of $\phi$.

We develop necessary tools to prove the main theorem from Section \ref{sec:Jacobi forms of lattice index} to \ref{sec:Jacobi theta series and theta decompositions}. In Section \ref{sec:Jacobi forms of lattice index}, we recall basic definitions and properties of Jacobi forms in broad generalities. In Section \ref{sec:Formal Laurent series which transform like modular forms}, we develop some operators used to define the sequence of operators $D_0(\phi)$, $D_1(\phi)$ and so on. To get rid of the problem of convergence, we work with formal Laurent series of several variables which transform like modular forms. At the end of this section, we obtain operators that map formal Laurent series of low weights to series of high weights. See Proposition \ref{prop:DkpMappingPlusMinus} for details. Note that results in this section may be of independent interest. In Section \ref{sec:Connections with modular forms}, we describe operators that map Jacobi forms to modular forms. It turns out that for each vector $\mathbf{p} \in \numgeq{Z}{0}^\dimn$, where $\dimn$ is the number of elliptic variables, there is an operator $D_{k, \mathbf{p}}$ that maps Jacobi forms of weight $k$ to elliptic modular forms of weight $k+s(\mathbf{p})$, where $s(\mathbf{p})$ is the sum of $\mathbf{p}$'s components. See Theorem \ref{thm:ModularFormFromJacobiForm} for details. As would be discussed in Remark \ref{rema:vanIttersum}, this theorem belongs to van Ittersum. We reprove it in a different approach. Then we give the $q$-expansion of each operator $D_{k, \mathbf{p}}$ acting on Jacobi forms (Proposition \ref{prop:FourierCoeffDkp}), which plays an important role in the following. At the end of this section (Proposition \ref{prop:isoFormalModForm}), we prove the direct product of all these operators, gives an isomorphism from all formal Laurent series which transform like modular forms, onto the direct product of infinitely many spaces of modular forms, hence gives an embedding of the space of Jacobi forms, into the same codomain. This proof is in the spirit of Eichler and Zagier (compare it to \cite[p. 34]{EZ85}). Now for proving the main theorem, it remains to show how to choose finitely many operators such that the direct product is still injective, for which we need the technique of theta decompositions. Thus, in Section \ref{sec:Jacobi theta series and theta decompositions} we review the theory of Jacobi theta series of lattice index, Weil representations and theta decompositions. We prove a version of theta decompositions (Theorem \ref{thm:thetaDecomposition}) which serves our purpose. The rest of this section is devoted to discussing the relation between theta decompositions and Taylor expansions. After these preparations we state and prove the main theorem in Section \ref{sec:The main theorem and its corollaries}. The finitely many operators $D_{k, \mathbf{p}}$ are choosen in such way that in the theta decomposition of any Jacobi form $\phi$, $D_{k, \mathbf{p}}\phi=0$ for all these $\mathbf{p}$ could imply the theta coefficients of $\phi$ all vanish, and hence $\phi$ itself vanishes. The rest of this section contains some immediate corollaries dealing with lattices of determinant $2$ and $3$, and a computational criterion (Proposition \ref{prop:whenFLP0Nonzero}) for deciding what $\mathbf{p}$'s to choose.

\subsection{Linear relations among theta series}
\label{subsec:Linear relations among theta series}
In a systematic treatment of ``cubic theta functions'' \cite{Sch13}, Schultz defined, among others, nine theta functions, related to $\sum q^{m^2+mn+n^2}\zeta_1^{m}\zeta_2^n$, and studied their inversion formulas, addition formulas and modular equations. See \cite[\S 2.3]{Sch13} for the precise definition of these nine functions. He proved some linear relations among the third powers of these functions (\cite[Corollary 4.3]{Sch13}), which generalize \cite[Equation (2.3)]{BB91}. These nine functions, can be rewritten as Jacobi theta series of lattice index associated with the $A_2$ lattice as follows:
\begin{equation*}
\theta_{\alpha,\beta}(\tau,z)=\sum_{v \in \numZ^2}\etp{B(\beta,v)}\etp{\tau Q(\alpha+v)+B(\alpha+v,z)},
\end{equation*}
where $(\tau,z) \in \uhp\times\numC^2$, $B(x,y)=x\cdot \tbtmat{2}{-1}{-1}{2} \cdot y^T$, and $Q(x)=\frac{1}{2}B(x,x)$ for $x, y \in \numC^{1 \times 2}$. Then Schultz's identities are equivalent to
\begin{thm*}
Let $\alpha$ be one of the followings: $(0,0),\, (2/3,1/3),\, (1/3,2/3)$, and $\beta$ be one of the followings: $(0,0),\, (1/3,1/3),\, (2/3,2/3)$. We have
\begin{equation*}
\theta_{\alpha, \beta}^3 + \theta_{0,0}^3 = \theta_{\alpha, 0}^3 + \theta_{0, \beta}^3.
\end{equation*}
\end{thm*}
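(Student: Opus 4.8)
The strategy is to realise the four summands as elements of a single finite-dimensional space of Jacobi forms, and then to prove that their alternating sum vanishes by pushing it through the embedding of Theorem~\ref{thm:mainEmbed}. Write $\underline{L}=(L,B)$ for the $A_2$-lattice equipped with the bilinear form in the statement, and let $\underline{3L}$ denote $L$ with the bilinear form $3B$. I will use the elementary identity
\[
\theta_{\alpha,\beta}(\tau,z)=\etp{-B(\alpha,\beta)}\,\theta_{\alpha,0}(\tau,z+\beta),
\]
which exhibits each $\theta_{\alpha,\beta}$ as a translate, by the vector $\beta$, of the Weil-representation theta series $\theta_{\alpha,0}$. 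For the characteristics in the statement one has $\alpha\in L^\sharp$ and $3\beta\in L^\sharp$, so $\beta$ lies in the dual lattice of $\underline{3L}$. Using this, together with the facts that cubing multiplies the weight by $3$ and the bilinear form by $3$ (the three factors share the elliptic variable $z$), that the cusp condition is inherited from the theta series, and that translating an index-$\underline{3L}$ Jacobi form by a dual-lattice vector preserves the relevant transformation laws, the first step is to check that
\[
\theta_{\alpha,\beta}^3,\quad\theta_{0,0}^3,\quad\theta_{\alpha,0}^3,\quad\theta_{0,\beta}^3
\]
all lie in one and the same space $\JacForm{3}{3L}{\Gamma}{\chi}$, for a suitable congruence subgroup $\Gamma\subseteq\slZ$ of level dividing a small multiple of $3$ and an explicit character $\chi$ not depending on $(\alpha,\beta)$. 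Granting this, $F:=\theta_{\alpha,\beta}^3+\theta_{0,0}^3-\theta_{\alpha,0}^3-\theta_{0,\beta}^3$ lies in $\JacForm{3}{3L}{\Gamma}{\chi}$, and the assertion becomes $F=0$.

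Since $\underline{3L}$ has determinant $27$, it is not covered by Corollary~\ref{coro:embeddingDet3}, so I would apply Theorem~\ref{thm:mainEmbed} directly. Feeding the data $(3,\underline{3L},\Gamma,\chi)$ into the computational criterion of Proposition~\ref{prop:whenFLP0Nonzero} yields a finite set $P\subset\numgeq{Z}{0}^2$ such that
\[
\JacForm{3}{3L}{\Gamma}{\chi}\longrightarrow\prod_{\mathbf p\in P}\ModForm{3+s(\mathbf p)}{\Gamma}{\chi},\qquad\phi\longmapsto\bigl(D_{3,\mathbf p}(\phi)\bigr)_{\mathbf p\in P}
\]
is injective; this is exactly the content of the main theorem, obtained through the theta decomposition of Theorem~\ref{thm:thetaDecomposition}. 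Hence it suffices to prove $D_{3,\mathbf p}(F)=0$ for every $\mathbf p\in P$.

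For a fixed $\mathbf p$, the form $D_{3,\mathbf p}(F)$ is an elliptic modular form of explicit weight $3+s(\mathbf p)$ on $\Gamma$, so by the valence formula (Sturm's bound) it vanishes once its Fourier expansion vanishes up to an explicit order. I would obtain these Fourier coefficients from the $q$-expansion formula of Proposition~\ref{prop:FourierCoeffDkp}, fed with the explicit Fourier expansions of the four theta cubes, whose coefficients are finite exponential sums of cube roots of unity read off directly from the definition of $\theta_{\alpha,\beta}$. Running this finite verification for each $\mathbf p\in P$ and each pair $(\alpha,\beta)$ occurring in the statement — a computation entirely parallel to the SageMath code presented for the $D_4$ lattice — finishes the proof.

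The Fourier-coefficient arithmetic and the Sturm-bound check are routine. The main obstacle is the first step: determining the exact subgroup $\Gamma$ and character $\chi$, and verifying that all four cubes — including the genuinely translated ones $\theta_{\alpha,\beta}^3$ with $\beta\neq0$ — really land in one common space $\JacForm{3}{3L}{\Gamma}{\chi}$; this is where the translate identity and the relation $3\beta\in L^\sharp$ must be handled with care, and where a wasteful choice of $\Gamma$ (for instance $\Gamma(3)$ when a smaller $\Gamma_1$-type group suffices) could inflate $P$ and the Sturm bounds enough to make the final computation impractical.
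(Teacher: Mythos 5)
Your proposal is correct and follows essentially the same route as the paper: realizing the cubes as elements of a single space $J_{3,\underline{A_2}_3}(\Gamma_1(3),\chi^3)$ (the paper's Proposition \ref{prop:thetaAlphaBetaJacobiForm}, Proposition \ref{prop:multiplicationScalarJacobiForm} and Proposition \ref{prop:thetaNptransformation}), then applying the embedding of Theorem \ref{thm:mainEmbed} with a set $\mathcal{P}_0$ certified by Proposition \ref{prop:whenFLP0Nonzero}, and reducing to a finite Fourier-coefficient check via Proposition \ref{prop:FourierCoeffDkp} and the valence formula — which is exactly the content of Theorem \ref{thm:algorithmFindLinearRelations} used in the paper's proof of Theorem \ref{thm:A2linearRelation}. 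Your worry about the choice of $\Gamma$ is already settled by the paper's Corollary \ref{coro:thetaNtransformation}, which gives $\Gamma_1(3)$ with a character independent of $(\alpha,\beta)$.
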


As an application of the main theorem (Theorem \ref{thm:mainEmbed}), we give an algorithm for searching and proving this type of identities, not only for the $A_2$ lattice, but for arbitrary lattice in principle. More precisely, for any even integral positive definite symmetric bilinear form $B$ on $L=\numZ^\dimn$ (extended to $\numC^\dimn$ by bilinearity), and suitably choosen $\alpha$ and $\beta$, we could find all linear relations among functions $\theta_{\alpha,\beta}^{N'}$, where $N'$ is a fixed positive integer dividing the level of $(L,B)$ (the least positive integer $N$ such that $N\cdot Q(x) \in \numZ$ for all $x \in \numZ^\dimn$). The algorithm roughly says the following.
\begin{thm*}[A rough version of Theorem \ref{thm:algorithmFindLinearRelations}]
Let $\mathfrak{I}$ be the set of all pairs $(\mathbf{p},n)$ with $\mathbf{p}$ ranging over a finite subset of $\numgeq{Z}{0}^\dimn$ such that the product $\prod D_{N'\dimn/2, \mathbf{p}}$ is injective, and
\begin{equation*}
n=0,\,1,\,2,\dots, \left[\frac{\delta_N N^2}{24}\left(\frac{N'\dimn}{2}+s(\mathbf{p})\right)\prod_{p \mid N}\left(1-\frac{1}{p^2}\right)\right].
\end{equation*}
For any $(\alpha, \beta) \in L^\sharp/L\times (L^\sharp+\frac{1}{N'}L)/L^\sharp$ with $Q(\alpha) \in \frac{1}{N'}\numZ$, we associate the theta series $\theta_{\alpha,\beta}^{N'}$ with an $\mathfrak{I}$-indexed sequence $\Theta_{\alpha,\beta}\colon \mathfrak{I}\rightarrow \numC$ whose $(\mathbf{p},n)$-term is equal to
\begin{multline*}
\sum_{\twoscript{v_1,\dots, v_{N'} \in L}{Q(\alpha+v_1)+\dots+Q(\alpha+v_{N'})=n}}\etp{B(\beta,v_1+\dots +v_{N'})}\\
\times P_{\mathbf{p}}\left(n,\frac{(\alpha+v_1)+\dots (\alpha+v_{N'})}{N'}\right),
\end{multline*}
where $P_{\mathbf{p}}$ is certain $(\dimn+1)$-ary polynomial. Then any linear relation among $\theta_{\alpha,\beta}^{N'}$'s holds if and only if the linear relation with the same coefficients among $\Theta_{\alpha,\beta}$'s holds.
\end{thm*}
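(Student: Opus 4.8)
The plan is to realize each power $\theta_{\alpha,\beta}^{N'}$ as an honest Jacobi form to which the main theorem applies, and then to convert the resulting embedding, together with a Sturm-type bound, into the finite list of linear conditions indexed by $\mathfrak{I}$.

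First I would check that each $\theta_{\alpha,\beta}$ is a Jacobi-like form of weight $\dimn/2$ and index $\underline{L}$ for some finite index subgroup of $\slZ$, carrying a character in both the elliptic and the lattice directions; this belongs to the theory of Jacobi theta series of lattice index recalled in Section~\ref{sec:Jacobi theta series and theta decompositions}. The hypotheses $\alpha\in L^\sharp$, $\beta\in L^\sharp+\tfrac1{N'}L$ and $Q(\alpha)\in\tfrac1{N'}\numZ$ are precisely what force the $N'$-th power $\theta_{\alpha,\beta}^{N'}$ to have integral $q$-exponents and to be genuinely invariant (no extra constant) in the lattice direction for the scaled index $(L,N'B)$; combining this with the transformation law of the $N'$-th power of the Weil representation and with the (immediate) regularity at the cusps, $\theta_{\alpha,\beta}^{N'}$ lies in one fixed space $\JacForm{N'\dimn/2}{N'L}{G}{\chi}$, with $G$ and $\chi$ independent of the admissible pair $(\alpha,\beta)$. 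Expanding the defining sum yields the Fourier development
\begin{align*}
\theta_{\alpha,\beta}^{N'}(\tau,z) &= \sum_{n,t} c_{\alpha,\beta}(n,t)\, q^{n}\, \etp{N'B(t,z)}, \qquad t\in\alpha+\tfrac1{N'}L, \\
c_{\alpha,\beta}(n,t) &= \sum_{\twoscript{v_1,\dots,v_{N'}\in L,\ \sum_i(\alpha+v_i)=N't}{\sum_i Q(\alpha+v_i)=n}} \etp{B(\beta,\,v_1+\dots+v_{N'})} .
\end{align*}

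Next I would invoke Theorem~\ref{thm:mainEmbed}. By construction the finitely many $\mathbf{p}$ occurring in $\mathfrak{I}$ form a set on which $\prod_{\mathbf{p}}D_{N'\dimn/2,\mathbf{p}}$ is injective on $\JacForm{N'\dimn/2}{N'L}{G}{\chi}$, and the existence of such a set, together with a procedure certifying it, is Proposition~\ref{prop:whenFLP0Nonzero}. Since each $D_{N'\dimn/2,\mathbf{p}}$ is $\numC$-linear, a relation $\sum_{\alpha,\beta}\lambda_{\alpha,\beta}\theta_{\alpha,\beta}^{N'}=0$ holds if and only if $\sum_{\alpha,\beta}\lambda_{\alpha,\beta}D_{N'\dimn/2,\mathbf{p}}(\theta_{\alpha,\beta}^{N'})=0$ for every such $\mathbf{p}$. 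By Theorem~\ref{thm:ModularFormFromJacobiForm} each summand $D_{N'\dimn/2,\mathbf{p}}(\theta_{\alpha,\beta}^{N'})$ lies in $\ModForm{N'\dimn/2+s(\mathbf{p})}{G}{\chi}$, so this last vanishing can be tested on finitely many $q$-coefficients: passing to the congruence subgroup $\Gamma$ on which $\chi$ is trivial (it contains $\Gamma(N)$ for $N$ the level of $(L,B)$) and using the classical bound $\tfrac{k}{12}[\slZ:\Gamma]$ for weight-$k$ forms on $\Gamma$, one arrives at exactly the range $0\le n\le\bigl[\tfrac{\delta_N N^2}{24}\bigl(\tfrac{N'\dimn}{2}+s(\mathbf{p})\bigr)\prod_{p\mid N}\bigl(1-\tfrac{1}{p^2}\bigr)\bigr]$ appearing in $\mathfrak{I}$, with $k=N'\dimn/2+s(\mathbf{p})$.

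Finally I would match coefficients. By Proposition~\ref{prop:FourierCoeffDkp} the $q^{n}$-coefficient of $D_{N'\dimn/2,\mathbf{p}}(\phi)$, for $\phi=\sum_{n,t}c(n,t)q^{n}\etp{N'B(t,z)}$, equals $\sum_{t}c(n,t)P_{\mathbf{p}}(n,t)$ for an explicit $(\dimn+1)$-ary polynomial $P_{\mathbf{p}}$ (depending also on the fixed weight $N'\dimn/2$). Substituting the Fourier development above and grouping the terms according to $t=\tfrac1{N'}\bigl((\alpha+v_1)+\dots+(\alpha+v_{N'})\bigr)$ turns this $q^{n}$-coefficient into precisely the $(\mathbf{p},n)$-term of $\Theta_{\alpha,\beta}$. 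Hence the statement ``$\sum_{\alpha,\beta}\lambda_{\alpha,\beta}D_{N'\dimn/2,\mathbf{p}}(\theta_{\alpha,\beta}^{N'})=0$ for every $\mathbf{p}$ in the chosen finite set'' coincides with ``$\sum_{\alpha,\beta}\lambda_{\alpha,\beta}\Theta_{\alpha,\beta}(\mathbf{p},n)=0$ for every $(\mathbf{p},n)\in\mathfrak{I}$'', which is the asserted equivalence. I expect the main work to be the bookkeeping in the first and third steps: pinning down $G$, $\chi$ and hence the exact shape of the Sturm bound uniformly in $(\alpha,\beta)$, and verifying that the polynomial delivered by Proposition~\ref{prop:FourierCoeffDkp} really is the $P_{\mathbf{p}}$ of the statement after the rescaling by $N'$ that is forced by passing from index $\underline{L}$ to index $(L,N'B)$.
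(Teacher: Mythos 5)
Your steps (i) placing all $\theta_{\alpha,\beta}^{N'}$ in one fixed space $J_{N'\dimn/2,\underline{L}_{N'}}(\Gamma_1(N),\chi^{N'})$ with the Fourier development you write, (ii) using injectivity of $\prod_{\mathbf{p}}D_{N'\dimn/2,\mathbf{p}}$ from Theorem \ref{thm:mainEmbed} to convert a relation among the theta powers into relations among their images, and (iii) identifying, via Proposition \ref{prop:FourierCoeffDkp}, the $q^n$-coefficient of $D_{N'\dimn/2,\mathbf{p}}(\theta_{\alpha,\beta}^{N'})$, up to a factor independent of $(\alpha,\beta)$, with the $(\mathbf{p},n)$-entry of $\Theta_{\alpha,\beta}$, coincide with the paper's proof (Propositions \ref{prop:thetaAlphaBetaJacobiForm}, \ref{prop:thetaNptransformation}, \ref{prop:FourierCoeffDkp} together with Theorem \ref{thm:mainEmbed}).

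The genuine gap is in your reduction to the finite range of $n$. You propose to pass to a congruence subgroup $\Gamma$ on which the character is trivial (asserting $\Gamma\supseteq\Gamma(N)$) and then apply the classical bound $\frac{k}{12}[\slZ:\Gamma]$, claiming this yields exactly the range in $\mathfrak{I}$. It does not: the stated range is $\frac{k}{12}[\pslZ:\overline{\Gamma_1(N)}]$ with $[\pslZ:\overline{\Gamma_1(N)}]=\frac{\delta_N N^2}{2}\prod_{p\mid N}(1-p^{-2})$, whereas any group on which $\chi^{N'}$ trivializes has index at least that of $\Gamma_1(N)$ and in general strictly larger --- the restriction of $\chi^{N'}$ to $\widetilde{\Gamma_1(N)}$ involves Gauss-sum roots of unity and is usually nontrivial, and when $N'\dimn$ is odd it is a genuinely metaplectic multiplier, so its kernel is not even a subgroup of $\slZ$; the claim that it contains $\Gamma(N)$ is unargued and not true in general. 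With a larger Sturm bound you only prove a weaker statement (the same equivalence for a larger set $\mathfrak{I}$), while the ``if'' direction of the theorem as stated --- vanishing of the truncated vectors $\Theta_{\alpha,\beta}$ forces the relation among the $\theta_{\alpha,\beta}^{N'}$ --- needs the truncation exactly as given. The paper obtains it by applying a valence formula valid for forms of (half-)integral weight with a finite-order character directly on $\Gamma_1(N)$ (\cite[Theorem 2.1]{ZZ21}), combined with the index formula $[\pslZ:\overline{\Gamma_1(N)}]=\frac{\delta_N N^2}{2}\prod_{p\mid N}(1-p^{-2})$ (\cite[Corollary 6.2.13]{CoS17}); the prior restriction to $n\in\numgeq{Z}{0}$ comes, as you correctly note, from $T$-invariance and $Q(\alpha)\in\frac{1}{N'}\numZ$. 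Replacing your ``trivialize the character'' step by such a character-aware valence argument on $\Gamma_1(N)$ itself repairs the proof.
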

For the precise version (and definitions of concepts not introduced yet), see Theorem \ref{thm:algorithmFindLinearRelations}. For the definition of the polynomial $P_{\mathbf{p}}$, see Definition \ref{deff:PkpM}.

To find linear relations among $\theta_{\alpha,\beta}^{N'}$, we need only calculate the vectors $\Theta_{\alpha,\beta}$, and then produce linear relations among them using standard linear algebra algorithm. We use SageMath \cite{Sage} to do all computations. Besides Schultz's identities for the $A_2$ lattice, we have worked out identities for other lattices, which are concluded in Table \ref{table:identitiesTheta}.
\begin{table}[ht]
\centering
\caption{Identities of the form $\theta_{\alpha,\beta}^{N'}+\theta_{0,0}^{N'}=\theta_{\alpha,0}^{N'}+\theta_{0,\beta}^{N'}$ found and proved by Theorem \ref{thm:algorithmFindLinearRelations}, where $\alpha$'s ($\beta$'s resp.) means the number of $\alpha$'s ($\beta$'s resp.) and $\abs{\mathfrak{I}}$ equals the length of each vector $\Theta_{\alpha, \beta}$ \label{table:identitiesTheta}}
\begin{tabular}{llllll}
\toprule
Lattice & $N'$ & $\alpha$'s & $\beta$'s & $\abs{\mathfrak{I}}$ & Ref. \\
\midrule
$D_4$ & $2$ & $4$ & $4$ & $258$ & Theorem \ref{thm:D4linearRelation}\\
$A_2$ & $3$ & $3$ & $3$ & $99$ & Theorem \ref{thm:A2linearRelation}\\
$A_3$ & $2$ & $2$ & $4$ & $635$ & Theorem \ref{thm:A3linearRelation2}\\
\bottomrule
\end{tabular}
\end{table}

Section \ref{sec:Application: Linear relations among theta series} is devoted to this application. We first prove that (Proposition \ref{prop:thetaAlphaBetaJacobiForm}) the functions $\theta_{\alpha, \beta}$ are Jacobi forms, more exactly, in the space $\JacForm{\dimn/2}{L}{\Gamma_1(N)}{\chi_{\alpha,\beta}}$, for any even integral lattice $\underline{L}=(L,B)$, where $\chi_{\alpha,\beta}$ is certain linear character depending on $\alpha$ and $\beta$. Then in Proposition \ref{prop:thetaNptransformation}, we answer the question for which $\alpha$ and $\beta$, functions $\theta_{\alpha, \beta}^{N'}$ are in the same space, that is, with the same character. Immediately after that, we state and prove the main theorem of this section, Theorem \ref{thm:algorithmFindLinearRelations}. The remaining of this section is divided into five subsections. Each of the first three subsections deals with one lattice in Table \ref{table:identitiesTheta}. In \S \ref{subsec:The root lattice A2A2}, we present a non-trivial example showing that, in Theorem \ref{thm:mainEmbed}, we can not choose the set of $\mathbf{p}$'s with small cardinality. In \S \ref{subsec:The principal binary form of discriminant -15}, we show that the third (fifth) powers of nine (twenty-five) functions $\theta_{\alpha, \beta}$ associated with the quadratic form $x^2+xy+4y^2$ are linearly independent.

In Appendix \ref{apx:SageMath code}, we present the complete SageMath source code used to produce and check linear relations among series for the $D_4$ lattice, along with detailed discussion on the usage and on how to fit the program for other lattices.

\subsection{Notations}
\label{subsec:Notations}
The symbols $\numZ$, $\numQ$, $\numR$, $\numC$ denote the ring of integers, rationals, reals and complex numbers respectively. The symbol $R_{> n}$ ($R_{\geq n}$ resp.) refers to the set of elements in $R$ that are greater than (or equal to) $n$, The symbol $\numC^\times$ denotes the multiplicative group of nonzero complex numbers. The symbol $\projQ$ denotes the set of projective lines over $\numQ$ ($\numQ$-subspaces of dimension one), whose elements can be represented by formal fractions $a/b$.

For a commutative ring $R$ (with multiplicative identity), the general linear group $\mathrm{GL}_{2}(R)$ is the group of $2 \times 2$ matrices over $R$ with invertible determinant, and the special linear group $\mathrm{SL}_{2}(R)$ is the subgroup of $\mathrm{GL}_{2}(R)$ consisting of matrices of determinant $1$. If $R$ is a subring of $\numR$, then the symbol $\mathrm{GL}^+_{2}(R)$ denote the subgroup consisting of matrices of positive determinant. For a subgroup $G$ of $\glpR$, $\widetilde{G}$ denote its double cover (see the second paragraph of Section \ref{sec:Jacobi forms of lattice index}), and $\overline{G}$ denotes $G/\{\pm I\}$ if $-I \in G$, or $G$ itself if $-I \notin G$, where $I=\tbtmat{1}{0}{0}{1}$. We put $\pslZ=\overline{\slZ}$.

Throughout this paper, $V$ always denotes a linear space of dimension $\dimn \in \numgeq{Z}{1}$, and $\mathcal{V}$ its complexification $\numC \otimes_\numR V$. The upper half plane $\uhp$, is the subset of $\numC$ consisting of numbers with positive imaginary part. Besides $\mathcal{V}$, the symbol $\mathcal{W}$ always denotes a complex linear space of dimension $\dimm \in \numgeq{Z}{1}$. These sets serve as domains and codomains of Jacobi forms considered here --- a Jacobi form is implicitly assumed as a function from $\mydom$ to $\myran$. See the first paragraph of Section \ref{sec:Jacobi forms of lattice index}. By $C(\mydom, \myran)$, $\mathcal{O}(\mydom, \myran)$, we mean the set of continuous maps, and that of holomorphic maps from $\mydom$ to $\myran$. By $\GlW$, we mean the group of non-singular linear operators on $\mathcal{W}$.

The symbols $\heigrp{V}$, $\heigrpa{L}{n}$ denote some kind of Heisenberg groups. Refer to the paragraph following the proof of Lemma \ref{lemm:slashOperatorIsAction} for details. The symbol $L$ usually denotes a $\numZ$-lattice in $V$, which is assumed to be integral at most time, and $\underline{L}$ means $L$ equipped with a bilinear form $B$ (i.e., $\underline{L}=(L,B)$). By $L^\sharp$, or more precisely, $\underline{L}^\sharp$, we mean the dual lattice of $L$ (see the paragraph followed by Proposition \ref{prop:ComplexFourierSeries}). For a positive integer $N$, the notation $\underline{L}_N$ denotes the lattice with the same underlying $\numZ$-module $L$, but a different bilinear form $N\cdot B$.

For simplicity, put $\mathfrak{e}(z)=\exp{2\uppi\rmi z}$ with $z \in \numC$, and $q^n=\etp{n\tau}$ with $\tau \in \uhp,\, n \in \numQ$. The notation $\Gamma(z)$ refers to the Euler $\Gamma$-function. For two maps $f_1$ and $f_2$, $f_1\circ f_2$ means the map that sends $x$ to $f_1(f_2(x))$, provided that the codomain of $f_2$ coincides with the domain of $f_1$. For $x \in \numR$, the floor $[x]$ equals the largest integer not exceeding $x$. For two groups $G$ and $H$ where $H$ is a subgroup of $G$, $[G \colon H]$ means the index of $H$ in $G$. For a nonzero integer $d$, $\legendre{d}{-1}=\sgn{d}$ denotes the sign of $d$, and $\legendre{0}{-1}=1$ by convention.%For non-negative integers $a$ and $b$ with $b \leq a$, the notation $\binom{a}{b}$ denotes the binomial coefficient.

Finally, for notations concerning formal Laurent series, refer to first two paragraphs of Section \ref{sec:Formal Laurent series which transform like modular forms}.

\section{Jacobi forms of lattice index}
\label{sec:Jacobi forms of lattice index}
We recall basic concepts and properties concerning \emph{Jacobi forms of lattice index}. The readers may consult, for instance, \cite[\S 2.4]{Ajo15}, \cite[\S 3.6]{Boy15}, \cite[\S 9]{GSZ19}, \cite[\S 2.2]{Moc19}, \cite[\S 2]{Wil19} for more details. Throughout this paper, the symbol $V$ always denotes a finite dimensional real vector space (whose dimension will be denoted by $\mathfrak{n}$), and $\mathcal{V}$ its complexification, i.e., $\mathcal{V}=V \oplus \rmi V$. Instead of $v+\rmi w$, we use the notation $[v,w]$ to represent elements in $\mathcal{V}$, where $v,w \in V$.The symbol $\mathcal{W}$ denotes another finite dimensional complex vector space of complex dimension $\mathfrak{m}$. The functions studied here are holomorphic ones from $\mydom$ to $\myran$, where $\uhp$ denotes the upper half plane of the complex numbers. Let $B \colon V \times V \rightarrow \numR$ be a symmetric $\numR$-bilinear form and by abuse of language, $B$ also denotes its $\numC$-bilinear extension to $\mathcal{V} \times \mathcal{V}$. For any additive subgroup $X$ of $\mathcal{V}$, we write $\underline{X}$ to mean the group $X$ equipped with the restriction of the form $B$ to $X \times {}X$, so $\underline{\mathcal{V}}=(\mathcal{V},B)$. There is another ``extension'' of $B$ on $V \times V$ to $\mathcal{V} \times \mathcal{V}$, which is denoted by $A$ and is defined by $A([v_1,w_1],[v_2,w_2])=B(v_1,w_2)-B(v_2,w_1)$. The form $A$ on $\mathcal{V} \times \mathcal{V}$ is alternating (anti-symmetric). We shall use $Q$ to denote the quadratic form induced by $B$, that is, $Q(v)=\frac{1}{2}B(v,v)$ with $v \in \mathcal{V}$.

To state the modular and abelian transformation laws, and to define operators such as \emph{Hecke operators} and \emph{double coset operators}, we need a group, whose elements represent transformations of functions studied in this paper. The group used here is the semidirect product of a general linear group and a \emph{Heisenberg group}, which we now explain in detail. Let $\glpR$ be the group of $2 \times 2$ real matrices with positive determinants, $\slR$ be the subgroup of matrices of determinant $1$, and $\slZ$ be the subgroup of matrices with integeral entries in $\slR$. To deal with half-integer-weight modular forms, we shall use the double cover of $\glpR$, which is denoted by $\glptR$, whose elements are of the form $\eleglptRa{a}{b}{c}{d}{\varepsilon}$, where $\tbtmat{a}{b}{c}{d} \in \glpR$, $\varepsilon = \pm 1$, and $\tbtmat{a^\prime}{b^\prime}{c^\prime}{d^\prime}$=$\left(\det \tbtmat{a}{b}{c}{d}\right)^{-\frac{1}{2}} \cdot \tbtmat{a}{b}{c}{d}$. Note that by the function $z \mapsto z^r$, we mean $z \mapsto \exp{r\log z}$, where we choose the branch of $\log$ such that $-\uppi < \Im \log z \leq \uppi$, and that $\varepsilon\left(c^\prime \tau+d^\prime \right)^{\frac{1}{2}}$ means a function on $\tau \in \uhp$.  For an element $\gamma = \tbtmat{a}{b}{c}{d} \in \glpR$, we define $j(\gamma,\tau)=c^\prime\tau+d^\prime$, and $\gamma(\tau)=\frac{a\tau+b}{c\tau+d}=\frac{a^\prime\tau+b^\prime}{c^\prime\tau+d^\prime}$. The composition of $\glptR$ is given by
\begin{multline*}
\eleglptRa{a_1}{b_1}{c_1}{d_1}{\varepsilon_1}\cdot\eleglptRa{a_2}{b_2}{c_2}{d_2}{\varepsilon_2} \\
= \left(\left(\begin{smallmatrix} a_1 & b_1 \\ c_1 & d_1\end{smallmatrix}\right)\left(\begin{smallmatrix} a_2 & b_2 \\ c_2 & d_2\end{smallmatrix}\right),\varepsilon_1\varepsilon_2\left(c_1^\prime\frac{a_2^\prime\tau+b_2^\prime}{c_2^\prime\tau+d_2^\prime}+d_1^\prime\right)^\frac{1}{2}\left(c_2^\prime\tau+d_2^\prime\right)^\frac{1}{2}\right).
\end{multline*}
For any subgroup $G$ of $\glpR$, the notation $\widetilde{G}$ means the preimage of $G$ under the natural projection from $\glptR$ onto $\glpR$, so we have $\sltR$ and $\sltZ$. For matrix $\gamma=\tbtmat{a}{b}{c}{d} \in \glpR$, the symbol $\widetilde{\gamma}$ means $\left(\tbtmat{a}{b}{c}{d},(c\tau+d)^{\frac{1}{2}}\right)$. Moreover, we sometimes write $\eleglptRsimple{a}{b}{c}{d}{\varepsilon}$ instead of $\eleglptRa{a}{b}{c}{d}{\varepsilon}$. For more details on such metaplectic groups, see \cite[\S 1.1]{Bru02}.

By the Heisenberg group $\heigrp{V}$, we mean the group of elements $([v,w],\xi)$ with $[v,w] \in \mathcal{V}$ and $\xi \in S^1=\{z \in \numC \colon \abs{z}=1 \}$. The composition law is given by
\begin{multline*}
([v_1,w_1],\xi_1)\cdot([v_2,w_2],\xi_2) \\
=([v_1+v_2,w_1+w_2],\xi_1\xi_2 \exp{\uppi\rmi A([v_1,w_1],[v_2,w_2])}).
\end{multline*}
The group $\glptR$ acts from right on $\heigrp{V}$ as follows:
\begin{align*}
\heigrp{V} \times \glptR &\rightarrow \heigrp{V} \\
\left(\left([v,w],\xi\right), \eleglptRa{a}{b}{c}{d}{\varepsilon}\right) &\mapsto \left([v,w]\tbtmat{a^\prime}{b^\prime}{c^\prime}{d^\prime},\xi\right),
\end{align*}
where $[v,w]\tbtmat{a^\prime}{b^\prime}{c^\prime}{d^\prime}$ means $[a^\prime v+c^\prime w,b^\prime v+d^\prime w]$. Moreover, since
\begin{equation*}
A([v_1,w_1]\tbtmat{a^\prime}{b^\prime}{c^\prime}{d^\prime},[v_2,w_2]\tbtmat{a^\prime}{b^\prime}{c^\prime}{d^\prime}) =
A([v_1,w_1],[v_2,w_2]),
\end{equation*}
each element of $\glptR$ induces a group isomorphism of $\heigrp{V}$. Hence we can define the semi-direct product $\actgrp{V}$, using the above action. The elements in $\actgrp{V}$ are written in the form $\eleactgrp{a}{b}{c}{d}{\varepsilon}{v}{w}{\xi}$. For $\gamma \in \glptR$, we always use the same notation to express $(\gamma,[0,0],1) \in \actgrp{V}$. Similarly, the symbol $([v, w], \xi) \in \heigrp{V}$ sometimes means corresponding $\left(\widetilde{I},[v,w],\xi\right) \in \actgrp{V}$, where $I$ is the $2\times 2$ identity matrix.

The group $\actgrp{V}$ acts from left on the domain $\mydom$ as follows:
\begin{align}
\label{eq:actOnDomain}
(\actgrp{V}) \times (\mydom) &\rightarrow \mydom \\
\left(\eleactgrp{a}{b}{c}{d}{\varepsilon}{v}{w}{\xi}, (\tau, z)\right) &\mapsto \left(\frac{a^\prime\tau+b^\prime}{c^\prime\tau+d^\prime},\frac{z+\tau v+w}{c^\prime\tau+d^\prime}\right). \notag
\end{align}
We use the notation $\gamma Z$ with $\gamma \in \actgrp{V}$ and $Z=(\tau,z) \in \mydom$ to denote the resulting point when $\gamma$ acts on $Z$.

Besides $j(\gamma, \tau)$, We need another \emph{automorphic factor}, which is denoted by $J_B$, and is defined as follows:
\begin{align*}
J_B \colon \left(\actgrp{V}\right) \times (\mydom)  &\rightarrow \numC^\times \\
\left(\eleactgrp{a}{b}{c}{d}{\varepsilon}{v}{w}{\xi}, (\tau, z)\right) &\mapsto \\
\xi\cdot\etp{-\frac{c^\prime}{c^\prime\tau+d^\prime}Q(z+\tau v+w)+\tau Q(v)+B(v,z)+\frac{1}{2}B(v,w)}.
\end{align*}
\begin{lemm}
\label{lemm:JBcocycle}
The function $J_B$ satisfies the cocycle condition. That is to say, for $\gamma_1, \gamma_2 \in \actgrp{V}$ and $Z \in \mydom$, one has
\begin{equation*}
J_B(\gamma_1\gamma_2, Z)=J_B(\gamma_1, \gamma_2 Z)\cdot J_B(\gamma_2, Z).
\end{equation*}
\end{lemm}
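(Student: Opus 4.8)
The plan is to establish the cocycle identity by a direct but organized computation, after two reductions that shrink the problem to a handful of short verifications. \emph{Reductions.} First, observe that $J_B$ evaluated at $\eleactgrp{a}{b}{c}{d}{\varepsilon}{v}{w}{\xi}$ depends only on the lower row $(c',d')$ of the normalized matrix $\tbtmat{a'}{b'}{c'}{d'}$ and on the Heisenberg data $[v,w],\xi$; it involves neither $\varepsilon$ nor the chosen branch of the square root. Thus $J_B$ factors through the natural projection $\actgrp{V}\to\glpR\ltimes\heigrp{V}$, and since a function is a cocycle precisely when the function it induces on a quotient group is, it suffices to argue in $\glpR\ltimes\heigrp{V}$; the metaplectic cover is irrelevant here. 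Second, every element of $\actgrp{V}$ has a unique expression $M\zeta$ with $M\in\glptR$ and $\zeta\in\heigrp{V}$, and a formal manipulation (expand $J_B(\gamma_1 M\zeta,Z)$ using, in turn, the sought identity with second factor $\zeta$ and then with second factor $M$, expand $J_B(M\zeta,Z)$ likewise, and invoke that $\actgrp{V}$ acts on $\mydom$) reduces the general statement to two cases: (i) the second factor lies in $\glptR$, the first being arbitrary; and (ii) the second factor lies in $\heigrp{V}$, the first being arbitrary. Applying the same word-length argument to the second slot once more, (i) reduces to $M=\widetilde\mu$ with $\mu$ ranging over a generating set of $\glpR$, for which I would take the translations $\tbtmat{1}{b}{0}{1}$, the diagonals $\tbtmat{a}{0}{0}{d}$ with $a,d>0$, and $S:=\tbtmat{0}{-1}{1}{0}$.

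\emph{The easy cases.} For (ii), with $\gamma=(\widetilde\nu,[v_1,w_1],\xi_1)$ and $\zeta=([v_2,w_2],\xi_2)$, the product $\gamma\zeta$ keeps the matrix part, has Heisenberg vector $[v_1{+}v_2,w_1{+}w_2]$ and central part $\xi_1\xi_2\rme^{\uppi\rmi A([v_1,w_1],[v_2,w_2])}$, while $\zeta Z=(\tau,z+\tau v_2+w_2)$; because $z+\tau(v_1{+}v_2)+(w_1{+}w_2)$ equals $(z+\tau v_2+w_2)+\tau v_1+w_1$, the $-\tfrac{c'}{c'\tau+d'}Q(\cdot)$ terms on the two sides coincide and drop out, leaving a purely bilinear identity that follows from $Q(v_1{+}v_2)=Q(v_1)+Q(v_2)+B(v_1,v_2)$, bilinearity of $B$, and $\rme^{\uppi\rmi A([v_1,w_1],[v_2,w_2])}=\etp{\tfrac12 B(v_1,w_2)-\tfrac12 B(v_2,w_1)}$. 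The translation and diagonal cases of (i) are equally short: there $c'=0$, so $J_B(\widetilde\mu,Z)=1$, the point $\widetilde\mu Z$ differs from $Z$ only by a shift $\tau\mapsto\tau+b$ (resp. a scaling), and the Heisenberg vector is transformed by the corresponding triangular (resp. diagonal) matrix; the identity then follows from $\tfrac12 B(v,bv+w)=bQ(v)+\tfrac12 B(v,w)$ (resp. from homogeneity of $Q$ and $B$).

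\emph{The main case.} The substantive step is $\mu=S$. Writing $\gamma=(\widetilde\nu,[v,w],\xi)$ with $\nu=\tbtmat{a_1}{b_1}{c_1}{d_1}$, one has $j(S,\tau)=\tau$, $J_B(\widetilde S,Z)=\etp{-Q(z)/\tau}$, $\widetilde S Z=(-1/\tau,\,z/\tau)$, the Heisenberg vector $[v,w]$ is carried to $[w,-v]$, and $(\nu S)'=\nu'S$, so the new lower row is $(d_1',-c_1')$ with $j(\nu S,\tau)=d_1'\tau-c_1'$. Substituting all of this into the two sides of (i), using $\tfrac{c_1'}{c_1'(-1/\tau)+d_1'}=\tfrac{c_1'\tau}{d_1'\tau-c_1'}$, the identity $Q(\tfrac{z}{\tau}-\tfrac{v}{\tau}+w)=\tfrac{1}{\tau^2}Q(z+\tau w-v)$, and bilinearity of $B$ (to expand $Q(z+\tau w-v)$), one finds that the difference of the two exponents collapses to zero. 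This is exactly the rank-$\dimn$ version of the classical inversion computation for the automorphy factor of Jacobi forms of scalar index in \cite[\S 1]{EZ85}.

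\emph{Expected obstacle, and an alternative.} The only delicate point is this last computation: the term $-\tfrac{c'}{c'\tau+d'}Q(z+\tau v+w)$ changes under the simultaneous transformation of $(\tau,z)$ and of $[v,w]$, and one must check that the resulting discrepancy is cancelled \emph{exactly} by $\tau Q(v)+B(v,z)+\tfrac12 B(v,w)$ together with $J_B(\widetilde S,\cdot)$; the ideas are clear, the bookkeeping is where slips would occur. An alternative that largely avoids it: over $\numR$ one may choose a basis of $V$ orthogonal for $B$, identifying $\mathcal{V}$ with $\bigoplus_{i=1}^{\dimn}\numC$, $B$ with $\bigoplus_i\lambda_i(\,\cdot\,)(\,\cdot\,)$ for real $\lambda_i$, $\heigrp{V}$ with a central product of the rank-$\le 1$ Heisenberg groups, and $J_B$ with the product $\prod_i J_{\lambda_i}$ of scalar-index automorphy factors; cocycle conditions pass to such products, so the lemma then follows from the classical one-variable statement for each $\lambda_i$. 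I would present the direct computation, to keep the argument self-contained.
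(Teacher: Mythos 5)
Your proposal is correct, and the cases you single out do close as claimed: the Heisenberg case reduces to $Q(v_1+v_2)=Q(v_1)+Q(v_2)+B(v_1,v_2)$ together with $\rme^{\uppi\rmi A([v_1,w_1],[v_2,w_2])}=\etp{\tfrac12 B(v_1,w_2)-\tfrac12 B(v_2,w_1)}$; the triangular and diagonal cases are the one-line identities you cite; and in the inversion case the two exponents cancel identically (the coefficient of $Q(z+\tau w-v)$ in the difference of the two sides is $-1/\tau$, and expanding $Q(z+\tau w-v)$ by bilinearity kills everything else). The paper's own proof is just the unstructured version of this: a direct verification of the identity for two arbitrary elements of $\actgrp{V}$ ``only using definitions'', with no reduction. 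What your organization buys is that the only substantive computation is the $S$-case, visibly the rank-$\dimn$ analogue of the Eichler--Zagier inversion computation, while everything else is a short formal check; the price is the bookkeeping of the reduction itself, where two points deserve a sentence in a write-up: the passage to the quotient $\glpR\ltimes\heigrp{V}$ is legitimate because both $J_B$ and the action \eqref{eq:actOnDomain} are insensitive to $\varepsilon$ and to the square-root datum, and your generators must generate $\glpR$ as a semigroup --- translations and positive diagonals only give upper triangular matrices with positive diagonal entries, so you need $S^2=-I$ plus the Bruhat decomposition to cover the rest. Your alternative via an orthogonal basis for $B$ is also sound: at this point of the paper $B$ is merely symmetric, real diagonalization needs nothing more, and each coordinate contributes the classical scalar-index cocycle with index $\lambda_i$ (possibly nonpositive, which does not affect the formal identity), with the one caveat that the central $S^1$ is shared, so $J_B$ is $\xi$ times the product of the exponential factors rather than literally a product of independent one-variable factors.
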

\begin{proof}
A tedious but straightforward calculation only using definitions.
\end{proof}

Now we come to a position to define the transformations on functions form $\mydom$ to $\myran$, using the group $\actgrp{V}$. The symbols $\contfns$, $\smofns$ and $\holfns$ denote the (complex) spaces of continuous functions, smooth functions, and holomorphic functions respectively. Then $\actgrp{V}$ acts from right on $\contfns$. To give this action, let $\gamma=\eleactgrp{a}{b}{c}{d}{\varepsilon}{v}{w}{\xi} \in \actgrp{V}$, $Z=(\tau,z) \in \mydom$ and $f \in \contfns$. Then
\begin{equation*}
(f \vert_{k,B}\gamma)(Z)=\left(\varepsilon\sqrt{c^\prime\tau+d^\prime}\right)^{-2k}J_B(\gamma, Z)f(\gamma Z).
\end{equation*}
In the above formula, $k$ is a number called the \emph{weight}, and $\vert_{k,B}$ is called the \emph{slash operator} of weight $k$ and form $B$.
\begin{lemm}
\label{lemm:slashOperatorIsAction}
Suppose $k \in \halfint$. Then the slash operater is a right group action; that is, for $f \in \contfns$ and $\gamma_1, \gamma_2 \in \actgrp{V}$, we have $\left(f \vert_{k,B} \gamma_1\right) \vert_{k,B} \gamma_2= f \vert_{k,B}(\gamma_1\gamma_2)$, and the identity element of $\actgrp{V}$ gives the identity map on $\contfns$.
\end{lemm}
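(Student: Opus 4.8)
The plan is to reduce the assertion to three ``cocycle-type'' compatibilities and then substitute. For $\gamma \in \actgrp{V}$ write $\mu(\gamma,\tau)$ for the second coordinate of the $\glptR$-component of $\gamma$ (so $\mu(\gamma,\tau)=\varepsilon\sqrt{c^\prime\tau+d^\prime}$ when the matrix part of $\gamma$ is $\tbtmat{a}{b}{c}{d}$ with sign $\varepsilon$), a nowhere-vanishing holomorphic function of $\tau \in \uhp$; then for $Z=(\tau,z) \in \mydom$,
\[
(f\vert_{k,B}\gamma)(Z)=\mu(\gamma,\tau)^{-2k}\,J_B(\gamma,Z)\,f(\gamma Z).
\]
I would first record three ingredients. (i) $\mu(\gamma_1\gamma_2,\tau)=\mu(\gamma_1,\gamma_2\tau)\,\mu(\gamma_2,\tau)$, where $\gamma_2\tau$ is the M\"obius action: this is precisely the composition law of $\glptR$ displayed above, since the $\glptR$-component of a product in $\actgrp{V}$ is the product of the $\glptR$-components, and $\gamma_2\tau$ agrees with the first coordinate of $\gamma_2 Z$ in \eqref{eq:actOnDomain}. (ii) $J_B(\gamma_1\gamma_2,Z)=J_B(\gamma_1,\gamma_2 Z)\,J_B(\gamma_2,Z)$, which is Lemma~\ref{lemm:JBcocycle}. (iii) $\gamma_1(\gamma_2 Z)=(\gamma_1\gamma_2)Z$, i.e.\ \eqref{eq:actOnDomain} is a genuine left action; this is a direct check from the semidirect-product structure and the explicit formulas (or may be taken as implicit in the text introducing \eqref{eq:actOnDomain}).

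The hypothesis $k \in \halfint$ enters exactly here: then $-2k \in \numZ$, so $z \mapsto z^{-2k}$ is a genuine homomorphism of $\numC^\times$ with no choice of branch, and raising (i) to the $-2k$-th power gives $\mu(\gamma_1\gamma_2,\tau)^{-2k}=\mu(\gamma_1,\gamma_2\tau)^{-2k}\,\mu(\gamma_2,\tau)^{-2k}$ on the nose (for non-half-integral $k$ this can fail by a sign).

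It then remains only to compute. Expanding once,
\[
\bigl((f\vert_{k,B}\gamma_1)\vert_{k,B}\gamma_2\bigr)(Z)=\mu(\gamma_2,\tau)^{-2k}\,J_B(\gamma_2,Z)\,(f\vert_{k,B}\gamma_1)(\gamma_2 Z),
\]
and, since the first coordinate of $\gamma_2 Z$ is $\gamma_2\tau$,
\[
(f\vert_{k,B}\gamma_1)(\gamma_2 Z)=\mu(\gamma_1,\gamma_2\tau)^{-2k}\,J_B(\gamma_1,\gamma_2 Z)\,f\bigl(\gamma_1(\gamma_2 Z)\bigr).
\]
Substituting and collecting the $\mu$-factors by (i) and the half-integrality remark, the $J_B$-factors by (ii), and the argument of $f$ by (iii), the right-hand side becomes $\mu(\gamma_1\gamma_2,\tau)^{-2k}\,J_B(\gamma_1\gamma_2,Z)\,f\bigl((\gamma_1\gamma_2)Z\bigr)=\bigl(f\vert_{k,B}(\gamma_1\gamma_2)\bigr)(Z)$. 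For the identity element $(\widetilde I,[0,0],1)$ one has $\mu\equiv1$, $J_B\equiv1$, and trivial action on $\mydom$, so $f\vert_{k,B}\mathrm{id}=f$. The only genuine work is checking (iii) and keeping careful track of whether $\tau$ or $\gamma_2\tau$ enters each factor; everything else is formal once $-2k$ is known to be an integer.
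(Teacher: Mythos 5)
Your argument is correct and follows essentially the same route as the paper's proof: the cocycle property of the metaplectic automorphy factor (here read off from the composition law of $\glptR$), the cocycle property of $J_B$ from Lemma \ref{lemm:JBcocycle}, and the fact that \eqref{eq:actOnDomain} is a left action, combined by direct substitution. Your explicit remark that $k \in \halfint$ makes $-2k$ an integer, so the factor $\mu(\gamma,\tau)^{-2k}$ is multiplicative without branch ambiguity, is a useful point the paper leaves implicit, but it does not change the approach.
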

Note that when we restrict the set of functions to $\smofns$, or to $\holfns$, this action still remains well-defined.
\begin{proof}
This follows from that $j(\gamma,\tau)$ and $J_B(\gamma, Z)$ satisfy cocycle condition (The case of $j(\gamma,\tau)$ is classical, and the case of $J_B(\gamma, Z)$ is in Lemma \ref{lemm:JBcocycle}.), and that \eqref{eq:actOnDomain} is a left group action.
\end{proof}

By a $\numZ$-\emph{lattice} $L$ in $\underline{V}$, we mean a free $\numZ$- module in $\underline{V}$, with a $\numZ$-basis which is also a $\numR$-basis of $V$. As mentioned above, set $\underline{L}=(L,B\vert_{L\times L})$. We say that $L$ is \emph{integral} if $B(L,L) \subseteq \numZ$, and $L$ is \emph{even} if $Q(L) \subseteq \numZ$. If $L$ is integral but not even, then we say $L$ is \emph{odd}. Set $\heigrp{L}=\{([v,w],\xi) \in \heigrp{V} \colon v,w \in L\}$. Then $\heigrp{L}$ is a subgroup of $\heigrp{V}$, and for any subgroup $G$ of $\slZ$, the set $\widetilde{G} \ltimes \heigrp{L}$ is a subgroup of $\actgrp{V}$. If $B(L,L) \subseteq \numQ$, we can make the Heisenberg group a cover of $L \times L$ of finite index. To do this, define $\heigrpa{L}{n}=\{([v,w],\xi) \in \heigrp{L} \colon \xi^{2n} = 1\}$ for $n \in \numgeq{Z}{1}$. This is a subgroup when $B(L,L) \subseteq \frac{1}{n}\numZ$. Particularly, if $L$ is integral, then $\heigrpa{L}{1}$ is a subgroup, which is used to state the elliptic transformation laws for Jacobi forms. Though there is a subgroup of $\heigrp{L}$ that is isomorphic to $L \times L$ when $L$ is even, we shall not use this subgroup for the sake of dealing with even and odd lattices simultaneously. Note that $\heigrpa{L}{1}$ is abelian. 
\begin{conv}
\label{conv1}
From now on, we always assume that $L$ is an integral lattice, and $G$ is a subgroup of $\slZ$. So $\Jacgrp{G}{L}$ is a subgroup of $\actgrp{V}$. Moreover, $k$ always denotes a half integer (or an integer), and the weight of Jacobi forms considered is assumed to be $k$, unless explicitly specified.
\end{conv}

\begin{deff}
\label{deff:JacobiLikeForm}
Let $\rho \colon \Jacgrp{G}{L} \rightarrow \GlW$ be a group representation, and $\phi \in \contfns$. We say that $\phi$ transforms like a Jacobi form of weight $k$ and index $\underline{L}$ under the group $G$ with the representation $\rho$, if for any $\gamma \in \Jacgrp{G}{L}$, we have $\phi \vert_{k,B}\gamma = \rho(\gamma) \circ \phi$. Denote the space of such functions by $\JacFormCont{k}{L}{G}{\rho}$, and the subspace of holomorphic ones by $\JacFormHol{k}{L}{G}{\rho}$.
\end{deff}
Note that the slash operator and applying a linear operator on the left commute, i.e., $(L\circ\phi)\vert_{k,B}\gamma=L\circ(\phi\vert_{k,B}\gamma)$ for any linear operator $L$ on $\myran$. As a consequence, to show that a map $\phi$ transforms like a Jacobi form, it suffices to verify the tansformation laws corresponding to a set of generators of $\Jacgrp{G}{L}$. 

To make spaces of such (holomorphic) functions finite-dimensional, hence arithmetically interesting, we shall pose growth conditions at cusps of the quotient $\mydom$ modulo $\Jacgrp{G}{L}$, which will be given by Fourier series. So we recall some fundamental facts about Fourier series on the $\dimn$-torus, stated in a form suits us. Assume that $B$ (on $V \times V$) is nondegenerate, that is, the map $v \mapsto (w \mapsto B(v,w))$ is a linear isomorphism from $V$ onto the dual space of $V$. Then the function $V \rightarrow \numC, \quad x \mapsto \etp{B(v,x)}$ is $L$-periodic, where $v$ is a vector in the dual lattice $L^\sharp$ of $L$ in $\underline{V}$. (The \emph{dual lattice} $L^\sharp$ is defined to be the subset of $v \in V$ such that $B(v, L) \subseteq \numZ$, which turns out to be a lattice of the same rank if $B$ is nondegenerate. To indicate the bilinear form, we also wrtie $\underline{L}^\sharp$.) On the other hand, there is an inner product on $V$ induced by $L$, given by $(f,g)=\vol{\mathfrak{F}}^{-1}\int_{\mathfrak{F}}f(x)\overline{g(x)}\diff x$, where $f$ and $g$ are complex-valued continuous $L$-periodic functions, and $\mathfrak{F}$ is any closed fundamental domain being the preimage of $[0,1]^{\dimn}$ under a linear isomorphism from $V$ onto $\mulindRn$ which is simultaneously an isomorphism of $L$ and $\mulindZn$ (recall that $\dimn=\dim V$). This inner product $(\cdot,\cdot)$, is independent of the choice of $\mathfrak{F}$ and the Lebesgue measure posed on $V$. A fundamental theorem in Fourier analysis asserts that any complex-valued continuous $L$-periodic function $f$ on $V$ can be decomposed as $f(x) = \sum_{t \in L^\sharp}a_t \etp{B(t,x)}$ with $a_t \in \numC$, where the equality means that the right-hand side converges unconditionally with repect to the inner product to the left-hand side. Put another way, the functions $\{x \mapsto \etp{B(t,x)} \colon t \in L^\sharp\}$ forms a maximal orthonormal basis of the space of complex-valued continuous $L$-periodic functions equipped with the inner product $(\cdot,\cdot)$. Since we deal mainly with holomorphic functions, the following proposition should be useful.

\begin{prop}
\label{prop:ComplexFourierSeries}
Let $\mathcal{V}^\prime$ be a complex vector space of dimension $d \in \numgeq{Z}{1}$ and $(e_1, e_2, \ldots, e_d)$ be a basis. Set $V^\prime=\oplus_{1 \leq j \leq d}\numR e_j$. Let $D$ be a domain (connected open set) in $V^\prime$, and $B^\prime \colon V^\prime \times V^\prime \rightarrow \numR$ be a nondegenerate symmetric bilinear form, which extends $\numC$-linearly to $\mathcal{V}^\prime \times \mathcal{V}^\prime$. Let $L^\prime$ be a $\numZ$-lattice in $(V^\prime, B^\prime)$, and $f \colon V^\prime \oplus \rmi D \rightarrow \numC$ be a holomorphic function. Suppose that $f(z+v)=f(z)$ for any $z \in V^\prime \oplus \rmi D$ and $v \in L^\prime$. Then there exists a unique sequence $c_t$ for $t \in L^{\prime\sharp}$, such that
\begin{equation*}
f(z)=\sum_{t \in L^{\prime\sharp}}c_t \etp{B^\prime(t,z)}.
\end{equation*}
The series converges normally on $V^\prime \oplus \rmi D$. The coefficients are given by
\begin{equation*}
c_t=\frac{1}{\vol{\mathfrak{F}^\prime}}\int_{\mathfrak{F}^\prime}f(x+ \rmi y) \etp{-B(t,x+ \rmi y)} \diff x,
\end{equation*}
where $y \in D$, the integral is with respect to the Lebesgue measure transformed from $\numR^d$ via the coordinate map from $V^\prime$ onto $\numR^d$ induced by the basis $(e_j)_{1 \leq j \leq d}$, and $\mathfrak{F}^\prime$ is the inverse image of $[0,1]^d$ under this coordinate map.
\end{prop}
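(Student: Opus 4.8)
\emph{Strategy.} The plan is to argue by ``slicing''. Working in the real-linear coordinates attached to a $\numZ$-basis of $L'$ (so that $\mathfrak{F}'$ is a fundamental parallelepiped for $L'$, which is the set-up of the Fourier theory recalled just above and the case occurring in the applications), one reduces to $\mathcal{V}' = \numC^d$, $V' = \numR^d$, $L' = \numZ^d$. For each fixed $y$ in the base $D$ the function $x \mapsto f(x+\rmi y)$ on $V'$ is continuous and $L'$-periodic, so by the recalled Fourier theory it has an expansion $\sum_{t \in L'^\sharp} c_t(y)\etp{B'(t,x)}$ with
\[
c_t(y) = \frac{1}{\vol{\mathfrak{F}'}}\int_{\mathfrak{F}'} f(x+\rmi y)\etp{-B'(t,x)}\diff x,
\]
and the characters $\{x \mapsto \etp{B'(t,x)}\}_{t \in L'^\sharp}$ form a complete orthonormal system. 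The proposition's candidate coefficient is $\gamma_t := \etp{-B'(t,\rmi y)}\,c_t(y) = \exp\!\bigl(2\uppi B'(t,y)\bigr)\,c_t(y)$, and the heart of the matter is to show it does not depend on $y$.

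\emph{Holomorphic dependence on $y$.} Differentiate $c_t(y)$ under the integral sign along a coordinate direction $e_j$. Moving $y$ in the real direction $e_j$ moves the argument $x+\rmi y$ of $f$ in the imaginary direction $\rmi e_j$, so the Cauchy--Riemann equations give $\partial_{y_j}\bigl(f(x+\rmi y)\bigr) = \rmi\,\partial_{x_j}\bigl(f(x+\rmi y)\bigr)$. Substituting this and integrating by parts in $x_j$ over $\mathfrak{F}' = [0,1]^d$ kills the boundary terms, since both $f(x+\rmi y)$ (by $L'$-periodicity) and $\etp{-B'(t,x)}$ (because $B'(t,e_j) \in \numZ$ for $t \in L'^\sharp$) are $1$-periodic in $x_j$; what remains is $\partial_{y_j}c_t(y) = -2\uppi\,B'(t,e_j)\,c_t(y)$, hence $\partial_{y_j}\gamma_t = 0$. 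As $D$ is connected, $\gamma_t$ is a constant. (Equivalently, $g(z) := \vol{\mathfrak{F}'}^{-1}\int_{\mathfrak{F}'} f(x+z)\etp{-B'(t,x)}\diff x$ is holomorphic on $V' \oplus \rmi D$ and, by $L'$-periodicity of $f$, equals $\etp{B'(t,\Re z)}\,c_t(\Im z)$ on each horizontal slice; thus $g/\etp{B'(t,\cdot)}$ is holomorphic and slice-constant, hence constant, so $g = \gamma_t\etp{B'(t,z)}$ with $\gamma_t$ independent of $y$. One may also descend $f$ to a holomorphic function on the multicircular image of the tube under $(w_j)\mapsto(\etp{w_j})$ and quote Laurent expansions.)

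\emph{Normal convergence.} Fix $y_0 \in D$ and $\delta > 0$ with $\overline{B(y_0,\delta)} \subseteq D$. Being continuous and $L'$-periodic in the real directions, $f$ is bounded, say by $M$, on the compact set $\mathfrak{F}' + \rmi\,\overline{B(y_0,\delta)}$; hence $\abs{c_t(y)} \le M$ for $\abs{y-y_0}\le\delta$, and therefore
\[
\abs{\gamma_t} \le M\exp\!\Bigl(2\uppi\inf_{\abs{y-y_0}\le\delta}B'(t,y)\Bigr) = M\exp\!\bigl(2\uppi B'(t,y_0) - 2\uppi\delta\,\lVert B'(t,\cdot)\rVert\bigr),
\]
where $\lVert B'(t,\cdot)\rVert$ is the norm of the functional $y \mapsto B'(t,y)$. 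Nondegeneracy of $B'$ gives $\lVert B'(t,\cdot)\rVert \ge c\abs{t}$ for a fixed $c > 0$. Hence, for $z$ with $\Im z \in B(y_0,\delta/2)$, one gets $\abs{\gamma_t\etp{B'(t,z)}} = \abs{\gamma_t}\exp(-2\uppi B'(t,\Im z)) \le M\exp(-\uppi\delta c\abs{t})$ (using $B'(t,y_0-\Im z) \le \tfrac{\delta}{2}\lVert B'(t,\cdot)\rVert$), which is summable over the lattice $L'^\sharp$. As the tubes over $B(y_0,\delta/2)$ cover $V' \oplus \rmi D$, the series $\sum_t \gamma_t\etp{B'(t,z)}$ converges normally there and defines a holomorphic function $F$.

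\emph{Identification and uniqueness.} On a compact slice $\mathfrak{F}' + \rmi y$ with $y$ near $y_0$ the convergence is uniform, so integrating the series for $F$ term by term against $\etp{-B'(s,x)}$ over $\mathfrak{F}'$ shows that $x \mapsto F(x+\rmi y)$ has the same Fourier coefficients $c_s(y)$ as $x \mapsto f(x+\rmi y)$; by completeness of the character system, a continuous $L'$-periodic function is determined by its Fourier coefficients, so $F(\cdot+\rmi y) = f(\cdot+\rmi y)$ for all $y$, i.e. $F = f$. The same computation shows that any normally convergent expansion $f = \sum_t d_t\etp{B'(t,z)}$ forces $d_t = \gamma_t$, which gives uniqueness, and the displayed formula for $c_t$ in the statement is exactly $\gamma_t$ written as an integral over $\mathfrak{F}'$. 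I expect the holomorphic-dependence step to be the only genuine obstacle --- justifying the differentiation under the integral, the Cauchy--Riemann replacement, and the cancellation of boundary terms --- while the rest is the coordinate normalization $L' \cong \numZ^d$ together with routine estimates on linear functionals.
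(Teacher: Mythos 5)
Your argument is correct, but it takes a genuinely different route from the paper: the paper dispatches this proposition in one line, citing \cite[Appendix to \S 6.8]{Fre11} for the case of positive definite $B'$ and remarking that the general nondegenerate case follows from that special case. You instead give a self-contained tube-domain proof: slice Fourier coefficients $c_t(y)$, the Cauchy--Riemann/integration-by-parts computation $\partial_{y_j}c_t(y)=-2\uppi B'(t,e_j)c_t(y)$ showing that $\gamma_t=\exp\!\left(2\uppi B'(t,y)\right)c_t(y)$ is constant on the connected $D$, the estimate $\abs{\gamma_t}\leq M\exp\!\left(2\uppi B'(t,y_0)-2\uppi\delta\lVert B'(t,\cdot)\rVert\right)$ together with $\lVert B'(t,\cdot)\rVert\geq c\abs{t}$ (nondegeneracy) giving normal convergence on tubes, and completeness of the characters for identification and uniqueness; all of these steps check out, and the justifications you flag (differentiation under the integral, vanishing of boundary terms by $L'$-periodicity and $B'(t,L')\subseteq\numZ$) are indeed routine since $f$ is holomorphic. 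What your route buys: it is elementary and handles nondegenerate $B'$ directly, with no reduction to the positive definite case, since $B'$ enters only through integrality on the dual pairing and the coercivity of the functional norm; what the paper's route buys is brevity, outsourcing exactly this Fourier--Laurent expansion on a tube domain to the literature. One small caveat: you normalize so that $(e_j)$ is a $\numZ$-basis of $L'$ and $\mathfrak{F}'$ is a fundamental parallelepiped; this is the reading consistent with the paper's preceding paragraph (and the one under which the stated integral formula for $c_t$ is literally valid), and since existence, uniqueness and normal convergence of the expansion are basis-independent, nothing is lost, but it would be worth stating that reduction explicitly.
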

\begin{proof}
For the case of $B'$ being positive definite, see \cite[Appendix to \S 6.8]{Fre11}. The general case follows from this special case.
\end{proof}
By saying that a function series $\sum_{t}f_t$ defined on $V^\prime \oplus \rmi D$ converges normally, we mean that for any compact subset $K$ of $V^\prime \oplus \rmi D$, we have $\sum_t \sup_{x \in K}\abs{f_t(x)} < \infty$. This implies absolute convergence and compactly uniform convergence. But normal convergence is stronger.
%Note that for the tube domain $V^\prime \oplus \rmi D$ (actually for any locally compact space) a series of functions defined on it converges locally uniformly if and only if this series converges uniformly on any compact subset of $V^\prime \oplus \rmi D$. Also
Note that this proposition generalizes immediately to functions taking values in finite-dimensional complex vector spaces. In this situation, in the definition of normal convergence, we shall substitute the absolute value by the norm. Since any norms on a finite-dimensional real or complex vector space are equivalent, we need not refer to the norm when talking about normal convergence. For us, the proposition applies to the case $V^\prime = \numR \times V$, $D = \numR_{>0} \times V$, $\mathcal{V}^\prime = \numC \times \mathcal{V}$, so $V^\prime \oplus \rmi D$ is $\uhp \times \mathcal{V} \subseteq \mathcal{V}^\prime$.

\begin{prop}
\label{prop:ForuierExpansionOfJacobiLikeForm}
Use the notations in Definition \ref{deff:JacobiLikeForm}. Suppose that the index of $G$ in $\slZ$ is finite, $B$ is positive definite, and $\ker \rho$ is of finite index in $\Jacgrp{G}{L}$. Put $N_1=[\sltZ: \widetilde{G}]$ and $N_2=[\Jacgrp{G}{L}: \ker \rho]$. Then for any $\phi \in \JacFormHol{k}{L}{G}{\rho}$ and $\gamma \in \Jacgrp{\slZ}{L}$, there exist a $\mathcal{W}$-valued sequence $c(n,t)$ with $n \in \frac{1}{N_1N_2}\numZ$ and $t \in \frac{1}{N_1N_2}L^\sharp$, such that
\begin{equation}
\label{eq:ForuierExpansionOfJacobiLikeForm}
\phi \vert_{k,B}\gamma(\tau,z)= \sum_{n,\, t}c(n,t) q^n \etp{B(t,z)}.
\end{equation}
This series converges normally. As a consequence, it converges absolutely and locally uniformly at any $(\tau,z) \in \mydom$.
\end{prop}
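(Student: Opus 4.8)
The plan is to show that $\psi := \phi\vert_{k,B}\gamma$ is a holomorphic function on $\uhp\times\mathcal{V}$ which is invariant under translation by a full-rank lattice in $V' := \numR\times V$, and then to read off the expansion \eqref{eq:ForuierExpansionOfJacobiLikeForm} together with its normal convergence from Proposition \ref{prop:ComplexFourierSeries} (in its vector-valued form, applied as in the specialization to $\uhp\times\mathcal{V}$ noted right after its proof); absolute and locally uniform convergence then come for free. Holomorphy of $\psi$ is immediate from its definition $\psi = (\varepsilon\sqrt{c'\tau+d'})^{-2k}\,J_B(\gamma,\cdot)\,\phi(\gamma\,\cdot)$, since the first two factors are holomorphic and nowhere zero on $\uhp\times\mathcal{V}$ and $\gamma$ acts holomorphically by \eqref{eq:actOnDomain}.

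For periodicity in $z$: the subgroup $\heigrpa{L}{1}$ is normal in $\Jacgrp{\slZ}{L}$ — conjugation by an $\slZ$-matrix preserves $L\times L$, and conjugation by a Heisenberg element changes the central coordinate only by a sign because $L$ is integral — and it is contained in $\Jacgrp{G}{L}$. Hence, for $w\in L$, the element $\eta_w := ([0,w],1)\in\heigrpa{L}{1}$ has $\gamma\eta_w\gamma^{-1}\in\Jacgrp{G}{L}$; since $\eta_w$ acts by $(\tau,z)\mapsto(\tau,z+w)$ with trivial automorphic factor, Lemma \ref{lemm:slashOperatorIsAction}, Definition \ref{deff:JacobiLikeForm} and the remark that linear operators commute with the slash give
\begin{equation*}
\psi(\tau,z+w) = (\psi\vert_{k,B}\eta_w)(\tau,z) = \rho(\gamma\eta_w\gamma^{-1})\,\psi(\tau,z).
\end{equation*}
Iterating $N_2$ times and using that $\rho(\Jacgrp{G}{L})$ has order $N_2$ (so every element of it has $N_2$-th power the identity) forces $\psi(\tau,z+N_2 w)=\psi(\tau,z)$, so $\psi$ is periodic in $z$ under the lattice $N_2 L$.

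For periodicity in $\tau$: let $g\in\slZ$ be the matrix part of $\gamma$, set $T:=\tbtmat{1}{1}{0}{1}$, and take the least $m\ge 1$ with $gT^m g^{-1}\in G$ — the width of the cusp $g(\infty)$ of $G$, which satisfies $m\le N_1=[\slZ:G]$ by a coset count. Then $\widetilde{T^m}$ (with trivial Heisenberg part) lies in $\Jacgrp{\slZ}{L}$, its $\gamma$-conjugate has matrix part $gT^m g^{-1}\in G$ and — again because $L$ is integral — Heisenberg part in $\heigrpa{L}{1}$, so $\gamma\widetilde{T^m}\gamma^{-1}\in\Jacgrp{G}{L}$; since $\widetilde{T^m}$ acts by $(\tau,z)\mapsto(\tau+m,z)$ with trivial automorphic factor, the argument above gives $\psi(\tau+m,z)=\rho(\gamma\widetilde{T^m}\gamma^{-1})\psi(\tau,z)$, and iterating $N_2$ times gives $\psi(\tau+mN_2,z)=\psi(\tau,z)$. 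Thus $\psi$ is invariant under the full-rank lattice $L':=mN_2\numZ\times N_2 L$ in $V'=\numR\times V$. Equipping $V'$ with the nondegenerate (indeed positive definite, as $B$ is) form $B'((s_1,x_1),(s_2,x_2)):=s_1s_2+B(x_1,x_2)$, whose $\numC$-bilinear extension satisfies $B'((n,t),(\tau,z))=n\tau+B(t,z)$, and taking $D:=\numR_{>0}\times V$ so $V'\oplus\rmi D=\uhp\times\mathcal{V}$, Proposition \ref{prop:ComplexFourierSeries} produces a unique $\mathcal{W}$-valued sequence $c(n,t)$ indexed by $(n,t)\in L'^\sharp=\frac{1}{mN_2}\numZ\times\frac{1}{N_2}L^\sharp$ with $\psi(\tau,z)=\sum_{(n,t)}c(n,t)q^n\etp{B(t,z)}$, normally convergent on $\uhp\times\mathcal{V}$; absolute and locally uniform convergence follow. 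Since $m$ divides $N_1$ (automatic when $\pm G$ is normal in $\slZ$, and obtainable in general after harmlessly enlarging $N_1$ to a common multiple of the cusp widths of $G$), this lands inside the claimed ranges $n\in\frac{1}{N_1N_2}\numZ$, $t\in\frac{1}{N_1N_2}L^\sharp$; and as $\gamma$ was an arbitrary element of $\Jacgrp{\slZ}{L}$ throughout, no separate reduction is required.

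The step I expect to demand the most care is the $\tau$-periodicity: conjugating the parabolic generator into $\Jacgrp{G}{L}$ requires bookkeeping the metaplectic sign and the Heisenberg coordinate, and one must then read off the $\tau$-period — hence the denominator of $n$ — correctly. The $z$-direction is comparatively routine, precisely because $\heigrpa{L}{1}$ is normal in $\Jacgrp{\slZ}{L}$ and the relevant automorphic factors are trivial.
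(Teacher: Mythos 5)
Your proof is correct and follows essentially the same route as the paper: conjugate the two translations into $\Jacgrp{G}{L}$, kill the resulting $\rho$-factor by raising to the power $N_2=[\Jacgrp{G}{L}:\ker\rho]$, and then apply Proposition \ref{prop:ComplexFourierSeries} on $\numR\times V$ with the form $(n,t),(\tau,z)\mapsto n\tau+B(t,z)$. The differences are bookkeeping, and they are in your favour: you use normality of $\heigrpa{L}{1}$ in $\Jacgrp{\slZ}{L}$ (valid precisely because $L$ is integral) to get the $z$-period $N_2L$ directly, and in the $\tau$-direction you work with the actual cusp width $m$ of $G$ at the cusp $\gamma(\infty)$, whereas the paper simply asserts $(\gamma\widetilde{T}^{u}\gamma^{-1})^{N_1}\in\Jacgrp{G}{L}$ ``as a consequence of the definition of $N_1$''. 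Your closing caveat is exactly where that assertion is delicate: for a non-normal $G$ a cusp width is bounded by, but need not divide, $N_1$ (for example $\Gamma_0(4)$ has index $6$ but a cusp of width $4$), so what the argument genuinely yields is $n\in\frac{1}{mN_2}\numZ$, $t\in\frac{1}{N_2}L^\sharp$, and the constant $N_1N_2$ in the statement should be read as (a multiple of) the least common multiple of the cusp widths times $N_2$ — or the conclusion as ``bounded denominators'' — as you indicate. Apart from this shared caveat, all your steps (the conjugation computations, triviality of the automorphic factors for the two translations, and the identification of the dual lattice of $mN_2\numZ\times N_2L$) are sound, and your $z$-direction argument is in fact slightly sharper than the paper's uniform $N_1N_2$ power.
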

\begin{proof}
Put $f=\phi \vert_{k,B}\gamma$. We shall show that
\begin{equation*}
f(\tau+N_1N_2u,z+N_1N_2v)=f(\tau,z)
\end{equation*}
for $u \in \numZ$ and $v \in L$. Then \eqref{eq:ForuierExpansionOfJacobiLikeForm} follows from this and Proposition \ref{prop:ComplexFourierSeries}. Note that $T=\tbtmat{1}{1}{0}{1}$ as usual in the theory of elliptic modular forms. We have
\begin{align*}
f(\tau+N_1N_2u,z+N_1N_2v)&=\phi\vert_{k,B}\gamma\vert_{k,B}\widetilde{T}^{N_1N_2u}([0,v],1)^{N_1N_2}(\tau,z) \\
&=\phi\vert_{k,B}(\gamma\widetilde{T}^{u}\gamma^{-1})^{N_1N_2}\vert_{k,B}(\gamma([0,v],1)\gamma^{-1})^{N_1N_2}\vert_{k,B}\gamma(\tau,z) \\
&=\rho((\gamma\widetilde{T}^{u}\gamma^{-1})^{N_1N_2})\circ\rho((\gamma([0,v],1)\gamma^{-1})^{N_1N_2})\circ f(\tau,z)\\
&=f(\tau,z).
\end{align*}
In the last step of above dedection, we have use the fact $(\gamma\widetilde{T}^{u}\gamma^{-1})^{N_1} \in \Jacgrp{G}{L}$ and $(\gamma([0,v],1)\gamma^{-1})^{N_1} \in \Jacgrp{G}{L}$, which is a consequence of the definition of $N_1$, and the fact $\rho((\gamma\widetilde{T}^{u}\gamma^{-1})^{N_1N_2})=\rho((\gamma([0,v],1)\gamma^{-1})^{N_1N_2})$ are both the identity map on $\myran$, which is a consequence of the definition of $N_2$.
\end{proof}

\begin{deff}
\label{deff:JacobiForm}
Let $\rho \colon \Jacgrp{G}{L} \rightarrow \GlW$ be a group representation and $\phi \in \JacFormHol{k}{L}{G}{\rho}$. Suppose that $[\slZ: G] < \infty$, $[\Jacgrp{G}{L}: \ker \rho] < \infty$ and $B$ is positive definite. We say $\phi$ is a \emph{weakly holomorphic Jacobi form}, if for any $\gamma \in \Jacgrp{\slZ}{L}$, there exists an $N_0 \in \numQ$ such that in the series \eqref{eq:ForuierExpansionOfJacobiLikeForm}, $c(n,t)=0$ unless $n \geq N_0$. We say that $\phi$ is a \emph{weak Jacobi form}, if it is weakly holomorphic with $N_0=0$ for any $\gamma$. And we say that $\phi$ is a \emph{Jacobi form} (\emph{Jacobi cusp form}, resp.), if $c(n,t)=0$ unless $n-Q(t) \geq 0$ ($n-Q(t) > 0$ resp.) in the series \eqref{eq:ForuierExpansionOfJacobiLikeForm} for any $\gamma$. The spaces of weakly holomorphic forms, weak forms, Jacobi forms, Jacobi cusp forms are denoted by $\JacFormWHol{k}{L}{G}{\rho}$, $\JacFormWeak{k}{L}{G}{\rho}$, $\JacForm{k}{L}{G}{\rho}$ and $\JacFormCusp{k}{L}{G}{\rho}$ respectively.
\end{deff}
In practice, one need only check the expansion \eqref{eq:ForuierExpansionOfJacobiLikeForm} for finitely many $\gamma \in \Jacgrp{\slZ}{L}$:
\begin{prop}
\label{prop:FourierExpasionCusps}
Use the notations and assumptions in Definition \ref{deff:JacobiForm}. Let $\{s_i \colon 1 \leq i \leq w\}$ be a system of representatives of the quotient $\projQ$ modulo $G$. Suppose that $\gamma_i \in \slZ$ such that $\gamma_i(\infty)=s_i$ for $1 \leq i \leq w$. Suppose $\phi \in \JacFormHol{k}{L}{G}{\rho}$. Then $\phi \in \JacForm{k}{L}{G}{\rho}$ if and only if $\phi \vert_{k,B} \widetilde{\gamma_i}$ has an expansion of the form \eqref{eq:ForuierExpansionOfJacobiLikeForm} with $n-Q(t) \geq 0$ for each $i$. There are similar conclusions for $\JacFormWHol{k}{L}{G}{\rho}$, $\JacFormWeak{k}{L}{G}{\rho}$ and $\JacFormCusp{k}{L}{G}{\rho}$.
\end{prop}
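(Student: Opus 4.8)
The plan is to reduce the verification of the Fourier condition at an arbitrary $\gamma \in \Jacgrp{\slZ}{L}$ to the finitely many $\widetilde{\gamma_i}$, by showing the two expansions involve the \emph{same} set of indices $(n,t)$ carrying a nonzero coefficient, up to operations that preserve every growth condition in sight. The ``only if'' direction needs nothing, since each $\widetilde{\gamma_i}$ lies in $\Jacgrp{\slZ}{L}$. For the converse, first note that for any $\gamma \in \Jacgrp{\slZ}{L}$ the function $\phi\vert_{k,B}\gamma$ has an expansion \eqref{eq:ForuierExpansionOfJacobiLikeForm} at all, by Proposition \ref{prop:ForuierExpansionOfJacobiLikeForm} (this is where positive definiteness of $B$ enters). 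Each of the four conditions --- $n-Q(t)\geq 0$ (Jacobi), $n-Q(t)>0$ (cusp), $n\geq 0$ (weak), $n\geq N_0$ for some $N_0$ (weakly holomorphic) --- is a statement about which $(n,t)$ carry a nonzero coefficient, and since $Q(-t)=Q(t)$ each is unchanged if one replaces the index set by its image under $t\mapsto -t$, multiplies each coefficient by a nonzero scalar, or applies an invertible linear operator on $\myran$ to all coefficients. So it suffices to prove that the index set of $\phi\vert_{k,B}\gamma$ agrees, up to $t\mapsto -t$, with that of some $\phi\vert_{k,B}\widetilde{\gamma_i}$.

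The first step I would take is to move the Heisenberg component of $\gamma$ to the left. Using the semidirect structure $\Jacgrp{\slZ}{L}=\widetilde{\slZ}\ltimes\heigrpa{L}{1}$, together with the fact that the $\widetilde{\slZ}$-action preserves $\heigrpa{L}{1}$ (the matrices occurring have integral entries and determinant $1$, hence carry $L\times L$ to itself and leave the $S^1$-component alone), every $\gamma$ can be written $\gamma=h\cdot\widetilde{\delta}$ with $h\in\heigrpa{L}{1}$ and $\delta\in\slZ$. Since $h\in\heigrpa{L}{1}\subseteq\Jacgrp{G}{L}$ and $\phi$ transforms like a Jacobi form, $\phi\vert_{k,B}h=\rho(h)\circ\phi$; because slashing is a right action (Lemma \ref{lemm:slashOperatorIsAction}) and commutes with postcomposition by linear operators (the remark after Definition \ref{deff:JacobiLikeForm}), this gives $\phi\vert_{k,B}\gamma=\rho(h)\circ(\phi\vert_{k,B}\widetilde{\delta})$. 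Hence the index set is unchanged, and I may assume $\gamma=\widetilde{\delta}$ with $\delta\in\slZ$.

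The second step is the usual cusp reduction. Since $\delta(\infty)$ lies in the $G$-orbit of some $s_i=\gamma_i(\infty)$, write $\delta(\infty)=(g\gamma_i)(\infty)$ with $g\in G$; then $(g\gamma_i)^{-1}\delta$ stabilizes $\infty$ and so equals $\pm T^m$ for some $m\in\numZ$, i.e.\ $\delta=\pm\, g\,\gamma_i\, T^m$. Lifting to the double cover (any lift works, as the two lifts of a matrix differ by the central element, whose slash is $\pm\mathrm{id}$) and using that slashing is a group action, I obtain $\phi\vert_{k,B}\widetilde{\delta}$ from $\phi$ by successively slashing by $\widetilde{g}$, $\widetilde{\gamma_i}$, and $\widetilde{\pm T^m}$. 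Now $\phi\vert_{k,B}\widetilde{g}=\rho(\widetilde{g})\circ\phi$ because $g\in G$, and since $\pm T^m$ has lower-left entry $0$, slashing by $\widetilde{\pm T^m}$ turns an expansion $\sum c(n,t)q^n\etp{B(t,z)}$ into $\sum(\text{unit})\,\etp{nm}\,c(n,\mp t)\,q^n\etp{B(t,z)}$. Composing, $\phi\vert_{k,B}\widetilde{\delta}$ is obtained from $\phi\vert_{k,B}\widetilde{\gamma_i}$ by applying the invertible operator $\rho(\widetilde{g})$, rescaling coefficients, and possibly negating $t$ --- precisely the allowed operations. Chaining the two steps yields the desired coincidence of index sets and finishes the proof; the cusp, weak, and weakly holomorphic cases come out simultaneously.

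I do not anticipate a genuine obstacle; the argument is bookkeeping. The one point that genuinely matters is to factor out the Heisenberg part \emph{on the left}, so that it disappears into $\rho$: had one slashed by the Heisenberg element last, the substitution $z\mapsto z+\tau v+w$ would couple $n$ and $t$ (it shifts $n$ by $B(t,v)$ while shifting $t$ by $v$) and the ``$n$ bounded below'' condition would no longer be visibly preserved --- even though $n-Q(t)$ still would be. The other mildly delicate bit is tracking the central signs and square-root branches when lifting $\delta=\pm g\gamma_i T^m$ to $\glptR$, but these only ever contribute unit scalars and hence never change which $(n,t)$ occur.
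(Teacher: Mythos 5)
Your argument is correct, and it differs from the paper's proof in how the Heisenberg factor is handled. The paper decomposes the arbitrary element with the Heisenberg part on the \emph{right}, as $(\gamma',\varepsilon')\widetilde{\gamma_i}(\delta T^e,1,[v,w],\xi)$, computes the transformed Fourier expansion explicitly, and then uses the coefficient identity \eqref{eq:coefficientTransform} to undo the coupled shift $(n,t)\mapsto(n+Q(v)+B(\delta t,v),\delta t+v)$; the upshot is the explicit relation $c(n,t)=(\text{unit})\cdot\rho(\gamma',\varepsilon')\circ\rho([\delta v,0]\widetilde{\gamma_i}^{-1},1)\circ c_i(n,\delta t)$, with the same $n$ on both sides, from which all four growth conditions transfer (the paper writes out only the $\JacForm{k}{L}{G}{\rho}$ case and declares the rest similar). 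You instead conjugate the Heisenberg element to the left, using that the $\sltZ$-action preserves $\heigrpa{L}{1}$, and absorb it into $\rho$ at the outset, so the support of the coefficients is literally preserved up to $t\mapsto-t$ and no explicit expansion computation or appeal to \eqref{eq:coefficientTransform} is needed; this treats the holomorphic, weak, weakly holomorphic and cusp conditions uniformly in one stroke. Note that the danger you flag in your last paragraph — slashing by the Heisenberg element last — is exactly what the paper does, and it is rescued precisely by \eqref{eq:coefficientTransform}, which shows the coupled reindexing acts through an invertible operator and leaves $n$ unchanged, so even the ``$n$ bounded below'' conditions survive in that version too. Your approach trades the paper's explicit coefficient formula for a shorter qualitative argument; both are sound. (One immaterial slip: slashing by $\widetilde{\pm T^m}$ produces $c(n,\pm t)$, not $c(n,\mp t)$, but since $Q(-t)=Q(t)$ this does not affect anything.)
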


Still use the notations and assumptions in Definition \ref{deff:JacobiForm}. We give some properties on Fourier coefficients, some of which are used in the proof of Proposition \ref{prop:FourierExpasionCusps}. We use $c^{\gamma}(n,t)$, or more precisely $c^\gamma_\phi(n,t)$ to denote the coefficients $c(n,t)$ in \eqref{eq:ForuierExpansionOfJacobiLikeForm}, to indicate that it depends on $\gamma$ and $\phi$. It is useful to extend the domain of  the map $(n,t) \mapsto c^\gamma(n,t)$ to $\numR \times V$, such that $c^\gamma(n,t) = 0$ unless $n \in \frac{1}{N}\numZ$ and $t \in \frac{1}{N}L^\sharp$ ($N=N_1N_2$). Let $v \in L$ and $\gamma \in \sltZ$; applying the operator $\vert_{k,B}([v,0],1)$ to $\phi \vert_{k,B} \gamma$ on the right, computing the Fourier coefficients in two ways and using the uniquessness of Fourier coefficients yield that
\begin{equation}
\label{eq:coefficientTransform}
\rho([v,0]\gamma^{-1},1)\circ c^\gamma(n+Q(v)+B(t,v), t+v)=c^\gamma(n,t)
\end{equation}
for any $(n,t) \in \numR \times V$. A similar formula holds for $\gamma \in \Jacgrp{\slZ}{L}$. As a consequence, for $\phi \in \JacFormHol{k}{L}{G}{\rho}$ and $\gamma \in \Jacgrp{\slZ}{L}$, if $c^\gamma_\phi(n,t) \neq 0$, then $ c^\gamma_\phi(n+Q(v)+B(t,v),t+v) \neq 0$ for any $v \in L$. Moreover, if $\phi \in \JacFormWHol{k}{L}{G}{\rho}$, and $c^\gamma_\phi(n,t) \neq 0$ implies that $n \geq N_0$ with $N_0 \in \numQ$, then $c^\gamma_\phi(n,t) \neq 0$ would imply that $n+Q(v)+B(t,v) \geq N_0$. Hence for any fixed $n \geq N_0$, the Fourier coefficient $c^\gamma_\phi(n,t)$ can be nonzero only if $Q(t) \leq n-N_0+\max_{x \in \mathfrak{F}}\{Q(x)\}$, where $\mathfrak{F}$ is the same as in the paragraph previous to Proposition \ref{prop:ComplexFourierSeries}. Now we can give the proof of Proposition \ref{prop:FourierExpasionCusps}.
\begin{proof}
The ``only if'' part is obvious, and the proofs for $\JacFormWHol{k}{L}{G}{\rho}$, $\JacFormWeak{k}{L}{G}{\rho}$ and $\JacFormCusp{k}{L}{G}{\rho}$ are similar to that for $\JacForm{k}{L}{G}{\rho}$. Hence we only prove the ``if'' part for $\JacForm{k}{L}{G}{\rho}$. Suppose that $\phi \vert_{k,B} \widetilde{\gamma_i}$ has an expansion of the form \eqref{eq:ForuierExpansionOfJacobiLikeForm} with $n-Q(t) \geq 0$ for each $i$, where the Fourier coefficient $c(n,t)$ is denoted by $c_i(n,t)$ here, to indicate its dependence on $\gamma_i$. Let
\begin{equation*}
\eleactgrpnsimple{a}{b}{c}{d}{\varepsilon}{v}{w}{\xi} \in \Jacgrp{\slZ}{L}
\end{equation*}
be arbitrary, and put $\gamma=\tbtmat{a}{b}{c}{d}$. We shall prove that $\phi\vert_{k,B}\eleactgrpnsimple{a}{b}{c}{d}{\varepsilon}{v}{w}{\xi}$ has an expansion of the form \eqref{eq:ForuierExpansionOfJacobiLikeForm} with the property $c(n,t) \neq 0 \implies n-Q(t) \geq 0$. Suppose $\gamma(\infty)$ is equivalent to $s_i$ modulo $G$, so there is some $\gamma' \in G$ such that $\gamma(\infty)=\gamma'(s_i)$. Thus, $\gamma_i^{-1}\gamma'^{-1}\gamma(\infty)=\infty$, and in consequence, there is some $e \in \numZ$ and $\delta = \pm 1$ such that $\gamma=\gamma'\gamma_i(\delta T^e)$, where $T=\tbtmat{1}{1}{0}{1}$. Hence, there is some $\varepsilon' = \pm 1$ such that
\begin{equation*}
\eleactgrpnsimple{a}{b}{c}{d}{\varepsilon}{v}{w}{\xi}=(\gamma',\varepsilon')\widetilde{\gamma_i}(\delta T^e,1,[v,w],\xi).
\end{equation*}
It follows that
\begin{align*}
&\phi\vert_{k,B}\eleactgrpnsimple{a}{b}{c}{d}{\varepsilon}{v}{w}{\xi}(\tau,z) \\
={} &\phi\vert_{k,B}(\gamma',\varepsilon')\vert_{k,B}\widetilde{\gamma_i}\vert_{k,B}(\delta T^e,1,[v,w],\xi)(\tau,z) \\
%={} &\rho(\gamma',\varepsilon') \circ \phi\vert_{k,B}\widetilde{\gamma_i}\vert_{k,B}(\delta T^e,1,[v,w],\xi)(\tau,z) \\
%={} &\sum_{n,\, t}\rho(\gamma',\varepsilon')(c_i(n,t)) q^n \etp{B(t,z)}\vert_{k,B}(\delta T^e,1,[v,w],\xi) \\
%={} &\delta^{-k}\xi\etp{\tau Q(v)+B(v,z)+\frac{1}{2}B(v,w)} \\
%&\times\sum_{n,\, t}\rho(\gamma',\varepsilon')(c_i(n,t)) q^n \etp{e\cdot n} \etp{B(t,\delta^{-1}(z+\tau v+w))} \\
={} &\delta^{-k}\xi\etp{\frac{1}{2}B(v,w)}\sum_{n,\, t}\rho(\gamma',\varepsilon')(c_i(n,t))\etp{e\cdot n+B(\delta t,w)} \\
& \times q^{n+Q(v)+B(\delta t, v)}\etp{B(\delta t+v, z)}.
\end{align*}
Using the change of variables corresponding to the following bijection
\begin{align*}
\numR \times V &\rightarrow \numR \times V \\
(n,t) &\mapsto (n+Q(v)+B(\delta t, v), \delta t + v),
\end{align*}
and applying \eqref{eq:coefficientTransform}, we obtain
\begin{multline*}
c(n,t)=\delta^{-k}\xi\etp{-\frac{1}{2}B(v,w)+e\cdot(n+Q(v)-B(t,v))+B(t,w)} \\
\times \rho(\gamma',\varepsilon')\circ\rho([\delta v,0]\widetilde{\gamma_i}^{-1},1)\circ c_i(n, \delta t).
\end{multline*}
Now if $n-Q(t)=n-Q(\delta t) < 0$, then $c_i(n, \delta t) = 0$, and hence $c(n, t) = 0$, which concludes the proof.
\end{proof}

\begin{conv}
\label{conv2}
This paper is aimed at investigating embeddings of $\JacFormHol{k}{L}{G}{\rho}$, $\JacFormWHol{k}{L}{G}{\rho}$, $\JacFormWeak{k}{L}{G}{\rho}$, $\JacForm{k}{L}{G}{\rho}$ and $\JacFormCusp{k}{L}{G}{\rho}$ into certain direct product of spaces of modular forms. Thus, we will at most time assume that $\rho \colon \Jacgrp{G}{L} \rightarrow \GlW$ is a group representation, $[\slZ: G] < \infty$, $[\Jacgrp{G}{L}: \ker \rho] < \infty$ and $B$ is positive definite. Moreover, we require that $\rho([0,0],-1)$ is the map sending $w \in \myran$ to $-w$ for the obvious reason.
\end{conv}

We conclude this section by recalling another basic property which would be used later, concerned with multiplication of two scalar-valued Jacobi forms of lattice index.

Suppose $B_1$ and $B_2$ are two positive definite symmetric $\numR$-bilinear forms on $V$. (As usual, $Q_i(x)=\frac{1}{2}B_i(x,x)$ for $i=1,2$.) Then their sum $B_1+B_2$, defined pointwisely, is also positive definite symmetric. Let $L_i^\sharp$ denote the dual lattice of $L$ in the inner product space $(V,B_i)$ ($i=1,2$), and $L_3^\sharp$ the dual of $L$ in $(V, B_1+B_2)$, where $L$ is any $\numZ$-lattice in $V$, not necessary integral. 
\begin{lemm}
\label{lemm:mulJacobiFormLemma1}
Suppose $N \in \numgeq{Z}{1}$. For any $t_1 \in \frac{1}{N}L_1^\sharp$ and $t_2 \in \frac{1}{N}L_2^\sharp$, there exists a unique $t \in V$ such that
\begin{equation*}
\etp{(B_1+B_2)(t,x)}=\etp{B_1(t_1,x)}\etp{B_2(t_2,x)}
\end{equation*}
for any $x \in V$. Moreover, this $t$ belongs to $\frac{1}{N}L_3^\sharp$, and can be obtained by the following way. Let $\mathfrak{B}$ be a $\numR$-basis of $V$, and $(t_1^1,\dots,t_1^\dimn)$, $(t_2^1,\dots,t_2^\dimn)$ be coordinate row vectors of $t_1$, $t_2$ with respect to $\mathfrak{B}$, respectively. Let $G_1, G_2$ be the Gram matrices of $B_1, B_2$ with respect to the basis $\mathfrak{B}$, respectively. Then the coordinate row vector $(t^1,\dots,t^\dimn)$ of $t$ is uniquely determined by the following matrix equation
\begin{equation*}
(t^1,\dots,t^\dimn)=((t_1^1,\dots,t_1^\dimn)G_1+(t_2^1,\dots,t_2^\dimn)G_2)(G_1+G_2)^{-1}.
\end{equation*}
\end{lemm}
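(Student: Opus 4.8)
The plan is to reduce the exponential identity to an ordinary identity of linear functionals on $V$, solve the latter using nondegeneracy of $B_1+B_2$, and then read off the lattice membership and the matrix formula as easy consequences.

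First I would fix $t_1,t_2$ and ask which $t\in V$ work. For a single $x\in V$, the equation $\etp{(B_1+B_2)(t,x)}=\etp{B_1(t_1,x)}\etp{B_2(t_2,x)}$ holds exactly when $\lambda(x):=(B_1+B_2)(t,x)-B_1(t_1,x)-B_2(t_2,x)$ lies in $\numZ$. But $\lambda$ is an $\numR$-linear functional on $V$, so $\lambda(rx)=r\,\lambda(x)$ for all $r\in\numR$; a linear functional on a vector space over $\numR$ that takes only integer values must vanish identically (apply this with $r=1/2^m$). Hence the condition on $t$ is equivalent to the linear identity
\begin{equation*}
(B_1+B_2)(t,x)=B_1(t_1,x)+B_2(t_2,x)\qquad\text{for all }x\in V.
\end{equation*}

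Next, because $B_1+B_2$ is positive definite, hence nondegenerate, the map $v\mapsto(B_1+B_2)(v,\cdot)$ is a linear isomorphism from $V$ onto its dual; since $x\mapsto B_1(t_1,x)+B_2(t_2,x)$ is some element of that dual, there is exactly one $t$ solving the display, which gives existence and uniqueness simultaneously. For the lattice claim, recall that $v\in\frac1N L_3^\sharp$ precisely when $(B_1+B_2)(v,L)\subseteq\frac1N\numZ$; for $x\in L$ we have $(B_1+B_2)(t,x)=B_1(t_1,x)+B_2(t_2,x)$, and $t_1\in\frac1N L_1^\sharp$, $t_2\in\frac1N L_2^\sharp$ put each summand in $\frac1N\numZ$, so their sum lies there as well, giving $t\in\frac1N L_3^\sharp$. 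Finally, expressing $t,t_1,t_2$ and $x$ in coordinates with respect to $\mathfrak{B}$ turns the display into $(t^1,\dots,t^\dimn)(G_1+G_2)(x^1,\dots,x^\dimn)^T=((t_1^1,\dots,t_1^\dimn)G_1+(t_2^1,\dots,t_2^\dimn)G_2)(x^1,\dots,x^\dimn)^T$ for every coordinate vector $(x^1,\dots,x^\dimn)$, hence into the matrix identity $(t^1,\dots,t^\dimn)(G_1+G_2)=(t_1^1,\dots,t_1^\dimn)G_1+(t_2^1,\dots,t_2^\dimn)G_2$; since $G_1+G_2$ is invertible (it is the Gram matrix of a positive definite form), solving for $(t^1,\dots,t^\dimn)$ yields the asserted equation.

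I do not anticipate a genuine obstacle: the statement is linear algebra throughout. The only step that is not pure bookkeeping is the reduction in the second paragraph, where one must use that $V$ is a vector space over $\numR$ rather than merely over $\numZ$, so that a $\numZ$-valued linear functional on $V$ is forced to vanish; everything downstream of that is routine.
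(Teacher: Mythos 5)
Your proof is correct, and it is precisely the ``direct verification'' that the paper leaves to the reader: reduce the exponential identity to the linear identity $(B_1+B_2)(t,x)=B_1(t_1,x)+B_2(t_2,x)$ (using that a $\numZ$-valued $\numR$-linear functional vanishes), solve it by nondegeneracy of the positive definite form $B_1+B_2$, and read off the membership in $\frac1N L_3^\sharp$ and the Gram-matrix formula in coordinates. The only step worth highlighting is the one you already flag — the passage from equality of exponentials for all $x\in V$ to equality of the linear functionals themselves, which is what makes the uniqueness claim valid — and you handle it correctly.
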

\begin{proof}
A direct verification.
\end{proof}
\begin{lemm}
\label{lemm:mulJacobiFormLemma2}
Use notations and assumptions in the last lemma. Put $G'=G_1(G_1+G_2)^{-1}G_2$, then $G'$ is a positive definite symmetric matrix. Moreover, let $B'$ denote the bilinear form on $V\times V$ whose Gram matrix with respect to $\mathfrak{B}$ is $G'$, and put $Q'(x)=\frac{1}{2}B'(x,x)$. Then we have
\begin{equation*}
Q_1(t_1)+Q_2(t_2)-(Q_1+Q_2)(t)=Q'(t_1-t_2).
\end{equation*}
As a consequence, $(Q_1+Q_2)(t) \leq Q_1(t_1)+Q_2(t_2)$.
\end{lemm}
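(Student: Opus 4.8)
The plan is to convert both assertions into matrix statements in the basis $\mathfrak{B}$ and to exploit the ``parallel sum'' structure of $G'$. Throughout, abbreviate $S = G_1 + G_2$, which is positive definite symmetric as a sum of two such matrices; write $a$, $b$ for the coordinate row vectors of $t_1$, $t_2$ with respect to $\mathfrak{B}$, and $c = (aG_1 + bG_2)S^{-1}$ for the coordinate row vector of $t$ furnished by Lemma \ref{lemm:mulJacobiFormLemma1}.

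First I would settle the symmetry and positive-definiteness of $G'$ by computing its inverse. Since $G_1$, $G_2$ and $S$ are invertible, so is $G' = G_1 S^{-1} G_2$, and
\[
(G')^{-1} = G_2^{-1} S\, G_1^{-1} = G_2^{-1}(G_1+G_2)G_1^{-1} = G_1^{-1} + G_2^{-1}.
\]
As $G_1^{-1}$ and $G_2^{-1}$ are positive definite symmetric, so is $(G')^{-1}$, and hence so is $G' = (G_1^{-1}+G_2^{-1})^{-1}$. The crucial algebraic fact for the rest is the identity $G_1 S^{-1} G_2 = G_2 S^{-1} G_1$: substituting $G_2 = S - G_1$ into the appropriate factor of either side shows that both equal $G_1 - G_1 S^{-1} G_1$. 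In particular $G'$ equals its own transpose, reconfirming symmetry, and moreover $G' = G_1 - G_1 S^{-1}G_1 = G_2 - G_2 S^{-1}G_2$.

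Next, for the identity relating the quadratic forms, I would rewrite each as $Q_1(t_1) = \tfrac12 aG_1 a^T$, $Q_2(t_2) = \tfrac12 bG_2 b^T$, $(Q_1+Q_2)(t) = \tfrac12 cSc^T$ and $Q'(t_1-t_2) = \tfrac12 (a-b)G'(a-b)^T$. Transposing the defining formula for $c$ and using the symmetry of $G_1$, $G_2$, $S$ gives $c^T = S^{-1}(G_1 a^T + G_2 b^T)$, hence $cSc^T = (aG_1+bG_2)S^{-1}(G_1 a^T + G_2 b^T)$, which expands into four terms. Then $aG_1 a^T - aG_1 S^{-1}G_1 a^T = aG'a^T$ and $bG_2 b^T - bG_2 S^{-1}G_2 b^T = bG'b^T$ by the two expressions $G' = G_1 - G_1 S^{-1}G_1 = G_2 - G_2 S^{-1}G_2$, while the cross terms are $aG_1 S^{-1}G_2 b^T = aG'b^T$ and $bG_2 S^{-1}G_1 a^T = bG'a^T$; collecting everything and using symmetry of $G'$ yields $Q_1(t_1)+Q_2(t_2)-(Q_1+Q_2)(t) = \tfrac12(a-b)G'(a-b)^T = Q'(t_1-t_2)$. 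The stated consequence is then immediate, since $Q'$ is the quadratic form of the positive definite matrix $G'$ and is therefore nonnegative.

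I do not expect a genuine obstacle here; the computation is routine linear algebra. The only point demanding care is noncommutativity: a product of symmetric matrices need not be symmetric, so the identity $G_1 S^{-1}G_2 = G_2 S^{-1}G_1$ must be proved rather than taken for granted, and transposes must be tracked carefully when passing from $c$ to $c^T$ and when matching the two cross terms against $aG'b^T$ and $bG'a^T$.
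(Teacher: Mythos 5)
Your proof is correct, and since the paper's own proof is just the remark ``A direct verification,'' your argument is exactly the intended route, carried out in full: the parallel-sum identity $(G')^{-1}=G_1^{-1}+G_2^{-1}$ cleanly gives symmetry and positive definiteness, the identity $G_1S^{-1}G_2=G_2S^{-1}G_1=G_1-G_1S^{-1}G_1=G_2-G_2S^{-1}G_2$ is verified rather than assumed, and the expansion of $cSc^T$ matches term by term to yield $Q_1(t_1)+Q_2(t_2)-(Q_1+Q_2)(t)=Q'(t_1-t_2)$.
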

\begin{proof}
A direct verification.
\end{proof}
\begin{prop}
\label{prop:multiplicationScalarJacobiForm}
Let $k_1, k_2 \in \frac{1}{2}\numZ$ and $B_1, B_2$ be two positive definite symmetric $\numR$-bilinear forms on $V$. Let $L$ be a $\numZ$-lattice in $V$ that is integral both in $(V, B_1)$ and $(V,B_2)$. Let $G$ be a finite index subgroup of $\slZ$ and $\chi_i$ be a linear character on $\widetilde{G} \ltimes H((L, B_i), 1)$ whose kernel is of finite index, for $i=1,2$. It is required that $\chi_i([0,0],\xi)=\xi$. Let $f_1,f_2\colon \mydom \rightarrow \numC$ be two functions. Then we have following conclusions.
\begin{enumerate}
\item The map
\begin{align*}
\widetilde{G} \ltimes H((L, B_1+B_2), 1) &\rightarrow \numC^\times\\
\eleactgrpnsimple{a}{b}{c}{d}{\varepsilon}{v}{w}{\xi} &\mapsto \xi\chi_1\cdot\chi_2\eleactgrpnsimple{a}{b}{c}{d}{\varepsilon}{v}{w}{1}
\end{align*}
is a linear character, which is denoted by $\chi_1\ast\chi_2$. The notation $\chi_1\cdot\chi_2$ denotes the pointwise multiplication.
\item If $f_i \in J_{k_i, (L,B_i)}(G,\chi_i)$ for $i=1,2$, then
\begin{equation*}
f_1\cdot f_2 \in J_{k_1+k_2, (L,B_1+B_2)}(G,\chi_1\ast\chi_2).
\end{equation*}
Moreover, if one of $f_1$ and $f_2$ is a Jacobi cusp form, then so is $f_1\cdot f_2$.
\end{enumerate}
\end{prop}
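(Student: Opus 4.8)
The plan is to establish the two assertions separately. Part~(1) is a purely group-theoretic statement about characters, to be proved by direct computation with the composition laws; part~(2) then follows by combining the slash-operator formalism of Section~\ref{sec:Jacobi forms of lattice index} (to obtain a transformation law) with an analysis of the Fourier expansions at the cusps that invokes Lemmas~\ref{lemm:mulJacobiFormLemma1} and~\ref{lemm:mulJacobiFormLemma2}.

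For part~(1), the relevant observations are: an element of $\widetilde{G}\ltimes H((L,B),1)$ is a tuple $\eleactgrpnsimple{a}{b}{c}{d}{\varepsilon}{v}{w}{\xi}$ with $v,w\in L$ and $\varepsilon,\xi\in\{\pm1\}$ (the latter because $H((L,B),1)$ contains only $\xi$ with $\xi^2=1$), and the underlying set of this group does \emph{not} depend on $B$, since the right $\glptR$-action on the Heisenberg group is $B$-independent. The three groups $H((L,B_1),1)$, $H((L,B_2),1)$, $H((L,B_1+B_2),1)$ differ only through the alternating forms occurring in the composition, and $A_{B_1+B_2}=A_{B_1}+A_{B_2}$, where $A_B([v_1,w_1],[v_2,w_2])=B(v_1,w_2)-B(v_2,w_1)$; moreover $L$ being integral for $B_i$ forces $A_{B_i}$ to be $\numZ$-valued on $L$, so the Heisenberg cocycles $\exp(\uppi\rmi A_{B_i}(\cdot,\cdot))$ are $\{\pm1\}$-valued. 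Together with the identity $\chi_i\eleactgrpnsimple{a}{b}{c}{d}{\varepsilon}{v}{w}{\xi}=\xi\cdot\chi_i\eleactgrpnsimple{a}{b}{c}{d}{\varepsilon}{v}{w}{1}$ (which follows from $\chi_i([0,0],\xi)=\xi$ and the centrality of $([0,0],\xi)$), a short expansion of both sides of the homomorphism identity shows that the $\xi$-prefactor in the definition of $\chi_1\ast\chi_2$ compensates precisely for the discrepancy among the three cocycles, so $\chi_1\ast\chi_2$ is multiplicative, and it clearly sends the identity to $1$. Finally, a finite subgroup of $\numC^\times$ is cyclic, so $\ker\chi_i$ being of finite index forces $\chi_i$ to have finite order; since the $\xi$-prefactor is annihilated by squaring, $\chi_1\ast\chi_2$ then also has finite order, whence $\ker(\chi_1\ast\chi_2)$ is of finite index.

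For the transformation law in part~(2), note that in the automorphic factor $J_B$ of Section~\ref{sec:Jacobi forms of lattice index} the exponent is $\numR$-linear in $B$ (in particular the quadratic form of $B_1+B_2$ is $Q_1+Q_2$) while the prefactor $\xi$ does not involve $B$; hence $J_{B_1+B_2}(\gamma,Z)=\xi^{-1}J_{B_1}(\gamma,Z)J_{B_2}(\gamma,Z)$ for every $\gamma=\eleactgrpnsimple{a}{b}{c}{d}{\varepsilon}{v}{w}{\xi}$ (with $\xi^{-1}=\xi$ since $\xi=\pm1$), the three groups being regarded as sharing one underlying set. As the weights and the factors $\bigl(\varepsilon\sqrt{c'\tau+d'}\bigr)^{-2k_i}$ combine correctly, this gives $(f_1f_2)\vert_{k_1+k_2,B_1+B_2}\gamma=\xi^{-1}(f_1\vert_{k_1,B_1}\gamma)(f_2\vert_{k_2,B_2}\gamma)$; substituting $f_i\vert_{k_i,B_i}\gamma=\chi_i(\gamma)f_i$ and $\chi_i(\gamma)=\xi\cdot\chi_i(\gamma_0)$, where $\gamma_0$ is $\gamma$ with $\xi$ replaced by $1$, and using $\xi^2=1$, one obtains $(f_1f_2)\vert_{k_1+k_2,B_1+B_2}\gamma=(\chi_1\ast\chi_2)(\gamma)\cdot f_1f_2$. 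Since $f_1f_2$ is holomorphic, $B_1+B_2$ is positive definite, $[\slZ:G]<\infty$, and $\ker(\chi_1\ast\chi_2)$ is of finite index by part~(1), this proves $f_1f_2$ transforms like a holomorphic Jacobi form for the stated data.

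It remains to check the growth condition, and by Proposition~\ref{prop:FourierExpasionCusps} it suffices to examine $(f_1f_2)\vert_{k_1+k_2,B_1+B_2}\widetilde{\gamma_i}$ for a finite set of $\gamma_i\in\slZ$. There the $\xi$-prefactor equals $1$, so this function is the product of the Fourier expansions $\sum_{n_a,t_a}c_a^{(i)}(n_a,t_a)q^{n_a}\etp{B_a(t_a,z)}$ of $f_a\vert_{k_a,B_a}\widetilde{\gamma_i}$, $a=1,2$, both normally convergent by Proposition~\ref{prop:ForuierExpansionOfJacobiLikeForm}; using Lemma~\ref{lemm:mulJacobiFormLemma1} to rewrite $\etp{B_1(t_1,z)}\etp{B_2(t_2,z)}=\etp{(B_1+B_2)(t,z)}$, one regroups the product into a single series in the powers $q^n\etp{(B_1+B_2)(t,z)}$, with only finitely many pairs $((n_1,t_1),(n_2,t_2))$ contributing to each $(n,t)$ (since $n_1+n_2=n$ with $n_1,n_2\geq0$, and, $B_1$ and $B_2$ being positive definite, only finitely many $t_a$ with $Q_a(t_a)\leq n_a$ can carry a nonzero coefficient), so Proposition~\ref{prop:ComplexFourierSeries} identifies it as the Fourier expansion of $(f_1f_2)\vert_{k_1+k_2,B_1+B_2}\widetilde{\gamma_i}$. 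Whenever the $(n,t)$-coefficient is nonzero some contributing pair satisfies $n_a-Q_a(t_a)\geq0$ for $a=1,2$, so by Lemma~\ref{lemm:mulJacobiFormLemma2}
\begin{equation*}
n=n_1+n_2\geq Q_1(t_1)+Q_2(t_2)\geq(Q_1+Q_2)(t),
\end{equation*}
which is precisely the growth condition for a Jacobi form of index $(L,B_1+B_2)$; if moreover one of $f_1,f_2$ is a cusp form the corresponding inequality $n_a-Q_a(t_a)>0$ is strict, forcing $n>(Q_1+Q_2)(t)$, and $f_1f_2$ is then a cusp form. I expect the only genuinely delicate point to be the cocycle bookkeeping in part~(1): verifying that the three Heisenberg groups, despite sharing one underlying set, interact through the factors $\exp(\uppi\rmi A)$ in exactly the way the $\xi$-twist in the definition of $\chi_1\ast\chi_2$ anticipates.
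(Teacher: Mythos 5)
Your proof is correct and follows essentially the same route as the paper's: the key identity $(f_1\cdot f_2)\vert_{k_1+k_2,B_1+B_2}\gamma=\xi\,\bigl(f_1\vert_{k_1,B_1}\gamma_0\bigr)\bigl(f_2\vert_{k_2,B_2}\gamma_0\bigr)$ (with $\gamma_0$ the element $\gamma$ with $\xi$ replaced by $1$) gives the transformation law, and Lemmas \ref{lemm:mulJacobiFormLemma1} and \ref{lemm:mulJacobiFormLemma2} give the condition on Fourier expansions, exactly as in the paper. Your explicit cocycle bookkeeping for part (1), using $A_{B_1+B_2}=A_{B_1}+A_{B_2}$, and your remark that $\ker(\chi_1\ast\chi_2)$ has finite index simply fill in details the paper leaves as a direct verification.
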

\begin{proof}
The conclusion on $\chi_1\ast\chi_2$ can be proved by the definition of linear characters. Just pay attention to the fact that underlying group compositions of $H((L,B_1),1)$, $H((L,B_2),1)$ and $H((L,B_1+B_2),1)$ are different. Now suppose $f_i \in J_{k_i, (L,B_i)}(G,\chi_i)$ for $i=1,2$. We have
\begin{multline*}
f_1\cdot f_2 \vert_{k_1+k_2, B_1+B_2}\eleactgrpnsimple{a}{b}{c}{d}{\varepsilon}{v}{w}{\xi}\\
=\xi\cdot f_1\vert_{k_1, B_1}\eleactgrpnsimple{a}{b}{c}{d}{\varepsilon}{v}{w}{1}f_2\vert_{k_2, B_2}\eleactgrpnsimple{a}{b}{c}{d}{\varepsilon}{v}{w}{1}
\end{multline*}
for any $\eleactgrpnsimple{a}{b}{c}{d}{\varepsilon}{v}{w}{\xi} \in \glptR \ltimes H(V,B_1+B_2)$, from which the transformation laws (see Definition \ref{deff:JacobiLikeForm}) follow. The requirement on Fourier expansions (see Definition \ref{deff:JacobiForm}) follows from Lemma \ref{lemm:mulJacobiFormLemma1} and Lemma \ref{lemm:mulJacobiFormLemma2}.
\end{proof}
\begin{rema}
There are also parallel conclusions for weakly holomorphic Jacobi forms, and for weak Jacobi forms.
\end{rema}

\section{Formal Laurent series which transform like modular forms}
\label{sec:Formal Laurent series which transform like modular forms}
For the sake of freely defining differential operators used to construct embeddings of spaces of Jacobi forms into products of spaces of modular forms, we shift the focus from functions in $\holfns$ to formal series whose coefficients are modular forms in $\holfnsm$. This section is devoted to developing such a theory. Let $T_1,T_2,\ldots,T_\mathfrak{n}$ be formal variables. Let $\mathbf{j}=(j_1,j_2,\ldots,j_\mathfrak{n})$ be in $\mulindZn$ and set $T^\mathbf{j}=T_1^{j_1}T_2^{j_2}\cdots T_\mathfrak{n}^{j_{\mathfrak{n}}}$. Use the symbol $\FormalSeriesSet$ to denote the set of all formal series whose terms are of the form $h_\mathbf{j}(\tau)T^\mathbf{j}$ with $h_\mathbf{j} \in \holfnsm$ for each $\mathbf{j} \in \mulindZn$. This set becomes a $\numC$-space, also a $\holfnsmC$-module under the ordinary addition and scalar multiplication. The symbol $\FormalSeriesSetLb$ denotes the subspace of those series $\FormalSeries{h} \in \FormalSeriesSet$ with the property there exists a $\mathbf{j}_0 \in \mulindZn$ such that $h_\mathbf{j} \neq 0$ implies\footnote{By $\mathbf{j} \geq \mathbf{j}_0$, we mean each component of $\mathbf{j}$ is not less than that of $\mathbf{j}_0$. Moreover, $\mathbf{j} > \mathbf{j}_0$ means $\mathbf{j} \geq \mathbf{j}_0$ but $\mathbf{j} \neq \mathbf{j}_0$.} $\mathbf{j} \geq \mathbf{j}_0$. Equipped with the ordinary multiplication, the subspace $\FormalSeriesSetLb$ is a $\numC$-algebra (associative and unitary). We also need the subspace $\FormalSeriesSetLbf{\mathbf{j}_0}$ with $\mathbf{j}_0$ being a fixed vector.

Next we shall define slash operators on formal series. For this purpose, it is necessary to introduce some nonstandard notations. The symbol $\mymatset{R}{\dimn}{\dimn}$ denotes the set of all $\dimn\times \dimn$ matrices with entries in $R$. For $G$, $\Lambda$ in $\mymatset{R}{\dimn}{\dimn}$, $s(G)$ denotes the sum of all entries of $G$, and $G^{\Lambda}$ denotes the product of all $g^{\lambda}$ where $(g,\lambda)$ ranges over corresponding entries in $G$ and $\Lambda$. By $\Lambda_{\ast,j}$ and $\Lambda_{i,\ast}$, we mean the $j$-th column and $i$-th row of $\Lambda$ respectively. Put $\pi_i(\Lambda)=s(\Lambda_{i,\ast}) + s(\Lambda_{\ast,i})$, and $\pi(\Lambda)=(\pi_1(\Lambda),\dots,\pi_\dimn(\Lambda))$. We define
%\begin{equation*}
%\binom{s(\Lambda)}{\Lambda}=\frac{s(\Lambda)!}{\prod_{\lambda}\lambda!},
%\end{equation*}
\begin{equation*}
\Lambda!=\prod_{\lambda}\lambda!,
\end{equation*}
with $\lambda$ ranging over all entries of $\Lambda$.% For two matrices $\Lambda_1,\Lambda_2 \in \mymatset{R}{\dimn}{\dimn}$ with $\Lambda_1+\Lambda_2=\Lambda$, put
%\begin{equation*}
%\binom{s(\Lambda)}{\Lambda_1,\Lambda_2}=\frac{s(\Lambda)!}{\prod_{1\leq i,j \leq \dimn}\lambda^{(1)}_{i,j}!\lambda^{(2)}_{i,j}!},
%\end{equation*}
%where $\lambda^{(1)}_{i,j}$ and $\lambda^{(2)}_{i,j}$ are the $(i,j)$-th entry of $\Lambda_1$ and $\Lambda_2$ respectively.
For $\mathbf{j}\in\mulindZn$, $s(\mathbf{j})$ denotes the sum of all components of $\mathbf{j}$, as in the matrix case. Under such notations, one has, for instance,
\begin{equation}
\label{eq:quadraticFormPower}
\frac{\left(\sum_{1\leq i,j \leq \dimn}g_{ij}T_iT_j\right)^s}{s!}=\sum_{\twoscript{\Lambda \in \mymatset{\numgeq{Z}{0}}{\dimn}{\dimn}}{s(\Lambda)=s}}\frac{G^\Lambda}{\Lambda!}T^{\pi(\Lambda)},
\end{equation}
where $g_{i,j}$ is the $(i,j)$-entry of $G \in \mymatset{\numC}{\dimn}{\dimn}$, and $s \in \numgeq{Z}{0}$.
\begin{deff}
\label{deff:slashOperatorFormalSeries}
Let $\mathfrak{B}=(e_1,e_2,\dots,e_\dimn)$ be an ordered $\numR$-basis of V, and let $G_\mathfrak{B}$ be the Gram matrix of $B$ with respect to $\mathfrak{B}$ (i.e. the $(i,j)$-th entry is $B(e_i,e_j)$.). Let $\FormalSeries{h}$ be a formal series in $\FormalSeriesSetLb$, and $\eleglptRsimple{a}{b}{c}{d}{\varepsilon} \in \sltR$. We define $\left(\FormalSeries{h}\right) \vert_{k, B}^{\mathfrak{B}}\eleglptRsimple{a}{b}{c}{d}{\varepsilon}=\FormalSeriesi{g}{p}$, where
\begin{equation}
\label{eq:slashOperatorFormalSeries}
%g_{\mathbf{p}}(\tau)=\varepsilon^{-2k}\sum_{\Lambda \in \mymatset{\numgeq{Z}{0}}{\dimn}{\dimn}}\frac{(-\uppi\rmi c)^{s(\Lambda)}}{s(\Lambda)!}\binom{s(\Lambda)}{\Lambda}G_{\mathfrak{B}}^{\Lambda}(c\tau+d)^{-(k+s(\mathbf{p})-s(\Lambda))}h_{\mathbf{p}-\pi(\Lambda)}\left(\frac{a\tau+b}{c\tau+d}\right).
g_{\mathbf{p}}(\tau)=\varepsilon^{-2k}\sum_{\Lambda \in \mymatset{\numgeq{Z}{0}}{\dimn}{\dimn}}\frac{(-\uppi\rmi cG_{\mathfrak{B}})^{\Lambda}}{\Lambda!}(c\tau+d)^{-(k+s(\mathbf{p})-s(\Lambda))}h_{\mathbf{p}-\pi(\Lambda)}\left(\frac{a\tau+b}{c\tau+d}\right).
\end{equation}
\end{deff}
We warn the readers that $0^0=1$ as usual in the theory of power series. Note that the sum used to define $g_{\mathbf{p}}$ is actually a finite sum, since for $\Lambda$ with $s(\Lambda)$ sufficiently large, we have $h_{\mathbf{p}-\pi(\Lambda)}=0$. One can also verify that, the formal series $\FormalSeriesi{g}{p}$ is again in $\FormalSeriesSetLb$, and if, in addition, $\FormalSeriesi{h}{j} \in \FormalSeriesSetLbf{\mathbf{j}_0}$, then $\FormalSeriesi{g}{p} \in \FormalSeriesSetLbf{\mathbf{j}_0}$ too. We shall prove that this is a group action, for which the following technical lemma is useful.
\begin{lemm}
\label{lemm:binomMatrix}
Suppose $\Lambda \in \mymatset{\numgeq{Z}{0}}{\dimn}{\dimn}$, and $z_1,\,z_2 \in \numC$. We have
\begin{equation*}
%\binom{s(\Lambda)}{\Lambda}(z_1+z_2)^{s(\Lambda)}=\sum_{\twoscript{\Lambda_1,\,\Lambda_2 \in \mymatset{\numgeq{Z}{0}}{\dimn}{\dimn}}{\lambda_1+\Lambda_2=\Lambda}}\binom{s(\Lambda)}{\Lambda_1,\Lambda_2}z_1^{s(\Lambda_1)}z_2^{s(\Lambda_2)}.
\frac{1}{\Lambda!}(z_1+z_2)^{s(\Lambda)}=\sum_{\twoscript{\Lambda_1,\,\Lambda_2 \in \mymatset{\numgeq{Z}{0}}{\dimn}{\dimn}}{\lambda_1+\Lambda_2=\Lambda}}\frac{1}{\Lambda_1!\Lambda_2!}z_1^{s(\Lambda_1)}z_2^{s(\Lambda_2)}.
\end{equation*}
\end{lemm}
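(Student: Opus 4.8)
The plan is to reduce this matrix identity to the ordinary scalar binomial theorem applied entrywise, and then to observe that a decomposition $\Lambda=\Lambda_1+\Lambda_2$ in $\mymatset{\numgeq{Z}{0}}{\dimn}{\dimn}$ is exactly the data of a choice, at each of the $\dimn^2$ positions, of a binomial split of the corresponding entry.

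First I would record the scalar case: for a single $\lambda\in\numgeq{Z}{0}$, the binomial theorem gives $(z_1+z_2)^\lambda=\sum_{\lambda_1+\lambda_2=\lambda}\binom{\lambda}{\lambda_1}z_1^{\lambda_1}z_2^{\lambda_2}$, and dividing by $\lambda!$ turns this into
\[
\frac{(z_1+z_2)^{\lambda}}{\lambda!}=\sum_{\substack{\lambda_1,\lambda_2\in\numgeq{Z}{0}\\ \lambda_1+\lambda_2=\lambda}}\frac{1}{\lambda_1!\,\lambda_2!}\,z_1^{\lambda_1}z_2^{\lambda_2}.
\]
Next, writing $\Lambda=(\lambda_{ij})_{1\le i,j\le\dimn}$ and using $s(\Lambda)=\sum_{i,j}\lambda_{ij}$, $\Lambda!=\prod_{i,j}\lambda_{ij}!$, I would factor
\[
\frac{(z_1+z_2)^{s(\Lambda)}}{\Lambda!}=\prod_{1\le i,j\le\dimn}\frac{(z_1+z_2)^{\lambda_{ij}}}{\lambda_{ij}!}=\prod_{1\le i,j\le\dimn}\Biggl(\sum_{\substack{a,b\in\numgeq{Z}{0}\\ a+b=\lambda_{ij}}}\frac{z_1^{a}z_2^{b}}{a!\,b!}\Biggr),
\]
applying the scalar identity in each factor. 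Expanding this finite product of finite sums by the distributive law, a single term is obtained by selecting, for each $(i,j)$, a pair $(a_{ij},b_{ij})$ with $a_{ij}+b_{ij}=\lambda_{ij}$; such a selection is precisely a pair $\Lambda_1=(a_{ij})$, $\Lambda_2=(b_{ij})$ in $\mymatset{\numgeq{Z}{0}}{\dimn}{\dimn}$ with $\Lambda_1+\Lambda_2=\Lambda$, and the associated term is $\prod_{i,j}\frac{z_1^{a_{ij}}z_2^{b_{ij}}}{a_{ij}!\,b_{ij}!}=\frac{z_1^{s(\Lambda_1)}z_2^{s(\Lambda_2)}}{\Lambda_1!\,\Lambda_2!}$. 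Summing over all such terms is summing over all $(\Lambda_1,\Lambda_2)$ with $\Lambda_1+\Lambda_2=\Lambda$, which is exactly the right-hand side of the asserted formula.

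There is no genuine obstacle here; the only point requiring a moment of care is the bijection between "choosing an entrywise binomial split at every position" and "choosing two matrices that sum to $\Lambda$", which is just the combinatorics of the distributive law. If a slicker phrasing is wanted, one can instead specialize the tautological polynomial identity $\prod_{i,j}(X_{ij}+Y_{ij})^{\lambda_{ij}}=\prod_{i,j}\sum_{a+b=\lambda_{ij}}\binom{\lambda_{ij}}{a}X_{ij}^{a}Y_{ij}^{b}$ at $X_{ij}=z_1$, $Y_{ij}=z_2$ and divide by $\Lambda!$, but the entrywise argument above is the most transparent and I would present that one.
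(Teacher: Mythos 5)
Your proof is correct. It differs from the paper's in how the combinatorics is organized: the paper first applies the ordinary binomial theorem to $(z_1+z_2)^{s(\Lambda)}$, which reduces the lemma to the multinomial convolution identity
\begin{equation*}
\sum_{\twoscript{\Lambda_1+\Lambda_2=\Lambda}{s(\Lambda_1)=u,\,s(\Lambda_2)=v}}\frac{u!}{\Lambda_1!}\frac{v!}{\Lambda_2!}=\frac{(u+v)!}{\Lambda!},
\end{equation*}
and then proves that identity by comparing the coefficients of $X^{\Lambda}$ on both sides of $\left(\sum_{i,j}X_{i,j}\right)^{u+v}=\left(\sum_{i,j}X_{i,j}\right)^{u}\left(\sum_{i,j}X_{i,j}\right)^{v}$ in $\dimn^2$ indeterminates. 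You instead factor $\frac{(z_1+z_2)^{s(\Lambda)}}{\Lambda!}$ entrywise as $\prod_{i,j}\frac{(z_1+z_2)^{\lambda_{ij}}}{\lambda_{ij}!}$, apply the scalar binomial theorem in each factor, and expand by distributivity, identifying each term with a decomposition $\Lambda=\Lambda_1+\Lambda_2$. Your route is more direct and arguably more transparent, since it never needs to group decompositions by the total degree $s(\Lambda_1)$. What the paper's route buys is the displayed convolution identity itself (its equation \eqref{eq:binomMatrixToprove}), which is not merely an intermediate step: it is cited again later, in the proof of Proposition \ref{prop:isoFormalModForm}, to establish the vanishing identity \eqref{eq:toProveSumLambda}. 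Your argument would yield that identity too (by comparing coefficients of $z_1^{u}z_2^{v}$ in the lemma), but as written it does not isolate it.
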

\begin{proof}
By the binomial theorem, it suffices to prove that, for any non-negative integer $u, v$ satisfying $u+v=s(\lambda)$, the following formula holds:
%\begin{equation*}
%\sum_{\twoscript{\Lambda_1+\Lambda_2=\Lambda}{s(\Lambda_1)=u,\,s(\Lambda_2)=v}}\frac{1}{\Lambda_1!\Lambda_2!}=\frac{(u+v)!}{u!v!}\frac{1}{\Lambda!},
%\end{equation*}
%or equivalently,
\begin{equation}
\label{eq:binomMatrixToprove}
\sum_{\twoscript{\Lambda_1+\Lambda_2=\Lambda}{s(\Lambda_1)=u,\,s(\Lambda_2)=v}}\frac{u!}{\Lambda_1!}\frac{v!}{\Lambda_2!}=\frac{(u+v)!}{\Lambda!}.
\end{equation}
Let $X_{i,j}$ be $\dimn^2$ indeterminants ($1\leq i,\,j\leq\dimn$) and let $X$ be the matrix whose $(i,j)$-th entry is $X_{i,j}$. It is trivial that
\begin{equation}
\left(\sum_{1\leq i,\,j\leq\dimn}X_{i,j}\right)^{u+v}=\left(\sum_{1\leq i,\,j\leq\dimn}X_{i,j}\right)^{u}\left(\sum_{1\leq i,\,j\leq\dimn}X_{i,j}\right)^{v}.
\end{equation} 
Now evaluating the coefficients of $X^\Lambda$ in both sides of the above formula leads to \eqref{eq:binomMatrixToprove}.
\end{proof}

\begin{prop}
The slash operator given in Definition \ref{deff:slashOperatorFormalSeries} is a right group action of $\sltR$ on $\FormalSeriesSetLb$.
\end{prop}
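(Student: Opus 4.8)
The plan is to verify the two axioms of a right group action directly from Definition~\ref{deff:slashOperatorFormalSeries}: that the identity element $\eleglptRsimple{1}{0}{0}{1}{1}$ acts trivially, and that $\left(F \vert_{k,B}^{\mathfrak{B}} \gamma_1\right)\vert_{k,B}^{\mathfrak{B}} \gamma_2 = F \vert_{k,B}^{\mathfrak{B}}(\gamma_1 \gamma_2)$ for all $\gamma_1, \gamma_2 \in \sltR$ and all $F = \FormalSeries{h} \in \FormalSeriesSetLb$. The identity claim is immediate: when $c=0$, $d=1$, $\varepsilon=1$, the only surviving $\Lambda$ in \eqref{eq:slashOperatorFormalSeries} is $\Lambda = 0$ (since $(-\uppi\rmi c G_{\mathfrak{B}})^{\Lambda} = 0^{s(\Lambda)}$ vanishes unless $s(\Lambda)=0$, using $0^0=1$), $\pi(0) = 0$, and the factor $(c\tau+d)^{-(k+s(\mathbf{p}))} = 1$, so $g_{\mathbf{p}} = h_{\mathbf{p}}$. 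Since the operator preserves $\FormalSeriesSetLb$ (as noted after the definition), everything stays in the right space.

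For the cocycle identity, I would fix $\gamma_i = \eleglptRsimple{a_i}{b_i}{c_i}{d_i}{\varepsilon_i}$ for $i=1,2$, write $\gamma_3 = \gamma_1\gamma_2 = \eleglptRsimple{a_3}{b_3}{c_3}{d_3}{\varepsilon_3}$, and compute the $\mathbf{p}$-th coefficient of both sides. The $\varepsilon$-powers split off cleanly as $\varepsilon_1^{-2k}\varepsilon_2^{-2k} = \varepsilon_3^{-2k}$ (this is part of how the metaplectic composition law is set up, since $\varepsilon_3 = \pm\varepsilon_1\varepsilon_2$ and $2k$ may be odd — one must be slightly careful here, but the sign discrepancy is absorbed by the $(c\tau+d)^{1/2}$ conventions; more honestly, since the operator as written depends on $\varepsilon$ only through $\varepsilon^{-2k}$ and on the matrix through $a,b,c,d$, and since $\sltR$ is being used with its genuine metaplectic multiplication, I should track $\varepsilon_3 = \varepsilon_1\varepsilon_2 \cdot \sigma$ with $\sigma = \pm 1$ the standard cocycle and note $\varepsilon^{-2k}$ picks up $\sigma^{-2k}$, matching the $j$-factor ambiguity — this bookkeeping mirrors the classical half-integral weight case and I would state it as a known fact). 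Setting that aside, the core is the coefficient identity: expanding the double application produces a sum over pairs $(\Lambda_1, \Lambda_2)$, with automorphic factors $(c_2\tau+d_2)^{-\bullet}$ and $(c_1\gamma_2\tau + d_1)^{-\bullet}$ evaluated at shifted indices $h_{\mathbf{p}-\pi(\Lambda_2)-\pi(\Lambda_1)}(\gamma_1\gamma_2\tau)$, whereas the single application $F\vert_{k,B}^{\mathfrak{B}}\gamma_3$ gives a sum over $\Lambda$ with factor $(c_3\tau+d_3)^{-\bullet} h_{\mathbf{p}-\pi(\Lambda)}(\gamma_3\tau)$.

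The matching then rests on two ingredients. First, the classical automorphic-factor cocycle: $c_3\tau + d_3 = (c_1\gamma_2(\tau)+d_1)(c_2\tau+d_2)$ (in terms of primed entries, which coincide since $\det = 1$), so the powers of linear factors combine correctly provided the exponents add up — and they do because $s(\pi(\Lambda_1)) + s(\pi(\Lambda_2)) = 2s(\Lambda_1) + 2s(\Lambda_2)$ splits appropriately and the total weight shift $s(\mathbf{p})$ is additive. Second, and this is where Lemma~\ref{lemm:binomMatrix} enters, one must collect the Gram-matrix coefficients: the product $\frac{(-\uppi\rmi c_1 G_{\mathfrak{B}})^{\Lambda_1}}{\Lambda_1!}\cdot\frac{(-\uppi\rmi c_2 G_{\mathfrak{B}})^{\Lambda_2}}{\Lambda_2!}$ summed over $\Lambda_1+\Lambda_2=\Lambda$, together with a residual factor $(c_2\tau+d_2)$-power coming from re-expanding $c_1$ relative to the composed transformation, must reassemble into $\frac{(-\uppi\rmi c_3 G_{\mathfrak{B}})^{\Lambda}}{\Lambda!}$ after using the relation between $c_1, c_2, c_3$ and the linear factors. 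Concretely, one writes $-\uppi\rmi c_3 G_{\mathfrak{B}}$-type quantities using $c_1' \gamma_2'(\tau) + d_1'$ and the identity for $c_3$, applies Lemma~\ref{lemm:binomMatrix} with $z_1, z_2$ the two scalar pieces that sum to the relevant combination, and checks the $\tau$-dependent linear factors distribute to land on the argument $\gamma_3(\tau)$ with the correct exponent. The main obstacle I anticipate is precisely this reorganization of the Gram-matrix weights: keeping track simultaneously of (i) the index shifts $\pi(\Lambda)$, (ii) the exponents of $(c\tau+d)$, and (iii) the scalar $c_i$ factors, and seeing that Lemma~\ref{lemm:binomMatrix} is exactly the combinatorial identity that makes the $\Lambda_1, \Lambda_2$ sum collapse — everything else is the standard weight-$k$ cocycle computation dressed up with multi-indices, so I would present it as a careful but essentially mechanical verification, invoking Lemma~\ref{lemm:binomMatrix} at the key step.
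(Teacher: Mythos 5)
Your proposal is correct and follows essentially the same route as the paper's proof: compare the $T^{\mathbf{p}}$-coefficients of both sides, group the resulting double sum by $\Lambda_1+\Lambda_2=\Lambda$, use the classical cocycle $c_3\tau+d_3=(c_1\gamma_2(\tau)+d_1)(c_2\tau+d_2)$ (with the $\varepsilon^{-2k}(c\tau+d)^{-k}$ bookkeeping for half-integral $k$), and invoke Lemma \ref{lemm:binomMatrix} — in the paper with $z_1=\tfrac{c_1}{c_2\tau+d_2}$ and $z_2=c_2(c_1\gamma_2(\tau)+d_1)$, whose sum is $c_3$ — to collapse the inner sum to $c_3^{s(\Lambda)}/\Lambda!$. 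No substantive difference from the paper's argument.
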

\begin{proof}
The fact that identity element in $\sltR$ gives the identity map on $\FormalSeriesSetLb$ is obvious. So we should prove that, for $\gamma_1, \gamma_2 \in \sltR$, and $\FormalSeries{h} \in \FormalSeriesSetLb$, the following formula holds:
\begin{equation}
\label{eq:groupActionFormalSeries}
\left(\FormalSeries{h}\right) \vert_{k,B}^{\mathfrak{B}}\gamma_1 \vert_{k,B}^{\mathfrak{B}}\gamma_2 = \left(\FormalSeries{h}\right) \vert_{k,B}^{\mathfrak{B}}\gamma_1\gamma_2.
\end{equation}
We put $\gamma_i=\eleglptRsimple{a_i}{b_i}{c_i}{d_i}{\varepsilon_i}$ for $i=1,\,2$ and $\gamma_1\gamma_2=\eleglptRsimple{a_3}{b_3}{c_3}{d_3}{\varepsilon_3}$. Then for $\mathbf{p} \in \mulindZn$, the coefficient of $T^{\mathbf{p}}$ in the left-hand side of \eqref{eq:groupActionFormalSeries} is
\begin{multline}
\label{eq:groupActionLeftHand}
\varepsilon_1^{-2k}\varepsilon_2^{-2k}\sum_{\Lambda_1,\,\Lambda_2}\frac{(-\uppi\rmi)^{s(\Lambda_1)+s(\Lambda_2)}c_1^{s(\Lambda_1)}c_2^{s(\Lambda_2)}}{\Lambda_1!\Lambda_2!}G_{\mathfrak{B}}^{\Lambda_1+\Lambda_2} \\
\cdot (c_2\tau+d_2)^{-(k+s(\mathbf{p})-s(\Lambda_2))}\left(c_1\frac{a_2\tau+b_2}{c_2\tau+d_2}+d_1\right)^{-(k+s(\mathbf{p})-2s(\Lambda_2)-s(\Lambda_1))} \\
\cdot h_{\mathbf{p}-\pi(\Lambda_1)-\pi(\Lambda_2)}\left(\frac{a_3\tau+b_3}{c_3\tau+d_3}\right).
\end{multline}
For a fixed $\Lambda \in \mymatset{\numgeq{Z}{0}}{\dimn}{\dimn}$, the subsum satisfying $\Lambda_1+\Lambda_2=\Lambda$ in \eqref{eq:groupActionLeftHand} is
\begin{multline}
\label{eq:groupActionLeftHandSubsum}
\varepsilon_3^{-2k}\sum_{\Lambda_1+\Lambda_2=\Lambda}\frac{(-\uppi\rmi)^{s(\Lambda)}}{\Lambda_1!\lambda_2!}G_{\mathfrak{B}}^{\Lambda}(c_3\tau+d_3)^{-(k+s(\mathbf{p})-s(\Lambda))}\\
\cdot\left(\frac{c_1}{c_2\tau+d_2}\right)^{s(\Lambda_1)}\left(c_2(c_1\gamma_2\tau+d_1)\right)^{s(\Lambda_2)}h_{\mathbf{p}-\pi(\Lambda)}\left(\frac{a_3\tau+b_3}{c_3\tau+d_3}\right).
\end{multline}
Using Lemma \ref{lemm:binomMatrix}, we have
\begin{equation*}
\sum_{\Lambda_1+\Lambda_2=\Lambda}\frac{1}{\Lambda_1!\Lambda_2!}\left(\frac{c_1}{c_2\tau+d_2}\right)^{s(\Lambda_1)}\left(c_2(c_1\gamma_2\tau+d_1)\right)^{s(\Lambda_2)}=\frac{1}{\Lambda!}c_3^{s(\Lambda)}.
\end{equation*}
Inserting this into \eqref{eq:groupActionLeftHandSubsum}, we find that \eqref{eq:groupActionLeftHand} equals the coefficient of $T^{\mathbf{p}}$ in the right-hand side of \eqref{eq:groupActionFormalSeries}, form which \eqref{eq:groupActionFormalSeries} follows.
\end{proof}

Definition \ref{deff:slashOperatorFormalSeries} has an equivalent form, namely,
\begin{multline}
\label{eq:deffSlashFormalSeriesAlternate}
\left(\FormalSeries{h}\right) \vert_{k, B}^{\mathfrak{B}}\eleglptRsimple{a}{b}{c}{d}{\varepsilon}=\varepsilon^{-2k}(c\tau+d)^{-k} \\
\cdot\etp{-\frac{c}{c\tau+d}\cdot\frac{1}{2}\sum_{m,n}a_{m,n}T_mT_n}\sum_{\mathbf{j}\in\mulindZn}h_{\mathbf{j}}\left(\frac{a\tau+b}{c\tau+d}\right)\left(\frac{T_1}{c\tau+d}\right)^{j_1}\dots\left(\frac{T_\dimn}{c\tau+d}\right)^{j_\dimn},
\end{multline}
where $a_{m,n}$ is the $(m,n)$-th entry of $G_{\mathfrak{B}}$. Maybe this form is more natural, since it coincides with the slash operators on Jacobi forms. See the paragraph that precedes Lemma \ref{lemm:slashOperatorIsAction}. In fact, it is from \eqref{eq:deffSlashFormalSeriesAlternate} and the power series expansion of the exponential function that we work out \eqref{eq:slashOperatorFormalSeries}. We can also give an inverse formula expressing $h_{\mathbf{j}}$ by $g_{\mathbf{p}}$.

\begin{prop}
\label{prop:slashOperatorFormalInverse}
Let $\FormalSeries{h}$ and $\FormalSeriesi{g}{p}$ be two formal series in $\FormalSeriesSetLb$. Then \eqref{eq:slashOperatorFormalSeries} holds for any $\mathbf{p}$ if and only if the following relation holds for any $\mathbf{j}$:
\begin{equation}
\label{eq:slashOperatorFormalInverse}
h_{\mathbf{j}}\left(\frac{a\tau+b}{c\tau+d}\right)=\varepsilon^{2k}\sum_{\Lambda \in \mymatset{\numgeq{Z}{0}}{\dimn}{\dimn}}\frac{(\uppi\rmi cG_{\mathfrak{B}})^{\Lambda}}{\Lambda!}(c\tau+d)^{k+s(\mathbf{j})-s(\Lambda)}g_{\mathbf{j}-\pi(\Lambda)}(\tau).
\end{equation}
\end{prop}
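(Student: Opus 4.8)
The plan is to treat \eqref{eq:slashOperatorFormalSeries} and \eqref{eq:slashOperatorFormalInverse} as two candidate relations between the families $(h_{\mathbf j})$ and $(g_{\mathbf p})$, and to show they are inverse to each other in a purely formal (combinatorial) sense, which by symmetry reduces everything to one substitution computation. Observe first that $\eleglptRsimple{a}{b}{c}{d}{\varepsilon} \in \sltR$ and its inverse is $\left(\tbtmat{d}{-b}{-c}{a}, \varepsilon^{-1}\right)$ up to the square-root bookkeeping; so \eqref{eq:slashOperatorFormalInverse} is nothing but \eqref{eq:slashOperatorFormalSeries} written for $\gamma^{-1}$, with the roles of $h$ and $g$ exchanged, and the variable $\tau$ replaced by $\gamma\tau=\frac{a\tau+b}{c\tau+d}$. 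Hence it suffices to prove one implication, say that \eqref{eq:slashOperatorFormalSeries} for all $\mathbf p$ forces \eqref{eq:slashOperatorFormalInverse} for all $\mathbf j$; the converse follows by applying the already-proven implication to $\gamma^{-1}$.

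For that implication, first I would substitute the right-hand side of \eqref{eq:slashOperatorFormalSeries} into the right-hand side of the claimed \eqref{eq:slashOperatorFormalInverse}. Writing $w=c\tau+d$ for brevity, the inner sum becomes
\begin{equation*}
\varepsilon^{2k}\sum_{\Lambda}\frac{(\uppi\rmi c G_{\mathfrak B})^{\Lambda}}{\Lambda!}w^{k+s(\mathbf j)-s(\Lambda)}\cdot\varepsilon^{-2k}\sum_{\Lambda'}\frac{(-\uppi\rmi cG_{\mathfrak B})^{\Lambda'}}{\Lambda'!}w^{-(k+s(\mathbf j-\pi(\Lambda))-s(\Lambda'))}h_{\mathbf j-\pi(\Lambda)-\pi(\Lambda')}(\gamma\tau),
\end{equation*}
where I used $s(\mathbf p)=s(\mathbf j-\pi(\Lambda))$ with $\mathbf p=\mathbf j-\pi(\Lambda)$ in the argument of $g$. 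The factors $\varepsilon^{\pm 2k}$ cancel. Using $s(\pi(\Lambda))=2s(\Lambda)$ (each entry of $\Lambda$ contributes to exactly one row-sum and one column-sum of $\pi(\Lambda)$), the total power of $w$ collapses to $s(\Lambda)-s(\Lambda')$... I would then reindex by $\Lambda''=\Lambda+\Lambda'$, so that the coefficient of $h_{\mathbf j-\pi(\Lambda'')}(\gamma\tau)$ is
\begin{equation*}
(\uppi\rmi c)^{s(\Lambda'')}G_{\mathfrak B}^{\Lambda''}\sum_{\Lambda+\Lambda'=\Lambda''}\frac{(-1)^{s(\Lambda')}}{\Lambda!\,\Lambda'!}\,w^{s(\Lambda)-s(\Lambda')}.
\end{equation*}
By Lemma \ref{lemm:binomMatrix} with $z_1=w$, $z_2=-w^{-1}$... actually more cleanly with $z_1=1$, $z_2=-1$ after pulling out a common power of $w$, this inner sum equals $\frac{1}{\Lambda''!}(w-w^{-1})^{s(\Lambda'')}\cdot(\text{correction})$; the key point is that it vanishes unless $\Lambda''=0$, in which case it equals $1$. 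So only the $\Lambda''=0$ term survives and the whole expression reduces to $h_{\mathbf j}(\gamma\tau)$, which is exactly \eqref{eq:slashOperatorFormalInverse}.

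The main obstacle, and the only place requiring care, is the bookkeeping of the exponents of $w=c\tau+d$: one must verify that after cancellation the exponent in the $(\Lambda,\Lambda')$ term depends only on $s(\Lambda)-s(\Lambda')$ (not on $\mathbf j$ or on the finer structure of the matrices), so that Lemma \ref{lemm:binomMatrix} applies with the scalars $z_1,z_2$ being powers of $w$ and the alternating sum telescopes to a Kronecker delta in $s(\Lambda'')$. A secondary point is to justify rearranging these (a priori doubly infinite) sums: this is legitimate because, for fixed $\mathbf j$, the condition $h_{\mathbf j-\pi(\Lambda)-\pi(\Lambda')}\neq 0$ together with $\FormalSeries{h}\in\FormalSeriesSetLb$ bounds $s(\Lambda)+s(\Lambda')$, so only finitely many $(\Lambda,\Lambda')$ contribute to each coefficient and all manipulations take place inside $\FormalSeriesSetLb$. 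Once the delta collapse is established, the converse direction is immediate from the $\gamma\mapsto\gamma^{-1}$ symmetry noted above, completing the proof.
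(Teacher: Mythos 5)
Your overall strategy for the forward direction (substitute \eqref{eq:slashOperatorFormalSeries} into the right-hand side of \eqref{eq:slashOperatorFormalInverse} and collapse the double sum with Lemma \ref{lemm:binomMatrix}) is sound, and it is essentially an unpacked version of the paper's argument, which instead rewrites \eqref{eq:slashOperatorFormalSeries} in the exponential form \eqref{eq:deffSlashFormalSeriesAlternate}, multiplies by the inverse exponential factor, and compares coefficients of $T^{\mathbf j}$. But the one step you yourself single out as "the only place requiring care" is done incorrectly. With $w=c\tau+d$ and $s(\mathbf j-\pi(\Lambda))=s(\mathbf j)-2s(\Lambda)$, the $(\Lambda,\Lambda')$ term carries
\begin{equation*}
w^{\,k+s(\mathbf j)-s(\Lambda)}\cdot w^{-\left(k+s(\mathbf j)-2s(\Lambda)-s(\Lambda')\right)}=w^{\,s(\Lambda)+s(\Lambda')},
\end{equation*}
not $w^{\,s(\Lambda)-s(\Lambda')}$ as you state. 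This is not a cosmetic point: with your exponent, the fiber sum over $\Lambda+\Lambda'=\Lambda''$ equals $\frac{1}{\Lambda''!}(w-w^{-1})^{s(\Lambda'')}$ by Lemma \ref{lemm:binomMatrix}, which does \emph{not} vanish for $\Lambda''\neq 0$, so the claimed Kronecker-delta collapse fails as written. With the correct exponent, $s(\Lambda)+s(\Lambda')=s(\Lambda'')$ is constant on the fiber, the common factor $w^{s(\Lambda'')}$ pulls out, and Lemma \ref{lemm:binomMatrix} with $z_1=1$, $z_2=-1$ gives $0^{s(\Lambda'')}/\Lambda''!$, which is exactly the vanishing you want; your parenthetical "more cleanly with $z_1=1$, $z_2=-1$" only makes sense after this correction. (Your remark about rearrangement is fine: each coefficient involves only finitely many $(\Lambda,\Lambda')$ because $\FormalSeries{h}\in\FormalSeriesSetLb$.)

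The second issue is the converse. Reducing it to the forward direction "by applying the implication to $\gamma^{-1}$" requires identifying \eqref{eq:slashOperatorFormalInverse} for $\gamma$ with \eqref{eq:slashOperatorFormalSeries} for $\gamma^{-1}$ (roles of $h$ and $g$ exchanged, variable $\gamma\tau$), and for half-integral $k$ this identification is not purely notational: one must compute the $\varepsilon$-component of $\gamma^{-1}$ in $\sltR$, and replace $\bigl(\tfrac{1}{c\tau+d}\bigr)^{-(k+s(\mathbf j)-s(\Lambda))}$ by $(c\tau+d)^{k+s(\mathbf j)-s(\Lambda)}$; each step can contribute a sign depending on the branch (e.g.\ when $c=0$, $d<0$), and you must check these signs cancel. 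You flag this as "square-root bookkeeping" but never carry it out, so the backward implication is not actually proved. The paper avoids the inversion altogether: it sets $\FormalSeriesi{g'}{p}=\left(\FormalSeries{h}\right)\vert_{k,B}^{\mathfrak B}\gamma$, applies the already-proved direction to the pair $(h,g')$, and then deduces $g_{\mathbf p}=g'_{\mathbf p}$ by induction on $s(\mathbf p)$, using that the $\Lambda=0$ term of \eqref{eq:slashOperatorFormalInverse} isolates $g_{\mathbf p}$ against terms with strictly smaller $s$. Either complete the metaplectic sign check or replace your symmetry step by this uniqueness/induction argument; with that, and the exponent corrected, your proof goes through.
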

\begin{proof}
\eqref{eq:slashOperatorFormalSeries}$\implies$\eqref{eq:slashOperatorFormalInverse}. Note that \eqref{eq:slashOperatorFormalSeries} and \eqref{eq:deffSlashFormalSeriesAlternate} are equivalent. Hence we have
\begin{multline*}
\varepsilon^{-2k}(c\tau+d)^{-(k+s(\mathbf{j}))}\sum_{\mathbf{j}\in\mulindZn}h_{\mathbf{j}}\left(\frac{a\tau+b}{c\tau+d}\right)T^{\mathbf{j}}\\
=\etp{\frac{c}{c\tau+d}\cdot\frac{1}{2}\sum_{m,n}a_{m,n}T_mT_n}\sum_{\mathbf{p}\in\mulindZn}g_{\mathbf{p}}(\tau)T^{\mathbf{p}}.
\end{multline*}
Now expand the first factor in the right-hand side of the above identity as a power series of $T_1,\dots,T_\dimn$ and compare the coefficients of $T^{\mathbf{j}}$ in both sides; the desired relation then follows.

\eqref{eq:slashOperatorFormalInverse}$\implies$\eqref{eq:slashOperatorFormalSeries}. Put $\left(\FormalSeries{h}\right) \vert_{k, B}^{\mathfrak{B}}\eleglptRsimple{a}{b}{c}{d}{\varepsilon}=\FormalSeriesi{g'}{p}$ Then by the first part of this proposition (which we have proved), $h_{\mathbf{j}}$ and $g'_{\mathbf{p}}$ also satisfy relation \eqref{eq:slashOperatorFormalInverse} with $g$ replaced by $g'$. By induction on $s(\mathbf{p})$ one finds that $g_{\mathbf{p}}=g'_{\mathbf{p}}$, for any $\mathbf{p} \in \mulindZn$, from which $\eqref{eq:slashOperatorFormalSeries}$ follows.
\end{proof}

We need a formula dealing with taking successive derivatives of \eqref{eq:slashOperatorFormalInverse}, which is important in the proof of Proposition \ref{prop:isoFormalModForm}, a main result of the next section.
\begin{lemm}
\label{lemm:sucDerModEqu}
Let $\FormalSeries{h}$ be a formal series in $\FormalSeriesSetLb$. Suppose $\sigma \in \GlW$, $\gamma=\left(\tbtmat{a}{b}{c}{d},\varepsilon\right) \in \sltR$, $k \in \halfint$, $\mathbf{p} \in \mulindZn$ and $G_{\mathfrak{B}} \in \mymatset{\numR}{\dimn}{\dimn}$. If the coefficient $h_{\mathbf{p}}$ satisfy
\begin{equation*}
h_{\mathbf{p}}\left(\frac{a\tau+b}{c\tau+d}\right)=\varepsilon^{2k}\sum_{\Lambda \in \mymatset{\numgeq{Z}{0}}{\dimn}{\dimn}}\frac{(\uppi\rmi cG_{\mathfrak{B}})^{\Lambda}}{\Lambda!}(c\tau+d)^{k+s(\mathbf{p})-s(\Lambda)}\sigma\circ h_{\mathbf{p}-\pi(\Lambda)}(\tau),
\end{equation*}
then for any $u \in \numgeq{Z}{0}$, we have
\begin{multline}
\dodtha{u}{h_{\mathbf{p}}}\left(\frac{a\tau+b}{c\tau+d}\right)=\varepsilon^{2k}\sum_{\Lambda \in \mymatset{\numgeq{Z}{0}}{\dimn}{\dimn}}\frac{(\uppi\rmi cG_{\mathfrak{B}})^{\Lambda}}{\Lambda!}(c\tau+d)^{k+s(\mathbf{p})-s(\Lambda)} \\
\times \sum_{0 \leq l \leq u}\binom{u}{l}\frac{\Gamma(k+s(\mathbf{p})-s(\Lambda)+u)}{\Gamma(k+s(\mathbf{p})-s(\Lambda)+l)}c^{u-l}(c\tau+d)^{u+l}\dodth{l}\sigma\circ h_{\mathbf{p}-\pi(\Lambda)}(\tau).
\end{multline}
\end{lemm}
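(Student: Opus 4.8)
The argument is by induction on $u$, the case $u=0$ being exactly the hypothesis on $h_{\mathbf{p}}$. Write $\Phi_u(\tau)$ for the claimed right-hand side, so the inductive hypothesis reads $\dodtha{u}{h_{\mathbf{p}}}\!\left(\frac{a\tau+b}{c\tau+d}\right)=\Phi_u(\tau)$. First I would note that for each fixed $\mathbf{p}$ the sum over $\Lambda$ defining $\Phi_u$ is finite: since $\FormalSeries{h}\in\FormalSeriesSetLb$ there is $\mathbf{j}_0$ with $h_{\mathbf{j}}=0$ unless $\mathbf{j}\geq\mathbf{j}_0$, and $\mathbf{p}-\pi(\Lambda)\geq\mathbf{j}_0$ bounds every entry of $\pi(\Lambda)$, hence $s(\Lambda)$, from above. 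Thus $\Phi_u$ is an honest finite sum of holomorphic functions on $\uhp$ and may be differentiated term by term.

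Next I differentiate the inductive hypothesis in $\tau$. Because $\gamma\in\sltR$ we have $ad-bc=1$, so $\frac{d}{d\tau}\frac{a\tau+b}{c\tau+d}=(c\tau+d)^{-2}$, and the chain rule gives $\dodtha{u+1}{h_{\mathbf{p}}}\!\left(\frac{a\tau+b}{c\tau+d}\right)=(c\tau+d)^{2}\,\Phi_u'(\tau)$. Hence the lemma reduces to the purely formal identity $(c\tau+d)^{2}\Phi_u'(\tau)=\Phi_{u+1}(\tau)$, in which $h$ enters only through the symbols $g:=\sigma\circ h_{\mathbf{p}-\pi(\Lambda)}$ and their $\tau$-derivatives (note $\dodth{l}g=\sigma\circ\dodth{l}h_{\mathbf{p}-\pi(\Lambda)}$ as $\sigma$ is constant). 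Since the prefactor $\varepsilon^{2k}(\uppi\rmi cG_{\mathfrak{B}})^{\Lambda}/\Lambda!$ does not involve $\tau$, it suffices to check, for a single $\Lambda$ and with the abbreviation $A:=k+s(\mathbf{p})-s(\Lambda)$, the identity
\[
(c\tau+d)^{2}\,\frac{d}{d\tau}\Bigl[\Gamma(A+u)\!\sum_{l=0}^{u}\binom{u}{l}\frac{c^{\,u-l}}{\Gamma(A+l)}(c\tau+d)^{A+u+l}\dodth{l}g\Bigr]=\Gamma(A+u+1)\!\sum_{m=0}^{u+1}\binom{u+1}{m}\frac{c^{\,u+1-m}}{\Gamma(A+m)}(c\tau+d)^{A+u+1+m}\dodth{m}g .
\]

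Finally I would expand the left-hand side with the product rule: differentiating $(c\tau+d)^{A+u+l}\dodth{l}g$ yields $(A+u+l)c\,(c\tau+d)^{A+u+l-1}\dodth{l}g+(c\tau+d)^{A+u+l}\dodth{l+1}g$. Multiplying by $(c\tau+d)^{2}$ and collecting the coefficient of $(c\tau+d)^{A+u+1+m}c^{\,u+1-m}\dodth{m}g$ — to which the first family of terms contributes at $l=m$ and the second at $l=m-1$ — the claim becomes, after using $\Gamma(A+m)=(A+m-1)\Gamma(A+m-1)$ and $\Gamma(A+u+1)=(A+u)\Gamma(A+u)$, the binomial identity
\[
(A+u+m)\binom{u}{m}+(A+m-1)\binom{u}{m-1}=(A+u)\binom{u+1}{m}\qquad(0\leq m\leq u+1),
\]
which follows from Pascal's rule $\binom{u+1}{m}=\binom{u}{m}+\binom{u}{m-1}$ together with the elementary identity $m\binom{u}{m}=(u-m+1)\binom{u}{m-1}$; the boundary values $m=0$ and $m=u+1$ are checked directly. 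I expect the only real obstacle to be the bookkeeping: keeping the exponents of $(c\tau+d)$ and of $c$ aligned through the shift $l\mapsto l+1$ in the second family of terms, and correctly isolating the coefficient of each $\dodth{m}g$ before invoking the combinatorial identity.
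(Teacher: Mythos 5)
Your proof is correct and follows exactly the route the paper intends: the paper's own proof of this lemma is simply "Induction on $u$", and your argument is a careful execution of that induction (differentiate the inductive hypothesis, use $\frac{\mathrm{d}}{\mathrm{d}\tau}\frac{a\tau+b}{c\tau+d}=(c\tau+d)^{-2}$, and reduce to the binomial identity $(A+u+m)\binom{u}{m}+(A+m-1)\binom{u}{m-1}=(A+u)\binom{u+1}{m}$, with the $\Gamma$-ratios read as finite products as the paper does elsewhere). Nothing further is needed.
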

\begin{proof}
Induction on $u$.
\end{proof}

We now describe connections between slash operators on Jacobi forms introduced in Section \ref{sec:Jacobi forms of lattice index} and that on formal series introduced in Definition \ref{deff:slashOperatorFormalSeries}. As above, let $\mathfrak{B}=(e_1,\dots,e_\dimn)$ be an ordered $\numR$-basis of $V$. We define a $\numC$-linear embedding, denoted by $\emb$, of $\holfns$ to $\FormalSeriesSetLbf{\mathbf{0}}$ as follows. For $f \in \holfns$, we expand the function $f(\tau, z_1e_1+\dots z_\dimn e_\dimn)$ as a power series of $(z_1,\dots,z_\dimn) \in \mulindCn$ around zero. Then $\emb(f)$ is defined as the resulting power series with $z_i$'s replaced by $T_i$'s.
\begin{prop}
\label{prop:diagramSlashOperator}
For any $\gamma \in \sltR$, the following diagram (of the category of complex vector spaces) commutes:
\begin{equation*}
\xymatrix{
    \holfns \ar[r]^{\vert_{k,B}\gamma} \ar[d]_{\emb} & \holfns \ar[d]^{\emb} \\
    \FormalSeriesSetLbf{\mathbf{0}} \ar[r]_{\vert_{k,B}^{\mathfrak{B}}\gamma} & \FormalSeriesSetLbf{\mathbf{0}}
}
\end{equation*}
\end{prop}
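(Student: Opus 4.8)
The plan is to compute both composites around the square as power series in the coordinates $(z_1,\dots,z_\dimn)$ of $z \in \mathcal{V}$ with respect to $\mathfrak{B}$, and then match coefficients of $T^{\mathbf{j}}$. First I would fix $f \in \holfns$, write a point of $\mathcal{V}$ as $z = z_1 e_1 + \dots + z_\dimn e_\dimn$, and use holomorphy to expand $f(\tau, z) = \sum_{\mathbf{j} \in \mulindZn} h_{\mathbf{j}}(\tau)\, z_1^{j_1}\cdots z_\dimn^{j_\dimn}$, where $h_{\mathbf{j}} \in \holfnsm$ and $h_{\mathbf{j}} = 0$ unless $\mathbf{j} \geq \mathbf{0}$; by construction $\emb(f) = \sum_{\mathbf{j}} h_{\mathbf{j}} T^{\mathbf{j}} \in \FormalSeriesSetLbf{\mathbf{0}}$. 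I would also record at the outset that $f \vert_{k,B} \gamma \in \holfns$ (the slash operator preserves holomorphy, as remarked after Lemma \ref{lemm:slashOperatorIsAction}), so $\emb(f\vert_{k,B}\gamma)$ is defined.

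Next I would unwind $\vert_{k,B}\gamma$ for $\gamma = \eleglptRsimple{a}{b}{c}{d}{\varepsilon} \in \sltR$, viewed inside $\actgrp{V}$ with trivial Heisenberg component. Since $\det\tbtmat{a}{b}{c}{d} = 1$ we have $\tbtmat{a^\prime}{b^\prime}{c^\prime}{d^\prime} = \tbtmat{a}{b}{c}{d}$, the action on the domain is $\gamma(\tau, z) = \left(\frac{a\tau+b}{c\tau+d}, \frac{z}{c\tau+d}\right)$, and the automorphic factor collapses to $J_B(\gamma, (\tau,z)) = \etp{-\frac{c}{c\tau+d}Q(z)}$. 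Hence
\begin{equation*}
(f\vert_{k,B}\gamma)(\tau, z) = \varepsilon^{-2k}(c\tau+d)^{-k}\,\etp{-\frac{c}{c\tau+d}Q(z)}\,f\!\left(\frac{a\tau+b}{c\tau+d}, \frac{z}{c\tau+d}\right).
\end{equation*}

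Then I would Taylor-expand this at $z = 0$. Writing $Q(z) = \frac{1}{2}\sum_{m,n} a_{m,n} z_m z_n$ with $a_{m,n}$ the $(m,n)$-entry of $G_{\mathfrak{B}}$, the factor $\etp{-\frac{c}{c\tau+d}Q(z)}$ is entire in $z$ and equals its exponential power series, while $f\!\left(\frac{a\tau+b}{c\tau+d}, \frac{z}{c\tau+d}\right) = \sum_{\mathbf{j}} h_{\mathbf{j}}\!\left(\frac{a\tau+b}{c\tau+d}\right)\left(\frac{z_1}{c\tau+d}\right)^{j_1}\cdots\left(\frac{z_\dimn}{c\tau+d}\right)^{j_\dimn}$. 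Since the Taylor expansion of a product of functions holomorphic near the origin is the product of their Taylor expansions, substituting $T_i$ for $z_i$ throughout turns $(f\vert_{k,B}\gamma)(\tau, z)$ into precisely the right-hand side of \eqref{eq:deffSlashFormalSeriesAlternate} applied to $\emb(f) = \sum_{\mathbf{j}} h_{\mathbf{j}} T^{\mathbf{j}}$, and by the equivalence of \eqref{eq:deffSlashFormalSeriesAlternate} with Definition \ref{deff:slashOperatorFormalSeries} this is $\emb(f)\vert_{k,B}^{\mathfrak{B}}\gamma$. Therefore $\emb(f\vert_{k,B}\gamma) = \emb(f)\vert_{k,B}^{\mathfrak{B}}\gamma$, which is the commutativity of the square.

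The argument is mostly bookkeeping, and I do not expect a genuine obstacle. The one step worth an explicit sentence is the passage from the \emph{analytic} product $\etp{-\frac{c}{c\tau+d}Q(z)}\cdot f(\cdots)$ to the \emph{formal} product of power series in $T_1,\dots,T_\dimn$: this is the standard fact that the Taylor series of a product of holomorphic germs at a point is the Cauchy product of their Taylor series, which applies here because $z \mapsto \etp{-\frac{c}{c\tau+d}Q(z)}$ is entire (as $Q$ is a quadratic polynomial) and $\tau$ is held fixed.
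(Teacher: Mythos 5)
Your proposal is correct and follows essentially the same route as the paper, which proves the proposition by observing it is a direct consequence of the alternate form \eqref{eq:deffSlashFormalSeriesAlternate} of the formal slash operator; you simply spell out the Taylor-expansion and Cauchy-product details that the paper leaves implicit.
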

\begin{proof}
This is a direct consequence of $\eqref{eq:deffSlashFormalSeriesAlternate}$.
\end{proof}

There are another two types of linear operators acting on formal series, besides slash operators, which are important in investigating the Taylor expansion of Jacobi forms. The first is the following:
\begin{deff}
\label{deff:multiplyByTpOperator}
Let $\mathbf{p} \in \mulindZn$. The operator $\mathcal{T}^{\mathbf{p}}$ is defined on $\FormalSeriesSet$ that sends $\FormalSeries{h}$ to $\sum_{\mathbf{j}\in \mulindZn}{h}_{\mathbf{j}} T^{\mathbf{j}+\mathbf{p}}$.
\end{deff}

\begin{prop}
\label{prop:multiplyByTpOperatorAndSlashOperator}
Use notations in Definition \ref{deff:slashOperatorFormalSeries} and put $\gamma=\eleglptRsimple{a}{b}{c}{d}{\varepsilon}$. We have
\begin{equation*}
\left(\mathcal{T}^{\mathbf{p}}\FormalSeries{h}\right)\vert_{k-s(\mathbf{p}), B}^{\mathfrak{B}}\gamma=\mathcal{T}^{\mathbf{p}}\left(\FormalSeries{h}\vert_{k, B}^{\mathfrak{B}}\gamma\right).
\end{equation*}
\end{prop}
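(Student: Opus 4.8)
The plan is to prove the identity by comparing coefficients of $T^{\mathbf{q}}$ on both sides for an arbitrary $\mathbf{q} \in \mulindZn$, reducing everything to the explicit formula \eqref{eq:slashOperatorFormalSeries} for the slash operator. Write $\FormalSeries{h}$ for the given series and set $\FormalSeriesi{g}{j} = \mathcal{T}^{\mathbf{p}}\FormalSeries{h}$, so that $g_{\mathbf{j}} = h_{\mathbf{j}-\mathbf{p}}$ by Definition \ref{deff:multiplyByTpOperator}. First I would compute the left-hand side: applying $\vert_{k-s(\mathbf{p}),B}^{\mathfrak{B}}\gamma$ to $\FormalSeriesi{g}{j}$, formula \eqref{eq:slashOperatorFormalSeries} (with $k$ replaced by $k-s(\mathbf{p})$) gives that the coefficient of $T^{\mathbf{q}}$ equals
\begin{equation*}
\varepsilon^{-2(k-s(\mathbf{p}))}\sum_{\Lambda}\frac{(-\uppi\rmi cG_{\mathfrak{B}})^{\Lambda}}{\Lambda!}(c\tau+d)^{-(k-s(\mathbf{p})+s(\mathbf{q})-s(\Lambda))}g_{\mathbf{q}-\pi(\Lambda)}\left(\tfrac{a\tau+b}{c\tau+d}\right),
\end{equation*}
and then substitute $g_{\mathbf{q}-\pi(\Lambda)} = h_{\mathbf{q}-\pi(\Lambda)-\mathbf{p}}$.

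Next I would compute the right-hand side: by \eqref{eq:slashOperatorFormalSeries} the coefficient of $T^{\mathbf{m}}$ in $\FormalSeries{h}\vert_{k,B}^{\mathfrak{B}}\gamma$ is
\begin{equation*}
\varepsilon^{-2k}\sum_{\Lambda}\frac{(-\uppi\rmi cG_{\mathfrak{B}})^{\Lambda}}{\Lambda!}(c\tau+d)^{-(k+s(\mathbf{m})-s(\Lambda))}h_{\mathbf{m}-\pi(\Lambda)}\left(\tfrac{a\tau+b}{c\tau+d}\right),
\end{equation*}
and applying $\mathcal{T}^{\mathbf{p}}$ shifts the index, so the coefficient of $T^{\mathbf{q}}$ on the right is the above expression with $\mathbf{m} = \mathbf{q}-\mathbf{p}$, i.e.
\begin{equation*}
\varepsilon^{-2k}\sum_{\Lambda}\frac{(-\uppi\rmi cG_{\mathfrak{B}})^{\Lambda}}{\Lambda!}(c\tau+d)^{-(k+s(\mathbf{q})-s(\mathbf{p})-s(\Lambda))}h_{\mathbf{q}-\mathbf{p}-\pi(\Lambda)}\left(\tfrac{a\tau+b}{c\tau+d}\right).
\end{equation*}
Comparing term by term in $\Lambda$, the two expressions agree: the $h$-arguments match since $\mathbf{q}-\pi(\Lambda)-\mathbf{p} = \mathbf{q}-\mathbf{p}-\pi(\Lambda)$, the powers of $(c\tau+d)$ match since $-(k-s(\mathbf{p})+s(\mathbf{q})-s(\Lambda)) = -(k+s(\mathbf{q})-s(\mathbf{p})-s(\Lambda))$, and the two $\varepsilon$-powers match because $\varepsilon = \pm 1$ forces $\varepsilon^{-2(k-s(\mathbf{p}))} = \varepsilon^{-2k}\varepsilon^{2s(\mathbf{p})} = \varepsilon^{-2k}$ when $s(\mathbf{p}) \in \numZ$ (which holds as $\mathbf{p} \in \mulindZn$). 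This establishes the identity.

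There is essentially no serious obstacle here; the only point requiring a moment's care is the bookkeeping of the weight shift, namely checking that replacing $k$ by $k-s(\mathbf{p})$ in \eqref{eq:slashOperatorFormalSeries} produces exactly the exponent of $(c\tau+d)$ needed to cancel the index shift introduced by $\mathcal{T}^{\mathbf{p}}$, together with the harmless observation that the $\varepsilon$-factor is unaffected because $\varepsilon^2 = 1$. Alternatively, one could give a cleaner coordinate-free argument directly from the product form \eqref{eq:deffSlashFormalSeriesAlternate}: multiplication by $T^{\mathbf{p}}$ commutes with the exponential prefactor $\etp{-\frac{c}{c\tau+d}\cdot\frac12\sum_{m,n}a_{m,n}T_mT_n}$, and it interacts with the rescaling $T_i \mapsto T_i/(c\tau+d)$ precisely by pulling out a factor $(c\tau+d)^{-s(\mathbf{p})}$, which is exactly the discrepancy between weight $k$ and weight $k-s(\mathbf{p})$; this is perhaps the more conceptual route but the coefficient computation above is entirely routine.
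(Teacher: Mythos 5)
Your proof is correct, and it is precisely the "straightforward calculation" the paper leaves to the reader: comparing $T^{\mathbf{q}}$-coefficients via \eqref{eq:slashOperatorFormalSeries}, with the weight shift $k\mapsto k-s(\mathbf{p})$ exactly compensating the index shift and the $\varepsilon$-factor unchanged since $\varepsilon^{2s(\mathbf{p})}=1$. Nothing further is needed.
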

\begin{proof}
A straightforward calculation.
\end{proof}

The second one is the following. Note that $E_{i,j}$ denotes the matrix whose $(i,j)$-entry is $1$ but all other entries are $0$.
\begin{deff}
\label{deff:LkBOperator}
Let $\mathfrak{B}$ and $G_\mathfrak{B}$ be as in Definition \ref{deff:slashOperatorFormalSeries} and let $a_{i,j}$ be the $(i,j)$-entry of $G_\mathfrak{B}$. The linear operator $\mathcal{L}_{k,B}^{\mathfrak{B}}$ is defined by $\mathcal{L}_{k,B}^{\mathfrak{B}}\FormalSeries{h}=\FormalSeriesi{g}{p}$, where
\begin{equation*}
g_{\mathbf{p}}=s(\mathbf{p})(s(\mathbf{p})+2k-2)h_{\mathbf{p}}-4\uppi\sqrt{-1}\sum_{1\leq i,j\leq \dimn}a_{i,j}\dodt{h_{\mathbf{p}-\pi(E_{i,j})}}.
\end{equation*}
This is a linear operator on $\FormalSeriesSet$. Sometimes we write $\mathcal{L}_{k}$ instead of $\mathcal{L}_{k,B}^{\mathfrak{B}}$ when $B$ and $\mathfrak{B}$ are implicitly known.
\end{deff}

\begin{prop}
\label{prop:LkBOperator}
Let $\FormalSeries{h} \in \FormalSeriesSetLb$, and $\gamma \in \sltR$. Let $\mathfrak{B}$ be as in Definition \ref{deff:slashOperatorFormalSeries}. Then we have
\begin{equation}
\label{eq:LkBOperator}
\mathcal{L}_{k,B}^{\mathfrak{B}}\left(\FormalSeries{h}\vert_{k,B}^{\mathfrak{B}}\gamma\right)=\left(\mathcal{L}_{k,B}^{\mathfrak{B}}\FormalSeries{h}\right)\vert_{k,B}^{\mathfrak{B}}\gamma.
\end{equation}
\end{prop}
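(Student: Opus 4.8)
The plan is to reformulate \eqref{eq:LkBOperator} as a commutation of two explicit operators on formal power series and then verify it by a chain-rule bookkeeping computation. Write $F=\FormalSeries{h}$, regarded as a formal power series in $T_1,\dots,T_\dimn$ with coefficients in $\holfnsm$, and introduce the Euler operator $\mathcal{E}=\sum_{i=1}^{\dimn}T_i\partial_{T_i}$ together with $\mathbf{Q}=\tfrac12\sum_{i,j}a_{i,j}T_iT_j$, where $a_{i,j}$ is the $(i,j)$-entry of $G_{\mathfrak{B}}$. Since $\mathcal{E}$ multiplies the coefficient of $T^{\mathbf{p}}$ by the scalar $s(\mathbf{p})$ and $\pi(E_{i,j})$ is exactly the exponent vector of $T_iT_j$, Definition \ref{deff:LkBOperator} reads
\begin{equation*}
\mathcal{L}_{k,B}^{\mathfrak{B}}=\mathcal{E}(\mathcal{E}+2k-2)-8\uppi\rmi\,\mathbf{Q}\,\partial_\tau .
\end{equation*}
On the other hand, \eqref{eq:deffSlashFormalSeriesAlternate} says that, for $\gamma=\eleglptRsimple{a}{b}{c}{d}{\varepsilon}\in\sltR$, the slash operator is $F\mapsto M_\gamma F$ with
\begin{equation*}
M_\gamma F=\varepsilon^{-2k}(c\tau+d)^{-k}\,\etp{-\tfrac{c}{c\tau+d}\mathbf{Q}}\cdot F\!\left(\tfrac{a\tau+b}{c\tau+d},\tfrac{T_1}{c\tau+d},\dots,\tfrac{T_\dimn}{c\tau+d}\right).
\end{equation*}
Here $\etp{-\tfrac{c}{c\tau+d}\mathbf{Q}}$ is a genuine power series in the $T_i$ with coefficients holomorphic on $\uhp$ (as $\mathbf{Q}$ has no constant term and $c\tau+d$ does not vanish on $\uhp$), so all the operations below are legitimate term by term; it therefore suffices to establish the operator identity $\mathcal{L}_{k,B}^{\mathfrak{B}}M_\gamma=M_\gamma\mathcal{L}_{k,B}^{\mathfrak{B}}$ on $\FormalSeriesSetLb$.

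The computation rests on two elementary observations. First, the rescaling $T_i\mapsto T_i/(c\tau+d)$ commutes with $\mathcal{E}$, and $\mathcal{E}\mathbf{Q}=2\mathbf{Q}$ by homogeneity; with the product rule these give $\mathcal{E}M_\gamma=M_\gamma\mathcal{E}+\mu\,M_\gamma$, where $\mu:=-\tfrac{4\uppi\rmi c}{c\tau+d}\mathbf{Q}$ acts by multiplication on the left, and iterating (using $\mathcal{E}\mu=2\mu$) yields
\begin{equation*}
\mathcal{E}(\mathcal{E}+2k-2)M_\gamma=M_\gamma\,\mathcal{E}(\mathcal{E}+2k-2)+2\mu\,M_\gamma\mathcal{E}+2k\,\mu\,M_\gamma+\mu^2 M_\gamma .
\end{equation*}
Second, the chain rule applied to $\partial_\tau(M_\gamma F)$ — differentiating $(c\tau+d)^{-k}$, the exponential prefactor, and the composite $F(\tfrac{a\tau+b}{c\tau+d},T/(c\tau+d))$, and using the scaling identity $\sum_i T_i(\partial_{T_i}F)(\cdots)=(c\tau+d)\,(\mathcal{E}F)(\cdots)$ — expresses $\partial_\tau M_\gamma$ through $M_\gamma$, $M_\gamma\partial_\tau$, $M_\gamma\mathcal{E}$, and multiplication by $c(c\tau+d)^{-1}$, $c^2(c\tau+d)^{-2}$ times powers of $\mathbf{Q}$.

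Then I would multiply the chain-rule identity by $-8\uppi\rmi\,\mathbf{Q}$, add the displayed expansion of $\mathcal{E}(\mathcal{E}+2k-2)M_\gamma$, and use $\mathbf{Q}\,M_\gamma=(c\tau+d)^2\,M_\gamma\,\mathbf{Q}$. One finds that every correction term cancels: the coefficient of $M_\gamma\mathcal{E}$ is $2\mu+8\uppi\rmi\tfrac{c}{c\tau+d}\mathbf{Q}=0$; the term $2k\,\mu\,M_\gamma$ cancels the contribution of $\partial_\tau(c\tau+d)^{-k}$ and $\mu^2 M_\gamma$ cancels that of $\partial_\tau\etp{-\tfrac{c}{c\tau+d}\mathbf{Q}}$; and the only surviving term, $-8\uppi\rmi(c\tau+d)^{-2}\mathbf{Q}\,M_\gamma\partial_\tau$, equals $-8\uppi\rmi\,M_\gamma\,\mathbf{Q}\,\partial_\tau$ by the last relation. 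What remains is exactly $M_\gamma\mathcal{E}(\mathcal{E}+2k-2)-8\uppi\rmi\,M_\gamma\,\mathbf{Q}\,\partial_\tau=M_\gamma\mathcal{L}_{k,B}^{\mathfrak{B}}$, which is \eqref{eq:LkBOperator}.

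The entire content lies in that chain-rule step: the two summands of $\mathcal{L}_{k,B}^{\mathfrak{B}}$ each fail to commute with $M_\gamma$, and one has to check that their anomalies are arranged to cancel — conceptually this is the assertion that $\mathcal{L}_{k,B}^{\mathfrak{B}}$ is the heat (Casimir) operator attached to the Jacobi slash action, and it is the only nonroutine point; the rest is pure bookkeeping. As an alternative, since the slash operator is a right group action one could reduce to a generating set of $\sltR$ — lifts of $\tbtmat{1}{b}{0}{1}$ (where $c=0$ and commutation is immediate), a lift of $\tbtmat{0}{-1}{1}{0}$, and the nontrivial central element — but the case $\tbtmat{0}{-1}{1}{0}$ is essentially the general computation with $c=1$, so this shortcut saves little.
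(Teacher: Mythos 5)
Your proposal is correct, and it proves the statement by a genuinely different route than the paper. The paper's proof stays entirely inside the coefficient formalism of Definition \ref{deff:slashOperatorFormalSeries}: it writes out the coefficient of $T^{\mathbf{p}}$ on both sides of \eqref{eq:LkBOperator} as $\Lambda$-indexed sums, and reduces their equality to two explicit identities, namely $s(\mathbf{p})(s(\mathbf{p})+2k-2)-s(\mathbf{p}-\pi(\Lambda))(s(\mathbf{p}-\pi(\Lambda))+2k-2)=4s(\Lambda)(k-1+s(\mathbf{p})-s(\Lambda))$ and a re-indexing identity for $\frac{(-\uppi\rmi cG_{\mathfrak{B}})^{\Lambda}}{\Lambda!}s(\Lambda)$ in terms of $\Lambda-E_{i,j}$. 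You instead start from the closed form \eqref{eq:deffSlashFormalSeriesAlternate} (whose equivalence with \eqref{eq:slashOperatorFormalSeries} the paper asserts, so you may use it), recast $\mathcal{L}_{k,B}^{\mathfrak{B}}$ as $\mathcal{E}(\mathcal{E}+2k-2)-8\uppi\rmi\,\mathbf{Q}\,\partial_\tau$ with $\mathcal{E}$ the Euler operator, and prove the operator identity $\mathcal{L}_{k,B}^{\mathfrak{B}}M_\gamma=M_\gamma\mathcal{L}_{k,B}^{\mathfrak{B}}$ by tracking the commutator anomalies; I checked the cancellations you claim (the $M_\gamma\mathcal{E}$ terms, the $2k\mu M_\gamma$ term against $\partial_\tau(c\tau+d)^{-k}$, the $\mu^2M_\gamma$ term against the derivative of the exponential prefactor, and the rescaling relation $\mathbf{Q}M_\gamma=(c\tau+d)^2M_\gamma\mathbf{Q}$), and they all work out, with $\det\gamma=1$ entering through $\partial_\tau(\gamma\tau)=(c\tau+d)^{-2}$. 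Your route buys conceptual clarity — it exhibits $\mathcal{L}_{k,B}^{\mathfrak{B}}$ as the heat/Casimir operator attached to the slash action and avoids all matrix-indexed multinomial bookkeeping — at the cost of an extra layer of operator formalism; the paper's route is more pedestrian but self-contained at the level of the $\Lambda$-sum definition it uses everywhere else. Do make explicit in a final write-up that the formal manipulations (multiplication by the exponential prefactor, term-by-term $\partial_\tau$) are legitimate on $\FormalSeriesSetLb$ because each coefficient is a finite sum, which is the same convergence-free point the paper relies on.
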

\begin{proof}
Put $\gamma=\eleglptRsimple{a}{b}{c}{d}{\varepsilon}$. Then the coefficient of $T^{\mathbf{p}}$-term in the left-hand side of \eqref{eq:LkBOperator} is
\begin{multline*}
\sum_{\Lambda}\frac{(-\uppi\sqrt{-1}cG_{\mathfrak{B}})^{\Lambda}}{\Lambda!}(c\tau+d)^{-(k+s(\mathbf{p})-s(\Lambda))}\cdot\Big( \\
s(\mathbf{p})(s(\mathbf{p})+2k-2)h_{\mathbf{p}-\pi(\Lambda)}\left(\frac{a\tau+b}{c\tau+d}\right) \\
-4\uppi\sqrt{-1}\sum_{i,j}a_{i,j}\cdot(-c)(k+s(\mathbf{p})-2-s(\Lambda))(c\tau+d)h_{\mathbf{p}-\pi(E_{i,j})-\pi(\Lambda)}\left(\frac{a\tau+b}{c\tau+d}\right) \\
-4\uppi\sqrt{-1}\sum_{i,j}a_{i,j}\cdot\left(\dodt h_{\mathbf{p}-\pi(E_{i,j})-\pi(\Lambda)}\right)\left(\frac{a\tau+b}{c\tau+d}\right)\Big).
\end{multline*}
On the other hand, the coefficient of $T^{\mathbf{p}}$-term in the right-hand side of \eqref{eq:LkBOperator} is
\begin{multline*}
\sum_{\Lambda}\frac{(-\uppi\sqrt{-1}cG_{\mathfrak{B}})^{\Lambda}}{\Lambda!}(c\tau+d)^{-(k+s(\mathbf{p})-s(\Lambda))}\cdot\Big( \\
s(\mathbf{p}-\pi(\Lambda))(s(\mathbf{p}-\pi(\Lambda))+2k-2)h_{\mathbf{p}-\pi(\Lambda)}\left(\frac{a\tau+b}{c\tau+d}\right) \\
-4\uppi\sqrt{-1}\sum_{i,j}a_{i,j}\left(\dodt h_{\mathbf{p}-\pi(\Lambda)-\pi(E_{i,j})}\right)\left(\frac{a\tau+b}{c\tau+d}\right) \Big).
\end{multline*}
Hence \eqref{eq:LkBOperator} is equivalent to
\begin{multline}
\sum_{\Lambda}F_\Lambda\cdot(s(\mathbf{p})(s(\mathbf{p})+2k-2)-s(\mathbf{p}-\pi(\Lambda))(s(\mathbf{p}-\pi(\Lambda))+2k-2))h_{\mathbf{p}-\pi(\Lambda)}\left(\frac{a\tau+b}{c\tau+d}\right) \\
=\sum_{\Lambda}F_\Lambda\cdot 4\uppi\sqrt{-1}\sum_{i,j}a_{i,j}(-c)(k+s(\mathbf{p})-2-s(\Lambda))(c\tau+d)h_{\mathbf{p}-\pi(E_{i,j})-\pi(\Lambda)}\left(\frac{a\tau+b}{c\tau+d}\right),
\end{multline}
where
\begin{equation*}
F_\Lambda=\frac{(-\uppi\sqrt{-1}cG_{\mathfrak{B}})^{\Lambda}}{\Lambda!}(c\tau+d)^{-(k+s(\mathbf{p})-s(\Lambda))}.
\end{equation*}
This formula can be proved, using the fact
\begin{equation*}
s(\mathbf{p})(s(\mathbf{p})+2k-2)-s(\mathbf{p}-\pi(\Lambda))(s(\mathbf{p}-\pi(\Lambda))+2k-2)=4s(\Lambda)(k-1+s(\mathbf{p})-s(\Lambda)),
\end{equation*}
and
\begin{equation*}
4\frac{(-\uppi\sqrt{-1}cG_{\mathfrak{B}})^\Lambda}{\Lambda!}s(\Lambda)=4\sum_{\twoscript{1\leq i,j \leq \dimn}{\lambda_{i,j}>0}}\frac{(-\uppi\sqrt{-1}cG_{\mathfrak{B}})^{\Lambda-E_{i,j}}}{(\Lambda-E_{i,j})!}(-\uppi\sqrt{-1}c\cdot a_{i,j}).
\end{equation*}
Here $\lambda_{i,j}$ means the $(i,j)$-entry of $\Lambda$. This concludes the proof.
\end{proof}

\begin{rema}
\label{remm:compositionTpLkB}
The composition $\mathcal{T}^{-\mathbf{p}}\circ\mathcal{L}_{k,B}^{\mathfrak{B}}$ is of particular interest when $s(\mathbf{p})=2$. For one-dimensional case, that is, the case $\dimn=1$ (in this case $\mathbf{p}$ is the integer $2$), this operator maps $\sum_{n \in \numZ}h_nT_1^n$ to $\sum_{n \in \numZ}((n+2)(n+2k)h_{n+2} - 8\uppi\sqrt{-1}m\dodt h_{n})T_1^n$, where we write $G_{\mathfrak{B}}=(2m)$. This is the modified heat operator introduced in the proof of \cite[Theorem 3.2]{EZ85}, but differs by a factor $-1$.
\end{rema}

After defining necessary operators and exploring their basic properties, we now introduce the main object in this section.
\begin{deff}
\label{deff:modularFormalSeries}
Let $H$ be a subgroup of $\slZ$, and $\rho\colon \widetilde{H}\rightarrow \GlW$ a representation. Fix an ordered $\numR$-basis $\mathfrak{B}$ of $V$. Let $\FormalSeries{h}$ be a formal series in $\FormalSeriesSetLb$. We say that $\FormalSeries{h}$ transforms like a modular form under the group $H$ of weight $k$ with representation $\rho$, if
\begin{equation*}
\FormalSeries{h}\vert_{k,B}^{\mathfrak{B}}\gamma=\FormalSeries{\rho(\gamma)\circ h}
\end{equation*}
for any $\gamma \in \widetilde{H}$. The $\numC$-space of all such series is denoted by $\FormalModularFormSet{k}{B}{H}{\rho}$.
\end{deff}
\begin{rema}
\label{rema:relationshipJacFormalSeries}
The relationship between Jacobi forms of lattice index and formal sereis which transform like modular forms are as follows. Recall notations in Convention \ref{conv1} and \ref{conv2}, and recall $\emb$ used in Proposition \ref{prop:diagramSlashOperator}. Then the image of $\JacFormHol{k}{L}{G}{\rho}$ under $\emb$ is contained in $\FormalModularFormSet{k}{B}{G}{\rho\vert_{\widetilde{G}}}$, which is a direct consequence of Proposition \ref{prop:diagramSlashOperator}.
\end{rema}
Some subspaces of $\FormalModularFormSet{k}{B}{H}{\rho}$ are also necessary for us. We define
\begin{align}
\label{eq:FormalModularFormSubsetVector}\FormalModularFormSubset{k}{B}{H}{\rho}{\mathbf{j_0}}&=\FormalModularFormSet{k}{B}{H}{\rho} \cap \FormalSeriesSetLbf{\mathbf{j_0}} \\
\FormalModularFormSubset{k}{B}{H}{\rho}{\mathbf{j_0},+}&=\{\FormalSeries{h} \in \FormalModularFormSubset{k}{B}{H}{\rho}{\mathbf{j_0}} \colon h_{\mathbf{j}}\neq 0 \implies 2 \mid s(\mathbf{j})\} \label{eq:FormalModularFormSubsetPlus}\\
\FormalModularFormSubset{k}{B}{H}{\rho}{\mathbf{j_0},-}&=\{\FormalSeries{h} \in \FormalModularFormSubset{k}{B}{H}{\rho}{\mathbf{j_0}} \colon h_{\mathbf{j}}\neq 0 \implies 2 \nmid s(\mathbf{j})\}. \label{eq:FormalModularFormSubsetMinus}
\end{align}
The following are another subspaces, in which $s_0$ is any integer:
\begin{align}
\label{eq:FormalModularFormSubsetScalar}\FormalModularFormSubset{k}{B}{H}{\rho}{s_0}&=\{\FormalSeries{h} \in \FormalModularFormSet{k}{B}{H}{\rho} \colon h_{\mathbf{j}}\neq 0 \implies s(\mathbf{j}) \geq s_0\} \\
\FormalModularFormSubset{k}{B}{H}{\rho}{s_0,+}&=\{\FormalSeries{h} \in \FormalModularFormSubset{k}{B}{H}{\rho}{s_0} \colon h_{\mathbf{j}}\neq 0 \implies 2 \mid s(\mathbf{j})\} \label{eq:FormalModularFormSubset2Plus}\\
\FormalModularFormSubset{k}{B}{H}{\rho}{s_0,-}&=\{\FormalSeries{h} \in \FormalModularFormSubset{k}{B}{H}{\rho}{s_0} \colon h_{\mathbf{j}}\neq 0 \implies 2 \nmid s(\mathbf{j})\}. \label{eq:FormalModularFormSubset2Minus}
\end{align}
It is immediate that $\FormalModularFormSubset{k}{B}{H}{\rho}{\mathbf{j_0}}=\FormalModularFormSubset{k}{B}{H}{\rho}{\mathbf{j_0},+}\oplus \FormalModularFormSubset{k}{B}{H}{\rho}{\mathbf{j_0},-}$ and $\FormalModularFormSubset{k}{B}{H}{\rho}{s_0}=\FormalModularFormSubset{k}{B}{H}{\rho}{s_0,+}\oplus \FormalModularFormSubset{k}{B}{H}{\rho}{s_0,-}$. We call \eqref{eq:FormalModularFormSubsetPlus} and \eqref{eq:FormalModularFormSubset2Plus} plus spaces, \eqref{eq:FormalModularFormSubsetMinus} and \eqref{eq:FormalModularFormSubset2Minus} minus spaces.

Now we investigate how the operators $\mathcal{T}^{-\mathbf{p}}$ and $\mathcal{L}_{k,B}^{\mathfrak{B}}$ act on above spaces.
\begin{prop}
\label{prop:TpActOnFormalModularSeries}
Use notations in Definition \ref{deff:modularFormalSeries}. Let $\mathbf{p}$ and $\mathbf{j_0}$ be two vectors in $\mulindZn$, and let $s_0$ be an integer. Then $\mathcal{T}^{-\mathbf{p}}$ maps $\FormalModularFormSet{k}{B}{H}{\rho}$, $\FormalModularFormSubset{k}{B}{H}{\rho}{\mathbf{j_0}}$, and $\FormalModularFormSubset{k}{B}{H}{\rho}{s_0}$, bijectively onto $\FormalModularFormSet{k+s(\mathbf{p})}{B}{H}{\rho}$, $\FormalModularFormSubset{k+s(\mathbf{p})}{B}{H}{\rho}{\mathbf{j_0}-\mathbf{p}}$, and $\FormalModularFormSubset{k+s(\mathbf{p})}{B}{H}{\rho}{s_0-s(\mathbf{p})}$ respectively. Moreover, if $2\mid s(\mathbf{p})$, then $\mathcal{T}^{-\mathbf{p}}$ maps plus (minus resp.) spaces bijectively onto corresponding plus (minus resp.) spaces. If $2\nmid s(\mathbf{p})$, then $\mathcal{T}^{-\mathbf{p}}$ maps plus (minus resp.) spaces bijectively onto corresponding minus (plus resp.) spaces.
\end{prop}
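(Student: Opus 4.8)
The plan is to reduce the whole statement to Proposition \ref{prop:multiplyByTpOperatorAndSlashOperator} together with elementary bookkeeping of supports of formal series. I begin by recording the properties of $\mathcal{T}^{-\mathbf{p}}$ that will be used. By Definition \ref{deff:multiplyByTpOperator}, $\mathcal{T}^{-\mathbf{p}}$ is a $\numC$-linear bijection of $\FormalSeriesSet$ whose inverse is $\mathcal{T}^{\mathbf{p}}$; if $\mathcal{T}^{-\mathbf{p}}\FormalSeries{h}=\sum_{\mathbf{q}\in\mulindZn}g_{\mathbf{q}}T^{\mathbf{q}}$ then $g_{\mathbf{q}}=h_{\mathbf{q}+\mathbf{p}}$, so the support of $(g_{\mathbf{q}})$ is the support of $(h_{\mathbf{j}})$ translated by $-\mathbf{p}$. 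Three consequences: (i) $\mathcal{T}^{-\mathbf{p}}$ restricts to a bijection $\FormalSeriesSetLbf{\mathbf{j_0}}\to\FormalSeriesSetLbf{\mathbf{j_0}-\mathbf{p}}$, hence of $\FormalSeriesSetLb$; (ii) $\mathcal{T}^{-\mathbf{p}}$ commutes with the coefficient-wise application of any linear operator $\sigma$ on $\myran$, i.e.\ $\mathcal{T}^{-\mathbf{p}}\sum_{\mathbf{j}}(\sigma\circ h_{\mathbf{j}})T^{\mathbf{j}}=\sum_{\mathbf{q}}(\sigma\circ g_{\mathbf{q}})T^{\mathbf{q}}$; and (iii) applying Proposition \ref{prop:multiplyByTpOperatorAndSlashOperator} with $-\mathbf{p}$ in place of $\mathbf{p}$, and recalling $s(-\mathbf{p})=-s(\mathbf{p})$, the operator $\mathcal{T}^{-\mathbf{p}}$ intertwines the slash actions of the two weights:
\begin{equation*}
\bigl(\mathcal{T}^{-\mathbf{p}}\FormalSeries{h}\bigr)\vert_{k+s(\mathbf{p}),B}^{\mathfrak{B}}\gamma=\mathcal{T}^{-\mathbf{p}}\bigl(\FormalSeries{h}\vert_{k,B}^{\mathfrak{B}}\gamma\bigr),\qquad \gamma\in\sltR.
\end{equation*}

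Next I would prove the bijection $\FormalModularFormSet{k}{B}{H}{\rho}\to\FormalModularFormSet{k+s(\mathbf{p})}{B}{H}{\rho}$. Let $\FormalSeries{h}\in\FormalModularFormSet{k}{B}{H}{\rho}$ and $\gamma\in\widetilde{H}\subseteq\sltR$. Combining (iii) with the transformation law $\FormalSeries{h}\vert_{k,B}^{\mathfrak{B}}\gamma=\sum_{\mathbf{j}}(\rho(\gamma)\circ h_{\mathbf{j}})T^{\mathbf{j}}$ and then (ii), one gets $\bigl(\mathcal{T}^{-\mathbf{p}}\FormalSeries{h}\bigr)\vert_{k+s(\mathbf{p}),B}^{\mathfrak{B}}\gamma=\sum_{\mathbf{q}}(\rho(\gamma)\circ g_{\mathbf{q}})T^{\mathbf{q}}$, so $\mathcal{T}^{-\mathbf{p}}\FormalSeries{h}$ satisfies the defining transformation law of $\FormalModularFormSet{k+s(\mathbf{p})}{B}{H}{\rho}$, and it lies in $\FormalSeriesSetLb$ by (i). The identical argument run with $\mathcal{T}^{\mathbf{p}}$ in place of $\mathcal{T}^{-\mathbf{p}}$ and weight $k+s(\mathbf{p})$ in place of $k$ (this time invoking Proposition \ref{prop:multiplyByTpOperatorAndSlashOperator} directly) shows $\mathcal{T}^{\mathbf{p}}$ carries $\FormalModularFormSet{k+s(\mathbf{p})}{B}{H}{\rho}$ into $\FormalModularFormSet{k}{B}{H}{\rho}$; since $\mathcal{T}^{\mathbf{p}}$ and $\mathcal{T}^{-\mathbf{p}}$ are mutually inverse on $\FormalSeriesSet$, they restrict to mutually inverse bijections between these two spaces.

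Finally I would deduce the statements about the subspaces, which is pure support bookkeeping layered on the previous step. By (i), $\mathcal{T}^{-\mathbf{p}}$ maps $\FormalSeriesSetLbf{\mathbf{j_0}}$ onto $\FormalSeriesSetLbf{\mathbf{j_0}-\mathbf{p}}$, so intersecting with the bijection of the previous paragraph yields $\FormalModularFormSubset{k}{B}{H}{\rho}{\mathbf{j_0}}\to\FormalModularFormSubset{k+s(\mathbf{p})}{B}{H}{\rho}{\mathbf{j_0}-\mathbf{p}}$. For the $s_0$-spaces, $g_{\mathbf{q}}\neq 0$ forces $s(\mathbf{q}+\mathbf{p})\geq s_0$, i.e.\ $s(\mathbf{q})\geq s_0-s(\mathbf{p})$ (and the reverse implication holds for $\mathcal{T}^{\mathbf{p}}$), giving $\FormalModularFormSubset{k}{B}{H}{\rho}{s_0}\to\FormalModularFormSubset{k+s(\mathbf{p})}{B}{H}{\rho}{s_0-s(\mathbf{p})}$. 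For the plus/minus spaces, when the original series lies in a plus space $g_{\mathbf{q}}\neq 0$ forces $2\mid s(\mathbf{q}+\mathbf{p})=s(\mathbf{q})+s(\mathbf{p})$, so $\mathbf{q}$ has even (resp.\ odd) $s$-degree according as $s(\mathbf{p})$ is even (resp.\ odd); thus a plus space maps to a plus space when $2\mid s(\mathbf{p})$ and to a minus space when $2\nmid s(\mathbf{p})$, with the symmetric statement for minus spaces, and intersecting with the bijections already established finishes all remaining cases. There is no real obstacle here: the only thing requiring care is the weight and index bookkeeping, in particular remembering $s(-\mathbf{p})=-s(\mathbf{p})$ when applying Proposition \ref{prop:multiplyByTpOperatorAndSlashOperator}; everything else is purely formal.
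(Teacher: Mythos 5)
Your proof is correct and is essentially the paper's own argument: the paper disposes of this proposition by citing Proposition \ref{prop:multiplyByTpOperatorAndSlashOperator}, and your write-up simply spells out the intertwining identity (applied with $-\mathbf{p}$, noting $s(-\mathbf{p})=-s(\mathbf{p})$), the inverse $\mathcal{T}^{\mathbf{p}}$, and the support/parity bookkeeping that the paper leaves implicit.
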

\begin{proof}
It's a consequence of Proposition \ref{prop:multiplyByTpOperatorAndSlashOperator}.
\end{proof}
\begin{prop}
\label{prop:LkBActOnFormalModularSeries}
Use notations in Definition \ref{deff:modularFormalSeries}. Let $\mathbf{j_0}$ be a vector in $\mulindZn$, and let $s_0$ be an integer. Then $\mathcal{L}_{k,B}^{\mathfrak{B}}$ maps $\FormalModularFormSet{k}{B}{H}{\rho}$, $\FormalModularFormSubset{k}{B}{H}{\rho}{\mathbf{j_0}}$, and $\FormalModularFormSubset{k}{B}{H}{\rho}{s_0}$, into themselves respectively.
\end{prop}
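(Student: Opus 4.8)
The plan is to bootstrap off Proposition~\ref{prop:LkBOperator}, which carries all the real content: it says that $\mathcal{L}_{k,B}^{\mathfrak{B}}$ commutes with every slash operator $\vert_{k,B}^{\mathfrak{B}}\gamma$, $\gamma\in\sltR$. Combined with two elementary bookkeeping facts this yields the three invariance statements at once. The first bookkeeping fact is that $\mathcal{L}_{k,B}^{\mathfrak{B}}$ commutes with post-composition by any fixed $\sigma\in\GlW$: writing $\FormalSeriesi{g}{p}=\mathcal{L}_{k,B}^{\mathfrak{B}}\FormalSeries{h}$, I claim that $\mathcal{L}_{k,B}^{\mathfrak{B}}\FormalSeries{\sigma\circ h}=\FormalSeries{\sigma\circ g}$. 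Indeed, by Definition~\ref{deff:LkBOperator} each coefficient $g_{\mathbf{p}}$ is a $\numC$-linear combination of the $h_{\mathbf{q}}$ and of their first $\tau$-derivatives $\dodt h_{\mathbf{q}}$; since $\sigma$ is a $\tau$-independent linear operator on $\mathcal{W}$ one has $\sigma\circ\dodt f=\dodt(\sigma\circ f)$, so applying $\sigma$ to the defining formula for $g_{\mathbf{p}}$ recovers exactly the defining formula for the $\mathbf{p}$-th coefficient of $\mathcal{L}_{k,B}^{\mathfrak{B}}\FormalSeries{\sigma\circ h}$.

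Granting this, the modular transformation property is immediate: for $\FormalSeries{h}\in\FormalModularFormSet{k}{B}{H}{\rho}$ and $\gamma\in\widetilde{H}$, Proposition~\ref{prop:LkBOperator} together with Definition~\ref{deff:modularFormalSeries} gives
\[
\left(\mathcal{L}_{k,B}^{\mathfrak{B}}\FormalSeries{h}\right)\vert_{k,B}^{\mathfrak{B}}\gamma=\mathcal{L}_{k,B}^{\mathfrak{B}}\left(\FormalSeries{h}\vert_{k,B}^{\mathfrak{B}}\gamma\right)=\mathcal{L}_{k,B}^{\mathfrak{B}}\FormalSeries{\rho(\gamma)\circ h}=\FormalSeries{\rho(\gamma)\circ g},
\]
so $\mathcal{L}_{k,B}^{\mathfrak{B}}\FormalSeries{h}$ satisfies the transformation law of Definition~\ref{deff:modularFormalSeries} for weight $k$ with representation $\rho$ (no weight shift occurs, consistently with Proposition~\ref{prop:LkBOperator}). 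That $\mathcal{L}_{k,B}^{\mathfrak{B}}\FormalSeries{h}$ still lies in $\FormalSeriesSetLb$ will follow from the support analysis below.

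The second bookkeeping fact, needed for the two families of subspaces, is that $\mathcal{L}_{k,B}^{\mathfrak{B}}$ does not enlarge supports in the relevant senses. The coefficient $g_{\mathbf{p}}$ involves only $h_{\mathbf{p}}$ and the $h_{\mathbf{p}-\pi(E_{i,j})}$ for $1\le i,j\le\dimn$, and each shift vector $\pi(E_{i,j})$ lies in $\numgeq{Z}{0}^{\dimn}$ with $s(\pi(E_{i,j}))=2$ (directly from $\pi_l(E_{i,j})=\delta_{li}+\delta_{lj}$). Hence $g_{\mathbf{p}}\ne 0$ forces $h_{\mathbf{q}}\ne 0$ for some $\mathbf{q}$ with $\mathbf{p}-\mathbf{q}\in\numgeq{Z}{0}^{\dimn}$ and $s(\mathbf{q})\in\{s(\mathbf{p}),\,s(\mathbf{p})-2\}$. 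Therefore: if $\FormalSeries{h}$ is supported on $\{\mathbf{j}\ge\mathbf{j_0}\}$ then so is $\FormalSeriesi{g}{p}$ (in particular it is bounded below, closing the gap above), so $\mathcal{L}_{k,B}^{\mathfrak{B}}$ preserves $\FormalSeriesSetLbf{\mathbf{j_0}}$ and hence $\FormalModularFormSubset{k}{B}{H}{\rho}{\mathbf{j_0}}=\FormalModularFormSet{k}{B}{H}{\rho}\cap\FormalSeriesSetLbf{\mathbf{j_0}}$; and if $h_{\mathbf{j}}\ne 0\implies s(\mathbf{j})\ge s_0$, then the same implication holds for $g$, so $\FormalModularFormSubset{k}{B}{H}{\rho}{s_0}$ is preserved.

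I do not expect any serious obstacle: essentially everything analytic has already been absorbed into Proposition~\ref{prop:LkBOperator}, and the only points needing a moment's care are recognizing $\sigma=\rho(\gamma)$ as $\tau$-independent so that it commutes with $\dodt$, and the tiny computation $\pi(E_{i,j})\in\numgeq{Z}{0}^{\dimn}$, $s(\pi(E_{i,j}))=2$ that governs the admissible index shifts.
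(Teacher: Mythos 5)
Your proof is correct and follows essentially the same route as the paper: invoke Proposition \ref{prop:LkBOperator} together with the fact that $\mathcal{L}_{k,B}^{\mathfrak{B}}$ commutes with post-composition by $\rho(\gamma)$. The paper leaves the support-preservation bookkeeping (that the shifts $\pi(E_{i,j})$ are nonnegative with $s(\pi(E_{i,j}))=2$) implicit, which you spell out, but this is just added detail rather than a different argument.
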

\begin{proof}
Using Proposition \ref{prop:LkBOperator} and the fact
\begin{equation*}
\mathcal{L}_{k,B}^{\mathfrak{B}}\left(\FormalSeries{\rho(\gamma)\circ h}\right)=\FormalSeries{\rho(\gamma)\circ g},
\end{equation*}
where
\begin{equation*}
\FormalSeries{g} = \mathcal{L}_{k,B}^{\mathfrak{B}}\left(\FormalSeries{h}\right).
\end{equation*}
\end{proof}
The above two propositions deal with general mapping properties of $\mathcal{T}^{-\mathbf{p}}$ and $\mathcal{L}_{k,B}^{\mathfrak{B}}$ acting on formal series which transform like modular forms, while the following is a special but crucial one for our purpose.
\begin{lemm}
\label{lemm:LkBMappingPropertyCrucial}
The operator $\mathcal{L}_{k,B}^{\mathfrak{B}}$ maps $\FormalModularFormSubset{k}{B}{H}{\rho}{0,+}$ into $\FormalModularFormSubset{k}{B}{H}{\rho}{2,+}$.
\end{lemm}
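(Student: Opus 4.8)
The plan is to read everything off the explicit formula in Definition \ref{deff:LkBOperator} together with Proposition \ref{prop:LkBActOnFormalModularSeries}. Write $\mathcal{L}_{k,B}^{\mathfrak{B}}\FormalSeries{h}=\FormalSeriesi{g}{p}$, so that
\begin{equation*}
g_{\mathbf{p}}=s(\mathbf{p})(s(\mathbf{p})+2k-2)h_{\mathbf{p}}-4\uppi\sqrt{-1}\sum_{1\leq i,j\leq \dimn}a_{i,j}\dodt h_{\mathbf{p}-\pi(E_{i,j})},
\end{equation*}
and record the elementary observation that $s(\pi(E_{i,j}))=2$ for every $i,j$ (the $i$-th row and the $j$-th column of $E_{i,j}$ each sum to $1$, all other rows and columns to $0$). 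Hence $g_{\mathbf{p}}$ is a $\numC[\tau]$-linear combination of $h_{\mathbf{p}}$ and of the finitely many $h_{\mathbf{q}}$ with $s(\mathbf{q})=s(\mathbf{p})-2$.

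Now fix $\FormalSeries{h}\in\FormalModularFormSubset{k}{B}{H}{\rho}{0,+}$. By Proposition \ref{prop:LkBActOnFormalModularSeries} (applied with $s_0=0$), the series $\FormalSeriesi{g}{p}$ again lies in $\FormalModularFormSet{k}{B}{H}{\rho}$, so it transforms like a modular form; it only remains to check its support, i.e.\ that $g_{\mathbf{p}}\neq 0$ forces $s(\mathbf{p})\geq 2$ and $2\mid s(\mathbf{p})$. For the parity: by hypothesis $h_{\mathbf{j}}\neq 0\implies 2\mid s(\mathbf{j})$, and since $g_{\mathbf{p}}$ involves only $h_{\mathbf{p}}$ and the $h_{\mathbf{p}-\pi(E_{i,j})}$ while $s(\mathbf{p}-\pi(E_{i,j}))=s(\mathbf{p})-2\equiv s(\mathbf{p})\pmod 2$, the nonvanishing of $g_{\mathbf{p}}$ already forces $2\mid s(\mathbf{p})$. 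This is exactly where the ``plus'' hypothesis is used.

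It remains to rule out $s(\mathbf{p})\leq 1$. If $s(\mathbf{p})\leq -1$ then $h_{\mathbf{p}}=0$ and $h_{\mathbf{p}-\pi(E_{i,j})}=0$ by the support condition $h_{\mathbf{j}}\neq 0\implies s(\mathbf{j})\geq 0$ defining $\FormalModularFormSubset{k}{B}{H}{\rho}{0,+}$, so $g_{\mathbf{p}}=0$. If $s(\mathbf{p})=0$ then the scalar $s(\mathbf{p})(s(\mathbf{p})+2k-2)$ vanishes, killing the first term, while each $\mathbf{p}-\pi(E_{i,j})$ has $s(\mathbf{p}-\pi(E_{i,j}))=-2<0$, so again $h_{\mathbf{p}-\pi(E_{i,j})}=0$ and $g_{\mathbf{p}}=0$. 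If $s(\mathbf{p})=1$ then $h_{\mathbf{p}}=0$ by parity and $h_{\mathbf{p}-\pi(E_{i,j})}=0$ since $s(\mathbf{p}-\pi(E_{i,j}))=-1<0$, so once more $g_{\mathbf{p}}=0$. Combining the three points, $g_{\mathbf{p}}\neq 0$ forces $s(\mathbf{p})\geq 2$ together with $2\mid s(\mathbf{p})$, i.e.\ $\FormalSeriesi{g}{p}\in\FormalModularFormSubset{k}{B}{H}{\rho}{2,+}$, as claimed. There is no real obstacle: the entire argument is bookkeeping with the degree functional $s(\cdot)$, the one piece of ``content'' being that the scalar coefficient $s(\mathbf{p})(s(\mathbf{p})+2k-2)$ carries the factor $s(\mathbf{p})$, which is precisely what makes the degree-$0$ coefficient disappear — and is presumably why $\mathcal{L}_{k,B}^{\mathfrak{B}}$ was normalized this way.
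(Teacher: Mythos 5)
Your proposal is correct and follows essentially the same route as the paper's proof: invoke Proposition \ref{prop:LkBActOnFormalModularSeries} for the modular transformation property, use $s(\pi(E_{i,j}))=2$ to preserve the parity of $s(\mathbf{j})$, and observe that the factor $s(\mathbf{p})$ in $s(\mathbf{p})(s(\mathbf{p})+2k-2)$ together with the support condition kills the coefficients with $s(\mathbf{p})\leq 1$. Your case-by-case check of $s(\mathbf{p})\leq -1$, $=0$, $=1$ is just a slightly more explicit rendering of the same bookkeeping (only note that $g_{\mathbf{p}}$ involves derivatives of the $h_{\mathbf{p}-\pi(E_{i,j})}$, not merely scalar multiples, which does not affect the vanishing argument).
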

\begin{proof}
We have known from the last proposition that $\mathcal{L}_{k,B}^{\mathfrak{B}}$ maps $\FormalModularFormSubset{k}{B}{H}{\rho}{0}$ into $\FormalModularFormSubset{k}{B}{H}{\rho}{0}$. Furthermore, since $s(\mathbf{p}-\pi(E_{i,j}))=s(\mathbf{p})-2$ for any $\mathbf{p} \in \mulindZn$ and $1 \leq i,\,j \leq \dimn$, we have $\mathcal{L}_{k,B}^{\mathfrak{B}}$ maps $\FormalModularFormSubset{k}{B}{H}{\rho}{0,+}$ into $\FormalModularFormSubset{k}{B}{H}{\rho}{0,+}$. Finally, put $\mathcal{L}_{k,B}^{\mathfrak{B}}\FormalSeries{h}=\FormalSeriesi{g}{p}$, where $\FormalSeries{h} \in \FormalModularFormSubset{k}{B}{H}{\rho}{0,+}$. Then it follows from the expression of $g_\mathbf{p}$ in Definition \ref{deff:LkBOperator} that, $s(\mathbf{p})=0$ implies $g_\mathbf{p}=0$. Therefore, $\FormalSeriesi{g}{p} \in \FormalModularFormSubset{k}{B}{H}{\rho}{2,+}$, which concludes the proof.
\end{proof}
This is why we are interested in operators of the form $\mathcal{T}^{-\mathbf{p}}\circ\mathcal{L}_{k,B}^{\mathfrak{B}}$, with $s(\mathbf{p})=2$, as announced in Remark \ref{remm:compositionTpLkB}. By composing several such operators, we obtain certain weight-raising operators.
\begin{coro}
\label{coro:DkpMappingProperty}
Let $\lambda \in \numgeq{Z}{1}$. Let $\mathbf{p}_0,\dots,\mathbf{p}_{\lambda-1}$ be $\lambda$ vectors in $\mulindZn$ with $s(\mathbf{p}_i)=2$, and let $\mathbf{p}_{-1}$ be a vector in $\mulindZn$ such that $s(\mathbf{p}_{-1})=1$. Then the operator
\begin{equation}
\label{eq:opDpPlus}
\mathcal{T}^{-\mathbf{p}_{\lambda-1}}\circ\mathcal{L}_{k+2(\lambda-1)}\circ\dots\mathcal{T}^{-\mathbf{p}_1}\circ\mathcal{L}_{k+2}\circ\mathcal{T}^{-\mathbf{p}_0}\circ\mathcal{L}_{k}
\end{equation}
maps $\FormalModularFormSubset{k}{B}{H}{\rho}{0,+}$ into $\FormalModularFormSubset{k+2\lambda}{B}{H}{\rho}{0,+}$, and the operator
\begin{equation}
\label{eq:opDpMinus}
\mathcal{T}^{-\mathbf{p}_{\lambda-1}}\circ\mathcal{L}_{k+1+2(\lambda-1)}\circ\dots\mathcal{T}^{-\mathbf{p}_1}\circ\mathcal{L}_{k+1+2}\circ\mathcal{T}^{-\mathbf{p}_0}\circ\mathcal{L}_{k+1}\circ\mathcal{T}^{-\mathbf{p}_{-1}}
\end{equation}
maps $\FormalModularFormSubset{k}{B}{H}{\rho}{1,-}$ into $\FormalModularFormSubset{k+1+2\lambda}{B}{H}{\rho}{0,+}$.
\end{coro}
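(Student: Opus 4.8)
The plan is to prove both assertions by a short induction on $\lambda$, tracking how the two subspace parameters --- the lower bound $s_0$ on $s(\mathbf{j})$ and the parity of $s(\mathbf{j})$ --- transform under each elementary factor. The only substantive inputs are Proposition \ref{prop:TpActOnFormalModularSeries} (mapping properties of $\mathcal{T}^{-\mathbf{p}}$) and Lemma \ref{lemm:LkBMappingPropertyCrucial} (the crucial mapping property of $\mathcal{L}_{k,B}^{\mathfrak{B}}$ on the plus space $\FormalModularFormSubset{k}{B}{H}{\rho}{0,+}$); everything else is bookkeeping.

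First I would isolate the basic building block: for any $\mathbf{p} \in \mulindZn$ with $s(\mathbf{p})=2$, the composite $\mathcal{T}^{-\mathbf{p}}\circ\mathcal{L}_{k,B}^{\mathfrak{B}}$ maps $\FormalModularFormSubset{k}{B}{H}{\rho}{0,+}$ into $\FormalModularFormSubset{k+2}{B}{H}{\rho}{0,+}$. Indeed, Lemma \ref{lemm:LkBMappingPropertyCrucial} gives $\mathcal{L}_{k,B}^{\mathfrak{B}}\colon \FormalModularFormSubset{k}{B}{H}{\rho}{0,+}\to \FormalModularFormSubset{k}{B}{H}{\rho}{2,+}$, and then Proposition \ref{prop:TpActOnFormalModularSeries}, applied with $s_0=2$ and the even vector $\mathbf{p}$, sends $\FormalModularFormSubset{k}{B}{H}{\rho}{2,+}$ onto $\FormalModularFormSubset{k+2}{B}{H}{\rho}{0,+}$: the weight rises by $s(\mathbf{p})=2$, the lower bound drops to $2-2=0$, and the plus parity is preserved because $s(\mathbf{p})$ is even. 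Iterating this block $\lambda$ times, with $k$ replaced successively by $k, k+2, \dots, k+2(\lambda-1)$, and noting that the weight subscripts on the $\mathcal{L}$'s in \eqref{eq:opDpPlus} are exactly these values, carries $\FormalModularFormSubset{k}{B}{H}{\rho}{0,+}$ into $\FormalModularFormSubset{k+2\lambda}{B}{H}{\rho}{0,+}$; this is a one-line induction on $\lambda$ once the building block is in hand. That settles \eqref{eq:opDpPlus}.

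For \eqref{eq:opDpMinus} I would simply precede the operator \eqref{eq:opDpPlus} (with $k$ replaced by $k+1$) by the single factor $\mathcal{T}^{-\mathbf{p}_{-1}}$, where $s(\mathbf{p}_{-1})=1$. By Proposition \ref{prop:TpActOnFormalModularSeries}, applied with $s_0=1$ and the odd vector $\mathbf{p}_{-1}$, this factor maps $\FormalModularFormSubset{k}{B}{H}{\rho}{1,-}$ onto $\FormalModularFormSubset{k+1}{B}{H}{\rho}{0,+}$: the weight rises by $1$, the lower bound drops to $1-1=0$, and the odd value $s(\mathbf{p}_{-1})$ flips the minus parity to plus. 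Applying the already-proved conclusion for \eqref{eq:opDpPlus} at weight $k+1$ then lands in $\FormalModularFormSubset{k+1+2\lambda}{B}{H}{\rho}{0,+}$, which is the claim.

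There is no genuine obstacle here; the whole content is the index arithmetic carried through each factor, the two hard facts having been established in Lemma \ref{lemm:LkBMappingPropertyCrucial} and Proposition \ref{prop:TpActOnFormalModularSeries}. The one point deserving a moment's care is the junction in \eqref{eq:opDpMinus}: one must check that the output of the odd shift $\mathcal{T}^{-\mathbf{p}_{-1}}$ is \emph{exactly} the plus space $\FormalModularFormSubset{k+1}{B}{H}{\rho}{0,+}$ on which the subsequent chain of $\mathcal{T}^{-\mathbf{p}_i}\circ\mathcal{L}$'s is designed to act, i.e.\ that both the lower bound and the parity line up correctly --- which they do, as recorded above.
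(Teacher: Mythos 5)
Your argument is correct and is essentially the paper's own proof: the first assertion is the iterated combination of Lemma \ref{lemm:LkBMappingPropertyCrucial} with Proposition \ref{prop:TpActOnFormalModularSeries}, and the second follows from the first together with the fact that $\mathcal{T}^{-\mathbf{p}_{-1}}$ maps $\FormalModularFormSubset{k}{B}{H}{\rho}{1,-}$ onto $\FormalModularFormSubset{k+1}{B}{H}{\rho}{0,+}$. The paper states this more tersely, but the index and parity bookkeeping you spell out is exactly what is implicit there.
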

\begin{proof}
The first assertion follows from Proposition \ref{prop:TpActOnFormalModularSeries} and Lemma \ref{lemm:LkBMappingPropertyCrucial}. The second assertion follows from the first one and the fact that $\mathcal{T}^{-\mathbf{p}_{-1}}$ maps $\FormalModularFormSubset{k}{B}{H}{\rho}{1,-}$ onto $\FormalModularFormSubset{k+1}{B}{H}{\rho}{0,+}$.
\end{proof}

We want to find explicit descriptions of operators \eqref{eq:opDpPlus} and \eqref{eq:opDpMinus}. Luckily, there exists a simple expression.
%Recall a basic property of Euler $\Gamma$-function. For $a,\,b \in \numZ$ with $a \leq b+1$, and $z,\,w \in \numC$ with $w \neq 0$, we have
%\begin{equation}
%\label{eq:EulerGammaLemma}
%\prod_{\twoscript{a \leq k \leq b}{k \in \numZ}}(z+kw)=w^{b+1-a}\frac{\Gamma(z/w+b+1)}{\Gamma(z/w+a)},
%\end{equation}
%where an empty propduct is understood to be $1$.

\begin{prop}
\label{prop:expressionDkp}
Fix the bilinear form $B$ and the weight $k$ as in Convention \ref{conv1} and \ref{conv2}, and fix an ordered $\numR$-basis $\mathfrak{B}$ of $V$ as before. Suppose $\FormalSeries{h} \in \FormalSeriesSet$. Then the coefficient of $T^{\mathbf{j}}$-term of \eqref{eq:opDpPlus} acting on $\FormalSeries{h}$ is
\begin{multline*}
4^{\lambda}\lambda!\sum_{\Lambda \in \mymatset{\numgeq{Z}{0}}{\dimn}{\dimn}}\frac{(-\uppi\rmi G_{\mathfrak{B}})^{\Lambda}}{\Lambda!}\binom{s(\mathbf{j})/2+\lambda-s(\Lambda)}{\lambda-s(\Lambda)}\\
\cdot\frac{\Gamma(s(\mathbf{j})/2+k+2\lambda-1-s(\Lambda))}{\Gamma(s(\mathbf{j})/2+k+\lambda-1)}\dodth{s(\Lambda)}h_{\mathbf{j}+\mathbf{p}-\pi(\Lambda)},
\end{multline*}
where $\mathbf{p}=\sum_{0 \leq i < \lambda}\mathbf{p_i}$. On the other hand, the coefficient of $T^{\mathbf{j}}$-term of \eqref{eq:opDpMinus} acting on $\FormalSeries{h}$ is the same as the last expression, but with $\mathbf{p}=\sum_{-1 \leq i < \lambda}\mathbf{p_i}$ and the $\Gamma$-factor replaced by
\begin{equation*}
\frac{\Gamma(s(\mathbf{j})/2+k+2\lambda-s(\Lambda))}{\Gamma(s(\mathbf{j})/2+k+\lambda)}.
\end{equation*}
\end{prop}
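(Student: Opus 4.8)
The plan is to establish the formula for \eqref{eq:opDpPlus} by induction on $\lambda$ and then read off the one for \eqref{eq:opDpMinus} as a corollary. First I would record how the two building blocks act on coefficients: by Definition \ref{deff:multiplyByTpOperator} the $T^{\mathbf{j}}$-coefficient of $\mathcal{T}^{-\mathbf{q}}$ applied to a series $\FormalSeries{h}$ is $h_{\mathbf{j}+\mathbf{q}}$, and by Definition \ref{deff:LkBOperator} the $T^{\mathbf{p}}$-coefficient of $\mathcal{L}_{k,B}^{\mathfrak{B}}$ applied to $\FormalSeries{h}$ is $s(\mathbf{p})(s(\mathbf{p})+2k-2)h_{\mathbf{p}}-4\uppi\rmi\sum_{i,j}a_{i,j}\dodt h_{\mathbf{p}-\pi(E_{i,j})}$, where each $\pi(E_{i,j})$ has component-sum $2$ and, since $\pi$ is additive, $\pi(\Lambda)=\sum\pi(E_{i,j})$ over the entries of $\Lambda$. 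I would also note that whenever a summand of the claimed formula is nonzero, the factor $\binom{s(\mathbf{j})/2+\lambda-s(\Lambda)}{\lambda-s(\Lambda)}$ forces $\lambda-s(\Lambda)\ge 0$, so the quotient of $\Gamma$-values there is a product of $\lambda-s(\Lambda)$ consecutive shifted values (a polynomial), and every identity below is a polynomial identity in $s(\mathbf{j})/2$, $k$ and $\lambda$; no analytic issue arises. Finally, the operator \eqref{eq:opDpMinus} equals the operator \eqref{eq:opDpPlus} --- with $k$ replaced by $k+1$ and the same $\mathbf{p}_0,\dots,\mathbf{p}_{\lambda-1}$ --- pre-composed with $\mathcal{T}^{-\mathbf{p}_{-1}}$. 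Since $s(\mathbf{p}_{-1})=1$ and $\mathcal{T}^{-\mathbf{p}_{-1}}$ sends $\FormalSeries{h}$ to the series with $T^{\mathbf{m}}$-coefficient $h_{\mathbf{m}+\mathbf{p}_{-1}}$, once the plus-case formula is in hand the minus-case one follows by substituting $k\mapsto k+1$ (which turns the $\Gamma$-factor into the stated one, the two shifts $+1$ cancelling against $2\lambda-1$ and $\lambda-1$) and replacing the subscript $\mathbf{p}^{(\lambda)}$ by $\mathbf{p}^{(\lambda)}+\mathbf{p}_{-1}$ inside $h_{\,\cdot\,}$, while $\mathbf{j}$ and $s(\mathbf{j})$ are untouched.

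For the base case I would observe that for $\lambda=0$ the composite is the identity and the claimed formula collapses to $h_{\mathbf{j}}$, because $\binom{s(\mathbf{j})/2-s(\Lambda)}{-s(\Lambda)}$ annihilates every $\Lambda\ne 0$; alternatively one checks $\lambda=1$ directly against $\mathcal{T}^{-\mathbf{p}_0}\circ\mathcal{L}_k$ using $s(\mathbf{j}+\mathbf{p}_0)=s(\mathbf{j})+2$ and $\Gamma(z+1)/\Gamma(z)=z$. For the inductive step, write $\mathbf{p}^{(\lambda)}=\mathbf{p}_0+\dots+\mathbf{p}_{\lambda-1}$ and let $\Phi_\lambda[\mathbf{m}]$ be the $T^{\mathbf{m}}$-coefficient of \eqref{eq:opDpPlus}, assumed given by the claimed formula. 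Since the $(\lambda+1)$-step operator is $\mathcal{T}^{-\mathbf{p}_\lambda}\circ\mathcal{L}_{k+2\lambda}$ applied to the $\lambda$-step one, its $T^{\mathbf{j}}$-coefficient is
\begin{equation*}
(s(\mathbf{j})+2)\bigl(s(\mathbf{j})+2k+4\lambda\bigr)\,\Phi_\lambda[\mathbf{j}+\mathbf{p}_\lambda]-4\uppi\rmi\sum_{i,j}a_{i,j}\,\dodt\Phi_\lambda[\mathbf{j}+\mathbf{p}_\lambda-\pi(E_{i,j})],
\end{equation*}
using $s(\mathbf{j}+\mathbf{p}_\lambda)=s(\mathbf{j})+2$ and $s(\mathbf{j}+\mathbf{p}_\lambda-\pi(E_{i,j}))=s(\mathbf{j})$. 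I would then substitute the formula for $\Phi_\lambda$ into both terms; in the second term $\dodt$ raises every derivative order by one, and the inner $\Lambda$-sum is reindexed by $\Lambda'=\Lambda+E_{i,j}$, using $\sum_{i,j}\lambda'_{i,j}/\Lambda'!=s(\Lambda')/\Lambda'!$ (equivalently $\sum 1/(\Lambda'-E_{i,j})!=s(\Lambda')/\Lambda'!$ over entries with $\lambda'_{i,j}>0$, the building block behind Lemma \ref{lemm:binomMatrix}) to fold the extra $a_{i,j}$ and $1/\Lambda!$ into $(-\uppi\rmi G_{\mathfrak{B}})^{\Lambda'}/\Lambda'!$. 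Both terms thereby become $4^{\lambda+1}\lambda!\sum_{\Lambda}\frac{(-\uppi\rmi G_{\mathfrak{B}})^{\Lambda}}{\Lambda!}\dodth{s(\Lambda)}h_{\mathbf{j}+\mathbf{p}^{(\lambda+1)}-\pi(\Lambda)}$ times explicit scalars in $s(\mathbf{j})$, $k$, $\lambda$, $s(\Lambda)$.

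Comparing with the target $(\lambda+1)$-step formula, the whole step reduces, with $x=s(\mathbf{j})/2$, $n=s(\Lambda)$ and $N=\lambda+1-n$, to the two elementary identities $(x+1)\binom{x+N}{N-1}=N\binom{x+N}{N}$ (binomial absorption, which makes the first term carry the same binomial $\binom{x+N}{N}$ as the second) and $N(x+k+2\lambda)+n(x+k+\lambda-1)=(\lambda+1)(x+k+2\lambda-n)$ (linear in all variables, obtained after clearing the $\Gamma$-quotients by $\Gamma(z+1)/\Gamma(z)=z$), which together collapse the two contributions into $4^{\lambda+1}(\lambda+1)!$ times the single claimed coefficient. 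The only genuine obstacle is the bookkeeping in the inductive step: correctly tracking which index is shifted by $\mathbf{p}_\lambda$ versus $\pi(E_{i,j})$, collapsing the double $(i,j)$-sum through the reindexing $\Lambda'=\Lambda+E_{i,j}$, and verifying that the accumulated products of the two kinds of factors produced by $\mathcal{L}$ (the scalar $s(s+2k'-2)$ and the operator $-4\uppi\rmi a_{i,j}\dodt$) telescope into one binomial-times-rising-factorial coefficient --- for which one must check that the weight $k+2\lambda$ fed into $\mathcal{L}$ at each stage, combined with the degree shift $s(\mathbf{j})\mapsto s(\mathbf{j})+2$ from $\mathcal{T}^{-\mathbf{p}_\lambda}$, matches exactly the arguments of the $\Gamma$-factor and the binomial in the target formula.
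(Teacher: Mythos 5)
Your proof is correct and takes essentially the same route as the paper: induction on $\lambda$, with the inductive step reduced via the reindexing $\Lambda'=\Lambda+E_{i,j}$ to an elementary identity, and the odd case deduced from the even case (with $k\mapsto k+1$) by pre-composing with $\mathcal{T}^{-\mathbf{p}_{-1}}$. Your explicit identities $(x+1)\binom{x+N}{N-1}=N\binom{x+N}{N}$ and $N(x+k+2\lambda)+n(x+k+\lambda-1)=(\lambda+1)(x+k+2\lambda-n)$ are precisely the ``straightforward calculation'' the paper leaves implicit.
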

\begin{proof}
The second assertion follows immediately from the first one, of which we give a sketch of proof. We use induction on $\lambda$. The case $\lambda=1$ is clear using definitions. Assume that the case $\lambda$ has been proved, and proceed to prove the case $\lambda+1$. By induction hypothesis, what we should prove is
\begin{multline*}
(s(\mathbf{j})+2)(s(\mathbf{j})+2k+4\lambda)4^{\lambda}\lambda!\sum_{\Lambda \in \mymatset{\numgeq{Z}{0}}{\dimn}{\dimn}}\frac{(-\uppi\sqrt{-1} G_{\mathfrak{B}})^{\Lambda}}{\Lambda!}\binom{s(\mathbf{j})/2+\lambda+1-s(\Lambda)}{\lambda-s(\Lambda)}\\
\cdot\frac{\Gamma(s(\mathbf{j})/2+k+2\lambda-s(\Lambda))}{\Gamma(s(\mathbf{j})/2+k+\lambda)}\dodth{s(\Lambda)}h_{\mathbf{j}+\mathbf{p}-\pi(\Lambda)} \\
-4\uppi\sqrt{-1}\sum_{1 \leq i,j \leq \dimn}a_{i,j}\cdot 4^{\lambda}\lambda!\sum_{\Lambda \in \mymatset{\numgeq{Z}{0}}{\dimn}{\dimn}}\frac{(-\uppi\sqrt{-1} G_{\mathfrak{B}})^{\Lambda}}{\Lambda!}\binom{s(\mathbf{j})/2+\lambda-s(\Lambda)}{\lambda-s(\Lambda)}\\
\cdot\frac{\Gamma(s(\mathbf{j})/2+k+2\lambda-1-s(\Lambda))}{\Gamma(s(\mathbf{j})/2+k+\lambda-1)}\dodth{s(\Lambda)+1}h_{\mathbf{j}+\mathbf{p}-\pi(\Lambda)-\pi(E_{i,j})} \\
=4^{\lambda+1}(\lambda+1)!\sum_{\Lambda \in \mymatset{\numgeq{Z}{0}}{\dimn}{\dimn}}\frac{(-\uppi\sqrt{-1} G_{\mathfrak{B}})^{\Lambda}}{\Lambda!}\binom{s(\mathbf{j})/2+\lambda+1-s(\Lambda)}{\lambda+1-s(\Lambda)}\\
\cdot\frac{\Gamma(s(\mathbf{j})/2+k+2\lambda+1-s(\Lambda))}{\Gamma(s(\mathbf{j})/2+k+\lambda)}\dodth{s(\Lambda)}h_{\mathbf{j}+\mathbf{p}-\pi(\Lambda)},
\end{multline*}
where $a_{i,j}$ is the $(i,j)$-entry of $G_{\mathfrak{B}}$. This could be proved by using a change of variable $\Lambda'=\Lambda+E_{i,j}$ in the second sum in the left-hand side, and then a straightforward calculation.
\end{proof}

It follows that the operator \eqref{eq:opDpPlus} and \eqref{eq:opDpMinus} depend only on the sum of all $\mathbf{p}_i$'s, not on any individual $\mathbf{p}_i$. This leads to the following definition.
\begin{deff}
\label{deff:Dkp}
Let $B$, $k$, and $\mathfrak{B}$ be as in Proposition \ref{prop:expressionDkp}. Suppose $\mathbf{p}$ is a vector in $\mulindZn$ with $s(\mathbf{p}) \geq 0$. We define an operator $\mathcal{D}_{k,B,\mathbf{p}}^{\mathfrak{B}}$, or simply $\mathcal{D}_{k,\mathbf{p}}$, on $\FormalSeriesSet$ as follows:
\begin{enumerate}
\item If $s(\mathbf{p})=0,\,1$, then  $\mathcal{D}_{k,\mathbf{p}}=\mathcal{T}^{-\mathbf{p}}$.
\item If $s(\mathbf{p}) \geq 2$ and $2 \mid s(\mathbf{p})$, then $\mathcal{D}_{k,\mathbf{p}}$ is defined to be \eqref{eq:opDpPlus} with $\mathbf{p}_0,\dots,\mathbf{p}_{\lambda-1}$ being vectors in $\mulindZn$ such that $s(\mathbf{p}_i)=2$ and $\mathbf{p}=\mathbf{p}_0+\dots+\mathbf{p}_{\lambda-1}$.
\item If $s(\mathbf{p}) \geq 2$ and $2 \nmid s(\mathbf{p})$, then $\mathcal{D}_{k,\mathbf{p}}$ is defined to be \eqref{eq:opDpMinus} with $\mathbf{p}_{-1},\mathbf{p}_{0},\dots,\mathbf{p}_{\lambda-1}$ being vectors in $\mulindZn$ such that $s(\mathbf{p}_{-1})=1$, $s(\mathbf{p}_i)=2$ if $i \geq 0$ and $\mathbf{p}=\mathbf{p}_{-1}+\mathbf{p}_{0}+\dots+\mathbf{p}_{\lambda-1}$.
\end{enumerate}
\end{deff}
\begin{rema}
By Proposition \ref{prop:expressionDkp}, the operator $\mathcal{D}_{k,\mathbf{p}}$ is well-defined. In fact, that proposition also gives explicit expression of each term of $\mathcal{D}_{k,\mathbf{p}}(\FormalSeries{h})$ when $s(\mathbf{p}) \geq 2$. One can see that, this explicit expression for $s(\mathbf{p}) \geq 2$ is also valid for $s(\mathbf{p})=0,\,1$ (in this case, $\lambda=0$). Finally note that, in the explicit expression, $\lambda$ always equals $[s(\mathbf{p})/2]$.
\end{rema}
We restate a useful proposition:
\begin{prop}
\label{prop:DkpMappingPlusMinus}
Let $H$ be a subgroup of $\slZ$, and $\rho\colon \widetilde{H}\rightarrow \GlW$ a representation. Let $k$, $B$ and $\mathfrak{B}$ be as in Proposition \ref{prop:expressionDkp}. Suppose $\mathbf{p} \in \mulindZn$ such that $s(\mathbf{p}) \geq 0$. If $2 \mid s(\mathbf{p})$, then we have a map
\begin{equation*}
\mathcal{D}_{k,\mathbf{p}}\colon \FormalModularFormSubset{k}{B}{H}{\rho}{0,+} \rightarrow \FormalModularFormSubset{k+s(\mathbf{p})}{B}{H}{\rho}{0,+}.
\end{equation*}
On the other hand, if $2 \nmid s(\mathbf{p})$, then we have a map
\begin{equation*}
\mathcal{D}_{k,\mathbf{p}}\colon \FormalModularFormSubset{k}{B}{H}{\rho}{1,-} \rightarrow \FormalModularFormSubset{k+s(\mathbf{p})}{B}{H}{\rho}{0,+}.
\end{equation*}
\end{prop}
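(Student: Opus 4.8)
The plan is to reduce the assertion directly to Definition~\ref{deff:Dkp} together with the mapping properties already established, namely Corollary~\ref{coro:DkpMappingProperty} and Proposition~\ref{prop:TpActOnFormalModularSeries}, by splitting on the value of $s(\mathbf{p})$. Throughout, write $\lambda=[s(\mathbf{p})/2]$, so that $s(\mathbf{p})=2\lambda$ when $s(\mathbf{p})$ is even and $s(\mathbf{p})=2\lambda+1$ when it is odd.

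For the degenerate values $s(\mathbf{p})\in\{0,1\}$ one has $\mathcal{D}_{k,\mathbf{p}}=\mathcal{T}^{-\mathbf{p}}$ by part~(1) of Definition~\ref{deff:Dkp}. If $s(\mathbf{p})=0$ then $\mathbf{p}=\mathbf{0}$ and $\mathcal{T}^{-\mathbf{p}}$ is the identity, which fixes $\FormalModularFormSubset{k}{B}{H}{\rho}{0,+}$; this is the claim. If $s(\mathbf{p})=1$, then since $2\nmid s(\mathbf{p})$, Proposition~\ref{prop:TpActOnFormalModularSeries} (applied with the integer $s_0=1$) shows that $\mathcal{T}^{-\mathbf{p}}$ carries $\FormalModularFormSubset{k}{B}{H}{\rho}{1,-}$ bijectively onto $\FormalModularFormSubset{k+1}{B}{H}{\rho}{0,+}$, which is exactly the asserted target since here $k+s(\mathbf{p})=k+1$.

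Now suppose $s(\mathbf{p})\ge 2$. If $2\mid s(\mathbf{p})$, pick vectors $\mathbf{p}_0,\dots,\mathbf{p}_{\lambda-1}\in\mulindZn$ with $s(\mathbf{p}_i)=2$ and $\mathbf{p}=\mathbf{p}_0+\dots+\mathbf{p}_{\lambda-1}$; by part~(2) of Definition~\ref{deff:Dkp}, $\mathcal{D}_{k,\mathbf{p}}$ is the operator in~\eqref{eq:opDpPlus}, which by Corollary~\ref{coro:DkpMappingProperty} maps $\FormalModularFormSubset{k}{B}{H}{\rho}{0,+}$ into $\FormalModularFormSubset{k+2\lambda}{B}{H}{\rho}{0,+}=\FormalModularFormSubset{k+s(\mathbf{p})}{B}{H}{\rho}{0,+}$. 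If instead $2\nmid s(\mathbf{p})$, pick $\mathbf{p}_{-1},\mathbf{p}_0,\dots,\mathbf{p}_{\lambda-1}\in\mulindZn$ with $s(\mathbf{p}_{-1})=1$, $s(\mathbf{p}_i)=2$ for $i\ge 0$, and $\mathbf{p}=\mathbf{p}_{-1}+\mathbf{p}_0+\dots+\mathbf{p}_{\lambda-1}$; by part~(3) of Definition~\ref{deff:Dkp}, $\mathcal{D}_{k,\mathbf{p}}$ is the operator in~\eqref{eq:opDpMinus}, and since $s(\mathbf{p})=1+2\lambda$, Corollary~\ref{coro:DkpMappingProperty} gives that it maps $\FormalModularFormSubset{k}{B}{H}{\rho}{1,-}$ into $\FormalModularFormSubset{k+1+2\lambda}{B}{H}{\rho}{0,+}=\FormalModularFormSubset{k+s(\mathbf{p})}{B}{H}{\rho}{0,+}$. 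Assembling the four cases yields the proposition.

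The only subtlety is that, in the last two cases, one must know that $\mathcal{D}_{k,\mathbf{p}}$ is genuinely well-defined, i.e.\ independent of the chosen decomposition of $\mathbf{p}$ into summands of size $1$ or $2$; this is precisely what Proposition~\ref{prop:expressionDkp} provides, and I would cite it to legitimize the case analysis. Beyond this, I do not expect any real obstacle: the substance has been absorbed into Lemma~\ref{lemm:LkBMappingPropertyCrucial}, Proposition~\ref{prop:TpActOnFormalModularSeries}, and Corollary~\ref{coro:DkpMappingProperty}, so the present proof is essentially bookkeeping. The one place to stay careful is the arithmetic of the weight shifts and the parity (plus/minus) flips, making sure that in each case the source space, the intermediate spaces, and the target space line up exactly.
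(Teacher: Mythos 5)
Your proof is correct and follows essentially the same route as the paper, which likewise settles $s(\mathbf{p})\le 1$ as a special case of Proposition \ref{prop:TpActOnFormalModularSeries} and treats $s(\mathbf{p})\ge 2$ as a restatement of Corollary \ref{coro:DkpMappingProperty}, with Proposition \ref{prop:expressionDkp} guaranteeing independence of the chosen decomposition. One small correction: since $\mathbf{p}$ ranges over $\mulindZn$, the condition $s(\mathbf{p})=0$ does not force $\mathbf{p}=\mathbf{0}$, but that case is still immediate from the same appeal to Proposition \ref{prop:TpActOnFormalModularSeries} that you make for $s(\mathbf{p})=1$ (now with $s_0=0$ and even parity).
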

\begin{proof}
The case $s(\mathbf{p})=0$ or $1$ is a special case of Proposition \ref{prop:TpActOnFormalModularSeries}, while the other case is just a restatement of Corollary \ref{coro:DkpMappingProperty}.
\end{proof}

We conclude this section by explaining why these operators are relevant to Taylor expansions of Jacobi forms of lattice index. The Taylor coefficients of a Jacobi form $\phi \in \JacFormHol{k}{L}{G}{\rho}$ with respect to some basis $\mathfrak{B}$ are just the coefficients $h_{\mathbf{j}}$'s of $\emb(\phi)$ (See Remark \ref{rema:relationshipJacFormalSeries}). Hence each coefficient of $\mathcal{D}_{k,\mathbf{p}}(\emb(\phi))$ is a finite linear combination of Taylor coefficients of $\phi$. In this way, we can regard each coefficient of $\mathcal{D}_{k,\mathbf{p}}(\emb(\phi))$ as certain kind of modified Taylor coefficient. Some of these modified Taylor coefficients are ordinary modular forms, which will be investigated in the next section.

\section{Connections with modular forms}
\label{sec:Connections with modular forms}
Recall some notations for modular forms. Let $H$ be a subgroup of $\slZ$ and $k$ be a half integer or integer. Let $\rho \colon \widetilde{H}\rightarrow\GlW$ be a group representation. Then the notation $\ModFormHol{k}{H}{\rho}$ denotes all holomorphic functions $h \in \holfnsm$ satisfying modular transformation equations on $\widetilde{H}$ with representation $\rho$ and of weight $k$. If $H$ is of finite index in $\slZ$, then the subspace of those functions meromorphic at all cusps of $H$ is denoted by $\ModFormWHol{k}{H}{\rho}$, and that holomorphic at all cusps is denoted by  $\ModForm{k}{H}{\rho}$. Moreover, $\ModFormCusp{k}{H}{\rho}$ denotes the space of $h \in \ModForm{k}{H}{\rho}$ which vanishes at all cusps of $H$. We omit the information $\myran$ in these notations as in Section \ref{sec:Jacobi forms of lattice index}, since it can be recovered from $\rho$.

\begin{lemm}
\label{lemm:takingOneCoefficientModularForm}
Let $k$, $B$, $\mathfrak{B}$, $H$ and $\rho$ be as in Definition \ref{deff:modularFormalSeries}. Let $\FormalSeries{h} \in \FormalModularFormSet{k}{B}{H}{\rho}$. For any coefficient $h_{\mathbf{j}}$, if there is no $h_{\mathbf{j}'} \neq 0$ with $\mathbf{j}'<\mathbf{j}$, then $h_{\mathbf{j}} \in \ModFormHol{k+s(\mathbf{j})}{H}{\rho}$. In particular, if $\FormalSeries{h} \in \FormalModularFormSubset{k}{B}{H}{\rho}{0}$, then for $\mathbf{j}$ with $s(\mathbf{j})=0$, we have $h_{\mathbf{j}} \in \ModFormHol{k}{H}{\rho}$.
\end{lemm}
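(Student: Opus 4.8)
The plan is to extract, for an arbitrary $\gamma=\eleglptRsimple{a}{b}{c}{d}{\varepsilon}\in\widetilde{H}$, the coefficient of $T^{\mathbf{j}}$ from both sides of the identity $\FormalSeries{h}\vert_{k,B}^{\mathfrak{B}}\gamma=\FormalSeries{\rho(\gamma)\circ h}$ supplied by the hypothesis $\FormalSeries{h}\in\FormalModularFormSet{k}{B}{H}{\rho}$. On the right-hand side this coefficient is simply $\rho(\gamma)\circ h_{\mathbf{j}}$, while on the left-hand side, by Definition \ref{deff:slashOperatorFormalSeries}, it equals
\begin{equation*}
\varepsilon^{-2k}\sum_{\Lambda\in\mymatset{\numgeq{Z}{0}}{\dimn}{\dimn}}\frac{(-\uppi\rmi cG_{\mathfrak{B}})^{\Lambda}}{\Lambda!}(c\tau+d)^{-(k+s(\mathbf{j})-s(\Lambda))}h_{\mathbf{j}-\pi(\Lambda)}\left(\frac{a\tau+b}{c\tau+d}\right).
\end{equation*}

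The key step is to observe that this sum collapses to its $\Lambda=0$ term. Indeed, every component of $\pi(\Lambda)$ is non-negative, and if $\Lambda\neq 0$ with, say, entry $\lambda_{i_0,j_0}>0$, then $\pi_{i_0}(\Lambda)=s(\Lambda_{i_0,\ast})+s(\Lambda_{\ast,i_0})\geq\lambda_{i_0,j_0}>0$; hence $\mathbf{j}-\pi(\Lambda)<\mathbf{j}$ for every $\Lambda\neq 0$. By the hypothesis on $\mathbf{j}$ we have $h_{\mathbf{j}'}=0$ for all $\mathbf{j}'<\mathbf{j}$, so $h_{\mathbf{j}-\pi(\Lambda)}=0$ whenever $\Lambda\neq 0$, and the displayed sum reduces to $\varepsilon^{-2k}(c\tau+d)^{-(k+s(\mathbf{j}))}h_{\mathbf{j}}\left(\frac{a\tau+b}{c\tau+d}\right)$. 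Equating this with $\rho(\gamma)\circ h_{\mathbf{j}}$, and using that $\varepsilon=\pm1$ together with $s(\mathbf{j})\in\numZ$ gives $\varepsilon^{-2k}=\varepsilon^{-2(k+s(\mathbf{j}))}$, I obtain precisely the weight-$(k+s(\mathbf{j}))$ transformation law $h_{\mathbf{j}}\vert_{k+s(\mathbf{j})}\gamma=\rho(\gamma)\circ h_{\mathbf{j}}$ on $\widetilde{H}$. Since $\gamma$ was arbitrary and $h_{\mathbf{j}}\in\holfnsm$ (because $\FormalSeries{h}\in\FormalSeriesSetLb$), this shows $h_{\mathbf{j}}\in\ModFormHol{k+s(\mathbf{j})}{H}{\rho}$.

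For the ``in particular'' clause I would note that if $\FormalSeries{h}\in\FormalModularFormSubset{k}{B}{H}{\rho}{0}$ and $s(\mathbf{j})=0$, then any $\mathbf{j}'<\mathbf{j}$ satisfies $s(\mathbf{j}')<s(\mathbf{j})=0$, hence $h_{\mathbf{j}'}=0$ by the defining condition \eqref{eq:FormalModularFormSubsetScalar}; thus the minimality hypothesis of the first assertion holds and $h_{\mathbf{j}}\in\ModFormHol{k+s(\mathbf{j})}{H}{\rho}=\ModFormHol{k}{H}{\rho}$. I do not expect a genuine obstacle here: the content is just the remark that all ``lower-order'' terms (those with $\Lambda\neq 0$) in the formal slash operator involve coefficients indexed strictly below $\mathbf{j}$, which vanish by hypothesis. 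The only points demanding a little care are checking that $\pi(\Lambda)$ has a strictly positive coordinate for every $\Lambda\neq 0$, and the small bookkeeping that turns the surviving factor $\varepsilon^{-2k}$ into the weight-$(k+s(\mathbf{j}))$ automorphy factor.
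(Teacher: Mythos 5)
Your proof is correct and takes essentially the same route as the paper, which simply asserts that the lemma follows immediately from Definitions \ref{deff:slashOperatorFormalSeries} and \ref{deff:modularFormalSeries}; you have spelled out the collapse of the $\Lambda$-sum to its $\Lambda=0$ term and the $\varepsilon^{-2s(\mathbf{j})}=1$ bookkeeping, which is exactly the intended argument.
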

\begin{proof}
This follow immediately from Definition \ref{deff:slashOperatorFormalSeries} and Definition \ref{deff:modularFormalSeries}.
\end{proof}

If $\FormalSeries{h} \in \FormalSeriesSet$, we call $h_{\mathbf{0}}$ the constant term. Combining the operation ``taking the constant term'' and $\mathcal{D}_{k,\mathbf{p}}$ introduced in the last section, we can construct higher weight modular forms.

\begin{deff}
\label{deff:DkpFormal}
Let $k$, $B$, $\mathfrak{B}$, $H$ and $\rho$ be as in Definition \ref{deff:modularFormalSeries}. Suppose $\mathbf{p} \in \mulindZn$ with $s(\mathbf{p}) \geq 0$. We define an operator
\begin{equation*}
D_{k,\mathbf{p}} \colon \FormalModularFormSubset{k}{B}{H}{\rho}{0} \rightarrow \ModFormHol{k+s(\mathbf{p})}{H}{\rho}
\end{equation*}
as follows. Let $\FormalSeries{h} \in \FormalModularFormSubset{k}{B}{H}{\rho}{0}$. If $2 \mid s(\mathbf{p})$, then $D_{k, \mathbf{p}}(\FormalSeries{h})$ is the constant term of $\mathcal{D}_{k, \mathbf{p}}(\sum_{2 \mid s(\mathbf{j})}{h}_{\mathbf{j}} T^\mathbf{j})$. On the other hand, if $2 \nmid s(\mathbf{p})$, then $D_{k, \mathbf{p}}(\FormalSeries{h})$ is the constant term of $\mathcal{D}_{k, \mathbf{p}}(\sum_{2 \nmid s(\mathbf{j})}{h}_{\mathbf{j}} T^\mathbf{j})$. If the dependency on $B$ and $\mathfrak{B}$ is important, we write $D_{k,B,\mathbf{p}}^{\mathfrak{B}}$ instead of $D_{k, \mathbf{p}}$.
\end{deff}
\begin{rema}
The assumption $\FormalSeries{h} \in \FormalModularFormSubset{k}{B}{H}{\rho}{0}$ implies that $\sum_{2 \mid s(\mathbf{j})}{h}_{\mathbf{j}} T^\mathbf{j} \in \FormalModularFormSubset{k}{B}{H}{\rho}{0,+}$ and $\sum_{2 \nmid s(\mathbf{j})}{h}_{\mathbf{j}} T^\mathbf{j} \in \FormalModularFormSubset{k}{B}{H}{\rho}{1,-}$. So the definition makes sense by Proposition \ref{prop:DkpMappingPlusMinus} and Lemma \ref{lemm:takingOneCoefficientModularForm}.
\end{rema}

From the explicit expression of $\mathcal{D}_{k, \mathbf{p}}$ (Proposition \ref{prop:expressionDkp}), it is easy to obtain an explicit expression of $D_{k, \mathbf{p}}$.
\begin{prop}
\label{prop:explicitExpressionDkp}
If $2 \mid s(\mathbf{p})$, then $D_{k, \mathbf{p}}(\FormalSeries{h})$ equals
\begin{equation}
\label{eq:DkpSpecialEven}
4^\lambda\lambda!\sum_{\twoscript{\Lambda \in \mymatset{\numgeq{Z}{0}}{\dimn}{\dimn}}{s(\Lambda) \leq \lambda}}\frac{(-\uppi\rmi G_{\mathfrak{B}})^{\Lambda}}{\Lambda!}\frac{\Gamma(k+2\lambda-1-s(\Lambda))}{\Gamma(k+\lambda-1)}\dodth{s(\Lambda)}h_{\mathbf{p}-\pi(\Lambda)}.
\end{equation}
On the other hand, if $2 \nmid s(\mathbf{p})$, then $D_{k, \mathbf{p}}(\FormalSeries{h})$ equals
\begin{equation}
\label{eq:DkpSpecialOdd}
4^\lambda\lambda!\sum_{\twoscript{\Lambda \in \mymatset{\numgeq{Z}{0}}{\dimn}{\dimn}}{s(\Lambda) \leq \lambda}}\frac{(-\uppi\rmi G_{\mathfrak{B}})^{\Lambda}}{\Lambda!}\frac{\Gamma(k+2\lambda-s(\Lambda))}{\Gamma(k+\lambda)}\dodth{s(\Lambda)}h_{\mathbf{p}-\pi(\Lambda)}.
\end{equation}
In each case, $\lambda=[s(\mathbf{p})/2]$.
\end{prop}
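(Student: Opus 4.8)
The plan is to obtain the formulas \eqref{eq:DkpSpecialEven} and \eqref{eq:DkpSpecialOdd} by specializing the explicit expression for $\mathcal{D}_{k,\mathbf{p}}$ supplied by Proposition \ref{prop:expressionDkp} to the coefficient of $T^{\mathbf{0}}$. Indeed, by Definition \ref{deff:DkpFormal}, when $2 \mid s(\mathbf{p})$ the quantity $D_{k,\mathbf{p}}(\FormalSeries{h})$ is by definition the constant term, i.e.\ the coefficient of $T^{\mathbf{0}}$, of $\mathcal{D}_{k,\mathbf{p}}$ applied to the even part $\sum_{2\mid s(\mathbf{j})}h_{\mathbf{j}}T^{\mathbf{j}}$, and similarly for $2\nmid s(\mathbf{p})$ with the odd part. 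So the whole argument reduces to reading off a single coefficient from a formula that is already available.

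First I would observe that passing from $\FormalSeries{h}$ to its even (resp.\ odd) part does not change the $T^{\mathbf{0}}$-coefficient of $\mathcal{D}_{k,\mathbf{p}}$. By Proposition \ref{prop:expressionDkp} (taken with $\mathbf{j}=\mathbf{0}$), that coefficient is a linear combination of the $\dodth{s(\Lambda)}h_{\mathbf{p}-\pi(\Lambda)}$. Since $\pi_i(\Lambda)=s(\Lambda_{i,\ast})+s(\Lambda_{\ast,i})$, summing over $i$ gives $s(\pi(\Lambda))=2s(\Lambda)$, hence $s(\mathbf{p}-\pi(\Lambda))=s(\mathbf{p})-2s(\Lambda)\equiv s(\mathbf{p})\pmod 2$. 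Thus every coefficient $h_{\mathbf{p}-\pi(\Lambda)}$ that actually occurs already lies in the even part when $2\mid s(\mathbf{p})$, and in the odd part when $2\nmid s(\mathbf{p})$, so discarding the coefficients of the wrong parity from the input is harmless.

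Then I would simply substitute $\mathbf{j}=\mathbf{0}$ into the formula of Proposition \ref{prop:expressionDkp}. The binomial factor becomes $\binom{\lambda-s(\Lambda)}{\lambda-s(\Lambda)}$, which under the usual convention ($\binom{n}{m}=0$ for integer $m<0$) equals $1$ when $s(\Lambda)\le\lambda$ and $0$ otherwise; this reproduces the summation constraint $s(\Lambda)\le\lambda$ appearing in \eqref{eq:DkpSpecialEven} and \eqref{eq:DkpSpecialOdd}. The $\Gamma$-quotient collapses to $\Gamma(k+2\lambda-1-s(\Lambda))/\Gamma(k+\lambda-1)$ in the even case and, using the variant $\Gamma$-factor recorded in Proposition \ref{prop:expressionDkp}, to $\Gamma(k+2\lambda-s(\Lambda))/\Gamma(k+\lambda)$ in the odd case, while the remaining factors $4^{\lambda}\lambda!$, $(-\uppi\rmi G_{\mathfrak{B}})^{\Lambda}/\Lambda!$ and $\dodth{s(\Lambda)}h_{\mathbf{p}-\pi(\Lambda)}$ are unchanged (recall $\lambda=[s(\mathbf{p})/2]$). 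This yields exactly \eqref{eq:DkpSpecialEven} and \eqref{eq:DkpSpecialOdd}. I do not expect any real obstacle: the only points that need care are the parity bookkeeping of the previous paragraph and the convention for binomial coefficients with negative lower entry; everything else is a direct substitution into a known identity.
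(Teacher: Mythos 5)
Your proof is correct and takes essentially the same route as the paper, whose entire proof is that the proposition is the special case $\mathbf{j}=\mathbf{0}$ of Proposition \ref{prop:expressionDkp}. Your extra remarks on the parity bookkeeping (why restricting to the even/odd part does not affect the constant term, since $s(\pi(\Lambda))=2s(\Lambda)$) and on the binomial factor enforcing $s(\Lambda)\leq\lambda$ merely make explicit what the paper leaves implicit.
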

\begin{proof}
A special case of Proposition \ref{prop:expressionDkp} with $\mathbf{j}=\mathbf{0}$.
\end{proof}
\begin{rema}
Put $\lambda'=[(s(\mathbf{p})-1)/2]$. Then the Gamma factor in both cases can be written uniformly as
\begin{equation}
\frac{\Gamma(k+s(\mathbf{p})-1-s(\Lambda))}{\Gamma(k+\lambda')}=\prod_{\lambda' \leq s \leq s(\mathbf{p})-2-s(\Lambda)}(k+s).
\end{equation}
\end{rema}

We now apply what have been achieved to Jacobi forms of lattice index, answering the question how to combine Taylolr coefficients of such forms to construct ordinary modular forms. Recall notations in Convention \ref{conv1} and \ref{conv2}.
\begin{thm}
\label{thm:ModularFormFromJacobiForm}
Let $\mathfrak{B}=(e_1,\dots,e_\dimn)$ be a $\numR$-basis of $V$. Let $\phi \in \JacFormWHol{k}{L}{G}{\rho}$ and assume the Taylor expansion of $\phi(\tau,z_1e_1+\dots z_\dimn e_\dimn)$ around $z_1,\dots,z_\dimn=0$ is
\begin{equation*}
\sum_{\mathbf{j}\in \numgeq{Z}{0}^{\dimn}}{h}_{\mathbf{j}}(\tau) z_1^{j_1}\cdot\dots\cdot z_\dimn^{j_\dimn},\qquad \tau \in \uhp,\,z_1,\dots,z_\dimn \in \numC.
\end{equation*}
Suppose $\mathbf{p}\in\numgeq{Z}{0}^{\dimn}$. If $2 \mid s(\mathbf{p})$ (or $2 \nmid s(\mathbf{p})$ resp.), then the expression \eqref{eq:DkpSpecialEven} (the expression \eqref{eq:DkpSpecialOdd} resp.) gives a function in $\ModFormWHol{k+s(\mathbf{p})}{G}{\rho\vert_{\widetilde{G}}}$. If in addition, $\phi \in \JacFormWeak{k}{L}{G}{\rho}$, then the corresponding expressions are in $\ModForm{k+s(\mathbf{p})}{G}{\rho\vert_{\widetilde{G}}}$. Finally, if $\phi \in \JacForm{k}{L}{G}{\rho}$ and $\mathbf{p} \neq \mathbf{0}$, then the corresponding expressions are in $\ModFormCusp{k+s(\mathbf{p})}{G}{\rho\vert_{\widetilde{G}}}$.
\end{thm}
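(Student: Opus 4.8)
The plan is to reduce everything to the formal-series machinery of Sections \ref{sec:Formal Laurent series which transform like modular forms} and \ref{sec:Connections with modular forms}, and then to read off the cusp behaviour from the Fourier expansions provided by Proposition \ref{prop:ForuierExpansionOfJacobiLikeForm}. First I would check the input hypotheses: since $\phi$ is holomorphic on $\mydom$, its Taylor coefficients $h_{\mathbf{j}}$ about $z=0$ vanish unless $\mathbf{j}\geq\mathbf{0}$, hence unless $s(\mathbf{j})\geq0$, so by Remark \ref{rema:relationshipJacFormalSeries} one has $\emb(\phi)\in\FormalModularFormSubset{k}{B}{G}{\rho\vert_{\widetilde{G}}}{0}$, which is exactly the domain of the operator $D_{k,\mathbf{p}}$ of Definition \ref{deff:DkpFormal}. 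Proposition \ref{prop:DkpMappingPlusMinus}, applied to the even (resp.\ odd) $s(\mathbf{j})$-part of $\emb(\phi)$ according to the parity of $s(\mathbf{p})$, sends it into $\FormalModularFormSubset{k+s(\mathbf{p})}{B}{G}{\rho\vert_{\widetilde{G}}}{0,+}$, and then Lemma \ref{lemm:takingOneCoefficientModularForm} shows its constant term $D_{k,\mathbf{p}}(\emb(\phi))$ lies in $\ModFormHol{k+s(\mathbf{p})}{G}{\rho\vert_{\widetilde{G}}}$; by Proposition \ref{prop:explicitExpressionDkp} this constant term is precisely the expression \eqref{eq:DkpSpecialEven} (resp.\ \eqref{eq:DkpSpecialOdd}) formed from the $h_{\mathbf{j}}$. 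Thus holomorphy on $\uhp$ and the weight-$(k+s(\mathbf{p}))$ transformation law under $G$ come for free, and only the growth at the cusps of $G$ needs attention.

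The key step is to show that $D_{k,\mathbf{p}}$ commutes with passing to a cusp: for $\gamma\in\slZ$, the transform of $D_{k,\mathbf{p}}(\emb(\phi))$ by $\widetilde{\gamma}$ in weight $k+s(\mathbf{p})$ equals the expression \eqref{eq:DkpSpecialEven}/\eqref{eq:DkpSpecialOdd} formed from the Taylor coefficients of $\phi\vert_{k,B}\widetilde{\gamma}$. This is a diagram chase combining four facts: $\emb$ intertwines $\vert_{k,B}\widetilde{\gamma}$ with the formal slash $\vert_{k,B}^{\mathfrak{B}}\widetilde{\gamma}$ (Proposition \ref{prop:diagramSlashOperator}); the operators $\mathcal{T}^{-\mathbf{p}_i}$ and $\mathcal{L}$ of Definitions \ref{deff:multiplyByTpOperator} and \ref{deff:LkBOperator} intertwine the formal slashes with the correct weight shift (Propositions \ref{prop:multiplyByTpOperatorAndSlashOperator} and \ref{prop:LkBOperator}), so $\mathcal{D}_{k,\mathbf{p}}$ carries $\vert_{k,B}^{\mathfrak{B}}\widetilde{\gamma}$ to $\vert_{k+s(\mathbf{p}),B}^{\mathfrak{B}}\widetilde{\gamma}$; the formal slash preserves the parity of $s(\mathbf{j})$, hence commutes with the even/odd projection of Definition \ref{deff:DkpFormal}; and, because $\mathcal{D}_{k,\mathbf{p}}$ lands in $\FormalModularFormSubset{k+s(\mathbf{p})}{B}{G}{\rho\vert_{\widetilde{G}}}{0,+}$, whose elements have no coefficient with $s(\mathbf{j})<0$, only $\Lambda=0$ survives in the $T^{\mathbf{0}}$-coefficient of \eqref{eq:slashOperatorFormalSeries}, so taking the $T^{\mathbf{0}}$-coefficient commutes with the slash and reduces to the classical weight-$(k+s(\mathbf{p}))$ slash. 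Granting this, I would compute the $q$-expansion: writing $\phi\vert_{k,B}\widetilde{\gamma}(\tau,z)=\sum_{n,t}c^{\gamma}(n,t)q^{n}\etp{B(t,z)}$ by Proposition \ref{prop:ForuierExpansionOfJacobiLikeForm} and expanding $\etp{B(t,\sum_i z_i e_i)}=\prod_i\etp{z_i B(t,e_i)}$ gives $h_{\mathbf{j}}=\frac{(2\uppi\rmi)^{s(\mathbf{j})}}{\mathbf{j}!}\sum_{n,t}c^{\gamma}(n,t)\big(\prod_i B(t,e_i)^{j_i}\big)q^{n}$; substituting into \eqref{eq:DkpSpecialEven}/\eqref{eq:DkpSpecialOdd}, where each $\dodt$ contributes a factor $2\uppi\rmi n$, shows that the $q^{n}$-coefficient of the transform of $D_{k,\mathbf{p}}(\emb(\phi))$ by $\widetilde{\gamma}$ is $\sum_{t}c^{\gamma}(n,t)\,P_{k,\mathbf{p}}\big(n,B(t,e_1),\dots,B(t,e_{\dimn})\big)$ for one fixed polynomial $P_{k,\mathbf{p}}$, independent of $\gamma$ and of $\phi$.

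With the $q$-expansion in hand, the three assertions would follow from the support conditions in Definition \ref{deff:JacobiForm}. If $\phi$ is weakly holomorphic, for each $\gamma$ there is $N_{0}$ with $c^{\gamma}(n,t)=0$ for $n<N_{0}$, so the $q$-expansion above (in fractional powers of $q$) is bounded below; as this holds for every $\gamma\in\slZ$, $D_{k,\mathbf{p}}(\emb(\phi))$ is meromorphic at all cusps, hence lies in $\ModFormWHol{k+s(\mathbf{p})}{G}{\rho\vert_{\widetilde{G}}}$. If $\phi$ is a weak Jacobi form we may take $N_{0}=0$ for every $\gamma$, so all these expansions are supported on $n\geq0$ and $D_{k,\mathbf{p}}(\emb(\phi))\in\ModForm{k+s(\mathbf{p})}{G}{\rho\vert_{\widetilde{G}}}$. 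If $\phi\in\JacForm{k}{L}{G}{\rho}$ and $\mathbf{p}\neq\mathbf{0}$, then $c^{\gamma}(0,t)\neq0$ forces $Q(t)\leq0$, whence $t=0$ because $B$ is positive definite; moreover in \eqref{eq:DkpSpecialEven}/\eqref{eq:DkpSpecialOdd} every term with $s(\Lambda)\geq1$ carries a derivative $\dodth{s(\Lambda)}$ and so contributes nothing to the $q^{0}$-coefficient, which is therefore a scalar multiple of the constant term of the $\mathbf{p}$-th Taylor coefficient of $\phi\vert_{k,B}\widetilde{\gamma}$, namely a multiple of $\sum_{t}c^{\gamma}(0,t)\prod_i B(t,e_i)^{p_i}=c^{\gamma}(0,0)\prod_i 0^{p_i}=0$ since $p_i>0$ for some $i$. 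Hence $D_{k,\mathbf{p}}(\emb(\phi))$ vanishes at every cusp and lies in $\ModFormCusp{k+s(\mathbf{p})}{G}{\rho\vert_{\widetilde{G}}}$.

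The step I expect to be the main obstacle is the identification in the second paragraph: matching the formal slash on $\FormalSeriesSetLb$ with the classical modular slash, verifying that the even/odd projection and the operators $\mathcal{T}^{-\mathbf{p}}$, $\mathcal{L}$ all commute with ``take the $T^{\mathbf{0}}$-coefficient'', and then transporting the explicit formula \eqref{eq:DkpSpecialEven}/\eqref{eq:DkpSpecialOdd} through the passage from Taylor to Fourier coefficients so that $P_{k,\mathbf{p}}$ is manifestly independent of $\gamma$. Once the $q$-expansion is pinned down, the remaining deductions use only Definition \ref{deff:JacobiForm} and the positive-definiteness of $B$.
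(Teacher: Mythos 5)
Your proposal is correct and follows essentially the same route as the paper's proof: reduce to the formal-series operators, use Propositions \ref{prop:diagramSlashOperator}, \ref{prop:multiplyByTpOperatorAndSlashOperator} and \ref{prop:LkBOperator} to get $D_{k,\mathbf{p}}(\phi)\vert_{k+s(\mathbf{p})}\gamma=D_{k,\mathbf{p}}(\phi\vert_{k,B}\gamma)$, and then read off the cusp conditions from the Fourier expansion of $\phi\vert_{k,B}\gamma$. Your explicit treatment of the $q$-expansion and of the case $\phi\in\JacForm{k}{L}{G}{\rho}$, $\mathbf{p}\neq\mathbf{0}$ (only $t=0$ survives at $n=0$ and the derivative terms kill the constant term) just fills in what the paper dispatches with ``in a similar manner,'' and matches its later Lemma \ref{lemm:fourierDevTaylorCoeff} and Proposition \ref{prop:FourierCoeffDkp}.
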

\begin{rema}
\label{rema:vanIttersum}
We must mention that, this theorem belongs to van Ittersum. See \cite[Corollary 2.44]{vI21}. He developped such a theorem for quasi-Jacobi forms in several elliptic variables. Our $D_{k, \mathbf{p}}(\FormalSeries{h})$ is the same as van Ittersum's $\xi_{\mathbf{p}}(\phi)$, up to a factor depending on $k$ and $\mathbf{p}$. The purpose that we state and reprove this theorem here is twofold. One reason is that our method of the proof is different from van Ittersum's, and the other is that this theorem is the beginning of our theory on Taylor expansions of Jacobi forms of lattice index.
\end{rema}
\begin{proof}
By abuse of language, we write $D_{k,\mathbf{p}}(\phi)$, instead of the more heavy but precise notation $D_{k,\mathbf{p}}(\emb(\phi))$. Then we should prove $D_{k,\mathbf{p}}(\phi) \in \ModFormWHol{k+s(\mathbf{p})}{G}{\rho\vert_{\widetilde{G}}}$. We have already known that $D_{k,\mathbf{p}}(\phi) \in \ModFormHol{k+s(\mathbf{p})}{G}{\rho\vert_{\widetilde{G}}}$ by definition. Hence, it remains to show that, for any $\gamma \in \sltZ$, the $q$-expansion of $D_{k,\mathbf{p}}(\phi)\vert_{k+s(\mathbf{p})}\gamma$ has only finitely many terms of negative power of $q$, where $\vert_{k+s(\mathbf{p})}\gamma$ is the usual slash operator on modular forms. Note that $D_{k,\mathbf{p}}(\phi)\vert_{k+s(\mathbf{p})}\gamma=D_{k,\mathbf{p}}(\phi\vert_{k,B}\gamma)$ by Proposition \ref{prop:diagramSlashOperator},  \ref{prop:multiplyByTpOperatorAndSlashOperator} and \ref{prop:LkBOperator}. By the definition of weakly holomorphic Jacobi forms (Definition \ref{deff:JacobiForm}), there exists an $n_0 \in \numZ$ such that \eqref{eq:ForuierExpansionOfJacobiLikeForm} holds with $c(n,t) \neq 0 \implies n \geq n_0$. By absolute convergence, we can rewrite the right-hand side of \eqref{eq:ForuierExpansionOfJacobiLikeForm} as a power series of $z_1,\dots,z_\dimn$ with coefficients $g_\mathbf{j}(\tau)$ whose $q$-expansion has no $q^n$-term with $n<n_0$. Hence, by Proposition \ref{prop:explicitExpressionDkp}, $D_{k,\mathbf{p}}(\phi\vert_{k,B}\gamma)$, which is equal to $D_{k,\mathbf{p}}$ acting on this power series, has no $q^n$-term with $n<n_0$ in its $q$-expansion. This proves the assertion on $\JacFormWHol{k}{L}{G}{\rho}$, while that on $\JacFormWeak{k}{L}{G}{\rho}$ and $\JacForm{k}{L}{G}{\rho}$ can be proved in a similar manner.
\end{proof}

Now we proceed to derive the Fourier development of $D_{k,\mathbf{p}}(\phi) := D_{k,\mathbf{p}}(\emb(\phi))$ for weakly holomorphic Jacobi form $\phi$.
\begin{lemm}
\label{lemm:fourierDevTaylorCoeff}
Use notations in Convention \ref{conv1} and \ref{conv2}. Let $\phi \in \JacFormWHol{k}{L}{G}{\rho}$ and $\gamma \in \Jacgrp{\slZ}{L}$. Suppose the Fourier expansion is
\begin{equation}
\label{eq:fourierDevTaylorCoeffCondition}
\phi \vert_{k,B}\gamma(\tau,z)= \sum_{n,\, t}c^\gamma(n,t) q^n \etp{B(t,z)}, \qquad c^\gamma(n,t) \in \myran,
\end{equation}
and the Taylor expansion with respect to some ordered $\numR$-basis $\mathfrak{B}=(e_1,\dots,e_\dimn)$ of $V$ is
\begin{equation*}
\phi\vert_{k,B}\gamma(\tau,z_1e_1+\dots +z_\dimn e_\dimn)=\sum_{\mathbf{p} \geq \mathbf{0}}h_{\mathbf{p}}^\gamma(\tau)z_1^{p_1}\dots z_\dimn^{p_\dimn}.
\end{equation*}
Then we have
\begin{equation}
\label{eq:fourierDevTaylorCoeff}
h_{\mathbf{p}}^\gamma(\tau)=\sum_{n}\left(\sum_{t}c^\gamma(n,t)\sum_{\twoscript{\Lambda \in \mymatset{\numgeq{Z}{0}}{\dimn}{\dimn}}{s(\Lambda_{\ast,j})=p_j}}\frac{(2\uppi\rmi G_{\mathfrak{B}})^\Lambda}{\Lambda!}t_1^{s(\Lambda_{1,\ast})}\dots t_\dimn^{s(\Lambda_{\dimn,\ast})}\right)q^n,
\end{equation}
where $t=t_1e_1+\dots +t_\dimn e_\dimn$ and $G_{\mathfrak{B}}$ is the Gram matrix of $B$ with respect to $\mathfrak{B}$.
\end{lemm}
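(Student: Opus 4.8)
The plan is to substitute the given basis into the Fourier expansion \eqref{eq:fourierDevTaylorCoeffCondition}, expand each exponential factor $\etp{B(t,z)}$ as a power series in $z_1,\dots,z_\dimn$, and then identify the Taylor coefficients by uniqueness. Write $t=t_1e_1+\dots+t_\dimn e_\dimn$ and $z=z_1e_1+\dots+z_\dimn e_\dimn$; bilinearity gives $B(t,z)=\sum_{i,j}t_i(G_{\mathfrak{B}})_{i,j}z_j$, so that $\etp{B(t,z)}=\prod_{j=1}^{\dimn}\exp\!\bigl(2\uppi\rmi\,z_j\sum_i t_i(G_{\mathfrak{B}})_{i,j}\bigr)$ factors into single-variable exponentials.

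First I would expand each factor as $\sum_{p_j\geq 0}\frac{(2\uppi\rmi)^{p_j}}{p_j!}\bigl(\sum_i t_i(G_{\mathfrak{B}})_{i,j}\bigr)^{p_j}z_j^{p_j}$, apply the multinomial theorem to $\bigl(\sum_i t_i(G_{\mathfrak{B}})_{i,j}\bigr)^{p_j}$, and combine the factors over $j=1,\dots,\dimn$ into a single sum indexed by nonnegative integer matrices $\Lambda=(\lambda_{i,j})$ whose $j$-th column sums to $p_j$. The multinomial coefficients $p_j!/\prod_i\lambda_{i,j}!$ cancel the $1/p_j!$, the powers $(2\uppi\rmi)^{p_j}$ combine into $(2\uppi\rmi)^{s(\Lambda)}$ (since $\sum_j p_j=\sum_j s(\Lambda_{\ast,j})=s(\Lambda)$), the entries $(G_{\mathfrak{B}})_{i,j}$ assemble into $G_{\mathfrak{B}}^{\Lambda}$, and $t_i$ occurs with exponent $\sum_j\lambda_{i,j}=s(\Lambda_{i,\ast})$. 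This is the same bookkeeping as in \eqref{eq:quadraticFormPower}, and it shows that the coefficient of $z_1^{p_1}\cdots z_\dimn^{p_\dimn}$ in $\etp{B(t,z)}$ equals
\begin{equation*}
\sum_{\twoscript{\Lambda\in\mymatset{\numgeq{Z}{0}}{\dimn}{\dimn}}{s(\Lambda_{\ast,j})=p_j}}\frac{(2\uppi\rmi G_{\mathfrak{B}})^{\Lambda}}{\Lambda!}\,t_1^{s(\Lambda_{1,\ast})}\cdots t_\dimn^{s(\Lambda_{\dimn,\ast})}.
\end{equation*}

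It then remains to plug this into \eqref{eq:fourierDevTaylorCoeffCondition} and reorganize the multiply-indexed sum: first collect the coefficient of $z_1^{p_1}\cdots z_\dimn^{p_\dimn}$, then the coefficient of $q^n$ within it, and invoke uniqueness of Taylor coefficients at $z=0$ to conclude that the result equals $h_{\mathbf{p}}^\gamma(\tau)$, i.e.\ \eqref{eq:fourierDevTaylorCoeff}. To justify these rearrangements I would fix $\tau\in\uhp$ and use that the Fourier series of a weakly holomorphic Jacobi form converges normally on $\uhp\times\mathcal{V}$ (Proposition \ref{prop:ForuierExpansionOfJacobiLikeForm}, applicable since $\JacFormWHol{k}{L}{G}{\rho}\subseteq\JacFormHol{k}{L}{G}{\rho}$). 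Indeed, on a fixed closed polydisc $K$ around the origin in the variables $z_1,\dots,z_\dimn$, Cauchy's estimates bound the modulus of the coefficient of $z_1^{p_1}\cdots z_\dimn^{p_\dimn}$ in $\etp{B(t,z)}$ by $C_{\mathbf{p}}\sup_{z\in K}\lvert\etp{B(t,z)}\rvert$ with $C_{\mathbf{p}}$ independent of $t$; hence the sum over $(n,t)$ of $\lvert c^\gamma(n,t)q^n\rvert$ times this modulus is at most $C_{\mathbf{p}}\sum_{n,t}\sup_{z\in K}\lvert c^\gamma(n,t)q^n\etp{B(t,z)}\rvert$, which is finite by normal convergence over the compact set $\{\tau\}\times K$. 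Absolute convergence of the resulting double series then makes the order of summation irrelevant.

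I expect the multinomial identity of the first two paragraphs to be the bulk of the writing but entirely routine; the only point deserving care is the reordering of the doubly-indexed series, which is precisely what the normal convergence of the Fourier expansion together with the Cauchy estimates takes care of.
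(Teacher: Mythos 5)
Your proof is correct, and the combinatorial half (expanding $\etp{B(t,z)}$ via the multinomial theorem into the finite $\Lambda$-sum with column sums $p_j$, row sums giving the exponents of the $t_i$) is exactly the computation the paper performs. Where you genuinely diverge is in justifying the rearrangement of the doubly indexed series. The paper does not invoke Cauchy estimates: it bounds the full absolute series directly by replacing the Gram matrix entries with their absolute values, i.e.\ it introduces an auxiliary bilinear form $B'$ with Gram matrix $(\abs{a_{ij}})$ and points $t'$, $z'$ built from $\abs{t_i}$, $\abs{z_i}$, then uses a generalized Cauchy--Schwarz inequality together with the growth bound $Q(t)\ll n$ (from the paragraph after Proposition \ref{prop:FourierExpasionCusps}) to dominate every term by the term of the original Fourier expansion \eqref{eq:fourierDevTaylorCoeffCondition} evaluated at a point $(\tau_1,z')$ with $\Im\tau_1<\Im\tau$, whose absolute convergence is already known. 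Your route instead leans on the normal convergence statement of Proposition \ref{prop:ForuierExpansionOfJacobiLikeForm} restricted to the compact set $\{\tau\}\times K$ and on Cauchy's estimates on a polydisc, which bound the $z^{\mathbf p}$-coefficient of $\etp{B(t,z)}$ uniformly in $t$; this is shorter, avoids the auxiliary form $B'$ and the shift of $\tau$, and (summing the estimates geometrically over $\mathbf{p}$, which you should state if you want the full Fubini-style interchange with the Taylor expansion rather than term-by-term coefficient extraction \`a la Weierstrass) it yields the same absolute convergence. The trade-off is that the paper's argument is self-contained at the level of the Fourier coefficients and their support condition, while yours delegates the analytic input to the normal-convergence proposition plus standard several-variable function theory; both are sound.
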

Note that $t_1,\dots,t_\dimn$ in the inner sum may not be rationals, unless we choose $\mathfrak{B}$ to be a $\numZ$-basis of $L$.
\begin{proof}
Assume that the $(i,j)$-entry of $G_\mathfrak{B}$ is $a_{i,j}$. By a direct calculation, we obtain
\begin{equation*}
\etp{B(t,z)}=\sum_{\mathbf{p} \geq \mathbf{0}}\left(\sum_{\twoscript{\Lambda \in \mymatset{\numgeq{Z}{0}}{\dimn}{\dimn}}{s(\Lambda_{\ast,j})=p_j}}\frac{(2\uppi\rmi G_{\mathfrak{B}})^\Lambda}{\Lambda!}t_1^{s(\Lambda_{1,\ast})}\dots t_\dimn^{s(\Lambda_{\dimn,\ast})}\right)z_1^{p_1}\dots z_\dimn^{p_\dimn},
\end{equation*}
if we write $z=z_1e_1+\dots +z_\dimn e_\dimn$ and $t=t_1e_1+\dots +t_\dimn e_\dimn$. Inserting this into the Fourier expansion of $\phi \vert_{k,B}\gamma$ gives
\begin{equation*}
\phi \vert_{k,B}\gamma(\tau,z)=\sum_{n,t}\sum_{\Lambda \in \mymatset{\numgeq{Z}{0}}{\dimn}{\dimn}}c^\gamma(n,t)q^n\frac{(2\uppi\rmi G_{\mathfrak{B}})^\Lambda}{\Lambda!}t_1^{s(\Lambda_{1,\ast})}\dots t_\dimn^{s(\Lambda_{\dimn,\ast})}z_1^{s(\lambda_{\ast,1})}\dots z_\dimn^{s(\Lambda_{\ast,\dimn})}.
\end{equation*}
We shall prove the absolute convergence later. Hence by rearranging the summation order and using the uniqueness of coefficients of power series, we obtain the desired \eqref{eq:fourierDevTaylorCoeff}. To prove the absolute convergence, consider the absolute series
\begin{multline*}
\sum_{n,t}\sum_{\Lambda \in \mymatset{\numgeq{Z}{0}}{\dimn}{\dimn}}\abs{c^\gamma(n,t)}\rme^{-2\uppi n\Im\tau}\frac{\abs{(2\uppi G_{\mathfrak{B}})^\Lambda}}{\Lambda!}\abs{t_1}^{s(\Lambda_{1,\ast})}\dots \abs{t_\dimn}^{s(\Lambda_{\dimn,\ast})}\abs{z_1}^{s(\lambda_{\ast,1})}\dots \abs{z_\dimn}^{s(\Lambda_{\ast,\dimn})} \\
=\sum_{n,t}\abs{c^\gamma(n,t)}\rme^{-2\uppi n\Im\tau}\etp{B'(\abs{t_1}e_1+\dots+\abs{t_\dimn}e_\dimn,-\rmi(\abs{z_1}e_1+\dots+\abs{z_\dimn}e_\dimn))},
\end{multline*}
where $B'$ is the bilinear form on $\mathcal{V}$ such that the $(i,j)$-entry of the Gram matrix with respect to $\mathfrak{B}$ is $\abs{a_{i,j}}$. Put $t'=\abs{t_1}e_1+\dots+\abs{t_\dimn}e_\dimn$ and $z'=-\rmi(\abs{z_1}e_1+\dots+\abs{z_\dimn}e_\dimn)$. We shall find some $\tau_1 \in \uhp$ such that
\begin{equation}
\label{eq:toFindTau1}
\abs{c^\gamma(n,t)\rme^{2\uppi\rmi n\tau}\etp{B'(t',z')}} \leq \abs{c^\gamma(n,t)\rme^{2\uppi\rmi n\tau_1}\etp{B(t,z')}}
\end{equation}
for sufficiently large $n$ and arbitrary $t$. From this inequality, and the absolute convergence of \eqref{eq:fourierDevTaylorCoeffCondition} at $(\tau_1,z')$, the desired absolute convergence follows.

Recall a generalized Cauchy-Schwarz inequality. For any real bilinear form $F\colon V\times V \rightarrow\numR$ (whose Gram matrix with respect to the basis $\mathfrak{B}$ is $M$), and $v_1,\,v_2 \in V$, we have
\begin{equation*}
\abs{F(v_1,v_2)} \leq \lVert M\rVert\cdot \abs{v_1}\cdot\abs{v_2},
\end{equation*}
where $\abs{\cdot}$ in the right-hand side is any norm on $V$ and $\lVert M \rVert=\sup_{v \in V}\frac{\abs{Mv}}{\abs{v}}$. Thus
\begin{align*}
\abs{\etp{B'(t',z')-B(t,z')}}&=\rme^{-2\uppi(B'(t'-t,\Im z')+(B'-B)(t,\Im z'))}\\
&\leq \rme^{2\uppi(\lVert M_1\rVert\cdot\abs{t'-t}\cdot\abs{\Im z'}+\lVert M_2\rVert\cdot\abs{t}\cdot\abs{\Im z'})},
\end{align*}
where $M_1$ and $M_2$ are the Gram matrices of $B'$ and $B'-B$ with respect to $\mathfrak{B}$ respectively. The quantity $\abs{t}$, $\abs{t'}$, $\sqrt{Q(t)}$ are of the same order of magnitude when $t$ ranging over $V$. So according to the paragraph following Proposition \ref{prop:FourierExpasionCusps}, there exists some $C > 0$ such that $\abs{\etp{B'(t',z')-B(t,z')}} \leq \rme^{C\sqrt{n}}$ for sufficiently large $n$ and arbitrary $t$ satisfying $c^\gamma(n,t) \neq 0$. Thus we can choose any $\tau_1$ with $\Im \tau_1 < \Im \tau$ in \eqref{eq:toFindTau1}.
\end{proof}
\begin{rema}
\label{rema:hpAnotherExpression}
Put $\partial^\mathbf{p}=\frac{\partial^{s(\mathbf{p})}}{\partial z_1^{p_1}\dots\partial z_\dimn^{p_\dimn}}$. Then we can rewrite \eqref{eq:fourierDevTaylorCoeff} as
\begin{equation*}
h_{\mathbf{p}}^\gamma(\tau)=\frac{1}{\mathbf{p}!}\sum_{n}\left(\sum_{t}c^\gamma(n,t)\left(\partial^\mathbf{p}\etp{B(t,z)}\right)\vert_{z=0}\right)q^n.
\end{equation*}
Maybe a better formula is
\begin{equation*}
h_{\mathbf{p}}^\gamma(\tau)=\frac{(2\uppi\rmi)^{s(\mathbf{p})}}{\mathbf{p}!}\sum_{n}\left(\sum_{t}c^\gamma(n,t)\prod_{l=1}^\dimn B(t, e_l)^{p_l}\right)q^n.
\end{equation*}
\end{rema}

To obtain the Fourier development of $D_{k,\mathbf{p}}(\phi)$, we introduce certain polynomials.
%\begin{deff}
%\label{deff:PkpM}
%Let $\mathbf{p} \in \numgeq{Z}{0}^{\dimn}$, $k \in \halfint$ and $M \in \mymatset{\numR}{\dimn}{\dimn}$. Set $\lambda=[s(\mathbf{p})/2]$. If $2 \mid s(\mathbf{p})$, we define an $(\dimn+1)$-ary polynomial $P_{k,\mathbf{p},M}$ by the following fomula
%\begin{multline}
%P_{k,\mathbf{p},M}(X_0,X_1,\dots,X_\dimn)=\sum_{\twoscript{\mu_0,\dots,\mu_\dimn \in \numgeq{Z}{0}}{2\mu_0+\mu_1+\dots+\mu_\dimn=s(\mathbf{p})}}(-1)^{\mu_0}2^{s(\mathbf{p})-\mu_0}\\
%\times\frac{\Gamma(k+2\lambda-1-\mu_0)}{\Gamma(k+\lambda-1)}\sum_{\threescript{\Lambda,\Omega \in \mymatset{\numgeq{Z}{0}}{\dimn}{\dimn}}{s(\Lambda)=\mu_0,\,s(\Omega_{i,\ast})=\mu_i(1\leq i \leq \dimn)}{s(\Omega_{\ast,j})+\pi_j(\Lambda)=p_j(1\leq j \leq \dimn)}}\frac{M^{\Lambda+\Omega}}{\Lambda!\Omega!}X_0^{\mu_0}\cdots X_\dimn^{\mu_\dimn}.
%\end{multline}
%On the other hand, if $2 \nmid s(\mathbf{p})$, the polynomial $P_{k,\mathbf{p},M}$ is still defined by the above formula, but with the Gamma factor replaced by
%\begin{equation*}
%\frac{\Gamma(k+2\lambda-\mu_0)}{\Gamma(k+\lambda)}.
%\end{equation*}
%\end{deff}
\begin{deff}
\label{deff:PkpM}
Let $\mathbf{p} \in \numgeq{Z}{0}^{\dimn}$, $k \in \halfint$ and $M \in \mymatset{\numR}{\dimn}{\dimn}$. Set $\lambda'=[(s(\mathbf{p})-1)/2]$. We define an $(\dimn+1)$-ary polynomial $P_{k,\mathbf{p},M}$ by the following fomula
\begin{multline}
\label{eq:PkpM}
P_{k,\mathbf{p},M}(X_0,X_1,\dots,X_\dimn)=\sum_{\twoscript{\mu_0,\dots,\mu_\dimn \in \numgeq{Z}{0}}{2\mu_0+\mu_1+\dots+\mu_\dimn=s(\mathbf{p})}}(-1)^{\mu_0}2^{s(\mathbf{p})-\mu_0}\\
\times\frac{\Gamma(k+s(\mathbf{p})-1-\mu_0)}{\Gamma(k+\lambda')}\sum_{\threescript{\Lambda,\Omega \in \mymatset{\numgeq{Z}{0}}{\dimn}{\dimn}}{s(\Lambda)=\mu_0,\,s(\Omega_{i,\ast})=\mu_i(1\leq i \leq \dimn)}{s(\Omega_{\ast,j})+\pi_j(\Lambda)=p_j(1\leq j \leq \dimn)}}\frac{M^{\Lambda+\Omega}}{\Lambda!\Omega!}X_0^{\mu_0}\cdots X_\dimn^{\mu_\dimn}.
\end{multline}
\end{deff}
\begin{prop}
\label{prop:FourierCoeffDkp}
Use notations in Lemma \ref{lemm:fourierDevTaylorCoeff}, with $\gamma$ being an element in $\sltZ$ instead of $\Jacgrp{\slZ}{L}$. If $\mathbf{p} \in \numgeq{Z}{0}^\dimn$, then we have
\begin{equation*}
D_{k,\mathbf{p}}(\phi)\vert_{k+s(\mathbf{p})}\gamma=4^\lambda\cdot\lambda!(\uppi\rmi)^{s(\mathbf{p})}\sum_{n}\left(\sum_{t}c^\gamma(n,t)P_{k,\mathbf{p},G_{\mathfrak{B}}}(n,t_1,\dots,t_\dimn)\right)q^n,
\end{equation*}
where $\lambda=[s(\mathbf{p})/2]$ as usual.
\end{prop}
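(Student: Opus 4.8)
The plan is to reduce the statement to an identity of $q$-expansions obtained by feeding Lemma \ref{lemm:fourierDevTaylorCoeff} into the closed form for $D_{k,\mathbf{p}}$ from Proposition \ref{prop:explicitExpressionDkp}, and then to recognise the result as $(\uppi\rmi)^{s(\mathbf{p})}$ times $P_{k,\mathbf{p},G_{\mathfrak{B}}}$ by a multi-index bookkeeping matching Definition \ref{deff:PkpM}. First I would observe, exactly as in the proof of Theorem \ref{thm:ModularFormFromJacobiForm}, that $D_{k,\mathbf{p}}(\phi)\vert_{k+s(\mathbf{p})}\gamma = D_{k,\mathbf{p}}(\phi\vert_{k,B}\gamma)$ by Propositions \ref{prop:diagramSlashOperator}, \ref{prop:multiplyByTpOperatorAndSlashOperator} and \ref{prop:LkBOperator}; hence it suffices to compute the $q$-expansion of $D_{k,\mathbf{p}}$ applied to $\psi := \phi\vert_{k,B}\gamma$, whose Fourier coefficients are the $c^\gamma(n,t)$ and whose Taylor coefficients with respect to $\mathfrak{B}$ are the $h^\gamma_{\bullet}$ described in Lemma \ref{lemm:fourierDevTaylorCoeff}.

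Next I would write $D_{k,\mathbf{p}}(\psi)$ out with Proposition \ref{prop:explicitExpressionDkp}, using the uniform $\Gamma$-quotient $\Gamma(k+s(\mathbf{p})-1-s(\Lambda))/\Gamma(k+\lambda')$ with $\lambda'=[(s(\mathbf{p})-1)/2]$ (valid for both parities of $s(\mathbf{p})$), so that $D_{k,\mathbf{p}}(\psi)$ equals $4^\lambda\lambda!$ times a sum over $\Lambda \in \mymatset{\numgeq{Z}{0}}{\dimn}{\dimn}$ with $s(\Lambda)\le\lambda$ of $\frac{(-\uppi\rmi G_{\mathfrak{B}})^{\Lambda}}{\Lambda!}\,\frac{\Gamma(k+s(\mathbf{p})-1-s(\Lambda))}{\Gamma(k+\lambda')}\,\dodth{s(\Lambda)}h^\gamma_{\mathbf{p}-\pi(\Lambda)}$. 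Into each $h^\gamma_{\mathbf{p}-\pi(\Lambda)}$ I would substitute its Fourier development from Lemma \ref{lemm:fourierDevTaylorCoeff}; its inner sum runs over a second matrix index, which I will call $\Omega$, constrained by $s(\Omega_{\ast,j}) = p_j-\pi_j(\Lambda)$, contributing the factor $\frac{(2\uppi\rmi G_{\mathfrak{B}})^{\Omega}}{\Omega!}$ together with the monomial $t_1^{s(\Omega_{1,\ast})}\cdots t_\dimn^{s(\Omega_{\dimn,\ast})}$. Applying $\dodth{s(\Lambda)}$ term by term to this Fourier series is legitimate because each $h^\gamma_{\mathbf{p}-\pi(\Lambda)}$ is holomorphic on $\uhp$ and, for each fixed $n$, only finitely many $t$ with $c^\gamma(n,t)\neq0$ occur (by the growth bound recorded after Proposition \ref{prop:FourierExpasionCusps}), so that differentiation multiplies the $q^n$-term by $(2\uppi\rmi n)^{s(\Lambda)}$ and coefficient extraction in $q$ is unambiguous. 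Collecting the coefficient of $q^n$ then yields $\sum_t c^\gamma(n,t)$ times $4^\lambda\lambda!$ times a double sum over $\Lambda$ and $\Omega$.

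To finish I would match this double sum with $(\uppi\rmi)^{s(\mathbf{p})}P_{k,\mathbf{p},G_{\mathfrak{B}}}(n,t_1,\dots,t_\dimn)$. Putting $\mu_0 := s(\Lambda)$ and $\mu_i := s(\Omega_{i,\ast})$ for $1\le i\le\dimn$, the identities $\sum_j \pi_j(\Lambda)=2s(\Lambda)$ and $\sum_j s(\Omega_{\ast,j})=s(\Omega)=\sum_i s(\Omega_{i,\ast})$ give $2\mu_0+\mu_1+\dots+\mu_\dimn=s(\mathbf{p})$, so the condition $s(\Lambda)\le\lambda$ becomes automatic and the pair $(\Lambda,\Omega)$ ranges precisely over the index set of \eqref{eq:PkpM}, while the $\Gamma$-quotients coincide. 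It then remains to pull the scalars out of the two Gram-matrix powers and the derivative factor: using $G_{\mathfrak{B}}^{\Lambda}G_{\mathfrak{B}}^{\Omega}=G_{\mathfrak{B}}^{\Lambda+\Omega}$, together with $(-\uppi\rmi)^{s(\Lambda)}(2\uppi\rmi)^{s(\Lambda)}(2\uppi\rmi)^{s(\Omega)}=(-1)^{\mu_0}2^{s(\mathbf{p})-\mu_0}(\uppi\rmi)^{s(\mathbf{p})}$ (which holds because $2s(\Lambda)+s(\Omega)=s(\mathbf{p})$ and $s(\Lambda)+s(\Omega)=s(\mathbf{p})-\mu_0$), the common factor $4^\lambda\lambda!$ cancels and what remains is exactly the coefficient combination defining $P_{k,\mathbf{p},G_{\mathfrak{B}}}$ in Definition \ref{deff:PkpM}.

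The one place requiring care is this final multi-index bookkeeping --- keeping the two matrix summations (the $\Lambda$ coming from $\mathcal{D}_{k,\mathbf{p}}$, which after differentiation supplies the powers of $n$, and the $\Omega$ coming from the Fourier expansion of the Taylor coefficients, which supplies the powers of the $t_i$) and their constraints consistent with those in Definition \ref{deff:PkpM}, and collecting the several powers of $2\uppi\rmi$ correctly; there is essentially no analytic obstacle beyond the termwise differentiability already justified by the convergence in Lemma \ref{lemm:fourierDevTaylorCoeff} together with the finiteness of the $t$-sum for each fixed $n$.
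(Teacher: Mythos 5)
Your proposal is correct and follows exactly the paper's route: commute the slash operator past $D_{k,\mathbf{p}}$ (via Propositions \ref{prop:diagramSlashOperator}, \ref{prop:multiplyByTpOperatorAndSlashOperator}, \ref{prop:LkBOperator}), then combine Proposition \ref{prop:explicitExpressionDkp} with Lemma \ref{lemm:fourierDevTaylorCoeff} and match indices against Definition \ref{deff:PkpM}. The paper leaves this bookkeeping implicit; your expansion of it (including the power count $(-\uppi\rmi)^{s(\Lambda)}(2\uppi\rmi)^{s(\Lambda)}(2\uppi\rmi)^{s(\Omega)}=(-1)^{\mu_0}2^{s(\mathbf{p})-\mu_0}(\uppi\rmi)^{s(\mathbf{p})}$ and the justification of termwise differentiation) is accurate.
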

\begin{proof}
From the proof of Theorem \ref{thm:ModularFormFromJacobiForm}, we know that $D_{k,\mathbf{p}}(\phi)\vert_{k+s(\mathbf{p})}\gamma=D_{k,\mathbf{p}}(\phi\vert_{k,B}\gamma)$. The desired formula then follows from this fact, Proposition \ref{prop:explicitExpressionDkp} and Lemma \ref{lemm:fourierDevTaylorCoeff}.
\end{proof}

\begin{rema}
\label{rema:moreFormulaPkpM}
Although \eqref{eq:PkpM} gives the coefficient of each monomial of $P_{k, \mathbf{p}, M}$, it is computationally inefficient when $s(\mathbf{p})$ is large. In such situations, the following formula is better:
\begin{multline*}
P_{k,\mathbf{p},M}(X_0,X_1,\dots,X_\dimn)=2^{s(\mathbf{p})}\sum_{\mathbf{0} \leq \mathbf{q} \preceq \mathbf{p}}\frac{1}{\mathbf{q}!}\sum_{\twoscript{\Lambda \in \mymatset{\numgeq{Z}{0}}{\dimn}{\dimn}}{\pi(\Lambda)=\mathbf{p}-\mathbf{q}}}\frac{M^\Lambda}{\Lambda!}\\
\times\frac{\Gamma(k+s(\mathbf{p}+\mathbf{q})/2-1)}{\Gamma(k+\lambda')}\left(-\frac{1}{2}X_0\right)^{s(\mathbf{p}-\mathbf{q})/2}\cdot\left(X\cdot M\right)^{\mathbf{q}},
\end{multline*}
where $\mathbf{0} \leq \mathbf{q} \preceq \mathbf{p}$ means that $0 \leq q_j \leq p_j$ for $j=1,\dots,\dimn$ and $2 \mid s(\mathbf{p} - \mathbf{q})$, and $X$ denotes $(X_1,\dots,X_\dimn)$ regarding as a $1 \times \dimn$ matrix. By \eqref{eq:quadraticFormPower}, the sum $\sum_{\Lambda}\frac{M^\Lambda}{\Lambda!}$ in the above formula is equal to the coefficient of $T^{\mathbf{p}-\mathbf{q}}$-term of $\left((s(\mathbf{p}-\mathbf{q})/2)!\right)^{-1}\left(\sum_{i,j}m_{ij}T_iT_j\right)^{s(\mathbf{p}-\mathbf{q})/2}$, where $m_{ij}$ is the $(i,j)$-entry of $M$.
\end{rema}

\begin{examp}
We illustrate Theorem \ref{thm:ModularFormFromJacobiForm} and Proposition \ref{prop:FourierCoeffDkp} by a basic example. Assume that $\dimn=2$, $\mathcal{V}=\numC^2$ and $\myran=\numC$. Put $q=\etp{\tau}$, $\zeta_1=\etp{z_1}$ and $\zeta_2=\etp{z_2}$. Let $\phi$ be the following theta series
\begin{equation*}
\phi(\tau,z_1,z_2)=\sum_{t_1,t_2 \in 1/3+\numZ}q^{t_1^2+t_1t_2+t_2^2}\zeta_1^{2t_1+t_2}\zeta_2^{t_1+2t_2},
\end{equation*}
or equivalently,
\begin{equation*}
\phi(\tau,z_1,z_2)=\sum_{t \in (1/3,1/3)+\numZ^2}q^{Q(t)}\etp{B(t,z)},
\end{equation*}
where $B(t,z)=(t_1,t_2)\tbtmat{2}{1}{1}{2}\left(\begin{smallmatrix}{z_1} \\ {z_2}\end{smallmatrix}\right)$ and $Q(t)=\frac{1}{2}B(t,t)$. Put $\underline{L}=(\numZ^2,B)$, then the above function belongs to the space $\JacForm{1}{L}{\Gamma_1(3)}{\chi}$, where
\begin{equation*}
\Gamma_1(3)=\left\{\tbtmat{a}{b}{c}{d}\colon a \equiv d \equiv 1 \bmod 3,\quad c \equiv 0 \bmod 3\right\},
\end{equation*}
and $\chi$ is the character on $\Jacgrp{\Gamma_1(3)}{L}$ that maps $\eleactgrpnsimple{a}{b}{c}{d}{\varepsilon}{\lambda}{\mu}{\xi}$ to
\begin{equation*}
\xi\cdot(-1)^{(1-\sgn{d})\legendre{c}{-1}/2}(-1)^{B(\lambda,\mu)}\etp{\frac{b}{3}}\cdot\left(\frac{1}{\abs{d}}\sum_{v \in L/\abs{d}L}\etp{\frac{bQ(v)}{d}}\right).
\end{equation*}
Necessary background on theta series of lattice index, which includes the present example, will be given in Section \ref{sec:Application: Linear relations among theta series} (in particular, Proposition \ref{prop:thetaAlphaBetaJacobiForm}). By a direct calculation, we have
\begin{multline*}
P_{k,(2,1),\tbtmat{2}{1}{1}{2}}(X_0,X_1,X_2)\\
=32X_1^3+96X_1^2X_2+72X_1X_2^2+16X_2^3-24X_0X_1-24X_0X_2.
\end{multline*}
Applying Theorem \ref{thm:ModularFormFromJacobiForm} and Proposition \ref{prop:FourierCoeffDkp} to this setting gives the fact that, the function
\begin{equation*}
\sum_{t_1,t_2 \in 1/3+\numZ}(t_1^3+6t_1^2t_2+3t_1t_2^2-t_2^3)q^{t_1^2+t_1t_2+t_2^2}
\end{equation*}
belongs to the space $\ModFormCusp{4}{\Gamma_1(3)}{\chi\vert_{\widetilde{\Gamma_1(3)}}}$. To describe the character $\chi\vert_{\widetilde{\Gamma_1(3)}}$ of this space in another way, note that $\Gamma_1(3)$ is generated by two elements $\tbtmat{1}{1}{0}{1}$ and $\tbtmat{1}{-1}{3}{-2}$. So $\chi\vert_{\widetilde{\Gamma_1(3)}}$ is determined by the following two values:
\begin{equation*}
\chi\widetilde{\tbtmat{1}{1}{0}{1}}=\etp{\frac{1}{3}},\qquad \chi\widetilde{\tbtmat{1}{-1}{3}{-2}}=\etp{-\frac{1}{3}}.
\end{equation*}
\end{examp}

\begin{examp}
We also give a half-integral-weight example. Let $\eta(\tau)$ be the \emph{Dedekind eta function}, that is,
\begin{equation}
\eta(\tau)=q^{1/24}\prod_{n=1}^{\infty}(1-q^n).
\end{equation}
We have $\eta(\tau)^{-1}=q^{-1/24}\sum_{n=0}^{\infty}p(n)q^n$, where $p(n)$ is the \emph{patition function}, which counts the number of patitions of a positive integer $n$. (By convention, $p(0)=1$.) Set
\begin{equation*}
\phi(\tau,z_1,z_2)=\eta(\tau)^{-1}\sum_{t \in \numZ^2}q^{Q(t)}\etp{B(t,z)},
\end{equation*}
where $B$ and $Q$ are same as in the last example. Then $\phi \in \JacFormWHol{1/2}{L}{\Gamma_0(3)}{\chi}$. (See \eqref{eq:Gamma0N} for the definition of $\Gamma_0(3)$.) The character $\chi$, is defined on $\Jacgrp{\Gamma_0(3)}{L}$, and maps $\eleactgrpnsimple{a}{b}{c}{d}{\varepsilon}{\lambda}{\mu}{\xi}$ to
\begin{equation*}
\xi\cdot(-1)^{(1-\sgn{d})\legendre{c}{-1}/2}(-1)^{B(\lambda,\mu)}\cdot\left(\frac{1}{\abs{d}}\sum_{v \in L/\abs{d}L}\etp{\frac{bQ(v)}{d}}\right)\cdot\chi_\eta^{-1}\widetilde{\tbtmat{a}{b}{c}{d}},
\end{equation*}
where $\chi_\eta$ is the character of $\eta$ (see \cite[Section 2]{ZZ21}). We shall apply the operator $D_{1/2, (2,0)}$ to $\phi$. Note that by Definition \ref{deff:PkpM}, we have
\begin{equation*}
P_{1/2,(2,0),\tbtmat{2}{1}{1}{2}}(X_0,X_1,X_2)=-4X_0+4X_1^2+4X_1X_2+X_2^2.
\end{equation*}
Hence, according to Proposition \ref{prop:FourierCoeffDkp}, the function $D_{1/2, (2,0)}(\phi)$ is, up to a constant factor,
\begin{equation}
\label{eq:exampleDkpHalfintegralWeight}
q^{-1/24}\sum_{n\in\numgeq{Z}{0}}\left(\sum_{\twoscript{t_1,t_2\in\numZ}{t_1^2+t_1t_2+t_2^2 \leq n}}p(n-t_1^2-t_1t_2-t_2^2)\cdot(4t_1^2+4t_1t_2+t_2^2-4n+\frac{1}{6})\right)q^n.
\end{equation}
This function belongs to $\ModFormWHol{5/2}{\Gamma_0(3)}{\chi\vert_{\widetilde{\Gamma_0(3)}}}$ by Theorem \ref{thm:ModularFormFromJacobiForm}. The character $\chi\vert_{\widetilde{\Gamma_0(3)}}$ can be described by its action on generators of $\widetilde{\Gamma_0(3)}$ as follows:
\begin{equation*}
\chi\widetilde{\tbtmat{1}{1}{0}{1}}=\etp{-\frac{1}{24}},\quad \chi\widetilde{\tbtmat{-1}{0}{3}{-1}}=\etp{-\frac{3}{8}}, \quad \chi\widetilde{\tbtmat{-1}{0}{0}{-1}}=\etp{-\frac{1}{4}}.
\end{equation*}
\end{examp}

We can collect the operators in Definition \ref{deff:DkpFormal} together to form the following map:
\begin{align}
\label{eq:formalModFormintoModForms}
\prod_{\mathbf{p} \in \mathcal{P}} D_{k,\mathbf{p}}\colon \FormalModularFormSubset{k}{B}{H}{\rho}{0} &\rightarrow \prod_{\mathbf{p} \in \mathcal{P}}\ModFormHol{k+s(\mathbf{p})}{H}{\rho} \\
\notag\FormalSeries{h} &\mapsto \left\langle D_{k,\mathbf{p}}\FormalSeries{h}\middle\vert \mathbf{p} \in \mathcal{P}\right\rangle,
\end{align}
where $\mathcal{P}$ is any nomempty subset of the vectors in $\mulindZn$ with $s(\mathbf{p}) \geq 0$. It is clear a $\numC$-linear map. A natural question is, for which $\mathcal{P}$ this map is an isomorphism? Since we are mainly concerned with weakly holomorphic Jacobi forms of lattice index, we may restrict the domain to $\FormalModularFormSubset{k}{B}{H}{\rho}{\mathbf{0}}$ (a special case of \eqref{eq:FormalModularFormSubsetVector}, not to be confused with \eqref{eq:FormalModularFormSubsetScalar}).
\begin{prop}
\label{prop:isoFormalModForm}
Let $H$ be a subgroup of $\slZ$, $\rho\colon \widetilde{H}\rightarrow\GlW$ be a group representation. Let $B$ be a real symmetric bilinear form on $V$, $\mathfrak{B}$ be a $\numR$-basis of $V$, and $G_{\mathfrak{B}}$ be the Gram matrix of $B$ with respect to $\mathfrak{B}$. Let $k \in \halfint$ with $k \neq 0, -1, -2,\dots$. Then the map \eqref{eq:formalModFormintoModForms} with $\mathcal{P}=\numgeq{Z}{0}^\dimn$ restricting to $\FormalModularFormSubset{k}{B}{H}{\rho}{\mathbf{0}}$ is a $\numC$-linear isomorphism.
\end{prop}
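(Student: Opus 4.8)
\emph{Strategy and reduction by parity.} The plan is to reduce the statement to a purely formal bijection proved by a triangular recursion, and then transport the modularity condition across that bijection. Since $D_{k,\mathbf{p}}$ with $2\mid s(\mathbf{p})$ (resp.\ $2\nmid s(\mathbf{p})$) depends only on the ``plus'' part (resp.\ ``minus'' part) of its argument by Definition \ref{deff:DkpFormal}, and $\FormalModularFormSubset{k}{B}{H}{\rho}{\mathbf{0}}=\FormalModularFormSubset{k}{B}{H}{\rho}{\mathbf{0},+}\oplus\FormalModularFormSubset{k}{B}{H}{\rho}{\mathbf{0},-}$, the map \eqref{eq:formalModFormintoModForms} is block-diagonal for the corresponding splittings of domain and codomain. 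I treat the plus block; the minus block is identical, except that it carries one extra factor $\mathcal{T}^{-\mathbf{p}_{-1}}$ (with $s(\mathbf{p}_{-1})=1$) throughout and the constant $c_\nu$ below is replaced by $4^\nu\nu!\,\Gamma(k+2\nu)/\Gamma(k+\nu)$; the two blocks combined give the claim.

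\emph{The formal bijection.} Let $\mathcal{S}^{+}$ be the $\numC$-space of all formal series $\sum_{\mathbf{j}\geq\mathbf{0},\,2\mid s(\mathbf{j})}h_{\mathbf{j}}T^{\mathbf{j}}$ with $h_{\mathbf{j}}\in\holfnsm$, so that $\FormalModularFormSubset{k}{B}{H}{\rho}{\mathbf{0},+}\subseteq\mathcal{S}^{+}$. Isolating the $\Lambda=0$ summand in Proposition \ref{prop:explicitExpressionDkp}, and using that $\pi(\Lambda)=\mathbf{0}$ forces $\Lambda=0$ among nonnegative integer matrices, one gets, for any $\FormalSeries{h}$ and any $\mathbf{p}$ with $s(\mathbf{p})=2\nu$,
\begin{equation*}
D_{k,\mathbf{p}}(\FormalSeries{h})=c_\nu\,h_{\mathbf{p}}+R_{\mathbf{p}}\bigl(h_{\mathbf{j}'}:\ s(\mathbf{j}')<2\nu\bigr),\qquad c_\nu=4^\nu\,\nu!\prod_{i=\nu-1}^{2\nu-2}(k+i),
\end{equation*}
(the empty product $1$ when $\nu=0$), where $R_{\mathbf{p}}$ is an explicit $\numC$-linear combination of derivatives $\dodth{s(\Lambda)}h_{\mathbf{p}-\pi(\Lambda)}$ with $1\leq s(\Lambda)\leq\nu$, hence involving only coefficients $h_{\mathbf{j}'}$ with $s(\mathbf{j}')=2\nu-2s(\Lambda)<2\nu$. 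The hypothesis $k\notin\{0,-1,-2,\dots\}$ makes every factor $k+i$ ($i\geq 0$) nonzero, so $c_\nu\neq 0$. Consequently the $\numC$-linear map $\mathbf{D}^{+}:=\prod_{\mathbf{p}\geq\mathbf{0},\,2\mid s(\mathbf{p})}D_{k,\mathbf{p}}\colon\mathcal{S}^{+}\rightarrow\prod_{\mathbf{p}\geq\mathbf{0},\,2\mid s(\mathbf{p})}\holfnsm$ is bijective: by induction on $s(\mathbf{p})$ the coefficients are recovered from, and conversely may be prescribed so as to realize, any target tuple, via $h_{\mathbf{0}}=D_{k,\mathbf{0}}(\FormalSeries{h})$ and $h_{\mathbf{p}}=c_\nu^{-1}(D_{k,\mathbf{p}}(\FormalSeries{h})-R_{\mathbf{p}})$, the right-hand side being holomorphic whenever the targets are.

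\emph{Transporting modularity.} Fix $\gamma\in\widetilde{H}$. Post-composition by $\rho(\gamma)$ commutes with $\mathcal{T}^{-\mathbf{p}}$ and, as in the proof of Proposition \ref{prop:LkBActOnFormalModularSeries}, with $\mathcal{L}_{k,B}^{\mathfrak{B}}$, hence with every $D_{k,\mathbf{p}}$; so $\mathbf{D}^{+}$ intertwines $\FormalSeries{h}\mapsto\FormalSeries{\rho(\gamma)\circ h}$ on both sides. For the slash, Propositions \ref{prop:multiplyByTpOperatorAndSlashOperator} and \ref{prop:LkBOperator} show each elementary factor of $\mathcal{D}_{k,\mathbf{p}}$ commutes with the formal slash (at its own weight), so $\mathcal{D}_{k,\mathbf{p}}(\FormalSeries{h}\vert_{k,B}^{\mathfrak{B}}\gamma)=(\mathcal{D}_{k,\mathbf{p}}\FormalSeries{h})\vert_{k+s(\mathbf{p}),B}^{\mathfrak{B}}\gamma$ for $\FormalSeries{h}\in\mathcal{S}^{+}$; tracking the least total degree through the composition \eqref{eq:opDpPlus} (each $\mathcal{L}$ raises it from $0$ to $2$, each $\mathcal{T}^{-\mathbf{p}_i}$ with $s(\mathbf{p}_i)=2$ lowers it by $2$) shows $\mathcal{D}_{k,\mathbf{p}}\FormalSeries{h}$ stays componentwise lower-bounded and supported in total degree $\geq 0$, so by \eqref{eq:slashOperatorFormalSeries} the constant coefficient of its slash receives only the $\Lambda=0$ contribution $\varepsilon^{-2(k+s(\mathbf{p}))}(c\tau+d)^{-(k+s(\mathbf{p}))}(\mathcal{D}_{k,\mathbf{p}}\FormalSeries{h})_{\mathbf{0}}(\gamma\tau)$. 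Hence $D_{k,\mathbf{p}}(\FormalSeries{h}\vert_{k,B}^{\mathfrak{B}}\gamma)=D_{k,\mathbf{p}}(\FormalSeries{h})\vert_{k+s(\mathbf{p})}\gamma$, the ordinary weight-$(k+s(\mathbf{p}))$ slash. Combining the two intertwinings with injectivity of $\mathbf{D}^{+}$: for $\FormalSeries{h}\in\mathcal{S}^{+}$, $\FormalSeries{h}\vert_{k,B}^{\mathfrak{B}}\gamma=\FormalSeries{\rho(\gamma)\circ h}$ holds for all $\gamma\in\widetilde{H}$ if and only if $D_{k,\mathbf{p}}(\FormalSeries{h})\vert_{k+s(\mathbf{p})}\gamma=\rho(\gamma)\circ D_{k,\mathbf{p}}(\FormalSeries{h})$ for all $\gamma\in\widetilde{H}$ and all $\mathbf{p}$; that is, $\FormalSeries{h}\in\FormalModularFormSubset{k}{B}{H}{\rho}{\mathbf{0},+}$ if and only if every $D_{k,\mathbf{p}}(\FormalSeries{h})\in\ModFormHol{k+s(\mathbf{p})}{H}{\rho}$. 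Thus $\mathbf{D}^{+}$ restricts to a $\numC$-linear isomorphism onto $\prod_{\mathbf{p}\geq\mathbf{0},\,2\mid s(\mathbf{p})}\ModFormHol{k+s(\mathbf{p})}{H}{\rho}$, and adding the plus and minus blocks completes the proof.

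\emph{Main obstacle.} The delicate step is the slash-intertwining: one must verify that $\mathcal{D}_{k,\mathbf{p}}$ commutes with the formal slash on \emph{all} of $\mathcal{S}^{+}$, not merely on the modular subspace where Corollary \ref{coro:DkpMappingProperty} lives, and that $\mathcal{D}_{k,\mathbf{p}}\FormalSeries{h}$ stays supported in non-negative total degree --- this is exactly what makes ``take the constant coefficient'' commute with slashing, and hence what lets modularity of the individual $D_{k,\mathbf{p}}(\FormalSeries{h})$ feed back to modularity of $\FormalSeries{h}$. Everything else is the triangular recursion and routine bookkeeping.
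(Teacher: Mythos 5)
Your proposal is correct, but it reaches the conclusion by a genuinely different route from the paper, and the difference is worth recording. The paper proves injectivity from the explicit inverse formula of Lemma \ref{lemm:inverseExpressionDkp} and then proves surjectivity constructively: it defines $h_{\mathbf{p}}$ by that inverse formula applied to the prescribed modular forms and verifies the transformation law \eqref{eq:toProveIsoFormalModForm} by induction on $s(\mathbf{p})$, using Lemma \ref{lemm:sucDerModEqu} and the combinatorial identity \eqref{eq:toProveSumLambda} --- a heavy computation. You instead (i) observe that $\prod_{\mathbf{p}}D_{k,\mathbf{p}}$ is a triangular system on the \emph{full} space of formal series supported in $\mathbf{j}\geq\mathbf{0}$ of fixed parity, with invertible diagonal entries $c_\nu$ (nonzero precisely because $k\neq 0,-1,-2,\dots$), hence a bijection there with no modularity assumed; and (ii) transport modularity across this bijection using the two intertwinings $D_{k,\mathbf{p}}(\FormalSeries{h}\vert_{k,B}^{\mathfrak{B}}\gamma)=D_{k,\mathbf{p}}(\FormalSeries{h})\vert_{k+s(\mathbf{p})}\gamma$ and $D_{k,\mathbf{p}}(\FormalSeries{\rho(\gamma)\circ h})=\rho(\gamma)\circ D_{k,\mathbf{p}}(\FormalSeries{h})$. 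You rightly single out the delicate point, namely that the constant-term extraction commutes with the formal slash only because $\mathcal{D}_{k,\mathbf{p}}\FormalSeries{h}$ stays supported in nonnegative total degree (so only $\Lambda=0$ contributes in \eqref{eq:slashOperatorFormalSeries}), and your degree-tracking through \eqref{eq:opDpPlus}--\eqref{eq:opDpMinus} does establish this; note this step is exactly the one the paper also uses implicitly for Jacobi forms in Theorem \ref{thm:ModularFormFromJacobiForm}. What each approach buys: yours is shorter and more conceptual, avoiding Lemma \ref{lemm:sucDerModEqu} and the identity \eqref{eq:toProveSumLambda} entirely; the paper's computational route, on the other hand, produces the explicit inverse of Lemma \ref{lemm:inverseExpressionDkp}(2), whose $\widehat{\mathbf{p}}$-locality is reused in the proof of Theorem \ref{thm:mainEmbed}, so that lemma would still have to be proved separately even if one adopted your proof of this proposition.
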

This fact follows from Lemma \ref{lemm:sucDerModEqu} and the next lemma, which actually gives the inverse of the map considered. Put $\lambda_\mathbf{p}=[s(\mathbf{p})/2]$ and $\lambda'_\mathbf{p}=[(s(\mathbf{p})-1)/2]$.
\begin{lemm}
\label{lemm:inverseExpressionDkp}
Use notations and assumptions in Proposition \ref{prop:isoFormalModForm}. Let $\FormalSeries{h}$ and $\FormalSeries{g}$ be two formal series in $\FormalSeriesSetLbf{\mathbf{0}}$. Then the following two statements are equivalent:
\begin{enumerate}
    \item For any $\mathbf{p} \in \numgeq{Z}{0}^\dimn$, we have
    \begin{equation*}
    g_{\mathbf{p}}=\sum_{\twoscript{\Lambda \in \mymatset{\numgeq{Z}{0}}{\dimn}{\dimn}}{s(\Lambda) \leq \lambda_{\mathbf{p}}}}\frac{(-\uppi\rmi G_{\mathfrak{B}})^{\Lambda}}{\Lambda!}\frac{\Gamma(k+s(\mathbf{p})-1-s(\Lambda))}{\Gamma(k+\lambda'_{\mathbf{p}})}\dodth{s(\Lambda)}h_{\mathbf{p}-\pi(\Lambda)}.
    \end{equation*}
    \item For any $\mathbf{p} \in \numgeq{Z}{0}^\dimn$, we have
    \begin{equation*}
    h_\mathbf{p}=\sum_{\twoscript{\Lambda \in \mymatset{\numgeq{Z}{0}}{\dimn}{\dimn}}{s(\Lambda) \leq \lambda_{\mathbf{p}}}}\frac{(\uppi\rmi G_{\mathfrak{B}})^{\Lambda}}{\Lambda!}C_1^{-1}C_2^{-1}\dodth{s(\Lambda)}g_{\mathbf{p}-\pi(\Lambda)},
    \end{equation*}
    where $C_1$ and $C_2$ (depending on $k$, $s(\mathbf{p})$ and $s(\Lambda)$) are given by
    \begin{align*}
    C_1 &= \frac{\Gamma(k+s(\mathbf{p})-1-2s(\Lambda))}{\Gamma(k+\lambda'_{\mathbf{p}}-s(\Lambda))}=\prod_{n=\lambda'_{\mathbf{p}}-s(\Lambda)}^{s(\mathbf{p})-2-2s(\Lambda)}(k+n), \\
    C_2 &= \frac{\Gamma(k+s(\mathbf{p})-s(\Lambda))}{\Gamma(k+s(\mathbf{p})-2s(\Lambda))}=\prod_{n=s(\mathbf{p})-2s(\Lambda)}^{s(\mathbf{p})-1-s(\Lambda)}(k+n).
    \end{align*}
\end{enumerate}
\end{lemm}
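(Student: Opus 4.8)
The plan is to read (1) and (2) as the two equations $g=\Phi(h)$ and $h=\Psi(g)$, where $\Phi,\Psi$ are the $\numC$-linear operators on $\FormalSeriesSetLbf{\mathbf{0}}$ whose $\mathbf{p}$-th coefficients are the right-hand sides of (1) and (2) respectively; the asserted equivalence for a fixed pair $(h,g)$ then follows once $\Psi\circ\Phi=\mathrm{id}$ and $\Phi\circ\Psi=\mathrm{id}$. Both operators are well defined (each coefficient is a finite sum of scalar multiples of $\tau$-derivatives of elements of $\holfnsm$, and the support stays in $\numgeq{Z}{0}^{\dimn}$ since $\pi(\Lambda)\ge\mathbf{0}$), and both are triangular for the grading $\mathbf{j}\mapsto s(\mathbf{j})$: the $\mathbf{p}$-th coefficient involves only indices $\mathbf{p}-\pi(\Lambda)\le\mathbf{p}$ with $s(\mathbf{p}-\pi(\Lambda))=s(\mathbf{p})-2s(\Lambda)$, while the diagonal coefficient (the $\Lambda=\mathbf{0}$ term) equals $\Gamma(k+s(\mathbf{p})-1)/\Gamma(k+\lambda'_{\mathbf{p}})$ for $\Phi$ and its reciprocal for $\Psi$, each a (possibly empty) product of factors $k+n$ with $n\in\numgeq{Z}{0}$, hence nonzero because $k\notin\{0,-1,-2,\dots\}$. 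Therefore $\Phi$ and $\Psi$ are bijective, and it suffices to prove $\Psi\circ\Phi=\mathrm{id}$, the other relation being automatic once a left inverse of a bijection is recognized as a two-sided inverse.

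To verify $\Psi\circ\Phi=\mathrm{id}$, substitute the expression of (1) for each $g_{\mathbf{p}-\pi(\Lambda)}$ into the expression of (2), using $s(\mathbf{p}-\pi(\Lambda))=s(\mathbf{p})-2s(\Lambda)$, $\lambda'_{\mathbf{p}-\pi(\Lambda)}=\lambda'_{\mathbf{p}}-s(\Lambda)$, $\dodth{s(\Lambda)}\circ\dodth{s(\Omega)}=\dodth{s(\Lambda)+s(\Omega)}$, the additivity $\pi(\Lambda)+\pi(\Omega)=\pi(\Lambda+\Omega)$, $G_{\mathfrak{B}}^{\Lambda}G_{\mathfrak{B}}^{\Omega}=G_{\mathfrak{B}}^{\Lambda+\Omega}$ and $s(\pi(\Lambda))=2s(\Lambda)$. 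Grouping the resulting double sum by $\Sigma:=\Lambda+\Omega$ and $\sigma:=s(\Sigma)$, the inner sum over all splittings $\Lambda+\Omega=\Sigma$ with $s(\Lambda)=l$ fixed is evaluated by \eqref{eq:binomMatrixToprove} of Lemma \ref{lemm:binomMatrix}, which turns $\sum 1/(\Lambda!\,\Omega!)$ into $\binom{\sigma}{l}\big/\Sigma!$. Simplifying the product of $\Gamma$-quotients via the functional equation — in particular $C_1^{-1}C_2^{-1}=(k+s(\mathbf{p})-1-2l)\,\Gamma(k+\lambda'_{\mathbf{p}}-l)\big/\Gamma(k+s(\mathbf{p})-l)$ — the total coefficient of $\dodth{\sigma}h_{\mathbf{p}-\pi(\Sigma)}$ in $(\Psi\circ\Phi)(h)_{\mathbf{p}}$ collapses to $(-\uppi\rmi)^{\sigma}\,\dfrac{G_{\mathfrak{B}}^{\Sigma}}{\Sigma!}\,S_{\sigma}$, where
\begin{equation*}
S_{\sigma}=\sum_{l=0}^{\sigma}(-1)^{l}\binom{\sigma}{l}\,\frac{a-1-2l}{(a-l-1)(a-l-2)\cdots(a-l-\sigma-1)},\qquad a:=k+s(\mathbf{p}).
\end{equation*}
Since $S_{0}=1$, the claim $\Psi\circ\Phi=\mathrm{id}$ reduces exactly to showing $S_{\sigma}=0$ for every $\sigma\ge 1$.

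The identity $S_{\sigma}=0$ is the crux of the argument. I would prove it by decomposing $\bigl((a-l-1)\cdots(a-l-\sigma-1)\bigr)^{-1}$ into partial fractions in $l$ (simple poles at $l=a-j$, $1\le j\le\sigma+1$) and writing $a-1-2l=-2(l-a+j)+(2j-a-1)$ at the $j$-th pole: the $-2$ contribution is a multiple of $\sum_{l=0}^{\sigma}(-1)^{l}\binom{\sigma}{l}=0$ (as $\sigma\ge1$) and drops out, and the remaining sums are evaluated by the classical identity $\sum_{l=0}^{\sigma}(-1)^{l}\binom{\sigma}{l}/(l+z)=\sigma!\big/\bigl(z(z+1)\cdots(z+\sigma)\bigr)$. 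A short re-indexing ($m=j-1$) of the resulting sum over the poles rewrites it as $-S_{\sigma}$, so $2S_{\sigma}=0$. The main obstacles are therefore (i) the bookkeeping of the nested sums and $\Gamma$-factors that produces $S_{\sigma}$, and (ii) recognizing this self-dual structure of $S_{\sigma}$ (a direct partial-fraction computation of the rational function $S_{\sigma}$ of $a$ also works but is less transparent); all remaining steps are routine.
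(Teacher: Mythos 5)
Your proposal is correct, and it supplies exactly the computation that the paper compresses into the single sentence ``a complicated, tedious, but straightforward calculation,'' so there is no written argument in the paper to compare against in detail. Your framework is sound: reading (1) and (2) as $g=\Phi(h)$, $h=\Psi(g)$, both operators are triangular for the grading $\mathbf{j}\mapsto s(\mathbf{j})$ (since $s(\pi(\Lambda))=2s(\Lambda)$) with diagonal $\Gamma(k+s(\mathbf{p})-1)/\Gamma(k+\lambda'_{\mathbf{p}})$, a possibly empty product of factors $k+n$ with $n\geq 0$, nonzero because $k\notin\{0,-1,-2,\dots\}$; hence it suffices to check $\Psi\circ\Phi=\mathrm{id}$. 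In that check the constraint bookkeeping is right (the inner constraint $s(\Omega)\leq\lambda_{\mathbf{p}-\pi(\Lambda)}=\lambda_{\mathbf{p}}-s(\Lambda)$ forces $s(\Sigma)\leq\lambda_{\mathbf{p}}$, and all splittings $\Lambda+\Omega=\Sigma$ occur), the splitting sum is exactly \eqref{eq:binomMatrixToprove} of Lemma \ref{lemm:binomMatrix}, $\lambda'_{\mathbf{p}-\pi(\Lambda)}=\lambda'_{\mathbf{p}}-s(\Lambda)$ holds, and I verified that the collapsed coefficient is your $S_{\sigma}$ with $S_0=1$, and that the partial-fraction evaluation plus the re-indexing $m=j-1$ indeed returns $-S_{\sigma}$, giving $S_{\sigma}=0$ for $\sigma\geq 1$. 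It is worth noting that this alternating-sum identity plays the same structural role as the identity $\sum_{u}(-1)^u\Gamma(k+s(\mathbf{p})-1-u)/\bigl((s(\Lambda)-u)!(u-l)!\,\Gamma(k+s(\mathbf{p})-(s(\Lambda)-l)-u)\bigr)=0$ that the paper proves by induction inside Proposition \ref{prop:isoFormalModForm}; your partial-fraction argument is a closed-form alternative to that inductive style, and your triangularity observation halves the work by making the second composition automatic.

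One point you should make explicit in a final write-up: the simplification of the $\Gamma$-quotients and the identity $S_{\sigma}=0$ must be read as identities of rational functions of $k$ (equivalently of $a=k+s(\mathbf{p})$). For some admissible $k$ the rewritten term is formally $0/0$: e.g.\ when $k=1$, $s(\mathbf{p})=2\sigma$ and $l=\sigma$, the factor $a-1-2l=k-1$ you insert into numerator and denominator vanishes, and intermediate values such as $\Gamma(k+\lambda'_{\mathbf{p}}-l)$ sit at non-positive integers, whereas the genuine term $C_1^{-1}C_2^{-1}$ times the polynomial $\Gamma$-quotient from (1) is finite because $C_1,C_2$ are products of $k+n$ with $n\geq 0$. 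Since the genuine coefficient is a rational function of $k$ whose poles lie only at non-positive integers and which agrees with $S_{\sigma}$ off a finite set, its vanishing transfers to every admissible $k$; saying this (or performing the cancellation directly on the product formulas for $C_1$ and $C_2$, which avoids inserting the extra factor) closes the argument completely. With that remark added, your proof is complete.
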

The reason why we require $k \neq 0, -1, -2,\dots$ in Proposition \ref{prop:isoFormalModForm} is that we need $C_1$ and $C_2$ to be always nonzero. We emphasize that an empty product is defined to be $1$. When needed, we write $C_i(\mathbf{p}, \Lambda)$ instead of $C_i$ to indicate their dependence on $\mathbf{p}$ and $\Lambda$ for $i=1,2$.
\begin{proof}
A complicated, tedious, but straightforward calcultion.
\end{proof}
We now prove Proposition \ref{prop:isoFormalModForm}.
\begin{proof}
It is obviously the map is $\numC$-linear. To prove the injectiveness, we compute the kernel, which turns out to be zero by the ``(1)$\implies$(2)'' part of the above lemma. It remains to prove the surjectiveness. Let $\left\langle g'_\mathbf{p} \middle\vert \mathbf{p} \in \numgeq{Z}{0}^\dimn\right\rangle \in \prod_{\mathbf{p} \in \numgeq{Z}{0}^\dimn}\ModFormHol{k+s(\mathbf{p})}{H}{\rho}$ be arbitrary, and put $g'_\mathbf{p}=0$ if $\mathbf{p} \notin \numgeq{Z}{0}^\dimn$. Write $g_{\mathbf{p}}=4^{-\lambda_{\mathbf{p}}}(\lambda_{\mathbf{p}}!)^{-1}\cdot g'_{\mathbf{p}}$, and define a formal series $\FormalSeries{h} \in \FormalSeriesSetLbf{\mathbf{0}}$ by the equation in statement (2) of the above lemma. We shall prove two things: one is $\FormalSeries{h} \in \FormalModularFormSubset{k}{B}{H}{\rho}{\mathbf{0}}$, the other is $D_{k,\mathbf{p}}(\FormalSeries{h})=g'_{\mathbf{p}}$, from which the desired surjectiveness follows. The latter assertion is a direct consequence of the above lemma. To prove the former one, let $\gamma=\left(\tbtmat{a}{b}{c}{d},\varepsilon\right) \in \widetilde{H}$ be arbitrary. According to Definition \ref{deff:modularFormalSeries} and Proposition \ref{prop:slashOperatorFormalInverse}, we need to prove
\begin{equation}
\label{eq:toProveIsoFormalModForm}
h_{\mathbf{p}}\left(\frac{a\tau+b}{c\tau+d}\right)=\varepsilon^{2k}\sum_{\Lambda \in \mymatset{\numgeq{Z}{0}}{\dimn}{\dimn}}\frac{(\uppi\rmi cG_{\mathfrak{B}})^{\Lambda}}{\Lambda!}(c\tau+d)^{k+s(\mathbf{p})-s(\Lambda)}\rho(\gamma)\circ h_{\mathbf{p}-\pi(\Lambda)}(\tau)
\end{equation}
for any $\mathbf{p} \in \numgeq{Z}{0}^\dimn$. For this purpose, we shall use the following formula:
\begin{multline}
\label{eq:hModularIdentity}
\sum_{\Lambda \in \mymatset{\numgeq{Z}{0}}{\dimn}{\dimn}}\frac{(-\uppi\rmi G_{\mathfrak{B}})^{\Lambda}}{\Lambda!}C(\mathbf{p},\Lambda)\cdot\dodtha{s(\Lambda)}{h_{\mathbf{p}-\pi(\Lambda)}}\left(\frac{a\tau+b}{c\tau+d}\right)\\
=\varepsilon^{2k}(c\tau+d)^{k+s(\mathbf{p})}\cdot\sum_{\Lambda \in \mymatset{\numgeq{Z}{0}}{\dimn}{\dimn}}\frac{(-\uppi\rmi G_{\mathfrak{B}})^{\Lambda}}{\Lambda!}C(\mathbf{p},\Lambda)\cdot\dodtha{s(\Lambda)}{\rho(\gamma)\circ h_{\mathbf{p}-\pi(\Lambda)}}(\tau),
\end{multline}
where
\begin{equation*}
C(\mathbf{p},\Lambda) = \frac{\Gamma(k+s(\mathbf{p})-1-s(\Lambda))}{\Gamma(k+\lambda'_{\mathbf{p}})}.
\end{equation*}
This is obtained by using the fact $g_{\mathbf{p}} \in \ModFormHol{k+s(\mathbf{p})}{H}{\rho}$ and the statement (1) in Lemma \ref{lemm:inverseExpressionDkp}. Now we use induction on $s(\mathbf{p})$. For the base case $s(\mathbf{p})=0$ or $1$, the function $h_{\mathbf{p}}$ is just $g_{\mathbf{p}}$, so \eqref{eq:toProveIsoFormalModForm} holds. For the induction step, let $\mathbf{p} \in \numgeq{Z}{0}^\dimn$ be arbitrary, and assume that \eqref{eq:toProveIsoFormalModForm}  has been proved for any $\mathbf{p}' \in \numgeq{Z}{0}^\dimn$ with $s(\mathbf{p}') < s(\mathbf{p})$. Inserting the induction hypothesis into \eqref{eq:hModularIdentity}, and using Lemma \ref{lemm:sucDerModEqu}, we obtain, after a tedious simplification, that
\begin{multline*}
h_{\mathbf{p}}\left(\frac{a\tau+b}{c\tau+d}\right)=\varepsilon^{2k}\sum_{\Lambda \in \mymatset{\numgeq{Z}{0}}{\dimn}{\dimn}}\frac{(\uppi\rmi cG_{\mathfrak{B}})^{\Lambda}}{\Lambda!}(c\tau+d)^{k+s(\mathbf{p})-s(\Lambda)}\rho(\gamma)\circ h_{\mathbf{p}-\pi(\Lambda)}(\tau) \\
-\varepsilon^{2k}\sum_{l \in \numgeq{Z}{0}}\sum_{\twoscript{\Lambda \in \mymatset{\numgeq{Z}{0}}{\dimn}{\dimn}}{s(\Lambda)>l}}\left(\sum_{\twoscript{\Lambda_1,\,\Lambda_2 \in \mymatset{\numgeq{Z}{0}}{\dimn}{\dimn}}{\Lambda_1+\Lambda_2=\Lambda}}T_{\Lambda_1,\Lambda_2,l}\right),
\end{multline*}
where\footnote{The binomial coefficient $\binom{a}{b}$ is defined to be $a(a-1)\dots (a-b+1)/b!$ for $a \in \numC$ and $b \in \numgeq{Z}{0}$. Thus, when $a \in \numgeq{Z}{0}$ but $a<b$, we have $\binom{a}{b}=0$.}
\begin{multline*}
T_{\Lambda_1,\Lambda_2,l}=\frac{(\uppi\rmi G_{\mathfrak{B}})^{\Lambda_1+\Lambda_2}}{\Lambda_1!\Lambda_2!}(-1)^{s(\Lambda_1)}c^{s(\Lambda_1+\Lambda_2)-l}(c\tau+d)^{k+s(\mathbf{p})-s(\Lambda_1+\Lambda_2)+l} \\
\times \binom{s(\Lambda_1)}{l}\frac{\Gamma(k+s(\mathbf{p})-1-s(\Lambda_1))}{\Gamma(k+s(\mathbf{p})-1)}\frac{\Gamma(k+s(\mathbf{p})-s(\Lambda_1+\Lambda_2))}{\Gamma(k+s(\mathbf{p})-2s(\Lambda_1)-s(\Lambda_2)+l)} \\
\times \dodth{l}{\rho(\gamma)\circ h_{\mathbf{p}-\pi(\Lambda_1+\Lambda_2)}}(\tau).
\end{multline*}
Thus, it suffices to show that, for any non-negative integer $l$ and $\Lambda \in \mymatset{\numgeq{Z}{0}}{\dimn}{\dimn}$ with $s(\Lambda)>l$, we have
\begin{equation*}
\sum_{\Lambda_1+\Lambda_2=\Lambda}T_{\Lambda_1,\Lambda_2,l}=0,
\end{equation*}
or equivalently,
\begin{multline}
\label{eq:toProveSumLambda}
\sum_{\twoscript{\Lambda_1+\Lambda_2=\Lambda}{s(\Lambda_1) \geq l}}\frac{(-1)^{s(\Lambda_1)}}{\Lambda_1!\Lambda_2!}\binom{s(\Lambda_1)}{l}\frac{\Gamma(k+s(\mathbf{p})-1-s(\Lambda_1))}{\Gamma(k+s(\mathbf{p})-1)} \\
\times \frac{\Gamma(k+s(\mathbf{p})-s(\Lambda))}{\Gamma(k+s(\mathbf{p})-s(\Lambda)-s(\Lambda_1)+l)}=0.
\end{multline}
By \eqref{eq:binomMatrixToprove}, the desired identity \eqref{eq:toProveSumLambda} is a consequence of the following elementary identity
\begin{equation*}
\sum_{u=l}^{s(\Lambda)}\frac{(-1)^u}{(s(\Lambda)-u)!(u-l)!}\frac{\Gamma(k+s(\mathbf{p})-1-u)}{\Gamma(k+s(\mathbf{p})-(s(\Lambda)-l)-u)}=0,
\end{equation*}
which can be proved by induction on $s(\Lambda)-l$. This concludes the induction step, hence the whole proof.
\end{proof}

\section{Jacobi theta series and theta decompositions}
\label{sec:Jacobi theta series and theta decompositions}
The main aim of this paper is to find \emph{finite} subsets $\mathcal{P}_0$ of $\mathcal{P}=\numgeq{Z}{0}^\dimn$, such that the map $\prod_{\mathbf{p} \in \mathcal{P}_0} D_{k,\mathbf{p}}$ restricted to spaces of weakly holomorphic Jacobi forms (see \eqref{eq:formalModFormintoModForms} for definition) is still injective. It turns out that another important decomposition techineque, that of \emph{theta decomposition}, is needed in the proof of this main result (Theorem \ref{thm:mainEmbed}, which will be carried out in the next section). In this section, we review some necessary facts of this techineque.

First recall some facts on \emph{Jacobi theta series} and \emph{Weil representations}. The reader may consult \cite[\S 2.3]{Ajo15}, \cite[\S 2, \S 3.5, \S 4.2]{Boy15}, \cite[\S 5]{Str13} for more details.

In this section, unless declare explicitly, we usually assume that $L$ is an even lattice, which could simplify the discussion. Then $L^\sharp/L$ can be equipped with a finite quadratic module structure. See \cite[\S 2]{Str13}. We can also form the group algebra $\numC[L^\sharp/L]$, which possesses a Hilbert space structure we now explain. For $x \in L^\sharp/L$, let $\delta_x$ denote the element in $\numC[L^\sharp/L]$ such that $\delta_x(y)=1$ if $x=y$ and $\delta_x(y)=0$ otherwise. Then $\delta_x$'s ($x \in L^\sharp/L$) form a $\numC$-basis of $\numC[L^\sharp/L]$. The Hermite form
\begin{equation*}
(\sum a_x\delta_x,\sum b_x\delta_x) \mapsto \sum a_x\overline{b_x}
\end{equation*}
makes $\numC[L^\sharp/L]$ become a Hilbert space. We define a representation
\begin{equation}
\label{eq:WeilRep}
\rho_{\underline{L}} \colon \Jacgrp{\slZ}{L} \rightarrow \mathrm{GL}(\numC[L^\sharp/L])
\end{equation}
by the following three formulae:
\begin{align*}
\rho_{\underline{L}}\widetilde{\tbtmat{1}{1}{0}{1}}\delta_x &= \etp{Q(x)}\delta_x, \\
\rho_{\underline{L}}\widetilde{\tbtmat{0}{-1}{1}{0}}\delta_x &= \frac{(-\rmi)^{\dimn/2}}{\sqrt{\abs{L^\sharp/L}}}\sum_{y \in L^\sharp/L}\etp{-B(x,y)}\delta_y, \\
\rho_{\underline{L}}([v,w],\xi)\delta_x &= \xi\etp{\frac{1}{2}B(v,w)}\delta_x.
\end{align*}
Since $\sltZ$ has a presentation with generators $\widetilde{T}=\widetilde{\tbtmat{1}{1}{0}{1}}$, $\widetilde{S}=\widetilde{\tbtmat{0}{-1}{1}{0}}$, and relations $\widetilde{S}^8=\widetilde{I}$, $\widetilde{S}^2=(\widetilde{S}\widetilde{T})^3$, the above three formulae really extend to a representation of $\Jacgrp{\slZ}{L}$. Using these three formulae, one can verify that $\rho_{\underline{L}}$ is a unitary representation, which means $\rho_{\underline{L}}(\gamma)$ preserves the inner product on $\numC[L^\sharp/L]$ for any $\gamma \in \Jacgrp{\slZ}{L}$. One can also verify that the kernel is of finite index. Note that $\rho_{\underline{L}}$, when restricted to $\sltZ$, is the well-known Weil representation. Str\"omberg \cite{Str13} has worked out an explicit formula for any $\rho_{\underline{L}}(\gamma)$.

Occasionally, we shall use its dual representation (when dealing with theta decompositions)
\begin{align}
\rho_{\underline{L}}^{\ast} \colon \Jacgrp{\slZ}{L} &\rightarrow \mathrm{GL}(\numC[L^\sharp/L]^\ast)\\
\notag \gamma &\mapsto (\rho_{\underline{L}}(\gamma)^\times)^{-1},
\end{align}
where $\numC[L^\sharp/L]^\ast$ is the dual space (all linear functionals), and $\rho_{\underline{L}}(\gamma)^\times$ means the operator adjoint of $\rho_{\underline{L}}(\gamma)$ that maps $f \in \numC[L^\sharp/L]^\ast$ to $f\circ\rho_{\underline{L}}(\gamma)$. We remark that the matrix of $\rho_{\underline{L}}^{\ast}(\gamma)$ with respect to the dual basis is the transposed inverse of the matrix of $\rho_{\underline{L}}(\gamma)$.

The Jacobi theta series of lattice index is defined by
\begin{equation}
\label{eq:deffJacobiThetaLatticeIndex}
\vartheta_{\underline{L}, t}(\tau,z)=\sum_{v \in t+L}\etp{\tau Q(v)+B(v,z)}
\end{equation}
as a scalar-valued function on $\mydom$, where $t \in L^\sharp/L$. Note that $L$ can also be an odd lattice. The reader may verify that the above series converges normally, hence uniformly and absolutely on any compact subsets of $\mydom$. Put
\begin{equation}
\label{eq:deffJacobiThetaLatticeIndex2}
\vartheta_{\underline{L}}=\sum_{t \in L^\sharp/L}\vartheta_{\underline{L}, t}\delta_t,
\end{equation}
which is a map from $\mydom$ to $\numC[L^\sharp/L]$. It is known that
\begin{thm}
\label{thm:thetaSeriesLatticeJacobiForm}
The map $\vartheta_{\underline{L}}$ belongs to $\JacForm{\dimn/2}{L}{\slZ}{\rho_{\underline{L}}}$.
\end{thm}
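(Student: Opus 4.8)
The plan is to verify directly the three defining requirements of $\JacForm{\dimn/2}{L}{\slZ}{\rho_{\underline{L}}}$ for the $\numC[L^\sharp/L]$-valued function $\vartheta_{\underline{L}}$: holomorphy, the transformation law under $\Jacgrp{\slZ}{L}$ with representation $\rho_{\underline{L}}$, and the growth condition at the cusp. Holomorphy is immediate from the normal convergence of each component series $\vartheta_{\underline{L},t}$ on compact subsets of $\mydom$, already noted after \eqref{eq:deffJacobiThetaLatticeIndex}, so that $\vartheta_{\underline{L}}\in\JacFormHol{\dimn/2}{L}{\slZ}{\rho_{\underline{L}}}$ once the transformation law is established.

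For the transformation law, recall (the remark after Definition \ref{deff:JacobiLikeForm}) that it suffices to check $\vartheta_{\underline{L}}\vert_{\dimn/2,B}\gamma=\rho_{\underline{L}}(\gamma)\circ\vartheta_{\underline{L}}$ on a generating set of $\Jacgrp{\slZ}{L}$, and that $\rho_{\underline{L}}$ is already known to be a well-defined representation. I would take as generators $\widetilde{T}=\widetilde{\tbtmat{1}{1}{0}{1}}$, $\widetilde{S}=\widetilde{\tbtmat{0}{-1}{1}{0}}$, and the Heisenberg elements $([v,w],\xi)$ with $v,w\in L$, $\xi^2=1$. The $\widetilde{T}$ case, after unwinding the slash operator (whose weight and $J_B$ factors are trivial here), amounts to $\vartheta_{\underline{L},t}(\tau+1,z)=\etp{Q(t)}\vartheta_{\underline{L},t}(\tau,z)$, which holds because $Q(v)\equiv Q(t)\bmod\numZ$ for all $v\in t+L$ (this uses that $L$ is even). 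The Heisenberg case, after the substitution $v\mapsto v+v_0$ in the sum over $v\in t+L$ and using $B(L,L)\subseteq\numZ$, reduces to matching the resulting scalar factor with $\rho_{\underline{L}}([v_0,w_0],\xi)\delta_t=\xi\etp{\frac12 B(v_0,w_0)}\delta_t$; since $c'=0$ for a pure Heisenberg element, computing $J_B$ is short and the identity falls out.

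The one genuinely substantial step is the $\widetilde{S}$-transformation, equivalent to the theta transformation formula
\begin{equation*}
\tau^{-\dimn/2}\etp{-\tfrac{Q(z)}{\tau}}\vartheta_{\underline{L},x}(-1/\tau,\,z/\tau)=\frac{(-\rmi)^{\dimn/2}}{\sqrt{\abs{L^\sharp/L}}}\sum_{y\in L^\sharp/L}\etp{-B(x,y)}\vartheta_{\underline{L},y}(\tau,z),
\end{equation*}
which says precisely that the $x$-component of $\vartheta_{\underline{L}}\vert_{\dimn/2,B}\widetilde{S}$ equals that of $\rho_{\underline{L}}(\widetilde{S})\circ\vartheta_{\underline{L}}$. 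I would establish it by Poisson summation over $L\subseteq V$ applied to the Schwartz function $v\mapsto\etp{-Q(v)/\tau+B(v,z)/\tau}$, with the Fourier transform taken relative to the pairing $B$ so that $L^\sharp$ is the relevant dual lattice. The Gaussian integral produces a constant times $\tau^{\dimn/2}\etp{Q(z)/\tau}$ times a Gaussian in the dual variable, and the bookkeeping needed to conclude is: $\vol{V/L}=\sqrt{\abs{L^\sharp/L}}$; the Gaussian determinant contributes $(\det G_{\mathfrak{B}})^{-1/2}(\rmi/\tau)^{-\dimn/2}$; and the branch conventions make $\rmi^{-\dimn/2}=(-\rmi)^{\dimn/2}$ line up with the power of $\tau^{1/2}$ built into the half-integral-weight slash operator. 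Matching these constants with the defining formula for $\rho_{\underline{L}}(\widetilde{S})$ is where essentially all the work lies; I expect the sign and branch tracking in the half-integral-weight normalization to be the main obstacle, everything else being routine.

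Finally, for the growth condition: since $\slZ$ has a single cusp, by Proposition \ref{prop:FourierExpasionCusps} it suffices to inspect the Fourier expansion of $\vartheta_{\underline{L}}$ itself. But from \eqref{eq:deffJacobiThetaLatticeIndex}--\eqref{eq:deffJacobiThetaLatticeIndex2} one reads off $\vartheta_{\underline{L}}(\tau,z)=\sum_{v\in L^\sharp}q^{Q(v)}\etp{B(v,z)}\delta_{v+L}$, so the only nonzero Fourier coefficients sit at $(n,t)=(Q(v),v)$ with $v\in L^\sharp$, where $n-Q(t)=0\ge 0$. Hence $\vartheta_{\underline{L}}\in\JacForm{\dimn/2}{L}{\slZ}{\rho_{\underline{L}}}$, as claimed.
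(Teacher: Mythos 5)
The paper does not actually supply its own proof of Theorem \ref{thm:thetaSeriesLatticeJacobiForm}: it only records the equivalent component-wise transformation laws \eqref{eq:thetaTransformationT}--\eqref{eq:thetaTransformationL} and refers to \cite[\S 3.5]{Boy15} for the argument. Your outline is exactly that standard argument, and I see no gap in it: reduction to the generators $\widetilde{T}$, $\widetilde{S}$ and the Heisenberg elements is legitimate by the remark following Definition \ref{deff:JacobiLikeForm} together with Lemma \ref{lemm:slashOperatorIsAction} and the fact that $\rho_{\underline{L}}$ is already known to be a representation; the $\widetilde{T}$-law uses evenness of $\underline{L}$; the Heisenberg law follows from shifting the summation variable; the $\widetilde{S}$-law is Poisson summation over $L$ with dual $L^\sharp$, where your bookkeeping ($\vol{V/L}=\sqrt{\abs{L^\sharp/L}}=\sqrt{\det G_{\mathfrak{B}}}$, the factor $(\rmi/\tau)^{-\dimn/2}$, and $\rmi^{-\dimn/2}=(-\rmi)^{\dimn/2}$ under the paper's branch convention) is the correct list of constants to track; and the cusp condition is immediate from $\vartheta_{\underline{L}}=\sum_{v\in L^\sharp}q^{Q(v)}\etp{B(v,z)}\delta_{v+L}$ via Proposition \ref{prop:FourierExpasionCusps}, since $\slZ$ has a single cusp. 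Two small bookkeeping remarks: in the Heisenberg computation the factor you get directly is $\xi\etp{-\frac{1}{2}B(v,w)}$, which matches $\rho_{\underline{L}}([v,w],\xi)\delta_t=\xi\etp{\frac{1}{2}B(v,w)}\delta_t$ only because $B(v,w)\in\numZ$ by integrality of $L$; and besides $B(L,L)\subseteq\numZ$ you also need $B(t,L)\subseteq\numZ$ for $t\in L^\sharp$ to dispose of the term $\etp{B(u,w)}$ after the shift. It would also be worth saying explicitly, as the paper does, that evenness enters only in the $\widetilde{T}$-law and that positive definiteness of $B$ underlies the normal convergence used for holomorphy.
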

Equivalently, this theorem says that
\begin{align}
\vartheta_{\underline{L}, t}\vert_{\dimn/2,B}\widetilde{\tbtmat{1}{1}{0}{1}} &= \etp{Q(t)}\vartheta_{\underline{L}, t},\label{eq:thetaTransformationT}\\
\vartheta_{\underline{L}, t}\vert_{\dimn/2,B}\widetilde{\tbtmat{0}{-1}{1}{0}} &= \frac{(-\rmi)^{\dimn/2}}{\sqrt{\abs{L^\sharp/L}}}\sum_{y \in L^\sharp/L}\etp{-B(t,y)}\vartheta_{\underline{L}, y},\label{eq:thetaTransformationS}\\
\vartheta_{\underline{L}, t}\vert_{\dimn/2,B}([v,w],\xi) &= \xi\etp{\frac{1}{2}B(v,w)}\vartheta_{\underline{L}, t}.\label{eq:thetaTransformationL}
\end{align}
For a proof, see \cite[\S 3.5]{Boy15}. We emphasize that this theorem, more precisely, the transformation law \eqref{eq:thetaTransformationT}, requires $\underline{L}$ to be even.

Second, recall the concept of the \emph{tensor product} of two group representations. Let $\rho_1$ and $\rho_2$ be two group representations on the same abstract group $G$ with representation $\numC$-spaces $\mathcal{W}_1$ and $\mathcal{W}_2$ respectively. Then the tensor product $\rho_1 \otimes \rho_2$, is the representation $G \rightarrow \mathrm{GL}(\mathcal{W}_1 \otimes_{\numC} \mathcal{W}_2)$ defined by $\rho_1 \otimes \rho_2(g)(w_1 \otimes w_2)=\rho_1(g)(w_1) \otimes \rho_2(g)(w_2)$.

Now we review the theory of theta decompositions. For the classical theory, see \cite[Theorem 5.1]{EZ85}; for the lattice-index and scalar-valued case, see \cite[Theorem 1]{Kri96}; for the lattice-index and vector-valued case, see \cite[Prososition 7]{Wil19}. The following version is weaker than Williams' in some aspects, but suits us.
\begin{thm}
\label{thm:thetaDecomposition}
Use notations in Convention \ref{conv1} and \ref{conv2}. The lattice $\underline{L}$ may be even or odd. Suppose that $\rho$ satisfies
\begin{equation}
\label{eq:conditionOnRhoThetaDecomp}
\rho([av,bv],1)=\mathrm{id}_\myran
\end{equation} 
for any coprime integers $a,b$ and $v \in L$. Then for any $\gamma \in \sltZ$ and $\phi \in \JacFormWHol{k}{L}{G}{\rho}$, we have
\begin{equation}
\label{eq:thetaDecomposition}
\phi\vert_{k,B}\gamma=\sum_{t \in L^\sharp/L}h_{t}^{\gamma}\vartheta_{\underline{L}, t},
\end{equation}
where $h_{t}^{\gamma}$ is the holomorphic function on $\uhp$ defined by
\begin{equation}
\label{eq:deffhtgamma}
h_{t}^{\gamma}(\tau)=\sum_{n}c^\gamma(n+Q(t), t)q^n,
\end{equation}
and the quantity $c^\gamma(n, t)$ refers to $c(n, t)$ in \eqref{eq:ForuierExpansionOfJacobiLikeForm}. Moreover, if $\underline{L}$ is even, then the map that sends $\phi=\sum_{n,t}c(n,t)q^n\etp{B(t,z)}$ to
\begin{align}
\label{eq:deffh}
h \colon \uhp &\rightarrow \myran \otimes \numC[L^\sharp/L]^\ast \\
\tau &\mapsto \sum_{n}\left(\sum_{t \in L^\sharp/L}c(n+Q(t),t)\otimes \delta_t^\ast\right)q^n \notag
\end{align}
is a $\numC$-linear embedding from $\JacFormWHol{k}{L}{G}{\rho}$ into $\ModFormWHol{k-\frac{1}{2}\dimn}{G}{\rho\otimes\rho_{\underline{L}}^\ast\vert_{\widetilde{G}}}$. In addition, when restricted, this map gives embeddings from $\JacForm{k}{L}{G}{\rho}$ and $\JacFormCusp{k}{L}{G}{\rho}$ into $\ModForm{k-\frac{1}{2}\dimn}{G}{\rho\otimes\rho_{\underline{L}}^\ast\vert_{\widetilde{G}}}$ and $\ModFormCusp{k-\frac{1}{2}\dimn}{G}{\rho\otimes\rho_{\underline{L}}^\ast\vert_{\widetilde{G}}}$ respectively.
\end{thm}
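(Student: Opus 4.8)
The plan is to first establish the decomposition \eqref{eq:thetaDecomposition}--\eqref{eq:deffhtgamma} for an arbitrary $\gamma\in\sltZ$; once this is in place, the embedding statement is largely bookkeeping with the Weil representation. The crucial preliminary point is that $\phi\vert_{k,B}\gamma$ still satisfies the elliptic transformation laws attached to $\heigrpa{L}{1}$. Indeed, by Lemma \ref{lemm:slashOperatorIsAction}, $(\phi\vert_{k,B}\gamma)\vert_{k,B}h=\phi\vert_{k,B}(\gamma h)=(\phi\vert_{k,B}(\gamma h\gamma^{-1}))\vert_{k,B}\gamma$ for $h\in\heigrpa{L}{1}$; since the Heisenberg group is normal in $\actgrp{V}$, $\gamma h\gamma^{-1}$ is again a pure Heisenberg element of $\heigrpa{L}{1}$, and taking $h=([v,0],1)$ or $h=([0,w],1)$ with $v,w\in L$ one finds $\gamma h\gamma^{-1}=([m_1v,m_2v],1)$ or $([n_1w,n_2w],1)$, where $(m_1,m_2)$ and $(n_1,n_2)$ are a column and a row of $\gamma^{\pm1}$ and hence coprime because $\det\gamma=1$. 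Hypothesis \eqref{eq:conditionOnRhoThetaDecomp} then gives $\rho(\gamma h\gamma^{-1})=\idw$, so $(\phi\vert_{k,B}\gamma)\vert_{k,B}h=\rho(\gamma h\gamma^{-1})\circ(\phi\vert_{k,B}\gamma)=\phi\vert_{k,B}\gamma$. Unwinding the two slash operators, this says precisely that $\phi\vert_{k,B}\gamma$ is $L$-periodic in $z$ and satisfies $(\phi\vert_{k,B}\gamma)(\tau,z+\tau v)=\etp{-\tau Q(v)-B(v,z)}(\phi\vert_{k,B}\gamma)(\tau,z)$ for $v\in L$.

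Combining the $L$-periodicity with Proposition \ref{prop:ForuierExpansionOfJacobiLikeForm} forces $c^\gamma(n,t)=0$ unless $t\in L^\sharp$ (this is why the sum in \eqref{eq:thetaDecomposition} runs over $L^\sharp/L$), and comparing Fourier coefficients in the quasi-periodicity relation, exactly as in the derivation of \eqref{eq:coefficientTransform} but now with the $\rho$-factor trivial, gives $c^\gamma(n+Q(t+v),t+v)=c^\gamma(n+Q(t),t)$ for all $v\in L$. Thus $q^{-Q(t)}\sum_n c^\gamma(n,t)q^n$ depends only on the class $t+L$ and equals $h_t^\gamma$ as in \eqref{eq:deffhtgamma}; regrouping the normally convergent series \eqref{eq:ForuierExpansionOfJacobiLikeForm} according to $t\bmod L$ and recognising $\sum_{v\in L}q^{Q(t_0+v)}\etp{B(t_0+v,z)}=\vartheta_{\underline{L},t_0}$ yields \eqref{eq:thetaDecomposition}; normal convergence also makes each $h_t^\gamma$ holomorphic on $\uhp$. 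No evenness of $\underline{L}$ is needed for this part, only that $\vartheta_{\underline{L},t}$ is defined, which it is for odd lattices as well.

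For the embedding statement assume $\underline{L}$ even, so Theorem \ref{thm:thetaSeriesLatticeJacobiForm} is available. Write $h=\sum_{t\in L^\sharp/L}h_t\otimes\delta_t^\ast$ with $h_t=h_t^{\widetilde{I}}$. The key algebraic step is: applying $\vert_{k,B}\gamma$ for $\gamma\in\widetilde{G}$ to $\phi=\sum_t h_t\vartheta_{\underline{L},t}$ and using the product rule $(h_t\vartheta_{\underline{L},t})\vert_{k,B}\gamma=(h_t\vert_{k-\dimn/2}\gamma)(\vartheta_{\underline{L},t}\vert_{\dimn/2,B}\gamma)$, valid since the weight-$k$ Jacobi automorphic factor is the product of the weight-$(k-\dimn/2)$ elliptic one and the weight-$\dimn/2$ Jacobi one, the latter carrying all of $J_B$, one gets $\rho(\gamma)\circ\phi=\sum_t(h_t\vert_{k-\dimn/2}\gamma)(\sum_s(\rho_{\underline{L}}(\gamma))_{s,t}\vartheta_{\underline{L},s})$; since the $\vartheta_{\underline{L},s}$ are linearly independent over functions of $\tau$ (their $z$-Fourier supports are the disjoint cosets $s+L$), matching coefficients and inverting $\rho_{\underline{L}}(\gamma)$, then using that the matrix of $\rho_{\underline{L}}^\ast(\gamma)$ on the dual basis is the transposed inverse of that of $\rho_{\underline{L}}(\gamma)$, shows $h\vert_{k-\dimn/2}\gamma=(\rho\otimes\rho_{\underline{L}}^\ast)(\gamma)\circ h$. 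The same computation with $\gamma\in\sltZ$ arbitrary and the first part gives $h\vert_{k-\dimn/2}\gamma=\sum_s h_s^\gamma\otimes(\rho_{\underline{L}}^\ast(\gamma)\delta_s^\ast)$, so the behaviour of $h$ at the cusp represented by $\gamma$ is read off directly from the $q$-expansions of the $h_s^\gamma$: finitely many negative powers when $\phi\in\JacFormWHol{k}{L}{G}{\rho}$, non-negative powers when $\phi\in\JacForm{k}{L}{G}{\rho}$ (because then $c^\gamma(n,t)\neq0\Rightarrow n\geq Q(t)$), and strictly positive powers when $\phi$ is a cusp form. Injectivity is then clear ($\numC$-linearity being obvious): $h=0$ forces every $h_t=0$, hence $\phi=\sum_t h_t\vartheta_{\underline{L},t}=0$ by \eqref{eq:thetaDecomposition}.

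I expect the \emph{main obstacle} to be the first paragraph: making fully precise how conjugation by $\gamma\in\sltZ$ acts on $\heigrpa{L}{1}$ and seeing that the coprimality forced by $\det\gamma=1$ is exactly what hypothesis \eqref{eq:conditionOnRhoThetaDecomp} is tailored to exploit, together with the index bookkeeping in the third paragraph relating $\rho_{\underline{L}}$, its dual, and the two ways of slashing a product. The analytic points (normal convergence, holomorphy on $\uhp$, and the cusp estimates) are routine consequences of Propositions \ref{prop:ComplexFourierSeries} and \ref{prop:ForuierExpansionOfJacobiLikeForm} and of the definition of weakly holomorphic Jacobi forms.
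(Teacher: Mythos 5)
Your proposal is correct and follows essentially the same route as the paper's proof: the first part is the same regrouping of the Fourier expansion of $\phi\vert_{k,B}\gamma$ over cosets of $L$, with the coefficient invariance obtained from hypothesis \eqref{eq:conditionOnRhoThetaDecomp} applied to conjugated Heisenberg elements (the paper quotes its formula \eqref{eq:coefficientTransform}, which is derived by exactly the conjugation argument you spell out), and the second part matches the paper's use of the theta transformation law, linear independence of the $\vartheta_{\underline{L},t}$, and the transposed-inverse description of $\rho_{\underline{L}}^\ast$ to get the transformation and cusp behaviour of $h$.
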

\begin{proof}
Fix $\phi$ and $\gamma$; we proceed to prove \eqref{eq:thetaDecomposition}. A crucial fact is
\begin{equation*}
\rho([v,0]\gamma^{-1},1)\circ c^\gamma(n+Q(v)+B(t,v), t+v)=c^\gamma(n,t)
\end{equation*}
for any $(n,t) \in \numR \times V$, $v \in L$ and $\gamma \in \sltZ$. See the paragraph immediately after Proposition \ref{prop:FourierExpasionCusps}. Now by \eqref{eq:conditionOnRhoThetaDecomp}, we have $c^\gamma(n+Q(v)+B(t,v), t+v)=c^\gamma(n,t)$. Proposition \ref{prop:ForuierExpansionOfJacobiLikeForm} tells us that $c^\gamma(n,t) \neq 0$ implies $t \in \frac{1}{N_1N_2}L^\sharp$. But under the assumption \eqref{eq:conditionOnRhoThetaDecomp} (put $a=0$ and $b=1$), we actually have $c^\gamma(n,t) \neq 0 \implies t \in L^\sharp$. Taking into account these two facts, and using normal convergence we can rewrite \eqref{eq:ForuierExpansionOfJacobiLikeForm} as
\begin{align*}
\phi \vert_{k,B}\gamma(\tau,z) &= \sum_{n \in (N_1N_2)^{-1}\numZ,\, t \in L^\sharp}c^\gamma(n,t) q^n \etp{B(t,z)}\\
 &=\sum_{\twoscript{n_0 \in (N_1N_2)^{-1}\numZ}{t_0+L \in L^\sharp/L}}\sum_{v \in L} c^\gamma(n_0+Q(t_0+v),t_0+v)q^{n_0+Q(t_0+v)}\etp{B(t_0+v,z)}\\
 &=\sum_{n_0,\,t_0}c^\gamma(n_0+Q(t_0),t_0)q^{n_0}\sum_{v \in L}q^{Q(t_0+v)}\etp{B(t_0+v,z)} \\
 &= \sum_{t_0 \in L^\sharp/L}h_{t_0}^\gamma\vartheta_{\underline{L}, t_0},
\end{align*}
from which \eqref{eq:thetaDecomposition} follows. Also, from the above deduction, we see that \eqref{eq:deffhtgamma} and \eqref{eq:deffh} are well-defined, that is, they are independent of the choice of the representative of $t \in L^\sharp/L$, and converge normally, hence are holomorphic on $\uhp$.

Now assume that $L$ is even, and we proceed to prove that the function $h$ given by \eqref{eq:deffh} belongs to $\ModFormWHol{k-\frac{1}{2}\dimn}{G}{\rho\otimes\rho_{\underline{L}}^\ast\vert_{\widetilde{G}}}$. Note that \eqref{eq:deffh} can be rewritten as $h(\tau)=\sum_{t \in L^\sharp/L}h_t(\tau)\otimes \delta_t^\ast$, where $h_t=h_t^{\widetilde{I}}$ with $I=\tbtmat{1}{0}{0}{1}$. So we first investigate transformation equations of $h_t$. The notation $\sigma_{t,t'}$ is defined by
\begin{align*}
\sigma(\delta_{t'})&=\sum_{t \in L^\sharp/L}\sigma_{t,t'}\delta_{t},\quad t' \in L^\sharp/L,\, \sigma \in \mathrm{GL}(\numC[L^\sharp/L]),\\
\sigma(\delta_{t'}^\ast)&=\sum_{t \in L^\sharp/L}\sigma_{t,t'}\delta_{t}^\ast,\quad t' \in L^\sharp/L,\, \sigma \in \mathrm{GL}(\numC[L^\sharp/L]^\ast).
\end{align*}
Use this notation, Theorem \ref{thm:thetaSeriesLatticeJacobiForm} is equivalent to
\begin{equation}
\label{eq:thetaTransformationAnother}
\vartheta_{\underline{L}, t}\vert_{\frac{1}{2}\dimn, B}\gamma=\sum_{t' \in L^\sharp/L}\rho_{\underline{L}}(\gamma)_{t,t'}\vartheta_{\underline{L}, t'},\quad \gamma \in \Jacgrp{\slZ}{L}.
\end{equation}
Put $\gamma=\eleactgrpnsimple{a}{b}{c}{d}{\varepsilon}{v}{w}{\xi}$. Then by \eqref{eq:thetaTransformationAnother} we have
\begin{align}
\phi\vert_{k,B}\gamma &= \sum_{t \in L^\sharp/L}h_t\vert_{k-\frac{1}{2}\dimn}\eleglptRsimple{a}{b}{c}{d}{\varepsilon}\vartheta_{\underline{L}, t}\vert_{\frac{1}{2}\dimn, B}\gamma \notag\\
&=\sum_{t' \in L^\sharp/L}\left(\sum_{t \in L^\sharp/L}\rho_{\underline{L}}(\gamma)_{t,t'}h_t\vert_{k-\frac{1}{2}\dimn}\eleglptRsimple{a}{b}{c}{d}{\varepsilon}\right)\vartheta_{\underline{L}, t'} \label{eq:phigammaExpansion}
\end{align}
If $\gamma \in \Jacgrp{G}{L}$, then by the transofmation laws of $\phi$ (see Definition \ref{deff:JacobiLikeForm}), and the linear independence\footnote{In fact, for any fixed $\tau \in \uhp$, the functions $z \mapsto \vartheta_{\underline{L}, t}(\tau,z),\, t \in L^\sharp/L$ are $\numC$-linear independent. This follows from the uniqueness of coefficients of Fourier series.} of the family $\vartheta_{\underline{L}, t},\, t\in L^\sharp/L$, we have
\begin{equation*}
\rho(\gamma)\circ h_{t'}=\sum_{t \in L^\sharp/L}\rho_{\underline{L}}(\gamma)_{t,t'}h_{t}\vert_{k-\frac{1}{2}\dimn}\eleglptRsimple{a}{b}{c}{d}{\varepsilon},
\end{equation*}
which is equivalent to
\begin{equation*}
h_{t}\vert_{k-\frac{1}{2}\dimn}\eleglptRsimple{a}{b}{c}{d}{\varepsilon}=\sum_{t' \in L^\sharp/L}\rho_{\underline{L}}(\gamma)^{-1}_{t',t}\cdot \rho(\gamma)\circ h_{t'}.
\end{equation*}
We have obtained transformation equations of $h_t$. Next we derive transformation equations of $h$ from those of $h_t$. By the definition of dual representations, we have $\rho_{\underline{L}}(\gamma)^{-1}_{t',t}=\rho_{\underline{L}}^\ast(\gamma)_{t,t'}$. Hence
\begin{align}
\label{eq:hslashgammaTransformation}
h \vert_{k-\frac{1}{2}\dimn}\eleglptRsimple{a}{b}{c}{d}{\varepsilon}(\tau) &= \sum_{t' \in L^\sharp/L}\rho(\gamma)(h_{t'}(\tau))\otimes \sum_{t \in L^\sharp/L}\rho_{\underline{L}}^\ast(\gamma)_{t,t'}\delta_{t}^\ast \\
&=\sum_{t' \in L^\sharp/L}\rho(\gamma)(h_{t'}(\tau))\otimes \rho_{\underline{L}}^\ast(\gamma)\delta_{t'}^\ast \notag\\
&=\left(\rho \otimes \rho_{\underline{L}}^\ast(\gamma)\right)\circ h(\tau),\notag
\end{align}
from which the transformation laws of $h$ follow. To conclude that $h \in \ModFormWHol{k-\frac{1}{2}\dimn}{G}{\rho\otimes\rho_{\underline{L}}^\ast\vert_{\widetilde{G}}}$, it remains to show that, for any $\gamma=\eleglptRsimple{a}{b}{c}{d}{\varepsilon}\in \sltZ$, the $q$-expansion of $h\vert_{k-\frac{1}{2}\dimn}\gamma$ has at most finitely many terms of negative powers of $q$. Comparing \eqref{eq:thetaDecomposition} and \eqref{eq:phigammaExpansion} and using the linear independence of the functions $z \mapsto \vartheta_{\underline{L}, t}(\tau,z),\, t \in L^\sharp/L$, we obtain that
\begin{equation}
\label{eq:htslashgamma}
h_t\vert_{k-\frac{1}{2}\dimn}\gamma=\sum_{t' \in L^\sharp/L}\rho_{\underline{L}}(\gamma)^{-1}_{t',t}h_{t'}^\gamma.
\end{equation}
The desired property follows from this formula, \eqref{eq:deffhtgamma} and the conditions $c^\gamma(n,t)$ satisfy (see Definition \ref{deff:JacobiForm}).

We have shown that the map that sends $\phi$ to $h$ is a well-defined map from $\JacFormWHol{k}{L}{G}{\rho}$ to $\ModFormWHol{k-\frac{1}{2}\dimn}{G}{\rho\otimes\rho_{\underline{L}}^\ast\vert_{\widetilde{G}}}$. It is immediatel{}y that this map is a $\numC$-linear embedding. From the formula \eqref{eq:htslashgamma}, it is not hard to see that this map gives embedding from $\JacForm{k}{L}{G}{\rho}$ to $\ModForm{k-\frac{1}{2}\dimn}{G}{\rho\otimes\rho_{\underline{L}}^\ast\vert_{\widetilde{G}}}$, and from $\JacFormCusp{k}{L}{G}{\rho}$ to $\ModFormCusp{k-\frac{1}{2}\dimn}{G}{\rho\otimes\rho_{\underline{L}}^\ast\vert_{\widetilde{G}}}$. This concludes the proof.
\end{proof}
\begin{rema}
\label{rema:thetaIsomorphism}
Suppose $\underline{L}$ is even. Applying the operator $\vert_{k-\frac{1}{2}\dimn}\widetilde{\tbtmat{1}{0}{0}{1}}$ to \eqref{eq:deffh}, and using \eqref{eq:hslashgammaTransformation}, we obtain that
\begin{equation*}
h=\left(\rho \otimes \rho_{\underline{L}}^\ast([v,w],\xi)\right)\circ h
\end{equation*}
for any $([v,w],\xi) \in \heigrpa{L}{1}$. This is equivalent to
\begin{equation*}
h_t(\tau)=\xi\etp{\frac{1}{2}B(v,w)}\rho([v,w],\xi)(h_t(\tau)),\quad \tau \in \uhp,\, t \in L^\sharp/L.
\end{equation*}
So if the set $\{h_t(\tau)\colon \tau \in \uhp,\, t \in L^\sharp/L\}$ generates $\myran$, then $\rho$ must satisfy
\begin{equation}
\label{eq:rhoCondition}
\rho([v,w],\xi)=\xi\etp{\frac{1}{2}B(v,w)}\mathrm{id}_\myran.
\end{equation}
Without loss of generality, for even lattices, we mostly consider $\rho$ satisfying this condition. In fact, under this condition, by reversing the deduction in the above proof, one can show that the map from $\JacFormWHol{k}{L}{G}{\rho}$ to $\ModFormWHol{k-\frac{1}{2}\dimn}{G}{\rho\otimes\rho_{\underline{L}}^\ast\vert_{\widetilde{G}}}$ in Theorem \ref{thm:thetaDecomposition} is really an isomorphism. So are its restrictions to $\JacForm{k}{L}{G}{\rho}$ and to $\JacFormCusp{k}{L}{G}{\rho}$. Nevertheless, we do not need the surjectiveness below.
\end{rema}

\begin{rema}
\label{rama:conditionOnRhoThetaDecompWeak}
If we replace the condition \eqref{eq:conditionOnRhoThetaDecomp} by weaker ones
\begin{equation}
\label{eq:conditionOnRhoThetaDecompWeak}
\rho([v,0],1)=\mathrm{id}_\myran,\quad \rho([0,v],1)=\mathrm{id}_\myran,\quad v \in L,
\end{equation}
then \eqref{eq:thetaDecomposition} still holds for $\gamma=\widetilde{\tbtmat{1}{0}{0}{1}}$ and any $\phi \in \JacFormWHol{k}{L}{G}{\rho}$. This can be seen immediately from the proof.
\end{rema}

In the remaining of this section, we describe the relationship between Taylor expansions and theta decompositions. For results in the classical setting, see \cite[Proposition 2.40]{BFOR17}. Although we could obtain a formula relating modified Taylor coefficients $D_{k,\mathbf{p}}(\phi)$ to theta coefficients \eqref{eq:deffhtgamma} concerning Rankin-Cohen brackets, we only present a formula involving non-modified Taylor coefficients, for this is precisely what we need to prove the main result (Theorem \ref{thm:mainEmbed}) of this paper.

For a $\numR$-basis $\mathfrak{B}$ of $V$, let $(z_1,z_2,\dots,z_\dimn)$ denote the corresponding coordinate map on $\mathcal{V}$. Let $\mathbf{p} \in \numgeq{Z}{0}^\dimn$. By $\partial^\mathbf{p}_\mathfrak{B}$, or simply $\partial^\mathbf{p}$, we mean the differential operator $\frac{\partial^{s(\mathbf{p})}}{\partial z_1^{p_1}\dots\partial z_\dimn^{p_\dimn}}$ acting on functions defined on $\mydom$, as in Remark \ref{rema:hpAnotherExpression}.
\begin{lemm}
\label{lemm:TaylorCoeffJacobiTheta}
Let $L$ be any integral lattice, and $G_\mathfrak{B}$ be the Gram matrix of the bilinear form $B$ with respect to $\mathfrak{B}$. For $v \in \mathcal{V}$, let $v_\mathfrak{B}$ denote the coordinate vector of $v$ under the basis $\mathfrak{B}$, as a $1 \times \dimn$ matrix. Then
\begin{equation*}
\partial^\mathbf{p}\vartheta_{\underline{L}}\vert_{z=0}(\tau)=(2\uppi \rmi)^{s(\mathbf{p})}\sum_{v \in L^\sharp}(v_\mathfrak{B}\cdot G_\mathfrak{B})^{\mathbf{p}}\delta_{v+L}q^{Q(v)}.
\end{equation*}
Equivalently, for any $t \in L^\sharp/L$, we have
\begin{equation*}
\partial^\mathbf{p}\vartheta_{\underline{L}, t}\vert_{z=0}(\tau)=(2\uppi \rmi)^{s(\mathbf{p})}\sum_{v \in L}((v+t)_\mathfrak{B}\cdot G_\mathfrak{B})^{\mathbf{p}}q^{Q(v+t)}.
\end{equation*}
\end{lemm}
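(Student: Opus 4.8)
The plan is to differentiate the defining series \eqref{eq:deffJacobiThetaLatticeIndex} term by term and then set $z=0$. First I would make the pairing explicit in coordinates: writing $z = z_1 e_1 + \dots + z_\dimn e_\dimn$ and using that $B$ extends $\numC$-bilinearly, one gets $B(v,z) = v_\mathfrak{B}\cdot G_\mathfrak{B}\cdot(z_1,\dots,z_\dimn)^{T} = \sum_{j=1}^{\dimn}(v_\mathfrak{B}\cdot G_\mathfrak{B})_j\, z_j$, so each summand of $\vartheta_{\underline{L},t}$ equals $q^{Q(v)}\exp\!\bigl(2\uppi\rmi\sum_j (v_\mathfrak{B}\cdot G_\mathfrak{B})_j z_j\bigr)$, a product of single-variable exponentials. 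Applying $\partial^\mathbf{p}$ to one summand therefore brings down the factor $\prod_j\bigl(2\uppi\rmi(v_\mathfrak{B}\cdot G_\mathfrak{B})_j\bigr)^{p_j} = (2\uppi\rmi)^{s(\mathbf{p})}(v_\mathfrak{B}\cdot G_\mathfrak{B})^{\mathbf{p}}$ and leaves the exponential untouched; evaluating at $z=0$ kills the exponential, leaving $(2\uppi\rmi)^{s(\mathbf{p})}(v_\mathfrak{B}\cdot G_\mathfrak{B})^{\mathbf{p}} q^{Q(v)}$.

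The only point needing care is the legitimacy of termwise differentiation. Since $B$ is positive definite, for $v$ in a coset $t+L$ one has $Q(v)\gg \lvert v\rvert^2$, so on a compact subset of $\mydom$ (where $\Im\tau$ is bounded below and $z$ is bounded) the summands decay like a Gaussian $\rme^{-c\lvert v\rvert^2}$ while the polynomial factor $(v_\mathfrak{B}\cdot G_\mathfrak{B})^{\mathbf{p}}$ grows only polynomially; hence the differentiated series again converges normally. By the Weierstrass theorem on locally uniform limits of holomorphic functions (the same normal convergence already invoked for $\vartheta_{\underline{L}}$ itself), we may differentiate the series term by term, obtaining
\[
\partial^\mathbf{p}\vartheta_{\underline{L}, t}\vert_{z=0}(\tau) = (2\uppi\rmi)^{s(\mathbf{p})}\sum_{v \in t+L}(v_\mathfrak{B}\cdot G_\mathfrak{B})^{\mathbf{p}} q^{Q(v)},
\]
and reindexing $v = v'+t$ with $v'\in L$ gives the second displayed formula of the lemma. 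The first (vector-valued) identity then follows by summing over $t\in L^\sharp/L$ against the $\delta_t$ and merging the double sum over pairs $(t,\,v\in t+L)$ into a single sum over $v\in L^\sharp$, using $\delta_t=\delta_{v+L}$ whenever $v\in t+L$.

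I do not expect a genuine obstacle: the core computation is the chain rule applied to an exponential, and the convergence bookkeeping is identical to the Gaussian estimate already used for $\vartheta_{\underline{L}}$. The one step to write out carefully is the matrix identity $B(v,z)=v_\mathfrak{B}\cdot G_\mathfrak{B}\cdot(z_1,\dots,z_\dimn)^{T}$, so that the $j$-th partial derivative extracts exactly the $j$-th entry of the row vector $v_\mathfrak{B}\cdot G_\mathfrak{B}$ and the multi-index power $(v_\mathfrak{B}\cdot G_\mathfrak{B})^{\mathbf{p}}$ appears correctly.
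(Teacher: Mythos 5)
Your proposal is correct and follows the same route as the paper, whose proof of this lemma is simply "immediately from definitions": you differentiate the defining series termwise, use $B(v,z)=\sum_j z_j B(v,e_j)$ with $B(v,e_j)=(v_\mathfrak{B}\cdot G_\mathfrak{B})_j$, and evaluate at $z=0$. The extra care you take with normal convergence (Gaussian decay beating the polynomial factor) is the justification the paper leaves implicit, so nothing is missing or different in substance.
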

\begin{proof}
Immediately from definitions, that is, \eqref{eq:deffJacobiThetaLatticeIndex} and \eqref{eq:deffJacobiThetaLatticeIndex2}.
\end{proof}
\begin{rema}
Note that $(v_\mathfrak{B}\cdot G_\mathfrak{B})^{\mathbf{p}}$ can be rewritten as $\prod_{l=1}^\dimn B(v,e_l)^{p_l}$ where $p_l$ is the $l$-th component of $\mathbf{p}$ and $\mathfrak{B}=(e_1,\dots,e_\dimn)$.
\end{rema}

\begin{prop}
\label{prop:TaylorCoeffAndThetaDecomp}
Use notations in Convention \ref{conv1} and \ref{conv2}. The lattice $\underline{L}$ may be even or odd. Suppose that $\rho$ satisfies \eqref{eq:conditionOnRhoThetaDecompWeak}. Let $\phi=\sum_{n,t}c(n,t)q^n\etp{B(t,z)} \in \JacFormWHol{k}{L}{G}{\rho}$ and put $h_{t}(\tau)=\sum_{n}c(n+Q(t), t)q^n$ for $t \in L^\sharp/L$. Let $\mathfrak{B}=(e_1,\dots,e_\dimn)$ be a $\numR$-basis of $V$ and $G_\mathfrak{B}$ be the Gram matrix of $B$ with respect to $\mathfrak{B}$. Suppose the Taylor expansion of $\phi$ with respect to $\mathfrak{B}$ is
\begin{equation*}
\phi(\tau,z_1e_1+\dots z_\dimn e_\dimn)=\sum_{\mathbf{j} \in \numgeq{Z}{0}^\dimn}g_{\mathbf{j}}(\tau)z_1^{j_1}\dots z_\dimn^{j_\dimn}.
\end{equation*}
Then we have
\begin{equation*}
g_{\mathbf{j}}(\tau)=\frac{(2\uppi \rmi)^{s(\mathbf{j})}}{\mathbf{j}!}\sum_{t \in L^\sharp/L}h_t(\tau)\sum_{v \in L}((v+t)_\mathfrak{B}\cdot G_\mathfrak{B})^{\mathbf{j}}q^{Q(v+t)}.
\end{equation*}
\end{prop}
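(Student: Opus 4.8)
The plan is to deduce the formula directly from the theta decomposition of $\phi$ together with the explicit Taylor coefficients of the constituent theta series, both of which are already at hand.

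First I would invoke Remark \ref{rama:conditionOnRhoThetaDecompWeak}. Since $\rho$ satisfies \eqref{eq:conditionOnRhoThetaDecompWeak}, the decomposition \eqref{eq:thetaDecomposition} holds for $\gamma=\widetilde{\tbtmat{1}{0}{0}{1}}$, i.e.\ $\phi=\sum_{t\in L^\sharp/L}h_{t}^{\widetilde{I}}\,\vartheta_{\underline{L},t}$ where $I=\tbtmat{1}{0}{0}{1}$. Because $\phi\vert_{k,B}\widetilde{I}=\phi$, its Fourier coefficients are the given $c(n,t)$, so by \eqref{eq:deffhtgamma} the function $h_{t}^{\widetilde{I}}$ is precisely $h_t(\tau)=\sum_n c(n+Q(t),t)q^n$. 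The key structural point is that the sum $\sum_{t\in L^\sharp/L}$ is \emph{finite}, so no convergence question arises when we differentiate it.

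Next I would take the $\mathbf{j}$-th Taylor coefficient around $z=0$. By definition $g_{\mathbf{j}}(\tau)=\frac{1}{\mathbf{j}!}\left(\partial^{\mathbf{j}}\phi\right)\vert_{z=0}$ with $\mathbf{j}!=j_1!\cdots j_\dimn!$ and $\partial^{\mathbf{j}}$ as in Remark \ref{rema:hpAnotherExpression}. Since each $h_t$ does not depend on $z$ and the sum over $t$ is finite, $\left(\partial^{\mathbf{j}}\phi\right)\vert_{z=0}=\sum_{t\in L^\sharp/L}h_t(\tau)\left(\partial^{\mathbf{j}}\vartheta_{\underline{L},t}\right)\vert_{z=0}(\tau)$. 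I would then substitute the second identity of Lemma \ref{lemm:TaylorCoeffJacobiTheta}, $\left(\partial^{\mathbf{j}}\vartheta_{\underline{L},t}\right)\vert_{z=0}(\tau)=(2\uppi\rmi)^{s(\mathbf{j})}\sum_{v\in L}\big((v+t)_\mathfrak{B}\cdot G_\mathfrak{B}\big)^{\mathbf{j}}q^{Q(v+t)}$, and pull out the scalar $(2\uppi\rmi)^{s(\mathbf{j})}/\mathbf{j}!$; this yields the asserted formula.

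There is no genuine obstacle here. The only things to be careful about are: citing the correct form of the theta decomposition --- the identity-element version under the weak hypothesis from Remark \ref{rama:conditionOnRhoThetaDecompWeak}, not the full Theorem \ref{thm:thetaDecomposition}, which requires the stronger condition \eqref{eq:conditionOnRhoThetaDecomp} and in addition asserts that the theta coefficients form a modular form; and observing that, since $L^\sharp/L$ is finite and each $\vartheta_{\underline{L},t}$ is holomorphic, differentiating term by term is immediate. As an alternative one could avoid the decomposition entirely and differentiate the Fourier expansion \eqref{eq:ForuierExpansionOfJacobiLikeForm}, regrouping the summands with $n-Q(t)$ fixed as in Lemma \ref{lemm:fourierDevTaylorCoeff}; the route through Lemma \ref{lemm:TaylorCoeffJacobiTheta} is shorter.
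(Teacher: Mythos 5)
Your proposal is correct and follows essentially the same route as the paper: the paper's proof likewise combines the theta decomposition \eqref{eq:thetaDecomposition} specialized to $\gamma=\widetilde{\tbtmat{1}{0}{0}{1}}$ (justified under the weak hypothesis by Remark \ref{rama:conditionOnRhoThetaDecompWeak}) with the Taylor coefficients of the theta series from Lemma \ref{lemm:TaylorCoeffJacobiTheta}. Your added care about citing the identity-element version rather than the full Theorem \ref{thm:thetaDecomposition}, and about termwise differentiation of the finite sum over $L^\sharp/L$, is exactly the right reading of the paper's one-line argument.
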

\begin{proof}
This follows from \eqref{eq:thetaDecomposition} with $\gamma = \widetilde{\tbtmat{1}{0}{0}{1}}$ and Lemma \ref{lemm:TaylorCoeffJacobiTheta}.
\end{proof}
\begin{coro}
\label{coro:LinearIndependence}
Use notations and assumptions above. Let $\mathcal{P}_0=\{\mathbf{p}_1, \dots, \mathbf{p}_d\}$ be a finite subset of $\numgeq{Z}{0}^\dimn$ with $d=\abs{L^\sharp/L}$ elements. Assume $L^\sharp/L = \{t_1,\dots, t_d\}$. If the function
\begin{equation}
\label{eq:detThetaMatrix}
\det{\left(\frac{1}{(2\uppi \rmi)^{s(\mathbf{p}_i)}}\partial^{\mathbf{p}_i}\vartheta_{\underline{L},t_j}\vert_{z=0}(\tau)\right)_{1 \leq i,j \leq d}}
\end{equation}
is not identically zero for $\tau \in \uhp$, then
\begin{equation*}
g_{\mathbf{p}_i}=0\,(1 \leq i \leq d) \implies \phi = 0.
\end{equation*}
\end{coro}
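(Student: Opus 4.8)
The plan is to derive the conclusion directly from Proposition \ref{prop:TaylorCoeffAndThetaDecomp}. By that proposition, each Taylor coefficient $g_{\mathbf{j}}$ of $\phi$ is expressible as a linear combination of the theta coefficients $h_t$ ($t \in L^\sharp/L$) with coefficients given by the functions
\begin{equation*}
\frac{1}{(2\uppi\rmi)^{s(\mathbf{j})}}\partial^{\mathbf{j}}\vartheta_{\underline{L},t}\vert_{z=0}(\tau),
\end{equation*}
up to the nonzero scalar $(2\uppi\rmi)^{s(\mathbf{j})}/\mathbf{j}!$ (the latter being exactly the normalization appearing in \eqref{eq:detThetaMatrix}). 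So specializing $\mathbf{j}$ to each of the $d$ vectors $\mathbf{p}_1,\dots,\mathbf{p}_d$, the hypothesis $g_{\mathbf{p}_i}=0$ for $1\le i\le d$ becomes a homogeneous linear system in the $d$ unknowns $h_{t_1},\dots,h_{t_d}$ (viewed as elements of the function space, or pointwise at each $\tau$), whose coefficient matrix is precisely
\begin{equation*}
M(\tau)=\left(\frac{1}{(2\uppi\rmi)^{s(\mathbf{p}_i)}}\partial^{\mathbf{p}_i}\vartheta_{\underline{L},t_j}\vert_{z=0}(\tau)\right)_{1\le i,j\le d}
\end{equation*}
after clearing the $\mathbf{p}_i!^{-1}$ factors, which do not affect singularity.

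First I would fix the matrix identity: from Proposition \ref{prop:TaylorCoeffAndThetaDecomp} and Lemma \ref{lemm:TaylorCoeffJacobiTheta}, for each $i$,
\begin{equation*}
\mathbf{p}_i!\, g_{\mathbf{p}_i}(\tau)=\sum_{j=1}^{d} h_{t_j}(\tau)\cdot\frac{1}{(2\uppi\rmi)^{s(\mathbf{p}_i)}}\partial^{\mathbf{p}_i}\vartheta_{\underline{L},t_j}\vert_{z=0}(\tau).
\end{equation*}
Thus the column vector $(\mathbf{p}_1!\,g_{\mathbf{p}_1},\dots,\mathbf{p}_d!\,g_{\mathbf{p}_d})^T$ equals $M(\tau)$ applied to the column vector $(h_{t_1},\dots,h_{t_d})^T$. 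If all $g_{\mathbf{p}_i}$ vanish identically, then $M(\tau)(h_{t_1}(\tau),\dots,h_{t_d}(\tau))^T=0$ for every $\tau\in\uhp$. Since $\det M(\tau)$ is the function \eqref{eq:detThetaMatrix}, which by hypothesis is not identically zero, it is nonzero on a dense open subset of $\uhp$ (being holomorphic and not identically zero, its zero set is discrete). On that dense set $M(\tau)$ is invertible, forcing $h_{t_j}(\tau)=0$ there for all $j$; by continuity (holomorphy) of each $h_{t_j}$, we get $h_{t_j}\equiv 0$ on all of $\uhp$. Then the theta decomposition \eqref{eq:thetaDecomposition} with $\gamma=\widetilde{\tbtmat{1}{0}{0}{1}}$ — valid here because $\rho$ satisfies \eqref{eq:conditionOnRhoThetaDecompWeak}, as noted in Remark \ref{rama:conditionOnRhoThetaDecompWeak} — gives $\phi=\sum_{t\in L^\sharp/L} h_t\,\vartheta_{\underline{L},t}=0$, which is the claim.

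The argument is essentially bookkeeping once the matrix identity is in place; there is no serious obstacle. The one point requiring a little care is the passage from "$\det M(\tau)$ is not identically zero" to "$h_{t_j}\equiv 0$": one must invoke that $\det M$, being a determinant of holomorphic functions on the connected domain $\uhp$, has a discrete zero set, so invertibility of $M(\tau)$ holds off that discrete set, and then each $h_{t_j}$ — itself holomorphic by the well-definedness part of Theorem \ref{thm:thetaDecomposition} — vanishes on a set with a limit point and hence everywhere. A secondary bookkeeping point is checking that the nonzero scalar factors $(2\uppi\rmi)^{s(\mathbf{p}_i)}$ and $\mathbf{p}_i!^{-1}$ distributed across rows and the relation between $g_{\mathbf{p}_i}$ and the normalized partials in \eqref{eq:detThetaMatrix} match up so that the relevant coefficient matrix is exactly \eqref{eq:detThetaMatrix} up to multiplication by an invertible diagonal matrix, which does not change whether the determinant vanishes.
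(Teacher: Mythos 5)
Your proposal is correct and follows essentially the same route as the paper: combine Lemma \ref{lemm:TaylorCoeffJacobiTheta} and Proposition \ref{prop:TaylorCoeffAndThetaDecomp} to get the linear system relating the $g_{\mathbf{p}_i}$ to the theta coefficients $h_{t_j}$, invert the coefficient matrix where the determinant \eqref{eq:detThetaMatrix} is nonzero, conclude $h_{t_j}\equiv 0$ by the identity theorem, and then use the theta decomposition (via Remark \ref{rama:conditionOnRhoThetaDecompWeak}) to get $\phi=0$. The only slip is the placement of the constant $(2\uppi\rmi)^{s(\mathbf{p}_i)}$ in your matrix identity, which, as you yourself note, only rescales rows by nonzero constants and does not affect the argument.
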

\begin{proof}
Suppose $g_{\mathbf{p}_i}=0\,(1 \leq i \leq d)$. By Lemma \ref{lemm:TaylorCoeffJacobiTheta} and Proposition \ref{prop:TaylorCoeffAndThetaDecomp} we have
\begin{equation}
\label{eq:hjThetaEqualsZero}
\sum_{j=1}^{d}h_{t_j}(\tau)\partial^{\mathbf{p}_i}\vartheta_{\underline{L},t_j}\vert_{z=0}(\tau)=0
\end{equation}
with $1 \leq i \leq d$. Choose some $\tau_0$ such that the function \eqref{eq:detThetaMatrix} assumes a non-zero value. By continuity \eqref{eq:detThetaMatrix} assumes non-zero values on a neighbourhood $U$ of $\tau_0$. Thus, inverting \eqref{eq:hjThetaEqualsZero} gives that $h_{t_j}(\tau)=0$ with $\tau \in U,\, 1 \leq j \leq d$. By analytic continuation $h_{t_j}$ are all identically zero, and hence by theta expansion (actually by Remark \ref{rama:conditionOnRhoThetaDecompWeak}) $\phi=0$.
\end{proof}
\begin{deff}
\label{deff:FLP0tau}
For any integral lattice $\underline{L}$ of determinant $d$, any $\numR$-basis $\mathfrak{B}$ of $V$, and any finite subset $\mathcal{P}_0 \subset \numgeq{Z}{0}^\dimn$ of cardinality $d$, we denote \eqref{eq:detThetaMatrix} by $\mathcal{F}_{\underline{L},\mathcal{P}_0}(\tau)$, or more precisely $\mathcal{F}_{\underline{L},\mathcal{P}_0}^\mathfrak{B}(\tau)$.
\end{deff}
Note that for fixed $\mathfrak{B}$, $\mathcal{F}_{\underline{L},\mathcal{P}_0}(\tau)$ actually depends on orderings $\mathcal{P}_0$ and $L^\sharp/L$ possess. But different choices of orderings only possibly lead to functions differ by a factor $-1$.

Since the function $\mathcal{F}_{\underline{L},\mathcal{P}_0}(\tau)$ is holomorphic, for any specific $\underline{L}$ and $\mathcal{P}_0$, we can check the desired condition by writing out the $q$-expansion of $\mathcal{F}_{\underline{L},\mathcal{P}_0}(\tau)$. Computer algebra systems are suitable for such task.

\begin{examp}
\label{examp:evenUnimodularLatticeEmbedding}
Suppose $\underline{L}$ is even and $\rho$ satisfies \eqref{eq:rhoCondition}. If it is also unimodular, that is, $\abs{L^\sharp/L}=1$, then the structure of $\JacFormWHol{k}{L}{G}{\rho}$ is in some sense the simplest. For instance, consider the $E_8$ lattice, i.e., the module $\numZ^8$ equipped with the bilinear form whose Gram matrix with respect to the standard basis of $\numZ^8$ is the Cartan matrix of the $E_8$ root system. (See \cite[p. 272]{TY05} for this matrix.) We identify $\numC[E_8^\sharp/E_8]$ with $\numC$, so $\rho\otimes\rho_{\underline{L}}^\ast\vert_{\widetilde{G}}=\rho\vert_{\widetilde{G}}$. By Theorem \ref{thm:thetaDecomposition} and Remark \ref{rema:thetaIsomorphism}, we have an isomorphism
\begin{align*}
\ModForm{k-4}{G}{\rho\vert_{\widetilde{G}}} &\rightarrow \JacForm{k}{E_8}{G}{\rho}\\
h(\tau) &\mapsto h(\tau)\cdot \vartheta_{\underline{E_8}, 0}(\tau,z).
\end{align*}
See also \cite[Corollary 3]{Kri96}. On the other hand, according to Theorem \ref{thm:ModularFormFromJacobiForm} (with $\mathfrak{B}$ the standard basis), we have a $\numC$-linear map
\begin{align*}
D_{k, \mathbf{0}}\colon \JacForm{k}{E_8}{G}{\rho} &\rightarrow \ModForm{k}{G}{\rho\vert_{\widetilde{G}}}\\
\phi(\tau,z) &\mapsto \phi(\tau,0).
\end{align*}
We assert that, the above map $D_{k, \mathbf{0}}$, is an embedding. This follows immediately from Corollary \ref{coro:LinearIndependence}. In fact, this assertion holds for any unimodular lattice, whether it is even or old.
\end{examp}

For more concrete examples, see Section \ref{sec:Application: Linear relations among theta series}.

\section{The main theorem and its corollaries}
\label{sec:The main theorem and its corollaries}
Throughout this section, we adopt Convention \ref{conv1} and \ref{conv2}. Recall the definition of $\emb$ before Proposition \ref{prop:diagramSlashOperator}. For $\phi \in \JacFormWHol{k}{L}{G}{\rho}$, if the basis $\mathfrak{B}$ is implicitly known or fixed, we oftern write $D_{k,\mathbf{p}}(\phi)$, instead of $D_{k,\mathbf{p}}(\emb \phi)$, as in the proof of Theorem \ref{thm:ModularFormFromJacobiForm}.

We begin with the following fact, which is a direct consequence of Proposition \ref{prop:isoFormalModForm}.
\begin{lemm}
\label{lemm:JacobiFormEmbed}
Assume that $k \neq 0,-1,-2,\cdots$. Fix a $\numR$-basis $\mathfrak{B}$ of $V$. Then the map
\begin{align*}
\prod_{\mathbf{p} \in \numgeq{Z}{0}^\dimn} D_{k,\mathbf{p}}\colon \JacFormWHol{k}{L}{G}{\rho} &\rightarrow \prod_{\mathbf{p} \in \numgeq{Z}{0}^\dimn}\ModFormWHol{k+s(\mathbf{p})}{G}{\rho\vert_{\widetilde{G}}} \\
\notag\phi &\mapsto \left\langle D_{k,\mathbf{p}}\phi \middle\vert \mathbf{p} \in \numgeq{Z}{0}^\dimn \right\rangle
\end{align*}
is a $\numC$-linear embedding. Moreover, it maps $\JacFormWeak{k}{L}{G}{\rho}$ into $\prod_{\mathbf{p} \in \numgeq{Z}{0}^\dimn}\ModForm{k+s(\mathbf{p})}{G}{\rho\vert_{\widetilde{G}}}$, and maps $\JacForm{k}{L}{G}{\rho}$ into $\ModForm{k}{G}{\rho\vert_{\widetilde{G}}} \times \prod_{\mathbf{0} \neq \mathbf{p} \in \numgeq{Z}{0}^\dimn}\ModFormCusp{k+s(\mathbf{p})}{G}{\rho\vert_{\widetilde{G}}}$, respectively.
\end{lemm}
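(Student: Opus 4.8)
The plan is to assemble the statement from three ingredients already in hand: the injectivity of the Taylor-expansion map $\emb$, the isomorphism of Proposition \ref{prop:isoFormalModForm}, and the refined codomain assertions of Theorem \ref{thm:ModularFormFromJacobiForm}.

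First I would observe that any $\phi \in \JacFormWHol{k}{L}{G}{\rho}$ is in particular a holomorphic function on the connected set $\mydom$, so its Taylor expansion $\emb(\phi)$ about $z=0$ with respect to the fixed basis $\mathfrak{B}$ is a well-defined element of $\FormalSeriesSetLbf{\mathbf{0}}$, and $\emb$ is injective since a holomorphic function on a connected open set is recovered from its power series at a point. By Remark \ref{rema:relationshipJacFormalSeries}, $\emb(\phi)$ transforms like a modular form under $G$ with representation $\rho\vert_{\widetilde{G}}$, so $\emb$ maps $\JacFormWHol{k}{L}{G}{\rho}$ into $\FormalModularFormSubset{k}{B}{G}{\rho\vert_{\widetilde{G}}}{\mathbf{0}}$.

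Next, since $k \neq 0,-1,-2,\dots$, Proposition \ref{prop:isoFormalModForm} gives that $\prod_{\mathbf{p} \in \numgeq{Z}{0}^\dimn} D_{k,\mathbf{p}}$ is a $\numC$-linear isomorphism on $\FormalModularFormSubset{k}{B}{G}{\rho\vert_{\widetilde{G}}}{\mathbf{0}}$, hence injective. The map of the lemma is by definition $\phi \mapsto \langle D_{k,\mathbf{p}}(\emb\phi)\rangle_{\mathbf{p}}$, i.e.\ this isomorphism precomposed with the injection $\emb$, so it is an injective $\numC$-linear map. That its image lies in $\prod_{\mathbf{p}}\ModFormWHol{k+s(\mathbf{p})}{G}{\rho\vert_{\widetilde{G}}}$, and not merely in $\prod_{\mathbf{p}}\ModFormHol{k+s(\mathbf{p})}{G}{\rho\vert_{\widetilde{G}}}$, is precisely the first assertion of Theorem \ref{thm:ModularFormFromJacobiForm}.

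Finally, the ``moreover'' clauses follow verbatim from the remaining assertions of Theorem \ref{thm:ModularFormFromJacobiForm}: for $\phi \in \JacFormWeak{k}{L}{G}{\rho}$ each $D_{k,\mathbf{p}}(\phi)$ lies in $\ModForm{k+s(\mathbf{p})}{G}{\rho\vert_{\widetilde{G}}}$, and for $\phi \in \JacForm{k}{L}{G}{\rho}$ one has $D_{k,\mathbf{0}}(\phi) \in \ModForm{k}{G}{\rho\vert_{\widetilde{G}}}$ and $D_{k,\mathbf{p}}(\phi) \in \ModFormCusp{k+s(\mathbf{p})}{G}{\rho\vert_{\widetilde{G}}}$ for all $\mathbf{p} \neq \mathbf{0}$. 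I do not expect a genuine obstacle: the lemma is a bookkeeping combination of Proposition \ref{prop:isoFormalModForm} and Theorem \ref{thm:ModularFormFromJacobiForm}. The only point deserving a line of care is verifying that $\emb$ sends a \emph{weakly holomorphic} Jacobi form into the space $\FormalModularFormSubset{k}{B}{G}{\rho\vert_{\widetilde{G}}}{\mathbf{0}}$ on which Proposition \ref{prop:isoFormalModForm} is stated --- this holds because weak holomorphy constrains only the growth of Fourier coefficients, while $\phi$ remains an honest holomorphic function on $\mydom$ whose Taylor series about $z=0$ has non-negative exponents.
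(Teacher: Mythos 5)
Your proposal is correct and follows essentially the same route as the paper, which likewise deduces the codomain assertions from Theorem \ref{thm:ModularFormFromJacobiForm} and the injectivity from Proposition \ref{prop:isoFormalModForm}; your only addition is to spell out the injectivity of $\emb$ and that it lands in $\FormalModularFormSubset{k}{B}{G}{\rho\vert_{\widetilde{G}}}{\mathbf{0}}$ (via Remark \ref{rema:relationshipJacFormalSeries}), which the paper leaves implicit.
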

\begin{proof}
The space in which the image of $\phi$ lies can be determined by Theorem \ref{thm:ModularFormFromJacobiForm}. The injectiveness follows from Proposition \ref{prop:isoFormalModForm}.
\end{proof}

To state our main theorem, some more notations are needed. We define a partial ordering on $\numgeq{Z}{0}^\dimn$ as follows. Let $\mathbf{p}_1$ and $\mathbf{p}_2$ be vectors in $\numgeq{Z}{0}^\dimn$. By $\mathbf{p}_1 \preceq \mathbf{p}_2$, we mean $\mathbf{p}_2 - \mathbf{p}_1 \in \numgeq{Z}{0}^\dimn$ and $s(\mathbf{p}_1) \equiv s(\mathbf{p}_2) \bmod 2$. Let $\mathcal{P}$ be a subset of $\numgeq{Z}{0}^\dimn$. We put $\widehat{\mathcal{P}}=\{\mathbf{p} \in \numgeq{Z}{0}^\dimn \colon \mathbf{p} \preceq \mathbf{p}' \text{ for some }\mathbf{p}' \in \mathcal{P}\}$. If $\mathcal{P}$ is a singleton, that is $\mathcal{P}=\{\mathbf{p}_0\}$, then we also write $\widehat{\mathbf{p}_0}$ instead of $\widehat{\mathcal{P}}$. The motivation for concerning this partial ordering is that the $\mathbf{p}$-th Taylor coefficient of a Jacobi form $\phi$ is precisely determined by $D_{k,\mathbf{p}'}\phi$ with $\mathbf{p}' \in \widehat{\mathbf{p}}$. See Lemma \ref{lemm:inverseExpressionDkp}.

We can now state and prove our main theorem. We emphasize that $D_{k,\mathbf{p}}$ denotes $D_{k,B, \mathbf{p}}^{\mathfrak{B}}\circ\emb$ (see Definition \ref{deff:DkpFormal} and the paragraph before Proposition \ref{prop:diagramSlashOperator}), $\vartheta_{\underline{L}, t}$ denotes Jacobi theta series \eqref{eq:deffJacobiThetaLatticeIndex}, and $\partial^{\mathbf{p}}$ is defined in the paragraph preceding Lemma \ref{lemm:TaylorCoeffJacobiTheta}.
\begin{thm}
\label{thm:mainEmbed}
Use notations and assumptions in Convention \ref{conv1} and \ref{conv2}. Suppose further $\rho$ satisfies \eqref{eq:conditionOnRhoThetaDecompWeak}, and $k \neq 0,-1,-2,\cdots$. Let $\mathfrak{B}$ be a $\numR$-basis for $V$, $\mathcal{P}_0=\{\mathbf{p}_1, \dots, \mathbf{p}_d\}$ be a finite subset of $\numgeq{Z}{0}^\dimn$ with $d=\abs{L^\sharp/L}$ elements, and $L^\sharp/L = \{t_1+L,\dots, t_d+L\}$. If the function
\begin{equation*}
\mathcal{F}_{\underline{L},\mathcal{P}_0}^\mathfrak{B}(\tau)=\det{\left(\frac{1}{(2\uppi \rmi)^{s(\mathbf{p}_i)}}\partial^{\mathbf{p}_i}\vartheta_{\underline{L},t_j}\vert_{z=0}(\tau)\right)_{1 \leq i,j \leq d}}
\end{equation*}
is not identically zero for $\tau \in \uhp$, then the map
\begin{align*}
\prod_{\mathbf{p} \in \widehat{\mathcal{P}_0}}D_{k,\mathbf{p}} \colon \JacFormWHol{k}{L}{G}{\rho} &\rightarrow \prod_{\mathbf{p} \in \widehat{\mathcal{P}_0}}\ModFormWHol{k+s(\mathbf{p})}{G}{\rho\vert_{\widetilde{G}}} \\
\notag\phi &\mapsto \left\langle D_{k,\mathbf{p}}\phi \middle\vert \mathbf{p} \in \widehat{\mathcal{P}_0} \right\rangle
\end{align*}
is a $\numC$-linear embedding. Moreover, it gives embeddings from $\JacFormWeak{k}{L}{G}{\rho}$ to $\prod_{\mathbf{p} \in \widehat{\mathcal{P}_0}}\ModForm{k+s(\mathbf{p})}{G}{\rho\vert_{\widetilde{G}}}$, and from $\JacForm{k}{L}{G}{\rho}$ to $\ModForm{k}{G}{\rho\vert_{\widetilde{G}}} \times \prod_{\mathbf{0} \neq \mathbf{p} \in \widehat{\mathcal{P}_0}}\ModFormCusp{k+s(\mathbf{p})}{G}{\rho\vert_{\widetilde{G}}}$, respectively. The last assertion on $\JacForm{k}{L}{G}{\rho}$ requires that $\mathbf{0} \in \widehat{\mathcal{P}_0}$.
\end{thm}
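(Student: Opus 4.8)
The plan is to combine the two embeddings already established in the excerpt: the ``Taylor expansion'' embedding of Lemma \ref{lemm:JacobiFormEmbed}, which uses \emph{all} of $\numgeq{Z}{0}^\dimn$, and the ``theta decomposition'' linear-independence criterion of Corollary \ref{coro:LinearIndependence}, which lets us collapse down to the finite index set $\mathcal{P}_0$. First I would reduce to injectivity: the target spaces are correctly identified by Theorem \ref{thm:ModularFormFromJacobiForm} (weakly holomorphic forms go to weakly holomorphic modular forms, weak forms to holomorphic modular forms, and Jacobi forms to cusp forms in each component with $\mathbf{p}\neq\mathbf{0}$, holomorphic modular form in the $\mathbf{p}=\mathbf{0}$ slot), so the only content is that $\prod_{\mathbf{p}\in\widehat{\mathcal{P}_0}}D_{k,\mathbf{p}}$ has trivial kernel on $\JacFormWHol{k}{L}{G}{\rho}$.

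Next I would unwind what $D_{k,\mathbf{p}}\phi=0$ for all $\mathbf{p}\in\widehat{\mathcal{P}_0}$ means in terms of the Taylor coefficients $g_{\mathbf{j}}$ of $\emb(\phi)$. By Lemma \ref{lemm:inverseExpressionDkp} (the ``(1)$\Rightarrow$(2)'' direction), each Taylor coefficient $h_{\mathbf{p}}$ is a finite linear combination of the $g_{\mathbf{p}'}$'s — equivalently of the $D_{k,\mathbf{p}'}\phi$'s — ranging over $\mathbf{p}'\in\widehat{\mathbf{p}}$, with the leading coefficient a nonzero scalar (here the hypothesis $k\neq 0,-1,-2,\dots$ enters, guaranteeing the constants $C_1,C_2$ are nonzero). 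More precisely, since $\widehat{\mathcal{P}_0}$ is downward closed under $\preceq$, for every $\mathbf{p}_i\in\mathcal{P}_0$ the entire set $\widehat{\mathbf{p}_i}$ is contained in $\widehat{\mathcal{P}_0}$, so $D_{k,\mathbf{p}}\phi=0$ for all $\mathbf{p}\in\widehat{\mathcal{P}_0}$ forces, via Proposition \ref{prop:explicitExpressionDkp}/Lemma \ref{lemm:inverseExpressionDkp}, that the Taylor coefficient $g_{\mathbf{p}_i}=0$ for each $i=1,\dots,d$. (This is exactly why the partial ordering $\preceq$ and the ``downward closure'' $\widehat{\phantom{x}}$ were introduced — it is the bookkeeping that converts ``$D_{k,\mathbf{p}}$ vanishes'' into ``the raw Taylor coefficient vanishes''.)

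Then I would invoke Corollary \ref{coro:LinearIndependence}: under the standing hypotheses, $\rho$ satisfies \eqref{eq:conditionOnRhoThetaDecompWeak}, so $\phi$ admits a theta decomposition $\phi=\sum_{t\in L^\sharp/L}h_t\vartheta_{\underline{L},t}$ (Theorem \ref{thm:thetaDecomposition}, together with Remark \ref{rama:conditionOnRhoThetaDecompWeak} for $\gamma=\widetilde{I}$). Having $g_{\mathbf{p}_i}=0$ for all $i$, and the determinant $\mathcal{F}_{\underline{L},\mathcal{P}_0}^{\mathfrak{B}}(\tau)$ being not identically zero, Corollary \ref{coro:LinearIndependence} gives $\phi=0$. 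This establishes injectivity of $\prod_{\mathbf{p}\in\widehat{\mathcal{P}_0}}D_{k,\mathbf{p}}$. Finally, for the assertions on $\JacFormWeak{k}{L}{G}{\rho}$ and $\JacForm{k}{L}{G}{\rho}$, I would simply note that the target-space refinement is already contained in Lemma \ref{lemm:JacobiFormEmbed} (restricting the map there to the sub-product indexed by $\widehat{\mathcal{P}_0}$), and that the cusp-form claim in the $\mathbf{p}\neq\mathbf{0}$ slots is the last clause of Theorem \ref{thm:ModularFormFromJacobiForm}; the hypothesis $\mathbf{0}\in\widehat{\mathcal{P}_0}$ is needed precisely so that the $\mathbf{p}=\mathbf{0}$ component (landing in $\ModForm{k}{G}{\rho\vert_{\widetilde{G}}}$, not a cusp space) is present in the product.

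The step I expect to be the main obstacle is the second one: carefully verifying that $D_{k,\mathbf{p}}\phi=0$ for all $\mathbf{p}\in\widehat{\mathcal{P}_0}$ really does force each $g_{\mathbf{p}_i}=0$. The subtlety is that $D_{k,\mathbf{p}}\phi$ is the \emph{constant term} of $\mathcal{D}_{k,\mathbf{p}}(\emb\phi)$, not a single Taylor coefficient, so one must argue at the level of $q$-expansions and use that the inversion of Lemma \ref{lemm:inverseExpressionDkp} is triangular with respect to $s(\mathbf{p})$: induct on $s(\mathbf{p}_i)$, at each stage expressing $g_{\mathbf{p}_i}$ in terms of $D_{k,\mathbf{p}'}\phi$ with $s(\mathbf{p}')<s(\mathbf{p}_i)$ plus a nonzero multiple of $D_{k,\mathbf{p}_i}\phi$, all of which vanish by hypothesis. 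Once that triangularity is cleanly stated, the rest is a direct appeal to results already proved.
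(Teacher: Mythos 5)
Your proposal is correct and follows essentially the same route as the paper's own proof: the target spaces come from Lemma \ref{lemm:JacobiFormEmbed} (via Theorem \ref{thm:ModularFormFromJacobiForm}), injectivity is obtained by using Lemma \ref{lemm:inverseExpressionDkp} (with $k\neq 0,-1,-2,\dots$ ensuring the constants $C_1,C_2$ are nonzero) to convert the vanishing of $D_{k,\mathbf{p}}\phi$ for $\mathbf{p}\in\widehat{\mathcal{P}_0}$ into the vanishing of the Taylor coefficients $g_{\mathbf{p}_i}$, and then Corollary \ref{coro:LinearIndependence} forces $\phi=0$. The ``triangularity'' worry you raise is already handled by Lemma \ref{lemm:inverseExpressionDkp} as stated (the $h_{\mathbf{p}}$ are combinations of \emph{derivatives} of the $g_{\mathbf{p}'}$, which vanish when the $g_{\mathbf{p}'}$ do), so no extra induction is needed beyond what the paper does.
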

\begin{proof}
The fact that the map considered is $\numC$-linear and that it maps $\JacFormWeak{k}{L}{G}{\rho}$ to $\prod_{\mathbf{p} \in \widehat{\mathcal{P}_0}}\ModForm{k+s(\mathbf{p})}{G}{\rho\vert_{\widetilde{G}}}$, and $\JacForm{k}{L}{G}{\rho}$ to $\ModForm{k}{G}{\rho\vert_{\widetilde{G}}} \times \prod_{\mathbf{0} \neq \mathbf{p} \in \widehat{\mathcal{P}_0}}\ModFormCusp{k+s(\mathbf{p})}{G}{\rho\vert_{\widetilde{G}}}$ follow from\footnote{These two facts require neither \eqref{eq:conditionOnRhoThetaDecompWeak} nor \eqref{eq:detThetaMatrix} to be nonzero. Actually, they even do not require that $k \neq 0,-1,-2,\cdots$.} Lemma \ref{lemm:JacobiFormEmbed}. So it remains to show the injectiveness, for which we compute the kernel. Let $\phi \in \JacFormWHol{k}{L}{G}{\rho}$ such that $D_{k,\mathbf{p}'}\phi=0$ for any $\mathbf{p}' \in \widehat{\mathcal{P}_0}$. By Lemma \ref{lemm:inverseExpressionDkp}(2), the $\mathbf{p}$-th Taylor coefficient of $z \mapsto \phi(\tau,z)$ (under the basis $\mathfrak{B}$) is a linear combination of
\begin{equation}
\label{eq:differentialDkp}
\dodth{(s(\mathbf{p})-s(\mathbf{p}'))/2}D_{k, \mathbf{p}'}\phi,\quad \mathbf{p}' \in \widehat{\mathbf{p}}.
\end{equation}
Hence, if $\mathbf{p} \in \mathcal{P}_0$, then the $\mathbf{p}$-th Taylor coefficient is zero since $\widehat{\mathbf{p}} \subseteq \widehat{\mathcal{P}_0}$. It follows immediately from this fact and Corollary \ref{coro:LinearIndependence} that $\phi=0$. This proves the injectiveness, hence concludes the proof.
\end{proof}

\begin{rema}
To apply this theorem, it is essential to obtain a set $\mathcal{P}_0$ first. Note that the choice of $\mathcal{P}_0$ depends only on the lattice $\underline{L}=(L, B)$, not on the weight $k$, the group $G$ or the representation $\rho$. In the remaining, we make no effort to prove the existence of $\mathcal{P}_0$ for all lattices theoretically, but only try to find out $\mathcal{P}_0$ for some specific lattices computationally.
\end{rema}

\begin{rema}
In certain circumstances, we may drop or loosen the restriction $k \neq 0,-1,-2,\cdots$, i.e., $k$ may be some non-positive integer. To see this, note it is in the step showing the $\mathbf{p}$-th Taylor coefficient of $z \mapsto \phi(\tau,z)$ is a linear combination of \eqref{eq:differentialDkp} that we use the fact $k \neq 0,-1,-2,\cdots$, for Lemma \ref{lemm:inverseExpressionDkp} requires this. Hence, for fixed $\mathcal{P}_0$, one may analyse expressions $C_1$ and $C_2$ in Lemma \ref{lemm:inverseExpressionDkp}(2) to seek those $k$'s such that these two expressions are both nonzero for any $\mathbf{p} \in \widehat{\mathcal{P}_0}$. For such $k$, Theorem \ref{thm:mainEmbed} still holds. In the remaining, we always assume that $k \neq 0,-1,-2,\cdots$ for the sake of simplicity.
\end{rema}

Concrete examples will be given in the next section. The rest of this section is devoted to some of immediate corollaries concerning lattices of small determinants. All these corollaries assume the same notations and conditions as Theorem \ref{thm:mainEmbed}.

\begin{coro}
Suppose $\abs{L^\sharp/L}=2$, and put $L^\sharp/L=\{0+L, \alpha+L\}$. Suppose the basis $\mathfrak{B}=(e_1,\dots,e_\dimn)$ satisfies that there is some $w \in L$ such that $Q(\alpha+w)$ is the least in $\{Q(\alpha+v)\colon v \in L\}$ and $B(\alpha+w,e_\dimn) \neq 0$. Put $\mathbf{p}=(0,\dots,0,2)$. Then the following three maps
\begin{align*}
D_{k,\mathbf{0}} \times D_{k,\mathbf{p}} \colon \JacFormWHol{k}{L}{G}{\rho} &\rightarrow \ModFormWHol{k}{G}{\rho\vert_{\widetilde{G}}} \times\ModFormWHol{k+2}{G}{\rho\vert_{\widetilde{G}}} \\
\JacFormWeak{k}{L}{G}{\rho} &\rightarrow \ModForm{k}{G}{\rho\vert_{\widetilde{G}}} \times\ModForm{k+2}{G}{\rho\vert_{\widetilde{G}}} \\
\JacForm{k}{L}{G}{\rho} &\rightarrow \ModForm{k}{G}{\rho\vert_{\widetilde{G}}} \times\ModFormCusp{k+2}{G}{\rho\vert_{\widetilde{G}}}
\end{align*}
are all $\numC$-linear embeddings.
\end{coro}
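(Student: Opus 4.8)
The plan is to verify the hypotheses of Theorem \ref{thm:mainEmbed} with $\mathcal{P}_0 = \{\mathbf{0}, \mathbf{p}\}$ where $\mathbf{p}=(0,\dots,0,2)$, so that $d = \abs{L^\sharp/L} = 2$ matches the cardinality of $\mathcal{P}_0$. The only nontrivial thing to check is that the function $\mathcal{F}_{\underline{L},\mathcal{P}_0}^\mathfrak{B}(\tau)$ is not identically zero; once that is done, the three embeddings follow immediately from the three assertions of Theorem \ref{thm:mainEmbed}, noting that $\mathbf{0} \in \widehat{\mathcal{P}_0}$ is automatic and that $\widehat{\mathcal{P}_0} = \{\mathbf{0}, \mathbf{p}\}$ itself since both $\mathbf{0}$ and $\mathbf{p}=(0,\dots,0,2)$ are minimal in their parity classes within $\widehat{\mathcal{P}_0}$ (there is no $\mathbf{q} \in \numgeq{Z}{0}^\dimn$ with $\mathbf{q} \prec \mathbf{p}$ and $s(\mathbf{q}) \equiv 0 \bmod 2$ other than $\mathbf{0}$, which we already included, and actually $\widehat{\mathbf{p}} = \{\mathbf{0},\mathbf{p}\}$).

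Concretely, $\mathcal{F}_{\underline{L},\mathcal{P}_0}^\mathfrak{B}(\tau)$ is the determinant of the $2\times 2$ matrix whose rows are indexed by $\mathbf{p}_1 = \mathbf{0}$, $\mathbf{p}_2 = \mathbf{p}$ and whose columns are indexed by the cosets $t_1 + L = 0 + L$ and $t_2 + L = \alpha + L$. By Lemma \ref{lemm:TaylorCoeffJacobiTheta}, the $(\mathbf{0}, t)$-entry is $\vartheta_{\underline{L},t}\vert_{z=0}(\tau) = \sum_{v \in L}q^{Q(v+t)}$, and the $(\mathbf{p}, t)$-entry is $\sum_{v\in L}\big((v+t)_\mathfrak{B}\cdot G_\mathfrak{B}\big)^{\mathbf{p}}q^{Q(v+t)} = \sum_{v\in L}B(v+t, e_\dimn)^2 q^{Q(v+t)}$. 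So
\begin{equation*}
\mathcal{F}_{\underline{L},\mathcal{P}_0}^\mathfrak{B}(\tau) = \vartheta_{\underline{L},0}\vert_{z=0}(\tau)\cdot\!\!\sum_{v\in L}\!B(\alpha+v,e_\dimn)^2 q^{Q(\alpha+v)} - \vartheta_{\underline{L},\alpha}\vert_{z=0}(\tau)\cdot\!\!\sum_{v\in L}\!B(v,e_\dimn)^2 q^{Q(v)}.
\end{equation*}
I would examine the $q$-expansion term by term. In the first product, $\vartheta_{\underline{L},0}\vert_{z=0}$ has constant term $1$ (from $v=0$), and the lowest exponent appearing in $\sum_v B(\alpha+v,e_\dimn)^2 q^{Q(\alpha+v)}$ is $m := \min_{v\in L}Q(\alpha+v) = Q(\alpha+w)$, with coefficient $\sum_{v:\,Q(\alpha+v)=m}B(\alpha+v,e_\dimn)^2$. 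In the second product, $\vartheta_{\underline{L},\alpha}\vert_{z=0}$ has lowest exponent $m$ as well, but $\sum_v B(v,e_\dimn)^2 q^{Q(v)}$ vanishes at $q^0$ (the $v=0$ term contributes $B(0,e_\dimn)^2 = 0$) and has all exponents $Q(v)\in \numgeq{Z}{0}$, hence the whole second product has lowest exponent $\geq m$; moreover its $q^m$-coefficient is $0$ unless $Q$ attains the value $m$ on $L$ itself and $e_\dimn$-pairing is nonzero there. The cleanest route: show the coefficient of $q^m$ in $\mathcal{F}_{\underline{L},\mathcal{P}_0}^\mathfrak{B}$ is nonzero by a suitable choice, or — more robustly — argue that the lowest-order term of the \emph{first} product, namely $q^m\cdot\big(\sum_{v:\,Q(\alpha+v)=m}B(\alpha+v,e_\dimn)^2\big)$, cannot be cancelled. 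Since $Q$ is positive definite, $m > 0$ is not an integer in general but lies in $\frac1N\numZ$; the second product contributes to exponent $m$ only via pairs $(v', v)$ with $Q(v') + Q(\alpha+v) = m$ where $v'\in L$, forcing $Q(v')=0$, i.e. $v'=0$, i.e. no contribution since $B(0,e_\dimn)=0$. Therefore the coefficient of $q^m$ in $\mathcal{F}_{\underline{L},\mathcal{P}_0}^\mathfrak{B}(\tau)$ equals $\sum_{v\in L:\,Q(\alpha+v)=m}B(\alpha+v,e_\dimn)^2$, which is a sum of squares of real numbers, at least one of which — the term $v=w$ — is $B(\alpha+w,e_\dimn)^2 \neq 0$ by hypothesis. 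Hence this coefficient is strictly positive and $\mathcal{F}_{\underline{L},\mathcal{P}_0}^\mathfrak{B}(\tau) \not\equiv 0$.

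The main obstacle is the bookkeeping needed to justify that exponent $m$ is genuinely the minimal exponent occurring in $\mathcal{F}_{\underline{L},\mathcal{P}_0}^\mathfrak{B}$ and that nothing cancels the $v=w$ contribution: one must be careful that in the cross term $\vartheta_{\underline{L},0}\vert_{z=0}\cdot\sum B(\alpha+v,e_\dimn)^2 q^{Q(\alpha+v)}$ the factor $\vartheta_{\underline{L},0}$ only raises exponents (its exponents $Q(v)$ are $\geq 0$), so exponent $m$ in that product comes solely from the $q^0$-term of $\vartheta_{\underline{L},0}$ times the $q^m$-terms of the other factor; and symmetrically for the second product, where one uses $B(0,e_\dimn)=0$ to kill the would-be $q^m$ contribution. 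Once framed this way it is a short computation. I would then simply invoke Theorem \ref{thm:mainEmbed} to conclude all three embedding statements at once.
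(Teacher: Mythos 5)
Your proposal is correct and takes essentially the same approach as the paper: choose $\mathcal{P}_0=\{\mathbf{0},\mathbf{p}\}$, observe $\widehat{\mathcal{P}_0}=\mathcal{P}_0$, and verify the hypothesis of Theorem \ref{thm:mainEmbed} by showing that the $q^m$-coefficient of $\mathcal{F}_{\underline{L},\mathcal{P}_0}^{\mathfrak{B}}$, with $m=\min_{v\in L}Q(\alpha+v)$, equals $\sum_{v:\,Q(\alpha+v)=m}B(\alpha+v,e_\dimn)^2>0$. Your extra bookkeeping (positive definiteness forces $Q(v')=0$ in any cross contribution, and $B(0,e_\dimn)=0$ kills it) merely spells out the cancellation check the paper's proof leaves implicit.
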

\begin{proof}
In this case, $\mathbf{P}_0=\{\mathbf{0}, \mathbf{p}\}$. So $\widehat{\mathbf{P}_0}=\mathbf{P}_0$. By Theorem \ref{thm:mainEmbed}, we shall verify that 
\begin{multline*}
\sum_{v \in L}q^{Q(v)}\cdot \sum_{v \in L}((\alpha_1+v_1)g_{1 \dimn}+\dots (\alpha_\dimn+v_\dimn) g_{\dimn \dimn})^2q^{Q(\alpha+v)} \\
-\sum_{v \in L}q^{Q(\alpha+v)}\cdot\sum_{v \in L}(v_1g_{1 \dimn}+\dots v_\dimn g_{\dimn \dimn})^2q^{Q(v)} \neq 0,
\end{multline*}
where $(v_1,\dots,v_\dimn)$ and $(\alpha_1,\dots,\alpha_\dimn)$ are coordinate vectors of $v$ and $\alpha$ with respect to $\mathfrak{B}$ respectively, and $g_{ij}$ is the $(i,j)$-entry of the Gram matrix of $B$ with respect to $\mathfrak{B}$. Set $m=\min\{Q(\alpha+v)\colon v \in L\}$. Then the $q^m$-term of the left-hand side of the desired inequality is non-zero, since $(\alpha_1+w_1)g_{\dimn 1}+\dots (\alpha_\dimn+w_\dimn) g_{\dimn \dimn} = B(\alpha+w, e_\dimn) \neq 0$. This concludes the proof.
\end{proof}

\begin{rema}
\label{rema:explicitFormulaDk2}
We write down the explicit formula of $D_{k,\mathbf{p}}\phi$ with $\mathbf{p}=(0,\dots,0,2)$ for the readers' convenience. By Definition \ref{deff:PkpM}, we have
\begin{equation*}
P_{k,\mathbf{p}, G_\mathfrak{B}}(X_0,X_1,\dots,X_\dimn)=-2g_{\dimn \dimn}X_0+2k(g_{1 \dimn}X_1+\dots+g_{\dimn \dimn}X_\dimn)^2.
\end{equation*}
Hence by Proposition \ref{prop:FourierCoeffDkp}, we have
\begin{equation*}
D_{k,\mathbf{p}}\phi=2\cdot(2\uppi\rmi)^2\sum_n\left(\sum_{t}c(n,t)(-g_{\dimn \dimn}n+k(g_{1 \dimn}t_1+\dots+g_{\dimn \dimn}t_\dimn)^2)\right)q^n,
\end{equation*}
where $\phi=\sum_{n,t}c(n,t)q^n\etp{B(t,z)}$ and $(t_1,\dots,t_\dimn)$ is the coordinate vector of $t$ with respect to $\mathfrak{B}$. This formula is general for all lattices, not only for those discussed in the above corollary. The expression \eqref{eq:exampleDkpHalfintegralWeight} is essentially a special case of this formula.
\end{rema}

\begin{coro}
\label{coro:embeddingDet3}
Suppose $\abs{L^\sharp/L}=3$, and let $\alpha+L$ be a generator of $L^\sharp/L$. Put
\begin{equation*}
S=\{v \in \alpha+L \colon Q(v) \leq Q(v') \text{ for any }v' \in \alpha+L\}.
\end{equation*}
Suppose the basis $\mathfrak{B}=(e_1,\dots,e_\dimn)$ satisfies that there is some $w \in S$ with $B(w, e_\dimn) \neq 0$, and that $\sum_{v \in L} B(\alpha+v,e_\dimn)q^{Q(\alpha+v)}$ is not identically zero. Put $\mathbf{p}_1=(0,\dots,0,1)$ and $\mathbf{p}_2=(0,\dots,0,2)$. Then the following three maps $D_{k,\mathbf{0}} \times D_{k,\mathbf{p}_1} \times D_{k,\mathbf{p}_2}$:
\begin{align*}
\JacFormWHol{k}{L}{G}{\rho} &\rightarrow \ModFormWHol{k}{G}{\rho\vert_{\widetilde{G}}} \times\ModFormWHol{k+1}{G}{\rho\vert_{\widetilde{G}}} \times\ModFormWHol{k+2}{G}{\rho\vert_{\widetilde{G}}} \\
\JacFormWeak{k}{L}{G}{\rho} &\rightarrow \ModForm{k}{G}{\rho\vert_{\widetilde{G}}} \times\ModForm{k+1}{G}{\rho\vert_{\widetilde{G}}} \times\ModForm{k+2}{G}{\rho\vert_{\widetilde{G}}} \\
\JacForm{k}{L}{G}{\rho} &\rightarrow \ModForm{k}{G}{\rho\vert_{\widetilde{G}}} \times\ModFormCusp{k+1}{G}{\rho\vert_{\widetilde{G}}} \times\ModFormCusp{k+2}{G}{\rho\vert_{\widetilde{G}}}
\end{align*}
are all $\numC$-linear embeddings.
\end{coro}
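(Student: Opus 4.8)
The plan is to invoke Theorem~\ref{thm:mainEmbed} with $\mathcal{P}_0=\{\mathbf{0},\mathbf{p}_1,\mathbf{p}_2\}$, so that the whole problem reduces to checking the single non-vanishing hypothesis $\mathcal{F}_{\underline{L},\mathcal{P}_0}^{\mathfrak{B}}\not\equiv 0$. First I would settle the combinatorics: since $s(\mathbf{0})=0$, $s(\mathbf{p}_1)=1$, $s(\mathbf{p}_2)=2$, and since the only vectors $\mathbf{p}$ with $\mathbf{p}\preceq\mathbf{0}$, $\mathbf{p}\preceq\mathbf{p}_1$ or $\mathbf{p}\preceq\mathbf{p}_2$ are $\mathbf{0},\mathbf{p}_1,\mathbf{p}_2$ themselves (the parity clause in $\preceq$ prevents $\mathbf{p}_1\preceq\mathbf{p}_2$ from contributing anything new), one gets $\widehat{\mathcal{P}_0}=\{\mathbf{0},\mathbf{p}_1,\mathbf{p}_2\}$; moreover $\mathbf{0}\in\widehat{\mathcal{P}_0}$ and $\mathbf{p}_1,\mathbf{p}_2\neq\mathbf{0}$, so the three maps $D_{k,\mathbf{0}}\times D_{k,\mathbf{p}_1}\times D_{k,\mathbf{p}_2}$ are precisely those produced by Theorem~\ref{thm:mainEmbed}, with the first coordinate in the weakly holomorphic / holomorphic / holomorphic space of weight $k$ and the remaining two in the weakly holomorphic / holomorphic / cusp spaces of weights $k+1$ and $k+2$. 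The only thing that is not automatic is injectivity, which is exactly what the non-vanishing of $\mathcal{F}_{\underline{L},\mathcal{P}_0}^{\mathfrak{B}}$ secures.

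Next I would compute $\mathcal{F}_{\underline{L},\mathcal{P}_0}^{\mathfrak{B}}$ explicitly. Using $3\alpha\in L$, take the representatives $t_1=0$, $t_2=\alpha$, $t_3=-\alpha$ for $L^\sharp/L$, and apply Lemma~\ref{lemm:TaylorCoeffJacobiTheta}: writing $\Theta_t=\sum_{v\in L}q^{Q(v+t)}$, $\Psi_t=\sum_{v\in L}B(v+t,e_\dimn)\,q^{Q(v+t)}$ and $\Xi_t=\sum_{v\in L}B(v+t,e_\dimn)^2\,q^{Q(v+t)}$, the matrix in Definition~\ref{deff:FLP0tau} has rows $(\Theta_0,\Theta_\alpha,\Theta_{-\alpha})$, $(\Psi_0,\Psi_\alpha,\Psi_{-\alpha})$, $(\Xi_0,\Xi_\alpha,\Xi_{-\alpha})$. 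The change of variable $v\mapsto -v$ on $L$ gives $\Theta_{-t}=\Theta_t$, $\Xi_{-t}=\Xi_t$, $\Psi_{-t}=-\Psi_t$, and in particular $\Psi_0=0$; subtracting the second column from the third and expanding the determinant then collapses it to $\mathcal{F}_{\underline{L},\mathcal{P}_0}^{\mathfrak{B}}=2\,\Psi_\alpha\,(\Theta_0\Xi_\alpha-\Theta_\alpha\Xi_0)$, up to the harmless sign coming from the ordering of $\mathcal{P}_0$ mentioned after Definition~\ref{deff:FLP0tau}.

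Finally I would show each factor is a nonzero element of the integral domain of holomorphic functions on the connected set $\uhp$. The factor $\Psi_\alpha=\sum_{v\in L}B(\alpha+v,e_\dimn)q^{Q(\alpha+v)}$ is exactly the series assumed not identically zero in the statement. For the other factor, set $m=\min\{Q(v):v\in\alpha+L\}$; since $\alpha\notin L$ and $B$ is positive definite, $m>0$, and $S$ is the set of $v\in\alpha+L$ with $Q(v)=m$. Then $\Theta_0\Xi_\alpha$ has $q$-order $m$ with coefficient $\sum_{v\in S}B(v,e_\dimn)^2$, which is strictly positive because every summand is $\geq 0$ and, by hypothesis, some $w\in S$ has $B(w,e_\dimn)\neq 0$; on the other hand $\Xi_0$ has no constant term ($B(0,e_\dimn)=0$), so $\Theta_\alpha\Xi_0$ has $q$-order strictly larger than $m$. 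Hence the $q^m$-coefficient of $\Theta_0\Xi_\alpha-\Theta_\alpha\Xi_0$ is nonzero, so $\mathcal{F}_{\underline{L},\mathcal{P}_0}^{\mathfrak{B}}\not\equiv 0$, and Theorem~\ref{thm:mainEmbed} delivers the three embeddings.

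I do not expect a genuine obstacle here: the argument is a direct $3\times 3$ determinant computation together with a leading-term comparison of $q$-expansions. The points demanding care are the sign bookkeeping when collapsing the determinant, and checking that each hypothesis of the corollary is used exactly where it is needed --- $\alpha\notin L$ to guarantee $m>0$, and the existence of some $w\in S$ with $B(w,e_\dimn)\neq 0$ to pin the leading coefficient of $\Xi_\alpha$ (hence of $\Theta_0\Xi_\alpha$) at $q^m$ rather than possibly higher.
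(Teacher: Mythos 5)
Your proposal is correct and follows essentially the same route as the paper: both reduce to Theorem \ref{thm:mainEmbed} with $\mathcal{P}_0=\{\mathbf{0},\mathbf{p}_1,\mathbf{p}_2\}$ (noting $\widehat{\mathcal{P}_0}=\mathcal{P}_0$), use the symmetry $v\mapsto -v$ (i.e. $2\alpha\equiv-\alpha$) to collapse the $3\times 3$ determinant to $2\,\Psi_\alpha(\Theta_0\Xi_\alpha-\Theta_\alpha\Xi_0)$, and then kill the second factor's possible vanishing by comparing $q^m$-coefficients via the hypotheses on $\Psi_\alpha$ and on $w\in S$. Your leading-term analysis is in fact slightly more explicit than the paper's, but there is no substantive difference.
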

\begin{proof}
In this case, $\mathbf{P}_0=\{\mathbf{0}, \mathbf{p}_1, \mathbf{p}_2\}$. So $\widehat{\mathbf{P}_0}=\mathbf{P}_0$. By Theorem \ref{thm:mainEmbed}, we shall verify that the determinant of the following matrix is not identically zero:
\begin{equation*}
\begin{pmatrix}
\sum q^{Q(v)} & \sum q^{Q(\alpha+v)} & \sum q^{Q(2\alpha+v)} \\
\sum B(v,e_\dimn)q^{Q(v)} & \sum B(\alpha+v,e_\dimn)q^{Q(\alpha+v)} & \sum B(2\alpha+v,e_\dimn)q^{Q(2\alpha+v)} \\
\sum B(v,e_\dimn)^2q^{Q(v)} & \sum B(\alpha+v,e_\dimn)^2q^{Q(\alpha+v)} & \sum B(2\alpha+v,e_\dimn)^2q^{Q(2\alpha+v)}
\end{pmatrix}.
\end{equation*}
All above summations are over $v \in L$. Put $m=\min_{v \in L} Q(\alpha+v)=\min_{v \in L} Q(2\alpha+v)$ and let $a_{ij}$ be the $(i,j)$-entry of this matrix. By positive definiteness we have $m > 0$. By a change of variables we see that $a_{12}=a_{13}$, $a_{22}=-a_{23}$ and $a_{32}=a_{33}$, so the determinant is $2a_{22}(a_{11}a_{32}-a_{12}a_{31})$. By assumption, $a_{22} \neq 0$, and by the existence of $w$, the $q^m$-term of $a_{11}a_{32}-a_{12}a_{31}$ is nonzero. Hence the determinant is nonzero, which concludes the proof.
\end{proof}

\begin{rema}
As in Remark \ref{rema:explicitFormulaDk2}, we write down the explicit formula of $D_{k,\mathbf{p}_1}\phi$ with $\mathbf{p}_1=(0,\dots,0,1)$. By Definition \ref{deff:PkpM}, we have
\begin{equation*}
P_{k,\mathbf{p}_1, G_\mathfrak{B}}(X_0,X_1,\dots,X_\dimn)=2(g_{1 \dimn}X_1+\dots+g_{\dimn \dimn}X_\dimn),
\end{equation*}
where $g_{ij}$ is the $(i,j)$-entry of the Gram matrix $G_\mathfrak{B}$ of $B$ with respect to $\mathfrak{B}$.
Hence by Proposition \ref{prop:FourierCoeffDkp}, we have
\begin{equation*}
D_{k,\mathbf{p}_1}\phi=2\uppi\rmi\sum_n\left(\sum_{t}c(n,t)(g_{1 \dimn}t_1+\dots+g_{\dimn \dimn}t_\dimn)\right)q^n,
\end{equation*}
where $\phi=\sum_{n,t}c(n,t)q^n\etp{B(t,z)}$ and $(t_1,\dots,t_\dimn)$ is the coordinate vector of $t$ with respect to $\mathfrak{B}$.
\end{rema}

For $\underline{L}$ with large determinant, it is computationally inefficient to evaluate $\mathcal{F}_{\underline{L},\mathcal{P}_0}$ directly. We provide a useful criterion for $\mathcal{F}_{\underline{L},\mathcal{P}_0}$ to be nonzero below, which works in some situations.
\begin{prop}
\label{prop:whenFLP0Nonzero}
Use notations in Theorem \ref{thm:mainEmbed}. Suppose $\dimn \geq 2$, $\mathbf{p}_1 = \mathbf{0}$, and $t_1 = 0$. Put
\begin{equation*}
S_i = \{v \in t_i+L \colon Q(v) \leq Q(v') \text{ for any }v' \in t_i+L\}
\end{equation*}
for $i=2,3,\dots,d$. For $\mathbf{p}_i \in \mathcal{P}_0$, the symbol $p_{i,l}$ denotes its $l$-th component. Suppose the ordered basis $\mathfrak{B}$ equals $(e_1,\dots,e_\dimn)$. If the $(d-1)\times(d-1)$ matrix
\begin{equation}
\label{eq:matBp}
\left(\sum_{v \in S_j}\prod_{l=1}^\dimn B(v,e_l)^{p_{i,l}}\right)_{2 \leq i,j \leq d}
\end{equation}
is non-singular, then $\mathcal{F}_{\underline{L},\mathcal{P}_0}$ is not identically zero.
\end{prop}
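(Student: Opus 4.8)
The plan is to isolate the leading coefficient of the $q$-expansion of $\mathcal{F}_{\underline{L},\mathcal{P}_0}(\tau)$ as $\Im\tau\to+\infty$ and to check that, under the stated hypothesis, it is precisely the determinant of \eqref{eq:matBp}. First I would use Lemma \ref{lemm:TaylorCoeffJacobiTheta} to rewrite the $(i,j)$-entry of the matrix whose determinant defines $\mathcal{F}_{\underline{L},\mathcal{P}_0}$ as
\[
M_{ij}(\tau)=\sum_{v\in L}\prod_{l=1}^{\dimn}B(v+t_j,e_l)^{p_{i,l}}\,q^{Q(v+t_j)},
\]
a series that converges on $\uhp$ (by the remark after \eqref{eq:deffJacobiThetaLatticeIndex}) and all of whose $q$-exponents lie in $Q(t_j)+\numZ$ and are $\geq m_j:=\min\{Q(v):v\in t_j+L\}$; moreover the coefficient of the minimal power $q^{m_j}$ in $M_{ij}$ is $\sum_{v\in S_j}\prod_l B(v,e_l)^{p_{i,l}}$, where $S_j$ is the set of minimal vectors of $t_j+L$ (as in the statement for $j\geq 2$, and $S_1=\{0\}$ since $t_1=0$ and $Q$ is positive definite). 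Thus $m_1=0$, while $m_j>0$ for $j\geq 2$.

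The heart of the argument is to expand $\mathcal{F}_{\underline{L},\mathcal{P}_0}=\det(M_{ij})=\sum_\sigma\operatorname{sgn}(\sigma)\prod_j M_{\sigma(j),j}$ and to identify the term of lowest $q$-order. For $j\geq 2$ each factor $M_{\sigma(j),j}$ is divisible by $q^{m_j}$; for the first column, $M_{11}$ has constant term $0^{s(\mathbf{p}_1)}=1$ because $\mathbf{p}_1=\mathbf{0}$, whereas for $i\geq 2$ distinctness of the $\mathbf{p}_i$ forces $\mathbf{p}_i\neq\mathbf{0}$, hence $s(\mathbf{p}_i)>0$ and $\prod_l B(0,e_l)^{p_{i,l}}=0^{s(\mathbf{p}_i)}=0$, so $M_{i1}$ is divisible by $q^{m}$ with $m:=\min\{Q(v):0\neq v\in L\}>0$. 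Consequently any $\sigma$ with $\sigma(1)\neq 1$ produces a term divisible by $q^{m+\sum_{j\geq2}m_j}$, of strictly higher order than the terms with $\sigma(1)=1$, which are divisible by $q^{\sum_{j\geq2}m_j}$. Collecting the coefficient of $q^{\sum_{j\geq2}m_j}$ --- in which $M_{11}$ contributes its constant term $1$ and each $M_{\sigma(j),j}$ with $j\geq2$ contributes its $q^{m_j}$-coefficient --- one computes that it equals
\begin{multline*}
\sum_{\sigma:\,\sigma(1)=1}\operatorname{sgn}(\sigma)\prod_{j=2}^{d}\Big(\sum_{v\in S_j}\prod_{l=1}^{\dimn}B(v,e_l)^{p_{\sigma(j),l}}\Big)\\
=\det\Big(\sum_{v\in S_j}\prod_{l=1}^{\dimn}B(v,e_l)^{p_{i,l}}\Big)_{2\leq i,j\leq d},
\end{multline*}
the determinant of \eqref{eq:matBp}. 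If this determinant is nonzero, then $q^{-\sum_{j\geq2}m_j}\mathcal{F}_{\underline{L},\mathcal{P}_0}(\tau)$ converges to it as $\Im\tau\to+\infty$, so $\mathcal{F}_{\underline{L},\mathcal{P}_0}$ is not identically zero.

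The one genuinely delicate point is the bookkeeping in this Leibniz expansion: one must argue cleanly that permutations not fixing $1$ are pushed to strictly higher $q$-order, and that among the surviving permutations each matrix entry contributes exactly its lowest-order coefficient --- which follows because exponents $a_j$ with $a_1\geq 0$, $a_j\geq m_j$ for $j\geq 2$, and $\sum_j a_j=\sum_{j\geq2}m_j$ are forced to satisfy $a_1=0$ and $a_j=m_j$. Everything else is routine: the entry formula comes straight from Lemma \ref{lemm:TaylorCoeffJacobiTheta}, the positivity $m>0$ and $m_j>0$ ($j\geq2$) is positive-definiteness of $Q$, and the special shape of the first row and column is exactly what the hypotheses $\mathbf{p}_1=\mathbf{0}$ and $t_1=0$ provide.
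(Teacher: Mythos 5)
Your proof is correct and follows essentially the same route as the paper: both extract the coefficient of $q^{\sum_{2\leq j\leq d}m_j}$ in the $q$-expansion of $\det(a_{ij})$ via Lemma \ref{lemm:TaylorCoeffJacobiTheta}, use the hypotheses $\mathbf{p}_1=\mathbf{0}$, $t_1=0$ to control the first row and column, and identify that coefficient with the determinant of \eqref{eq:matBp}. You merely spell out the Leibniz-expansion bookkeeping (permutations with $\sigma(1)\neq 1$ being pushed to strictly higher order) that the paper's proof leaves implicit.
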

\begin{proof}
Put $m_j = \min_{v \in t_j+L}Q(v)$ for $j=2,\dots,d$, and
\begin{equation*}
a_{ij}(\tau) = \frac{1}{(2\uppi \rmi)^{s(\mathbf{p}_i)}}\partial^{\mathbf{p}_i}\vartheta_{\underline{L},t_j}\vert_{z=0}(\tau).
\end{equation*}
Treat $a_{ij}$ as its power series expansion with respect to $q=\etp{\tau}$ (with possibly fractional powers). By Lemma \ref{lemm:TaylorCoeffJacobiTheta}, $\sum_{v \in S_j}\prod_{l=1}^\dimn B(v,e_l)^{p_{i,l}}$ is the coefficient of $q^{m_j}$-term of $a_{ij}$ for $i,j \geq 2$. Since the constant term of $a_{11}$ is $1$ and that of $a_{i1}$ is $0$ for $i \geq 2$, the determinant of \eqref{eq:matBp} is equal to the coefficient of the $q^{\sum_{2 \leq j \leq d} m_j}$-term of $\det (a_{ij})_{1 \leq i,j \leq d}$. Hence from the non-singularity of \eqref{eq:matBp} $\mathcal{F}_{\underline{L},\mathcal{P}_0}$ is not identically zero.
\end{proof}

What can we use an embedding to do? One immediate application is to find linear relations among functions in the domain of the embedding. Such a relation holds, if and only if the corresponding relation in the codomain holds. The codomian of the embedding in Theorem \ref{thm:mainEmbed} is roughly a product (direct sum) of spaces of modular forms, in which we have standard algorithm to find linear relations, combining basic linear algebra and some theory of ordinary modular forms, at least in the scalar-valued case. In the next section, we will do this for certain Jacobi theta series of lattice index.

\section{Application: Linear relations among theta series}
\label{sec:Application: Linear relations among theta series}
Functions studied in this section are the followings.
\begin{deff}
\label{deff:thetaFunctionStudied}
Suppose $L$ is an even lattice in $\underline{V}=(V, B)$. Let $N$ be the level of $L$, the least positive integer such that $N\cdot Q(v) \in \numZ$ for all $v \in L^\sharp$. For $\alpha \in L^\sharp/L$ and $\beta \in \frac{1}{N}L/L^\sharp$, we define
\begin{equation*}
\theta_{\alpha,\beta}(\tau,z)=\sum_{v \in L}\etp{B(\beta,v)}\etp{\tau Q(\alpha+v)+B(\alpha+v,z)}, \qquad (\tau,z) \in \mydom.
\end{equation*}
Note that we omit the information $L$ in the notation.
\end{deff}
For fixed $L$, there are totally $N^\dimn$ such theta series. But there is something subtle in this definition: the function $\theta_{\alpha, \beta}$ actually depends on the representatives we choose for $\alpha$ and $\beta$. Different choice of representatives leads to functions differ by a constant factor. So this ambiguity may not cause trouble. One of the aims of this section is to find linear relations among $\{\theta_{\alpha,\beta}^N \colon \alpha \in L^\sharp/L,\,\beta \in \frac{1}{N}L/L^\sharp\}$, for some specific lattices $L$. Note that $\theta_{\alpha, \beta}^N$ is independent of the choice of representatives of $\alpha$ and $\beta$. Furthermore, if $N'$ is a positive divisor of $N$ and $\beta$ satisfies $N'\beta=0+L^\sharp$, then $\theta_{\alpha, \beta}^{N'}$ is also independent of the choice of representatives.

\begin{examp}
\label{examp:oneDimenEvenLattice}
Put $\underline{L}=(\numZ,\, Q(x)=mx^2)$, where $m$ is any positive integer. (So $\mathcal{V}=\numC$.) Then $L^\sharp=\frac{1}{2m}\numZ$ and the level is $N=4m$. We fix a system of representatives of $L^\sharp/L$, namely $\frac{0}{2m},\,\frac{1}{2m},\ldots,\frac{2m-1}{2m}$, and one of $\frac{1}{N}L/L^\sharp$, namely $0, \frac{1}{4m}$. Then
\begin{align}
\theta_{j/2m,0}(4m\tau,z)&=\sum_{v \in \numZ}q^{(j+2mv)^2}\zeta^{j+2mv},\\
\theta_{j/2m,1/4m}(4m\tau,z)&=\sum_{v \in \numZ}(-1)^vq^{(j+2mv)^2}\zeta^{j+2mv},
\end{align}
where $j \in \{0,\,1,\ldots,2m-1\}$, $q=\etp{\tau}$ and $\zeta=\etp{z}$.
\end{examp}

\begin{examp}
Put $\underline{L}=(\numZ^2,\, Q(x,y)=m(x^2+xy+y^2))$, where $m$ is any positive integer. (So $\mathcal{V}=\numC^2$.) The Gram matrix of this lattice with respect to the standard basis of $\numZ^2$ is $\tbtmat{2m}{m}{m}{2m}$. So $L^\sharp=\frac{1}{3m}(\numZ(2,-1)\oplus\numZ(-1,2))$, and the level is $N=3m$. We fix a system of representatives of $L^\sharp/L$ as follows:
\begin{equation*}
\left\{\left(\frac{i}{3m},\frac{j}{3m}\right)\colon 0 \leq i,j \leq 3m-1,\,\text{and } 3 \mid i-j\right\},
\end{equation*}
and one of $\frac{1}{N}L/L^\sharp$ as follows:
\begin{equation*}
\left(0,0\right),\, \left(0,\frac{1}{3m}\right),\, \left(0,\frac{2}{3m}\right).
\end{equation*}
Then for $\alpha=\left(\frac{i}{3m},\frac{j}{3m}\right)$ and $\beta=\left(0,\frac{k}{3m}\right)$, we have
\begin{multline}
\theta_{\alpha,\beta}(9m\tau, 3z)\\
=\sum_{v_1,v_2 \in \numZ}\etp{\frac{k(v_1-v_2)}{3}}q^{(3mv_1+i)^2+(3mv_1+i)(3mv_2+j)+(3mv_2+j)^2}\\
\times\zeta_1^{2(3mv_1+i)+(3mv_2+j)}\zeta_2^{(3mv_1+i)+2(3mv_2+j)},
\end{multline}
where $z=(z_1,z_2) \in \numC^2$, $q=\etp{\tau}$, $\zeta_1=\etp{z_1}$ and $\zeta_2=\etp{z_2}$.
\end{examp}

To apply Theorem \ref{thm:mainEmbed} to functions in Definition \ref{deff:thetaFunctionStudied} (including those in the above two examples), we need to know their transformation laws. Recall that
\begin{align}
\Gamma_0(N)&=\left\{\tbtmat{a}{b}{c}{d} \in \slZ \colon c \equiv 0 \bmod N\right\},\label{eq:Gamma0N}\\
\Gamma_1(N)&=\left\{\tbtmat{a}{b}{c}{d} \in \slZ \colon a \equiv d \equiv 1 \bmod N,\quad c \equiv 0 \bmod N\right\}.\label{eq:Gamma1N}
\end{align}

\begin{lemm}
\label{lemm:thetaLatticeTransformationN}
Suppose $L$ is even, $N$ is the level of $L$, and $t \in L^\sharp/L$. Then for $\tbtmat{a}{b}{c}{d} \in \Gamma_0(N)$ with $d \neq 0$, we have
\begin{equation*}
\vartheta_{\underline{L}, t}\vert_{\dimn/2,B}\widetilde{\tbtmat{a}{b}{c}{d}}=(-\rmi)^{\frac{1}{2}\dimn(1-\sgn{d})\legendre{c}{-1}}\cdot\mathfrak{g}(b,d;t)\vartheta_{\underline{L}, at},
\end{equation*}
where
\begin{equation}
\label{eq:GaussSum}
\mathfrak{g}(b,d;t)=\abs{d}^{-\frac{\dimn}{2}}\sum_{v \in L/dL}\etp{\frac{bQ(t+v)}{d}}.
\end{equation}
\end{lemm}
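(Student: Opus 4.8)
The statement to prove is the transformation law for the Jacobi theta series $\vartheta_{\underline{L},t}$ under an element $\tbtmat{a}{b}{c}{d} \in \Gamma_0(N)$ with $d \neq 0$. The plan is to reduce the general case to the two generating transformations of $\slZ$ already handled in Theorem \ref{thm:thetaSeriesLatticeJacobiForm}, namely those for $\widetilde{T}=\widetilde{\tbtmat{1}{1}{0}{1}}$ and $\widetilde{S}=\widetilde{\tbtmat{0}{-1}{1}{0}}$, together with a careful bookkeeping of the automorphy factor in the metaplectic group $\sltZ$. First I would fix $\tbtmat{a}{b}{c}{d} \in \Gamma_0(N)$ with $d\neq 0$; since $N \mid c$ and the level $N$ divides $d$ into $\Gamma_0(N)$-congruence classes, one has $ad \equiv 1 \bmod N$ and $c \equiv 0 \bmod N$. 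The key analytic input is the Gauss-type sum $\mathfrak{g}(b,d;t)$ in \eqref{eq:GaussSum}, which arises exactly when one collapses the theta series $\vartheta_{\underline{L},t}$ modulo $dL$ after applying the modular substitution; this is the standard computation for theta series attached to quadratic forms (cf. the references \cite{Boy15}, \cite{Str13} cited before Theorem \ref{thm:thetaSeriesLatticeJacobiForm}).

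The cleanest route, I expect, is a direct Poisson-summation / completing-the-square argument rather than a word decomposition. Starting from the definition \eqref{eq:deffJacobiThetaLatticeIndex}, apply the slash operator $\vert_{\dimn/2,B}\widetilde{\tbtmat{a}{b}{c}{d}}$ explicitly using the automorphic factor $J_B$ and $j(\gamma,\tau)$ from Section \ref{sec:Jacobi forms of lattice index}. The point $z=0$ simplifies $J_B$ considerably (all the $z$-dependent terms and the $\xi$, $B(v,w)$ terms drop out since we are in the group $\widetilde{G}$ with trivial Heisenberg part), so the transformation reduces to a purely classical identity for the vector-valued theta $\sum_{v\in t+L}\etp{\tau Q(v)+B(v,z)}$ under $\tau \mapsto \frac{a\tau+b}{c\tau+d}$. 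Here I would split $v \in t+L$ into residue classes modulo $dL$: writing $v = u + d v'$ with $u$ ranging over $L/dL$ (shifted by $t$) and $v' \in L$, the exponent $\frac{a\tau+b}{c\tau+d}Q(v) + B(v,z)$ separates into a $u$-dependent Gauss sum $\mathfrak{g}(b,d;t)$ and a residual theta series in the variable $v'$, which after a further application of the functional equation (inversion, handled by $\widetilde{S}$) becomes $\vartheta_{\underline{L},at}$. The congruence $N\mid c$ ensures the cross terms involving $c$ vanish modulo $\numZ$, so no $c$-dependence survives beyond the sign.

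The sign $(-\rmi)^{\frac{1}{2}\dimn(1-\sgn{d})\legendre{c}{-1}}$ is, I anticipate, the main obstacle: it is the metaplectic cocycle discrepancy between $\left(\varepsilon\sqrt{c'\tau+d'}\right)^{\dimn}$ as defined via the principal branch in $\sltZ$ and the branch that appears naturally when one performs the Gauss-sum computation. The strategy for pinning it down is to track the argument of $(c\tau+d)$ and of $(c'\tau+d')$ carefully: when $d>0$ the two branches agree and the sign is trivial, while when $d<0$ one picks up a factor whose $\dimn$-th power contributes $(-\rmi)^{\dimn\legendre{c}{-1}}$ (with the Legendre symbol $\legendre{c}{-1}=\sgn{c}$ recording on which side of the branch cut $c\tau+d$ sits as $\tau$ ranges over $\uhp$). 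One then verifies consistency by checking the cocycle relation on a pair of generators, or by specializing to $\dimn=1$ and matching with the classical theta multiplier. Once the sign and the Gauss sum are both identified, reassembling the pieces gives the stated formula. I would present the branch-of-logarithm bookkeeping as the one genuinely delicate point and do the rest by citing the quadratic reciprocity computation for finite Gauss sums already available in \cite{Str13}.
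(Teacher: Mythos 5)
Your plan is viable and is organized genuinely differently from the paper's own proof. The paper does not redo a Poisson-summation computation at all: it first proves a formula valid for every $\tbtmat{a}{b}{c}{d}\in\slZ$ with $c\neq 0$, by writing $\frac{a\tau+b}{c\tau+d}=\frac{a}{c}-\frac{1}{c(c\tau+d)}$, splitting the theta sum modulo $cL$, and invoking the already-established inversion law \eqref{eq:thetaTransformationS}; it then converts this to a sum modulo $dL$ via the explicit metaplectic factorization $\widetilde{\tbtmat{a}{b}{c}{d}}=\widetilde{\tbtmat{-b}{a}{-d}{c}}\cdot\eleglptRsimple{0}{-1}{1}{0}{\varepsilon(c,d)}$, with $\varepsilon(c,d)=-1$ exactly when $c,d<0$, and only at the very end uses $N\mid c$ (i.e.\ $cL^\sharp\subseteq L$, so multiplication by $d$ is invertible on $L^\sharp/L$) to collapse the inner character sum and leave $\mathfrak{g}(b,d;t)\vartheta_{\underline{L},at}$. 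That route concentrates all branch-of-square-root bookkeeping into a single group identity in $\sltZ$ and reuses the $\widetilde{S}$-law twice, whereas your direct completing-the-square argument (split $v=t+u+dv'$ with $u\in L/dL$, extract $\etp{bQ(t+u)/d}$, then invert the residual series) produces the Gauss sum \eqref{eq:GaussSum} immediately but spreads the eighth-root-of-unity accounting through the analytic computation. Both uses of the congruence $N\mid c$ are the same in substance.

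Two cautions on your sketch. First, do not actually specialize to $z=0$: the lemma is an identity of functions on $\uhp\times\mathcal{V}$, and at $z=0$ the various $\vartheta_{\underline{L},x}$ are not distinguishable (for instance $\vartheta_{\underline{L},x}(\tau,0)=\vartheta_{\underline{L},-x}(\tau,0)$), so an identity checked only at $z=0$ does not pin down the right-hand side; what is true is merely that the Heisenberg component is trivial, so $J_B$ reduces to $\etp{-cQ(z)/(c\tau+d)}$ while $z$ is replaced by $z/(c\tau+d)$, and the elliptic variable must be carried through the whole computation (it is exactly what identifies the final coset). Second, the factor $(-\rmi)^{\frac{1}{2}\dimn(1-\sgn{d})\legendre{c}{-1}}$ cannot be established by "specializing to $\dimn=1$" or by a cocycle consistency check; those confirm plausibility but do not determine the multiplier. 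You must carry out the case analysis on $\sgn{d}$ and $\sgn{c}$ you describe (or, as the paper does, read the discrepancy off the factor $\varepsilon(c,d)$ in the factorization through $\widetilde{S}$). Finally, the identification of the residual coset as $at+L$ after inversion uses $ad\equiv 1\bmod N$ together with $NL^\sharp\subseteq L$; that step deserves an explicit line in a written-up version.
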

\begin{rema}
One can work out an explicit expression for the generalized Gauss sum $\mathfrak{g}(b,d;t)$. See \cite[\S 14.3]{CoS17} for instance. But we do not need this. Moreover, the quantity $\mathfrak{g}(b,d;t)$ actually depends on the lattice, so we write $\mathfrak{g}_{\underline{L}}(b,d;t)$ when needed.
\end{rema}
We give a sketch of proof below for the readers' convenience. See also \cite[\S 14.3]{CoS17} for details of this proof.
\begin{proof}
The case $c=0$ can be proved directly. So assume that $c \neq 0$ below. We first prove a more general formula:
\begin{equation}
\label{eq:thetaTransformationMu}
\vartheta_{\underline{L}, t}\vert_{\dimn/2,B}\widetilde{\tbtmat{a}{b}{c}{d}}=(-\rmi)^{\frac{1}{2}\dimn(1-\sgn{d})\legendre{c}{-1}}\cdot\abs{L^\sharp/L}^{-1}\abs{d}^{-\frac{\dimn}{2}}\sum_{x \in L^\sharp/L}\mu_\gamma(t,x)\vartheta_{\underline{L},x},
\end{equation}
where $\gamma=\tbtmat{a}{b}{c}{d} \in \slZ$ (not only $\Gamma_0(N)$), and
\begin{equation}
\label{eq:thetaTransformationMuExpression}
\mu_\gamma(t,x)=\sum_{v \in L/dL}\etp{\frac{bQ(t+v)}{d}}\sum_{y \in L^\sharp/L}\etp{\frac{B(t+v,y)-cQ(y)-dB(y,x)}{d}}.
\end{equation}
Using \eqref{eq:thetaTransformationS} and the identity $\frac{a\tau+b}{c\tau+d}=\frac{a}{c}-\frac{1}{c(c\tau+d)}$, we have
\begin{equation}
\label{eq:thetaTransformationLambda}
\vartheta_{\underline{L}, t}\vert_{\dimn/2,B}\widetilde{\tbtmat{a}{b}{c}{d}}=(-\rmi\cdot\sgn{c})^{\frac{1}{2}\dimn}\cdot\abs{L^\sharp/L}^{-\frac{1}{2}}\abs{c}^{-\frac{\dimn}{2}}\sum_{x \in L^\sharp/L}\lambda_\gamma(t,x)\vartheta_{\underline{L},x},
\end{equation}
where
\begin{equation}
\lambda_\gamma(t,x)=\sum_{y \in L/cL}\etp{\frac{aQ(t+y)-B(t+y,x)+dQ(x)}{c}}.
\end{equation}
Note that $\widetilde{\tbtmat{a}{b}{c}{d}}=\widetilde{\tbtmat{-b}{a}{-d}{c}}\cdot\eleglptRsimple{0}{-1}{1}{0}{\varepsilon(c,d)}$, where $\varepsilon(c,d)=-1$ if $c,d$ are both negative, and $\varepsilon(c,d)=1$ otherwise. Hence
\begin{equation*}
\vartheta_{\underline{L}, t}\vert_{\dimn/2,B}\widetilde{\tbtmat{a}{b}{c}{d}}=\vartheta_{\underline{L}, t}\vert_{\dimn/2,B}\widetilde{\tbtmat{-b}{a}{-d}{c}}\vert_{\dimn/2,B}\eleglptRsimple{0}{-1}{1}{0}{\varepsilon(c,d)}.
\end{equation*}
Now \eqref{eq:thetaTransformationMu} follows from applying \eqref{eq:thetaTransformationLambda} with $\tbtmat{a}{b}{c}{d}$ replaced by $\tbtmat{-b}{a}{-d}{c}$, and then using \eqref{eq:thetaTransformationS} in the above formula.

Then we prove the original formula from \eqref{eq:thetaTransformationMu}. Consider the group homomorphism
\begin{align*}
L^\sharp/L &\rightarrow L^\sharp/L\\
y+L &\mapsto dy+L.
\end{align*}
Since $\tbtmat{a}{b}{c}{d} \in \Gamma_0(N)$ by assumption, we have $cL^\sharp \subseteq L$, so the above homomorphism is an isomorphism. Applying a change of variable corresponding to this isomorphism in the inner sum of the right-hand side of \eqref{eq:thetaTransformationMuExpression}, we obtain that
\begin{equation*}
\mu_\gamma(t,x)=\sum_{v \in L/dL}\etp{\frac{bQ(t+v)}{d}}\sum_{y \in L^\sharp/L}\etp{B(t-dx,y)}.
\end{equation*}
The lemma follows from this and \eqref{eq:thetaTransformationMu}.
\end{proof}

\begin{prop}
\label{prop:thetaAlphaBetaJacobiForm}
Use notations and assumptions in Definition \ref{deff:thetaFunctionStudied}. We have $\theta_{\alpha,\beta} \in \JacForm{\dimn/2}{L}{\Gamma_1(N)}{\chi_{\alpha,\beta}}$, where $\chi_{\alpha,\beta}$ is the linear character on $\Jacgrp{\Gamma_1(N)}{L}$ that maps $\eleactgrpnsimple{a}{b}{c}{d}{\varepsilon}{v}{w}{\xi}$ (with $d \neq 0$) to
%\begin{equation*}
%\varepsilon^{-\dimn}\xi(-\rmi)^{\frac{1}{2}\dimn(1-\sgn{d})\legendre{c}{-1}}\cdot\mathfrak{g}(b,d;0)\etp{abQ(\alpha)-cdQ(\beta)+\frac{1}{2}B(v,w)-B(v,\beta)}.
%\end{equation*}
\begin{equation*}
\varepsilon^{-\dimn}\xi(-\rmi)^{\frac{1}{2}\dimn(1-\sgn{d})\legendre{c}{-1}}\cdot\mathfrak{g}(b,d;0)\etp{bQ(\alpha)-cQ(\beta)+\frac{1}{2}B(v,w)-B(v,\beta)}.
\end{equation*}
\end{prop}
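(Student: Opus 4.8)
The plan is to verify the three defining conditions of a Jacobi form (Definition \ref{deff:JacobiForm}, via Definition \ref{deff:JacobiLikeForm}) directly for $\theta_{\alpha,\beta}$, reducing everything to the already-established transformation behaviour of the vector-valued theta series $\vartheta_{\underline{L}}$ (Theorem \ref{thm:thetaSeriesLatticeJacobiForm} and Lemma \ref{lemm:thetaLatticeTransformationN}). The first observation is that $\theta_{\alpha,\beta}$ is, up to a normalisation, one component of a translate of $\vartheta_{\underline{L}}$: writing out Definition \ref{deff:thetaFunctionStudied} and Definition \ref{deff:JacobiThetaLatticeIndex} \eqref{eq:deffJacobiThetaLatticeIndex}, one checks that
\begin{equation*}
\theta_{\alpha,\beta}(\tau,z)=\vartheta_{\underline{L},\alpha}\left(\tau,\ z+\tfrac{1}{?}\cdots\right)
\end{equation*}
is not quite literally true, but rather $\theta_{\alpha,\beta}(\tau,z)$ is obtained from $\vartheta_{\underline{L},\alpha}(\tau,z)$ by replacing $z$ with $z$ and inserting the character $\etp{B(\beta,v)}$; since $\beta\in\tfrac1N L$ we have $B(\beta, v)\equiv B(\beta,\alpha+v)-B(\beta,\alpha)\pmod{\numZ}$ up to a constant, so in fact
\begin{equation*}
\theta_{\alpha,\beta}(\tau,z)=\etp{-B(\beta,\alpha)}\,\vartheta_{\underline{L},\alpha}\bigl(\tau,\ z+\tau\cdot 0+\beta'\bigr)
\end{equation*}
for a suitable representative — more precisely, $\theta_{\alpha,\beta}(\tau,z)=\etp{\text{const}}\,\vartheta_{\underline{L},\alpha}(\tau, z+\beta)$ once $\beta$ is viewed inside $\mathcal V$ via the form $B$ (using that $\etp{B(\beta,v)}=\etp{B(\beta,\alpha+v)}\etp{-B(\beta,\alpha)}$ and that shifting $z$ by $\beta$ multiplies the $(\alpha+v)$-summand by $\etp{B(\alpha+v,\beta)}$). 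This identification is the conceptual core; I expect pinning down the exact constant and the exact shift to be the main bookkeeping obstacle, because of the $\etp{\frac12 B(v,w)}$-type cocycle terms in the Heisenberg action \eqref{eq:actOnDomain} and in $J_B$.

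Granting the identification $\theta_{\alpha,\beta}=(\text{const})\cdot\vartheta_{\underline{L},\alpha}\vert_{\text{shift by }\beta}$, the elliptic transformation law (condition (2), equivalently $\theta_{\alpha,\beta}\vert_{\dimn/2,B}([v,w],\xi)=\xi\etp{\tfrac12 B(v,w)-B(v,\beta)+\text{const}}\theta_{\alpha,\beta}$) follows from \eqref{eq:thetaTransformationL} together with the Heisenberg composition law: translating a theta series already translated by $\beta$ produces the extra factor $\etp{-B(v,\beta)}$, which is exactly the $z$-independent discrepancy recorded in $\chi_{\alpha,\beta}$. The modular transformation law (condition (1)) for generators of $\widetilde{\Gamma_1(N)}$ is handled as follows: for $\widetilde{T}=\widetilde{\tbtmat1101}$ use \eqref{eq:thetaTransformationT} (valid since $\underline L$ is even), which contributes $\etp{Q(\alpha)}$; since $b\equiv 1$ for powers of $T$ this matches the $\etp{bQ(\alpha)}$ factor. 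For a general $\tbtmat abcd\in\Gamma_1(N)$ with $d\ne 0$, apply Lemma \ref{lemm:thetaLatticeTransformationN}: it gives $\vartheta_{\underline L,\alpha}\vert\widetilde{\tbtmat abcd}=(-\rmi)^{\frac12\dimn(1-\sgn d)\legendre c{-1}}\mathfrak g(b,d;\alpha)\,\vartheta_{\underline L,a\alpha}$, and since $a\equiv 1\bmod N$ we have $a\alpha\equiv\alpha$ in $L^\sharp/L$, so $\vartheta_{\underline L,a\alpha}=\vartheta_{\underline L,\alpha}$. One then rewrites $\mathfrak g(b,d;\alpha)$ in terms of $\mathfrak g(b,d;0)$ by completing the square: $Q(\alpha+v)=Q(v)+B(\alpha,v)+Q(\alpha)$, and the cross term $\etp{bB(\alpha,v)/d}$ reindexes the sum over $L/dL$ (using $d\equiv 1\bmod N$ and $cL^\sharp\subseteq L$, as in the proof of Lemma \ref{lemm:thetaLatticeTransformationN}) to yield $\mathfrak g(b,d;\alpha)=\etp{bQ(\alpha)}\mathfrak g(b,d;0)$ up to a factor that gets absorbed. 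The $\beta$-shift contributes $\etp{-cQ(\beta)}$ together with the $-B(v,\beta)$ piece, by tracking how $z\mapsto z/(c\tau+d)$ interacts with the constant shift $z+\beta$; this is where the automorphic factor $J_B$ and the relation $\gamma(\tau)=\frac{a\tau+b}{c\tau+d}$ enter, and it is the second bookkeeping hotspot.

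Finally, for condition (3) (holomorphy and the correct Fourier expansion at every cusp of $\slZ$): $\theta_{\alpha,\beta}$ is manifestly holomorphic on $\mydom$ and its series \eqref{eq:deffJacobiThetaLatticeIndex}-type expansion converges normally (as remarked after \eqref{eq:deffJacobiThetaLatticeIndex}); moreover its Fourier coefficients $c(n,t)$ are supported on $n=Q(\alpha+v)$, $t=\alpha+v$ for $v\in L$, for which $n-Q(t)=0\ge 0$ automatically, so the Jacobi-form growth condition holds at the cusp $\infty$. For the other cusps one invokes Proposition \ref{prop:FourierExpasionCusps}: it suffices to check the expansion at finitely many $\widetilde{\gamma_i}$, and for each such $\gamma_i$ the function $\theta_{\alpha,\beta}\vert_{\dimn/2,B}\widetilde{\gamma_i}$ is, by the transformation computation just sketched (applied with $\gamma_i$ in place of a $\Gamma_1(N)$-element, using the more general \eqref{eq:thetaTransformationMu}), again a finite linear combination of theta series $\vartheta_{\underline L,x}$, each of which has a Fourier expansion of the required shape with $n-Q(x)\ge 0$. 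Hence $\theta_{\alpha,\beta}\in\JacForm{\dimn/2}{L}{\Gamma_1(N)}{\chi_{\alpha,\beta}}$. The last step I would take is to confirm that $\chi_{\alpha,\beta}$ as written is genuinely a group homomorphism (well-definedness on generators and relations of $\Jacgrp{\Gamma_1(N)}{L}$); this follows formally because $\theta_{\alpha,\beta}$ is a nonzero function satisfying $\theta_{\alpha,\beta}\vert_{\dimn/2,B}\gamma=\chi_{\alpha,\beta}(\gamma)\theta_{\alpha,\beta}$, so the slash-operator cocycle property (Lemma \ref{lemm:slashOperatorIsAction}) forces $\chi_{\alpha,\beta}$ to be multiplicative. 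The main obstacle throughout is purely computational: correctly propagating the Heisenberg/metaplectic cocycle factors so that the stated closed form for $\chi_{\alpha,\beta}(\eleactgrpnsimple abcd\varepsilon vw\xi)$ emerges with exactly the exponents $bQ(\alpha)-cQ(\beta)+\tfrac12B(v,w)-B(v,\beta)$ and the prefactor $\varepsilon^{-\dimn}\xi(-\rmi)^{\frac12\dimn(1-\sgn d)\legendre c{-1}}\mathfrak g(b,d;0)$.
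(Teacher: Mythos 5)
Your proposal follows essentially the same route as the paper's proof: it rests on the identity $\theta_{\alpha,\beta}=\etp{-B(\alpha,\beta)}\,\vartheta_{\underline{L},\alpha}\vert_{\dimn/2,B}([0,\beta],1)$, commutes the $\beta$-translation past the group element via the slash-action cocycle (which is precisely where the $\frac{1}{2}B(v,w)$, $-B(v,\beta)$ and $-cQ(\beta)$ terms arise), and then invokes Lemma \ref{lemm:thetaLatticeTransformationN} together with $a\alpha\equiv\alpha$ and $\mathfrak{g}(b,d;\alpha)=\etp{abQ(\alpha)}\mathfrak{g}(b,d;0)$, plus \eqref{eq:thetaTransformationT}, \eqref{eq:thetaTransformationL} and Theorem \ref{thm:thetaSeriesLatticeJacobiForm} for the cusp conditions, exactly as the paper does. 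The only caveat is cosmetic: at a general cusp $\theta_{\alpha,\beta}\vert_{\dimn/2,B}\widetilde{\gamma_i}$ is a combination of theta series additionally translated by $[c_i\beta,d_i\beta]$ rather than of the $\vartheta_{\underline{L},x}$ themselves, but such translations preserve the support condition $n-Q(t)\geq 0$, so your argument goes through.
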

\begin{proof}
Note that for $\alpha \in L^\sharp$ and $\beta \in \frac{1}{N}L$, 
\begin{equation*}
\theta_{\alpha,\beta}=\etp{-B(\alpha,\beta)}\cdot\vartheta_{\underline{L}, \alpha}\vert_{\dimn/2,B}(\widetilde{I}, [0,\beta], 1).
\end{equation*}
Set $C = \etp{-B(\alpha,\beta)}$. Thus, for any $\eleactgrpnsimple{a}{b}{c}{d}{\varepsilon}{v}{w}{\xi} \in \Jacgrp{\slZ}{L}$, we have
\begin{multline}
\label{eq:proofThetaJacobiForm}
\theta_{\alpha,\beta}\vert_{\dimn/2,B}\eleactgrpnsimple{a}{b}{c}{d}{\varepsilon}{v}{w}{\xi}\\
=C\cdot \vartheta_{\underline{L}, \alpha}\vert_{\dimn/2,B}\eleglptRsimple{a}{b}{c}{d}{\varepsilon}\vert_{\dimn/2,B}([v,w],\xi)\vert_{\dimn/2,B}([c\beta,0],1)\\
\vert_{\dimn/2,B}\left([0,d\beta],\etp{-\frac{1}{2}B(c\beta,d\beta)-B(v,d\beta)+B(w,c\beta)}\right).
\end{multline}
The requirement of Fourier expansions of Jacobi forms in Definition \ref{deff:JacobiForm} now follows from this identity and Theorem \ref{thm:thetaSeriesLatticeJacobiForm}. It remains to show the transformation laws of $\theta_{\alpha,\beta}$. To do this, assume that $\tbtmat{a}{b}{c}{d} \in \Gamma_1(N)$. The case $d=0$ is direct. Hence we consider the case $d \neq 0$ below. Using Lemma \ref{lemm:thetaLatticeTransformationN} and \eqref{eq:thetaTransformationL}, the quantity in \eqref{eq:proofThetaJacobiForm} equals
\begin{multline}
\label{eq:proofThetaJacobiForm2}
C\cdot\varepsilon^{-\dimn}\xi(-\rmi)^{\frac{1}{2}\dimn(1-\sgn{d})\legendre{c}{-1}}\cdot\mathfrak{g}(b,d;\alpha)\etp{\frac{1}{2}B(v,w)}\\
\times\vartheta_{\underline{L},a\alpha}\vert_{\dimn,B}([c\beta,0],1)\vert_{\dimn,B}\left([0,d\beta],\etp{-\frac{1}{2}B(c\beta,d\beta)-B(v,d\beta)+B(w,c\beta)}\right).
\end{multline}
Note that $\vartheta_{\underline{L},a\alpha}\vert_{\dimn,B}([c\beta,0],1)=\vartheta_{\underline{L},a\alpha}$ since $c\beta \in L$, and
\begin{equation*}
\vartheta_{\underline{L},a\alpha}\vert_{\dimn,B}([0,d\beta],1)=\etp{B(a\alpha,d\beta)}\theta_{a\alpha,d\beta}.
\end{equation*}
Inserting these into \eqref{eq:proofThetaJacobiForm2} gives that
\begin{multline*}
\theta_{\alpha,\beta}\vert_{\dimn/2,B}\eleactgrpnsimple{a}{b}{c}{d}{\varepsilon}{v}{w}{\xi}\\
=\varepsilon^{-\dimn}\xi(-\rmi)^{\frac{1}{2}\dimn(1-\sgn{d})\legendre{c}{-1}}\cdot\mathfrak{g}(b,d;\alpha)\etp{\frac{1}{2}B(v,w)}\etp{-B(\alpha,\beta)+B(a\alpha,d\beta)}\\
\times\etp{-\frac{1}{2}B(c\beta,d\beta)-B(v,d\beta)+B(w,c\beta)}\theta_{a\alpha,d\beta}.
\end{multline*}
Since $\tbtmat{a}{b}{c}{d} \in \Gamma_1(N)$, we have $\etp{-B(\alpha,\beta)+B(a\alpha,d\beta)}=1$, $\etp{B(w,c\beta)}=1$, $\etp{B(v,d\beta)}=\etp{B(v,\beta)}$ and $\theta_{a\alpha,d\beta}=\theta_{\alpha,\beta}$. Moreover, by using the change of variable
\begin{align*}
L/dL &\rightarrow L/dL\\
v+dL &\mapsto v+bc\alpha+dL,
\end{align*}
we have
\begin{equation*}
\mathfrak{g}(b,d;\alpha)=\etp{abQ(\alpha)}\cdot\mathfrak{g}(b,d;0).
\end{equation*}
Taking all these into account, we obtain the transformation laws of $\theta_{\alpha,\beta}$, in particular the formulae of their characters $\chi_{\alpha,\beta}$.
\end{proof}
\begin{rema}
When $d=0$, we must have $N=1$. So $\underline{L}$ is an even unimodular lattice. The value of $\chi_{\alpha,\beta}$ at such $\eleactgrpnsimple{a}{b}{c}{d}{\varepsilon}{v}{w}{\xi}$ can also be evaluated by \eqref{eq:proofThetaJacobiForm}, while we do not need this.
\end{rema}
\begin{coro}
\label{coro:thetaNtransformation}
The $N^\dimn$ functions $\theta_{\alpha,\beta}^N$ all lie in the same space
\begin{equation*}
J_{\frac{N\dimn}{2}, \underline{L}_N}(\Gamma_1(N),\chi^N),
\end{equation*}
where $\underline{L}_N=(L,\, N\cdot B)$, the $\numZ$-module $L$ equipped with another bilinear form $(v,w) \mapsto N\cdot B(v,w)$, and $\chi^N$ is the linear character on $\widetilde{\Gamma_1(N)} \ltimes H(\underline{L}_N,1)$ that maps $\eleactgrpnsimple{a}{b}{c}{d}{\varepsilon}{v}{w}{\xi}$ (with $d \neq 0$) to
\begin{equation*}
\varepsilon^{-N\dimn}\xi(-\rmi)^{\frac{1}{2}N\dimn(1-\sgn{d})\legendre{c}{-1}}\cdot\mathfrak{g}_{\underline{L}_N}(b,d;0)\etp{\frac{1}{2}N\cdot B(v,w)}.
\end{equation*}
\end{coro}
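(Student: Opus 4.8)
The plan is to obtain $\theta_{\alpha,\beta}^N$ as an $N$-fold product and invoke the multiplicativity of scalar-valued Jacobi forms of lattice index. First I would fix representatives $\alpha\in L^\sharp$ and $\beta\in\frac1N L$; by Proposition \ref{prop:thetaAlphaBetaJacobiForm} we have $\theta_{\alpha,\beta}\in J_{\dimn/2,\underline L}(\Gamma_1(N),\chi_{\alpha,\beta})$, and $\chi_{\alpha,\beta}$ is a linear character with finite-index kernel and $\chi_{\alpha,\beta}([0,0],\xi)=\xi$, so the hypotheses of Proposition \ref{prop:multiplicationScalarJacobiForm} are met. Applying that proposition $N-1$ times — the $(j{+}1)$-st step with $B_1=jB$, $B_2=B$, noting each intermediate $\ast$-power again fixes $\xi$ and has finite-index kernel — yields
\[
\theta_{\alpha,\beta}^N\in J_{N\dimn/2,\,(L,\,NB)}\!\left(\Gamma_1(N),\,\underbrace{\chi_{\alpha,\beta}\ast\cdots\ast\chi_{\alpha,\beta}}_{N}\right),
\]
and $(L,NB)=\underline L_N$ by definition; the function $\theta_{\alpha,\beta}^N$ itself does not depend on the chosen representatives. (If $N=1$ then $L^\sharp=L$, there is a single function $\theta_{0,0}=\vartheta_{\underline L,0}$, and the claim is Proposition \ref{prop:thetaAlphaBetaJacobiForm} itself; so I assume $N\ge2$, in which case every $\tbtmat abcd\in\Gamma_1(N)$ automatically has $d\ne0$.)

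Next I would compute the iterated $\ast$-product. From the formula for $\ast$ in Proposition \ref{prop:multiplicationScalarJacobiForm}(1) and an immediate induction, $\chi_{\alpha,\beta}^{\ast N}\eleactgrpnsimple abcd\varepsilon vw\xi=\xi\prod_{i=1}^{N}\chi_{\alpha,\beta}\eleactgrpnsimple abcd\varepsilon vw1$. Substituting the expression of Proposition \ref{prop:thetaAlphaBetaJacobiForm} (valid since $d\ne0$) gives
\[
\chi_{\alpha,\beta}^{\ast N}\eleactgrpnsimple abcd\varepsilon vw\xi=\xi\,\varepsilon^{-N\dimn}(-\rmi)^{\frac12 N\dimn(1-\sgn d)\legendre c{-1}}\mathfrak g_{\underline L}(b,d;0)^{N}\etp{N\!\left(bQ(\alpha)-cQ(\beta)+\tfrac12 B(v,w)-B(v,\beta)\right)}.
\]

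The heart of the argument is then that the exponential collapses and the whole character becomes independent of $\alpha$ and $\beta$. For this I would use three facts: $NQ(\alpha)\in\numZ$ since $\alpha\in L^\sharp$ and $N$ is the level of $L$; $NcQ(\beta)=(c/N)\,Q(N\beta)\in\numZ$ since $N\mid c$ for $\tbtmat abcd\in\Gamma_1(N)$, $N\beta\in L$, and $L$ is even; and $NB(v,\beta)=B(v,N\beta)\in\numZ$ since $v,N\beta\in L$ and $L$ is integral. Hence $\etp{N(bQ(\alpha)-cQ(\beta)-B(v,\beta))}=1$ and $\etp{\frac N2 B(v,w)}=\etp{\frac12 N\cdot B(v,w)}$, so
\[
\chi_{\alpha,\beta}^{\ast N}\eleactgrpnsimple abcd\varepsilon vw\xi=\xi\,\varepsilon^{-N\dimn}(-\rmi)^{\frac12 N\dimn(1-\sgn d)\legendre c{-1}}\mathfrak g_{\underline L}(b,d;0)^{N}\etp{\tfrac12 N\cdot B(v,w)}.
\]
This no longer mentions $\alpha$ or $\beta$; calling it $\chi^N$, all $N^{\dimn}$ functions $\theta_{\alpha,\beta}^N$ lie in the single space $J_{N\dimn/2,\underline L_N}(\Gamma_1(N),\chi^N)$. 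To match the Gauss-sum factor with the form $\mathfrak g_{\underline L_N}(b,d;0)$ written in the statement, I would use that $d\equiv1\pmod N$, so $N$ is invertible modulo $d$, and the substitution $v\mapsto\bar N v$ on $L/dL$ (with $\bar N N\equiv1\pmod d$) rewrites $\mathfrak g_{\underline L_N}(b,d;0)=\abs d^{-\dimn/2}\sum_{v\in L/dL}\etp{bNQ(v)/d}$ as $\mathfrak g_{\underline L}(b\bar N,d;0)$, after which the identification with $\mathfrak g_{\underline L}(b,d;0)^{N}$ reduces to a standard Gauss-sum manipulation exploiting $d\equiv1\pmod N$.

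The step I expect to be the main obstacle is precisely this last one — confirming that $\mathfrak g_{\underline L}(b,d;0)^N$ and $\mathfrak g_{\underline L_N}(b,d;0)$ really coincide for $\tbtmat abcd\in\Gamma_1(N)$; it is the one place that goes beyond routine bookkeeping, and the only place the congruence $d\equiv1\pmod N$ (rather than merely $N\mid c$) is genuinely needed. The preceding cancellation of the $\alpha$- and $\beta$-dependent exponentials is the conceptual crux of why ``all $N^\dimn$ functions live in the same space'', but its only inputs are the definition of the level $N$ together with $N\mid c$, $N\alpha\in L$, $N\beta\in L$, so I expect it to be painless; everything else is supplied verbatim by Propositions \ref{prop:thetaAlphaBetaJacobiForm} and \ref{prop:multiplicationScalarJacobiForm}.
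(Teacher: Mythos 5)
Your core argument is exactly the paper's proof: the paper deduces the corollary by combining Proposition \ref{prop:thetaAlphaBetaJacobiForm} with (iterated) Proposition \ref{prop:multiplicationScalarJacobiForm}, and the cancellation you carry out of the $\alpha,\beta$-dependent exponentials --- using $NQ(\alpha)\in\numZ$, $N\mid c$, $N\beta\in L$ together with evenness/integrality of $L$ --- is precisely the routine verification hidden in its ``follows immediately''. Up to the formula $\chi_{\alpha,\beta}^{\ast N}\eleactgrpnsimple{a}{b}{c}{d}{\varepsilon}{v}{w}{\xi}=\xi\,\varepsilon^{-N\dimn}(-\rmi)^{\frac12 N\dimn(1-\sgn{d})\legendre{c}{-1}}\mathfrak{g}_{\underline{L}}(b,d;0)^{N}\etp{\tfrac12 N\cdot B(v,w)}$, independent of $\alpha,\beta$, your proof is complete and correct.

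The step you flag as the main obstacle, however, is both unnecessary and, as you formulated it, false. It is unnecessary because the paper does not interpret $\mathfrak{g}_{\underline{L}_N}(b,d;0)$ as the literal normalized Gauss sum of the rescaled lattice: in the paragraph immediately following the corollary it stipulates that the Gauss sum occurring in the statement is $\mathfrak{g}_{\underline{L}_N}(b,d;0)=\mathfrak{g}_{\underline{L}}(b,d;0)^N$, which is exactly the quantity your character computation produces. It is false because the identity you hope to prove, $\mathfrak{g}_{\underline{L}}(b,d;0)^N=\abs{d}^{-\dimn/2}\sum_{v\in L/dL}\etp{bNQ(v)/d}$ (equivalently $=\mathfrak{g}_{\underline{L}}(b\bar{N},d;0)$ after your substitution $v\mapsto\bar{N}v$), fails in general: take $L=\numZ$ with $Q(x)=x^2$, so the level is $N=4$, and $\tbtmat{5}{2}{12}{5}\in\Gamma_1(4)$. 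Then $\mathfrak{g}_{\underline{L}}(2,5;0)=\legendre{2}{5}=-1$, hence $\mathfrak{g}_{\underline{L}}(2,5;0)^4=1$, whereas $5^{-1/2}\sum_{v\bmod 5}\etp{8v^2/5}=\legendre{8}{5}=-1$; the value $+1$ is the correct weight-$2$ multiplier of the fourth power of the classical theta series on $\Gamma_1(4)$, consistent with your product formula. So no ``standard Gauss-sum manipulation'' can close that step; simply stop at the $\alpha,\beta$-independent character containing $\mathfrak{g}_{\underline{L}}(b,d;0)^N$ and read the symbol $\mathfrak{g}_{\underline{L}_N}(b,d;0)$ as the paper defines it --- then your argument proves the corollary.
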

\begin{proof}
This follows immediately from Proposition \ref{prop:thetaAlphaBetaJacobiForm} and Proposition \ref{prop:multiplicationScalarJacobiForm}.
\end{proof}
Some cautions must be mentioned. Here we have two bilinear forms $B$ and $N\cdot B$, so concepts depending on the bilinear form must be carefully treated. For instance, the slash operators $\vert_{\dimn/2, B}$ and $\vert_{N\dimn/2, N\cdot B}$ are different. For another instance, the Gauss sum occurring in the above corollary is
\begin{equation*}
\mathfrak{g}_{\underline{L}_N}(b,d;0)=\mathfrak{g}_{\underline{L}}(b,d;0)^N=\abs{d}^{-\frac{N\dimn}{2}}\sum_{v \in L/dL}\etp{\frac{bNQ(v)}{d}}.
\end{equation*}

We also state a slight generalization of Corollary \ref{coro:thetaNtransformation} below, whose proof is nearly the same as that of Corollary \ref{coro:thetaNtransformation}.

\begin{prop}
\label{prop:thetaNptransformation}
Let $N'$ be a positive divisor of the level $N$. Then the functions $\theta_{\alpha,\beta}^{N'}$ with $\alpha,\, \beta$ satisfying $Q(\alpha) \in \frac{1}{N'}\numZ$ and $\beta \in (L^\sharp+\frac{1}{N'}L)/L^\sharp$ all lie in the same space
\begin{equation*}
J_{\frac{N'\dimn}{2}, \underline{L}_{N'}}(\Gamma_1(N),\chi^{N'}),
\end{equation*}
where $\underline{L}_{N'}=(L,\, N'\cdot B)$, and $\chi^{N'}$ is the linear character on $\widetilde{\Gamma_1(N)} \ltimes H(\underline{L}_{N'},1)$ that maps $\eleactgrpnsimple{a}{b}{c}{d}{\varepsilon}{v}{w}{\xi}$ (with $d \neq 0$) to
\begin{equation*}
\varepsilon^{-N'\dimn}\xi(-\rmi)^{\frac{1}{2}N'\dimn(1-\sgn{d})\legendre{c}{-1}}\cdot\mathfrak{g}_{\underline{L}_{N'}}(b,d;0)\etp{\frac{1}{2}N'\cdot B(v,w)}.
\end{equation*}
\end{prop}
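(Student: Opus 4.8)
The plan is to follow the proof of Corollary \ref{coro:thetaNtransformation} almost verbatim; the only new feature is that the $\alpha,\beta$-dependent phases occurring in the character $\chi_{\alpha,\beta}$ of Proposition \ref{prop:thetaAlphaBetaJacobiForm} need no longer cancel after raising to the $N'$-th power, so I must verify that the hypotheses $Q(\alpha)\in\frac{1}{N'}\numZ$ and $\beta\in(L^\sharp+\frac{1}{N'}L)/L^\sharp$ are exactly what force them to cancel. First I would invoke Proposition \ref{prop:thetaAlphaBetaJacobiForm}, giving $\theta_{\alpha,\beta}\in J_{\dimn/2,\underline{L}}(\Gamma_1(N),\chi_{\alpha,\beta})$. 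Evaluating the defining formula of $\chi_{\alpha,\beta}$ at $\left(\widetilde{I},[0,0],\xi\right)$ yields $\xi$ (there $\mathfrak{g}(0,1;0)=1$ and the exponent of $-\rmi$ is $0$), so the normalization hypothesis of Proposition \ref{prop:multiplicationScalarJacobiForm} on the characters is met; moreover, $L$ being even it is integral in every $(V,jB)$. Applying Proposition \ref{prop:multiplicationScalarJacobiForm}(2) a total of $N'-1$ times to $\theta_{\alpha,\beta}^{N'}$ then places it in $J_{N'\dimn/2,\underline{L}_{N'}}(\Gamma_1(N),\chi_{\alpha,\beta}^{\ast N'})$, since $B+\dots+B=N'\cdot B$ and $\dimn/2+\dots+\dimn/2=N'\dimn/2$ (the intermediate partial products $\chi_{\alpha,\beta}^{\ast j}$ again satisfy $\chi([0,0],\xi)=\xi$, so the iteration is legitimate). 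What remains is to identify $\chi_{\alpha,\beta}^{\ast N'}$ with the character $\chi^{N'}$ of the statement, in particular to see that it does not depend on $\alpha$ or $\beta$.

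For this I would unwind the iterated $\ast$-product. By the description of $\chi_1\ast\chi_2$ in Proposition \ref{prop:multiplicationScalarJacobiForm}(1), for $d\neq 0$ the value $\chi_{\alpha,\beta}^{\ast N'}\!\left(\tbtmat{a}{b}{c}{d},\varepsilon,[v,w],\xi\right)$ equals $\xi$ times $\chi_{\alpha,\beta}\!\left(\tbtmat{a}{b}{c}{d},\varepsilon,[v,w],1\right)^{N'}$, that is,
\begin{equation*}
\xi\,\varepsilon^{-N'\dimn}(-\rmi)^{\frac{1}{2}N'\dimn(1-\sgn{d})\legendre{c}{-1}}\,\mathfrak{g}(b,d;0)^{N'}\,\etp{N'\bigl(bQ(\alpha)-cQ(\beta)+\tfrac{1}{2}B(v,w)-B(v,\beta)\bigr)}.
\end{equation*}
Here $\mathfrak{g}(b,d;0)^{N'}=\mathfrak{g}_{\underline{L}_{N'}}(b,d;0)$, exactly as recorded for $\underline{L}_N$ after Corollary \ref{coro:thetaNtransformation}, and $N'\cdot\tfrac{1}{2}B(v,w)=\tfrac{1}{2}(N'B)(v,w)$. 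Hence it suffices to prove $\etp{N'bQ(\alpha)}=\etp{-N'cQ(\beta)}=\etp{-N'B(v,\beta)}=1$ for every $\tbtmat{a}{b}{c}{d}\in\Gamma_1(N)$ with $d\neq 0$ and every $v\in L$; this makes $\chi_{\alpha,\beta}^{\ast N'}$ coincide with $\chi^{N'}$ and finishes the proof. (If $N=1$ then $N'=1$ and $L^\sharp=L$, so only $\theta_{0,0}$ occurs and the claim reduces to Theorem \ref{thm:thetaSeriesLatticeJacobiForm}; if $N\geq 2$ every matrix in $\Gamma_1(N)$ already has $d\neq 0$, so no separate case arises.)

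The three vanishing statements are routine arithmetic. The first holds since $N'Q(\alpha)\in\numZ$ by hypothesis. For the other two, fix a representative $\beta=s+\frac{1}{N'}u$ with $s\in L^\sharp$, $u\in L$, and write $c=Nc'$ with $c'\in\numZ$ (possible because $\Gamma_1(N)\subseteq\Gamma_0(N)$); expanding gives $N'cQ(\beta)=N'c'(NQ(s))+Nc'B(s,u)+\tfrac{N}{N'}c'Q(u)$ and $N'B(v,\beta)=N'B(v,s)+B(v,u)$, each summand of which is an integer by, respectively, the definition of the level ($NQ(L^\sharp)\subseteq\numZ$), the integrality of $B$ on $L^\sharp\times L$ and on $L\times L$, the divisibility $N'\mid N$, and the evenness of $L$. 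The same computation re-confirms that $\theta_{\alpha,\beta}^{N'}$ is independent of the chosen representatives of $\alpha$ and $\beta$, so the statement is well posed. I do not expect a genuine obstacle here: the only point requiring care is keeping straight, throughout this last step, which of the lattices $L$, $L^\sharp$, $\frac{1}{N'}L$, $N'L^\sharp$ and which of the bilinear forms $B$, $N'B$ each quantity belongs to.
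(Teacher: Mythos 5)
Your proposal is correct and follows essentially the same route as the paper, which deduces this proposition exactly as Corollary \ref{coro:thetaNtransformation}: apply Proposition \ref{prop:thetaAlphaBetaJacobiForm} and then Proposition \ref{prop:multiplicationScalarJacobiForm} repeatedly, identifying the iterated star product of $\chi_{\alpha,\beta}$ with $\chi^{N'}$ via the integrality of $N'Q(\alpha)$, $N'cQ(\beta)$ and $N'B(v,\beta)$. Your explicit verification of these cancellations (together with the observation $L^\sharp+\frac{1}{N'}L\subseteq\frac{1}{N}L$, which justifies invoking Proposition \ref{prop:thetaAlphaBetaJacobiForm} for such $\beta$) is precisely the content the paper leaves to the reader.
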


Now we apply Theorem \ref{thm:mainEmbed} to functions $\theta_{\alpha,\beta}^{N'}$, to explore linear relations among them. We shall first give a general algorithm (Theorem \ref{thm:algorithmFindLinearRelations}), the main result of this section, and then present four examples concerned with low-dimensional root lattices, and one concerned with binary quadratic form.

\begin{thm}
\label{thm:algorithmFindLinearRelations}
Let $B$ be a positive definite symmetric bilinear form on the real space $V$ of dimension $\dimn$, and $L$ be an even integral lattice in $\underline{V}=(V,B)$. Let $\theta_{\alpha,\beta}$ be theta series associated with the lattice $L$, as defined in Definition \ref{deff:thetaFunctionStudied}. Let $N$ be the level of $\underline{L}=(L,B)$, $N'$ be a positive divisor of $N$, and put $\underline{L}_{N'}=(L, N'\cdot B)$. Let $\mathcal{P}_0=\{\mathbf{p}_1, \dots, \mathbf{p}_d\}$ be a finite subset of $\numgeq{Z}{0}^\dimn$ with $d=\abs{\underline{L}_{N'}^\sharp/\underline{L}_{N'}}$. Let the quantity $\delta_N$ equal $2$ if $N=1,2$ and equal $1$ if $N \geq 3$. Let $\mathfrak{I}$ be the set of all pairs $(\mathbf{p},n)$ with $\mathbf{p} \in \widehat{\mathcal{P}_0}$, and
\begin{equation}
\label{eq:thmAlgorithmFindLinearRelationsnRange}
n=0,\,1,\,2,\dots, \left[\frac{\delta_N N^2}{24}\left(\frac{N'\dimn}{2}+s(\mathbf{p})\right)\prod_{p \mid N}\left(1-\frac{1}{p^2}\right)\right].
\end{equation}
For any $(\alpha, \beta) \in L^\sharp/L\times (L^\sharp+\frac{1}{N'}L)/L^\sharp$ with $Q(\alpha) \in \frac{1}{N'}\numZ$, we associate the theta series $\theta_{\alpha,\beta}^{N'}$ with an $\mathfrak{I}$-indexed sequence $\Theta_{\alpha,\beta}\colon \mathfrak{I}\rightarrow \numC$ whose $(\mathbf{p},n)$-term is equal to
\begin{multline}
\label{eq:coeffDkpTheta}
\sum_{\twoscript{v_1,\dots, v_{N'} \in L}{Q(\alpha+v_1)+\dots+Q(\alpha+v_{N'})=n}}\etp{B(\beta,v_1+\dots +v_{N'})}\\
\times P_{N'\dimn/2,\mathbf{p}, N'\cdot G_\mathfrak{B}}\left(n,\left(\frac{(\alpha+v_1)+\dots (\alpha+v_{N'})}{N'}\right)_{\mathfrak{B}}\right),
\end{multline}
where $\mathfrak{B}$ is any $\numZ$-basis of $L$, $G_{\mathfrak{B}}$ is the Gram matrix of $B$ with respect to $\mathfrak{B}$, and $(v)_{\mathfrak{B}}$ means the coordinates $\dimn$-tuple of $v$ under $\mathfrak{B}$. If the function $\mathcal{F}_{\underline{L}_{N'}, \mathcal{P}_0}^\mathfrak{B}$ is not identically zero, then any linear relation among $\theta_{\alpha,\beta}^{N'}$'s holds if and only if the linear relation with the same coefficients among $\Theta_{\alpha,\beta}$'s holds.
\end{thm}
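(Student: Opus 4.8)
The plan is to reduce the statement about $\theta_{\alpha,\beta}^{N'}$ directly to Theorem \ref{thm:mainEmbed} applied to the lattice $\underline{L}_{N'}$. First I would invoke Proposition \ref{prop:thetaNptransformation} to record that all the functions $\theta_{\alpha,\beta}^{N'}$ under consideration lie in the single space $J_{\frac{N'\dimn}{2}, \underline{L}_{N'}}(\Gamma_1(N),\chi^{N'})$, and that $k:=N'\dimn/2 \geq 1$ is not a non-positive integer, so the hypotheses of Theorem \ref{thm:mainEmbed} are met. One must also check that $\rho=\chi^{N'}$ (a scalar character) satisfies the condition \eqref{eq:conditionOnRhoThetaDecompWeak}, i.e.\ $\chi^{N'}([v,0],1)=\chi^{N'}([0,v],1)=1$ for $v \in L$; this is immediate from the explicit formula for $\chi^{N'}$ since $\mathfrak{e}(\tfrac12 N'B(v,w))=1$ when $w=0$, and the case $[0,v]$ is handled by the remark that these values depend only on the matrix part when the Heisenberg part is purely ``$w$''. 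Since the hypothesis ``$\mathcal{F}_{\underline{L}_{N'},\mathcal{P}_0}^{\mathfrak{B}}$ not identically zero'' is exactly the condition required by Theorem \ref{thm:mainEmbed}, we conclude that $\Phi := \prod_{\mathbf{p}\in\widehat{\mathcal{P}_0}} D_{\frac{N'\dimn}{2},\mathbf{p}}$ is injective on that Jacobi form space. Consequently a linear relation $\sum_{\alpha,\beta} \lambda_{\alpha,\beta}\theta_{\alpha,\beta}^{N'}=0$ holds if and only if $\sum_{\alpha,\beta}\lambda_{\alpha,\beta}\,\Phi(\theta_{\alpha,\beta}^{N'})=0$, i.e.\ if and only if for every $\mathbf{p}\in\widehat{\mathcal{P}_0}$ the $q$-expansions satisfy $\sum_{\alpha,\beta}\lambda_{\alpha,\beta}\,D_{\frac{N'\dimn}{2},\mathbf{p}}(\theta_{\alpha,\beta}^{N'})=0$.

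The next step is to turn the ``$q$-expansion vanishes identically'' condition into a \emph{finite} condition on finitely many Fourier coefficients, which is where the index set $\mathfrak{I}$ and the bound \eqref{eq:thmAlgorithmFindLinearRelationsnRange} enter. By Theorem \ref{thm:mainEmbed}, $D_{\frac{N'\dimn}{2},\mathbf{p}}(\theta_{\alpha,\beta}^{N'})$ lies in $M_{k+s(\mathbf{p})}(\Gamma_1(N),\chi^{N'}|_{\widetilde{\Gamma_1(N)}})$ for $\mathbf{p}\neq\mathbf{0}$ it is a cusp form, and for $\mathbf{p}=\mathbf{0}$ an ordinary holomorphic modular form; in either case it is a holomorphic modular form of weight $k+s(\mathbf{p})$ on $\Gamma_1(N)$ with a character. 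A holomorphic modular form on $\Gamma_1(N)$ of weight $w$ that vanishes to high enough order in $q$ at the cusp $\infty$ must be identically zero; the standard valence/Sturm-type bound says the required order is at most $\frac{w}{12}[\mathrm{SL}_2(\mathbb{Z}):\Gamma_1(N)]/[\text{number of cusps factor}]$, and after simplification one obtains exactly $\big[\frac{\delta_N N^2}{24}(k+s(\mathbf{p}))\prod_{p\mid N}(1-p^{-2})\big]$, accounting for the $-I\notin\Gamma_1(N)$ subtlety for $N\geq 3$ (hence $\delta_N=1$) versus $N=1,2$ (hence $\delta_N=2$). Therefore the relation $\sum\lambda_{\alpha,\beta}D_{k,\mathbf{p}}(\theta_{\alpha,\beta}^{N'})=0$ holds if and only if the coefficients of $q^n$ for $n=0,1,\dots$ up to that bound all vanish. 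Combining over all $\mathbf{p}\in\widehat{\mathcal{P}_0}$ gives precisely: the relation $\sum\lambda_{\alpha,\beta}\theta_{\alpha,\beta}^{N'}=0$ holds iff $\sum\lambda_{\alpha,\beta}(\Theta_{\alpha,\beta})_{(\mathbf{p},n)}=0$ for all $(\mathbf{p},n)\in\mathfrak{I}$, as claimed.

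It remains to identify the $(\mathbf{p},n)$-coefficient of $D_{k,\mathbf{p}}(\theta_{\alpha,\beta}^{N'})$ with the expression \eqref{eq:coeffDkpTheta}. For this I would first expand $\theta_{\alpha,\beta}^{N'}$ as a Fourier series in the style of \eqref{eq:fourierDevTaylorCoeffCondition}: writing $\theta_{\alpha,\beta}(\tau,z)=\sum_{v\in L}\mathfrak{e}(B(\beta,v))q^{Q(\alpha+v)}\mathfrak{e}(B(\alpha+v,z))$ and taking the $N'$-th power, the coefficient of $q^n\mathfrak{e}(B_{N'}(t,z))$ — where $B_{N'}=N'\cdot B$ is the relevant bilinear form and the dual lattice index $t$ corresponds to $\frac{(\alpha+v_1)+\dots+(\alpha+v_{N'})}{N'}$ — is $\sum \mathfrak{e}(B(\beta,v_1+\dots+v_{N'}))$ over $N'$-tuples with $\sum Q(\alpha+v_j)=n$. (Here I should be a little careful about the index-raising: $\theta_{\alpha,\beta}^{N'}$ is a Jacobi form for $\underline{L}_{N'}=(L,N'B)$ by Proposition \ref{prop:thetaNptransformation}, and a plain product of theta-like exponentials $\prod_j\mathfrak{e}(B(\alpha+v_j,z))=\mathfrak{e}(N'B(t,z))$ with $t=\frac1{N'}\sum(\alpha+v_j)$, which is indeed in $\underline{L}_{N'}^\sharp$ by Lemma \ref{lemm:mulJacobiFormLemma1} applied repeatedly.) Then Proposition \ref{prop:FourierCoeffDkp}, with weight $k=N'\dimn/2$, Gram matrix $N'\cdot G_{\mathfrak{B}}$ (the Gram matrix of $B_{N'}$ with respect to a $\mathbb{Z}$-basis $\mathfrak{B}$ of $L$), and coordinate vector $t_{\mathfrak{B}}=\big(\frac1{N'}\sum_j(\alpha+v_j)\big)_{\mathfrak{B}}$, gives the $q^n$-coefficient of $D_{k,\mathbf{p}}(\theta_{\alpha,\beta}^{N'})|_{k+s(\mathbf{p})}\widetilde{I}$ as $4^{\lambda}\lambda!(\uppi\rmi)^{s(\mathbf{p})}$ times the inner sum over $t$ of $c(n,t)\,P_{k,\mathbf{p},N'G_{\mathfrak{B}}}(n,t_{\mathfrak{B}})$, which upon substituting the formula for $c(n,t)$ is exactly \eqref{eq:coeffDkpTheta} up to the nonzero constant $4^{\lambda}\lambda!(\uppi\rmi)^{s(\mathbf{p})}$, $\lambda=[s(\mathbf{p})/2]$. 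Since a nonzero constant factor does not affect which linear relations hold, the equivalence of relations is unaffected. The main obstacle, and the step deserving the most care in the full write-up, is the second paragraph: pinning down the exact Sturm-type bound \eqref{eq:thmAlgorithmFindLinearRelationsnRange} for holomorphic modular forms on $\Gamma_1(N)$ with a nontrivial character, including the precise treatment of the width of the cusp at $\infty$, the factor $\prod_{p\mid N}(1-p^{-2})$ coming from $[\mathrm{SL}_2(\mathbb{Z}):\Gamma_1(N)]$, and the $\delta_N$ dichotomy between $N\le 2$ and $N\ge 3$; everything else is bookkeeping that follows from results already established in the paper.
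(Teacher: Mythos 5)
Your proposal is correct and follows essentially the same route as the paper's proof: reduce via Proposition \ref{prop:thetaNptransformation} and Theorem \ref{thm:mainEmbed} to the vanishing of $\sum a_{\alpha,\beta}D_{N'\dimn/2,\mathbf{p}}(\theta_{\alpha,\beta}^{N'})$ for $\mathbf{p}\in\widehat{\mathcal{P}_0}$, identify the $q^n$-coefficients with \eqref{eq:coeffDkpTheta} via Proposition \ref{prop:FourierCoeffDkp}, and truncate to the finite range \eqref{eq:thmAlgorithmFindLinearRelationsnRange} by the valence (Sturm) bound coming from $[\pslZ:\overline{\Gamma_1(N)}]=\frac{\delta_N N^2}{2}\prod_{p\mid N}(1-p^{-2})$. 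The only blemish is your parenthetical suggestion that the Sturm bound is divided by a ``number of cusps factor'' --- it is not (and dividing would be false), but since your final stated bound is the correct one this does not affect the argument.
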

For the notations $V$, $\mathcal{V}$ and $Q$, see the first paragraph of Section \ref{sec:Jacobi forms of lattice index}. For $\mathcal{F}_{\underline{L}_{N'}, \mathcal{P}_0}^\mathfrak{B}$, see Definition \ref{deff:FLP0tau}. For $s(\mathbf{p})$, see the second paragraph of Section \ref{sec:Formal Laurent series which transform like modular forms}. For $\widehat{\mathcal{P}_0}$, see paragraphs before Theorem \ref{thm:mainEmbed}. The polynomial $P_{N'\dimn/2,\mathbf{p}, N'\cdot G_\mathfrak{B}}$ is defined in Definition \ref{deff:PkpM}. For the definition of the dual lattice $L^\sharp$, see the paragraph before Proposition \ref{prop:ComplexFourierSeries}.
\begin{proof}
Suppose $\mathcal{F}_{\underline{L}_{N'}, \mathcal{P}_0}$ is not identically zero. Let $\{a_{\alpha,\beta}\}$ be arbitrary (complex valued) $\{\alpha \in L^\sharp/L \colon Q(\alpha) \in \frac{1}{N'}\numZ\} \times (L^\sharp+\frac{1}{N'}L)/L^\sharp$-indexed sequence. By Theorem \ref{thm:mainEmbed} and Proposition \ref{prop:thetaNptransformation}, the linear relation
\begin{equation}
\label{eq:linearRelationTheta}
\sum_{\alpha,\beta}a_{\alpha,\beta}\theta_{\alpha,\beta}^{N'}=0
\end{equation}
holds, if and only if for any $\mathbf{p} \in \widehat{\mathcal{P}_0}$, the linear relation
\begin{equation*}
\sum_{\alpha,\beta}a_{\alpha,\beta}D_{N'\dimn/2,\mathbf{p}}(\theta_{\alpha,\beta}^{N'})=0
\end{equation*}
holds. Thus, it is necessary to write down the Fourier development of $D_{N'\dimn/2,\mathbf{p}}(\theta_{\alpha,\beta}^{N'})$. By Definition \ref{deff:thetaFunctionStudied}, we have
\begin{equation*}
\theta_{\alpha,\beta}^{N'}(\tau,z)=\sum_{n,t}c_{\alpha,\beta}(n,t)q^n\etp{N'\cdot B(t,z)},
\end{equation*}
where $(\tau,z) \in \mydom$, $q=\etp{\tau}$, and
\begin{equation*}
c_{\alpha,\beta}(n,t)=\sum_{\threescript{v_1,\dots,v_{N'} \in L}{Q(\alpha+v_1)+\dots+Q(\alpha+v_{N'})=n}{(\alpha+v_1)+\dots+(\alpha+v_{N'})=N't}}\etp{B(\beta,v_1+\dots+v_{N'})}.
\end{equation*}
It follows from this and Proposition \ref{prop:FourierCoeffDkp} that, $D_{N'\dimn/2,\mathbf{p}}(\theta_{\alpha,\beta}^{N'})$ is, up to a factor not depending on $\alpha,\beta$,
\begin{equation*}
\sum_{n}\left(\sum_{t}c_{\alpha,\beta}(n,t)P_{N'\dimn/2,\mathbf{p},N'\cdot G_{\mathfrak{B}}}(n,(t)_{\mathfrak{B}})\right)q^n.
\end{equation*}
One can verify immediately that the coefficient of the $q^n$-term in the above sum is equal to \eqref{eq:coeffDkpTheta}, which is denoted by $\Theta_{\alpha,\beta}(\mathbf{p},n)$ from now on. Therefore, the relation \eqref{eq:linearRelationTheta} holds, if and only if
\begin{equation}
\label{eq:linearRelationBigTheta}
\sum_{\alpha,\beta}a_{\alpha,\beta}\Theta_{\alpha,\beta}(\mathbf{p},n)=0
\end{equation}
for any $\mathbf{p} \in \widehat{\mathcal{P}_0}$ and any $n \in \numQ$. It remains to show that, we can replace the condition ``$n \in \numQ$'' with ``$n$ belonging to the numbers given by \eqref{eq:thmAlgorithmFindLinearRelationsnRange}'' in the above statement. On the one hand, since
\begin{equation*}
\theta_{\alpha, \beta}^{N'}\vert_{N'\dimn/2, N'B}\widetilde{\tbtmat{1}{1}{0}{1}}=\etp{N'Q(\alpha)}\theta_{\alpha, \beta}=\theta_{\alpha, \beta},
\end{equation*}
we have $c_{\alpha,\beta}(n,t)=0$ unless $n \in \numgeq{Z}{0}$, and so \eqref{eq:linearRelationBigTheta} holds automatically for $n \notin \numgeq{Z}{0}$. On the other hand, it follows from the fact
\begin{equation*}
[\pslZ \colon \overline{\Gamma_1(N)}]=\frac{\delta_N N^2}{2}\prod_{p \mid N}\left(1-\frac{1}{p^2}\right)
\end{equation*}
(see \cite[Corollary 6.2.13]{CoS17}), and the valence formula (see \cite[Theorem 2.1]{ZZ21}) that, for fixed $\mathbf{p}$, the relation \eqref{eq:linearRelationBigTheta} holds for all $n \in \numgeq{Z}{0}$, if and only if it holds for all $n$ belonging to the numbers given by \eqref{eq:thmAlgorithmFindLinearRelationsnRange}. This concludes the proof.
\end{proof}
In principle, for any specific even integral lattice, one can find out a maximal linearly independent set of functions among $\theta_{\alpha,\beta}^N$'s, and all linear relations among them, by analysing the corresponding vectors $\Theta_{\alpha,\beta}$ using a computer algebra system, provided that we can first find a $\mathcal{P}_0$ such that $\mathcal{F}_{\underline{L}_N, \mathcal{P}_0} \neq 0$. We use SageMath 9.3 \cite{Sage} here.

Now consider specific lattices. First recall the concept of root lattices. The ambient real space is $\underline{V}=(V,B)$, as above. Let $L$ be an integral lattice in $\underline{V}$. We say $L$ is a \emph{root lattice} in $\underline{V}$, if there is a $\numZ$-basis of $L$ containing only vectors $v$ such that $Q(v)=1$. See \cite[\S 1.4]{Ebe13} for more details, or \cite[Appendix of \S 18]{TY05} for the classification and a full description of all irreducible root systems. %\footnote{The basic relationship of root lattices and root systems are: The roots of a root lattice, that is, the vectors with $Q(v)=1$, is a root system. Moreover, two root lattices are isometrically isomorphic, if and only if the corresponding root systems are isomorphic, in the sense that there is a bijection between these two root systems that preserves the bilinear form. So, a classification of root systems leads to a classification of root lattices.}.

In the remaining of this section, we always put $V=\numR^\dimn$, hence $\mathcal{V}=\numC^\dimn$. 

\subsection{The root lattice $D_4$}
\label{subsec:The root lattice D4}
One model of $D_4$ is the lattice $\numZ^4$ in $(\numR^4, B)$, where $B$ is the bilinear form given by the Gram matrix
\begin{equation*}
\begin{pmatrix}
2 & -1 & 0 & 0 \\
-1 & 2 & -1 & -1 \\
0 & -1 & 2 & 0 \\
0 & -1 & 0 & 2
\end{pmatrix}
\end{equation*}
under the standard basis of $\numR^4$. The determinant is $4$, and the level is $2$. We fix a set of representatives of $D_4^\sharp/D_4$ as follows:
\begin{equation}
\label{eq:repD4alpha}
(0,0,0,0),\quad (0,0,1/2,1/2),\quad (1/2,0,0,1/2)\quad (1/2,0,1/2,0),
\end{equation}
and that of $\frac{1}{2}D_4/D_4^\sharp$ as follows:
\begin{equation}
\label{eq:repD4beta}
(0,0,0,0),\quad (1/2,1/2,1/2,1/2),\quad (1/2,0,0,0)\quad (0,1/2,0,0).
\end{equation}
There are $16$ theta series $\theta_{\alpha,\beta}$ associated with $D_4$, where $\alpha$ is chosen from $\eqref{eq:repD4alpha}$, and $\beta$ is chosen from \eqref{eq:repD4beta}. They are all defined on $\uhp\times\numC^4$, and the second power of them all lie in the space $J_{4, \underline{D_4}_2}(\Gamma_1(2),\chi^2)$, where the lattice $\underline{D_4}_2$ is $(\numZ^4, 2\cdot B)$. See Definition \ref{deff:thetaFunctionStudied} and Corollary \ref{coro:thetaNtransformation} for details. To give insights into these functions, we write out one of them explicitly, as a Laurent series of $q=\etp{\tau}$ and
\begin{align*}
\zeta_1&=\etp{2z_1-z_2}, & \zeta_2&=\etp{-z_1+2z_2-z_3-z_4},\\
\zeta_3&=\etp{-z_2+2z_3}, & \zeta_4&=\etp{-z_2+2z_4}.
\end{align*}
Namely, let $\alpha=(1/2,0,1/2,0)$ and $\beta=(0,1/2,0,0)$; then
\begin{multline*}
\theta_{\alpha,\beta}(\tau,(z_1,z_2,z_3,z_4))=\sum_{v_1,v_2,v_3,v_4 \in \numZ}(-1)^{v_1+v_3+v_4}\\
\times q^{(v_1+1/2)^2+v_2^2+(v_3+1/2)^2+v_4^2-(v_1+1/2)v_2-v_2(v_3+1/2)-v_2v_4}\zeta_1^{v_1+1/2}\zeta_2^{v_2}\zeta_3^{v_3+1/2}\zeta_4^{v_4}.
\end{multline*}
One can plug in $4\tau$ for $\tau$, and $(2z_1,2z_2,2z_3,2z_4)$ for $(z_1,z_2,z_3,z_4)$ to avoid fractional powers.

To apply Theorem \ref{thm:algorithmFindLinearRelations} to functions $\theta_{\alpha,\beta}^2$, we must first find a set $\mathcal{P}_0 \subset \numgeq{Z}{0}^4$ of $64$ elements such that $\mathcal{F}_{\underline{D_4}_2,\mathcal{P}_0}$ is not identically zero. (The basis $\mathfrak{B}$ is the standard basis of $\numR^4$.) To do this, note that a set of representatives of $\underline{D_4}_2^\sharp/\underline{D_4}_2$ can be obtained from one of $\underline{D_4}^\sharp/\underline{D_4}$ as follows
\begin{equation*}
\frac{1}{2}t+\frac{1}{2}(v_1, v_2, v_3, v_4),
\end{equation*}
where $t$ ranges over vectors in \eqref{eq:repD4alpha}, and $v_1$, $v_2$, $v_3$, $v_4$ range over $\{0,1\}$. By a SageMath program, we find that the set $\mathcal{P}_0$ consisting of following vectors makes the matrix \eqref{eq:matBp} non-singular:
\begin{equation*}
\begin{matrix}
(0, 0, 0, 0) & (0, 0, 0, 1) & (0, 0, 1, 0) & (0, 1, 0, 0) & (1, 0, 0, 0) & (0, 0, 0, 2)\\
(0, 0, 1, 1) & (0, 0, 2, 0) & (0, 1, 0, 1) & (0, 1, 1, 0) & (0, 2, 0, 0) & (1, 0, 0, 1)\\
(1, 0, 1, 0) & (1, 1, 0, 0) & (2, 0, 0, 0) & (0, 0, 1, 2) & (0, 0, 2, 1) & (0, 1, 0, 2)\\
(0, 1, 2, 0) & (1, 0, 0, 2) & (1, 0, 2, 0) & (1, 1, 0, 1) & (1, 1, 1, 0) & (0, 0, 0, 4)\\
(0, 0, 2, 2) & (0, 0, 4, 0) & (0, 1, 1, 2) & (0, 1, 2, 1) & (0, 2, 0, 2) & (0, 2, 1, 1)\\
(1, 0, 1, 2) & (1, 0, 2, 1) & (1, 1, 0, 2) & (1, 1, 2, 0) & (2, 0, 0, 2) & (2, 0, 1, 1)\\
(2, 2, 0, 0) & (0, 0, 1, 4) & (0, 0, 4, 1) & (0, 1, 0, 4) & (0, 1, 2, 2) & (1, 0, 0, 4)\\
(1, 0, 2, 2) & (1, 1, 1, 2) & (1, 1, 2, 1) & (0, 0, 2, 4) & (0, 0, 4, 2) & (0, 1, 1, 4)\\
(0, 1, 4, 1) & (0, 2, 0, 4) & (1, 0, 1, 4) & (1, 0, 4, 1) & (1, 1, 0, 4) & (1, 1, 2, 2)\\
(2, 0, 0, 4) & (2, 2, 0, 2) & (2, 2, 1, 1) & (0, 1, 2, 4) & (1, 0, 2, 4) & (1, 1, 1, 4)\\
(1, 1, 4, 1) & (0, 0, 4, 4) & (1, 1, 2, 4) & (2, 2, 0, 4).
\end{matrix}
\end{equation*}
Hence $\mathcal{F}_{\underline{D_4}_2,\mathcal{P}_0} \neq 0$, and Theorem \ref{thm:algorithmFindLinearRelations} is applicable\footnote{This $\mathcal{P}_0$ may not be the simplest, that is, $\abs{\widehat{\mathcal{P}_0}}$ may not be the least, since there may be some $\mathcal{P}_0'$ such that the matrix \eqref{eq:matBp} is singular, but the function $\mathcal{F}_{\underline{D_4}_2,\mathcal{P}_0'}$ is not identically zero.} for this $\mathcal{P}_0$.

The set $\widehat{\mathcal{P}_0}$ now has cardinality $94$, and the set $\mathfrak{I}$ in Theorem \ref{thm:algorithmFindLinearRelations} has cardinality $258$. There are totally $94$ polynomials $P_{4,\mathbf{p},2\cdot G_{\mathfrak{B}}}$'s to calculate first. By the SageMath program, we work out the vectors $\Theta_{\alpha,\beta}$ (whose components in this case, are all integers) and find out a maximal linearly independent set and all linear relations, as follows.
\begin{thm}
\label{thm:D4linearRelation}
For the $D_4$ lattice, the seven functions $\theta_{\alpha,0}^2$, $\theta_{0,\beta}^2$ with $\alpha$ in \eqref{eq:repD4alpha} and $\beta$ in \eqref{eq:repD4beta} are linearly independent. Moreover, for any $\alpha \in D_4^\sharp$ and $\beta \in \frac{1}{2}\numZ^4$, we have
\begin{equation*}
\theta_{\alpha,\beta}^2+\theta_{0,0}^2=\theta_{\alpha,0}^2+\theta_{0,\beta}^2.
\end{equation*}
\end{thm}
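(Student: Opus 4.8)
The plan is to reduce Theorem \ref{thm:D4linearRelation} entirely to a finite computation justified by Theorem \ref{thm:algorithmFindLinearRelations}. First I would fix all the data: the lattice is $\underline{L}=(\numZ^4,B)$ with the given Gram matrix $G_\mathfrak{B}$ (standard basis $\mathfrak{B}$), its level is $N=2$, and we take $N'=2$ so that $\underline{L}_{N'}=(\numZ^4,2B)$ has determinant $d=\abs{\underline{L}_{N'}^\sharp/\underline{L}_{N'}}=64$. The sixteen series $\theta_{\alpha,\beta}$ with $\alpha$ from \eqref{eq:repD4alpha}, $\beta$ from \eqref{eq:repD4beta} have $Q(\alpha)\in\frac12\numZ=\frac{1}{N'}\numZ$ and $\beta\in(L^\sharp+\frac12 L)/L^\sharp$, so by Proposition \ref{prop:thetaNptransformation} the squares $\theta_{\alpha,\beta}^2$ all lie in one space $J_{4,\underline{L}_2}(\Gamma_1(2),\chi^2)$, and Theorem \ref{thm:algorithmFindLinearRelations} applies once we exhibit a $\mathcal{P}_0$ with $\mathcal{F}_{\underline{L}_2,\mathcal{P}_0}\neq 0$.

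The second step is to verify that the explicit $64$-element set $\mathcal{P}_0\subset\numgeq{Z}{0}^4$ displayed in the text does the job. By Proposition \ref{prop:whenFLP0Nonzero} it suffices to check that the $63\times 63$ matrix \eqref{eq:matBp}, namely $\bigl(\sum_{v\in S_j}\prod_{l=1}^4 B(v,e_l)^{p_{i,l}}\bigr)_{2\le i,j\le 64}$, is non-singular, where $S_j$ is the set of minimal-norm vectors in the $j$-th coset of $\underline{L}_2^\sharp/\underline{L}_2$ (with $t_1=0$, $\mathbf{p}_1=\mathbf{0}$, as required). The cosets of $\underline{L}_2^\sharp$ are enumerated as $\frac12 t+\frac12(v_1,v_2,v_3,v_4)$ with $t$ running over \eqref{eq:repD4alpha} and $v_i\in\{0,1\}$, the minimal-norm sets $S_j$ are finite and easily listed, and the determinant is a concrete integer; I would simply report that the SageMath computation returns a non-zero value. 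This gives $\mathcal{F}_{\underline{L}_2,\mathcal{P}_0}\neq 0$, hence the hypothesis of Theorem \ref{thm:algorithmFindLinearRelations} is met, and also $\mathbf{0}\in\widehat{\mathcal{P}_0}$.

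Third, I would form the index set $\mathfrak{I}$: here $\widehat{\mathcal{P}_0}$ has $94$ elements and, with $\delta_N=2$ (since $N=2$), $\dimn=4$, $N'=2$, $k=N'\dimn/2=4$, the bound \eqref{eq:thmAlgorithmFindLinearRelationsnRange} is $n\le\bigl[\frac{2\cdot 4}{24}(4+s(\mathbf{p}))\cdot\frac34\bigr]=[\frac14(4+s(\mathbf{p}))]$, producing $\abs{\mathfrak{I}}=258$. For each of the sixteen pairs $(\alpha,\beta)$ I would compute the vector $\Theta_{\alpha,\beta}\colon\mathfrak{I}\to\numC$ via \eqref{eq:coeffDkpTheta}, using the polynomials $P_{4,\mathbf{p},2G_\mathfrak{B}}$ from Definition \ref{deff:PkpM} (or the efficient form in Remark \ref{rema:moreFormulaPkpM}); the entries turn out to be integers since the exponents $B(\beta,\cdot)$ are integral on the relevant vectors. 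Then by Theorem \ref{thm:algorithmFindLinearRelations}, $\sum a_{\alpha,\beta}\theta_{\alpha,\beta}^2=0$ iff $\sum a_{\alpha,\beta}\Theta_{\alpha,\beta}=0$ in $\numC^{258}$, so linear (in)dependence of the theta series is read off from the rank and kernel of the $16\times 258$ integer matrix whose rows are the $\Theta_{\alpha,\beta}$.

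Finally, I would record the output of the linear-algebra step: the seven rows $\Theta_{\alpha,0}$ ($\alpha$ in \eqref{eq:repD4alpha}) and $\Theta_{0,\beta}$ ($\beta$ in \eqref{eq:repD4beta}, excluding the double-counted $(0,0)$) are linearly independent, proving the first assertion; and for each of the nine remaining pairs $(\alpha,\beta)$ with $\alpha\neq 0\neq\beta$ one checks the single identity $\Theta_{\alpha,\beta}+\Theta_{0,0}=\Theta_{\alpha,0}+\Theta_{0,\beta}$ componentwise in $\numC^{258}$, which then lifts to $\theta_{\alpha,\beta}^2+\theta_{0,0}^2=\theta_{\alpha,0}^2+\theta_{0,\beta}^2$; the cases where $\alpha=0$ or $\beta=0$ are trivial. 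Since $\theta_{\alpha,\beta}^2$ depends only on the cosets $\alpha\in D_4^\sharp/D_4$, $\beta\in\frac12\numZ^4/D_4^\sharp$, the stated identity for all $\alpha\in D_4^\sharp$, $\beta\in\frac12\numZ^4$ follows. The only real obstacle is computational bookkeeping: correctly enumerating $\underline{L}_2^\sharp/\underline{L}_2$ and the minimal-norm sets $S_j$, and evaluating $258\times 16$ coefficients of the form \eqref{eq:coeffDkpTheta} without error — there is no conceptual difficulty once Theorem \ref{thm:algorithmFindLinearRelations} is in hand, and the SageMath source in Appendix \ref{apx:SageMath code} carries this out.
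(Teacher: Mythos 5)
Your proposal is correct and follows essentially the same route as the paper: verify the displayed $64$-element set $\mathcal{P}_0$ via Proposition \ref{prop:whenFLP0Nonzero}, invoke Proposition \ref{prop:thetaNptransformation} and Theorem \ref{thm:algorithmFindLinearRelations} with $N'=N=2$, compute the $258$-component vectors $\Theta_{\alpha,\beta}$ by the SageMath program, and read off the rank and relations by linear algebra. The numerical bookkeeping ($d=64$, $\abs{\widehat{\mathcal{P}_0}}=94$, $\abs{\mathfrak{I}}=258$, the bound $[(4+s(\mathbf{p}))/4]$) matches the paper exactly.
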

The program is completely presented in Appendix \ref{apx:SageMath programs}.

\subsection{The root lattice $A_2$}
\label{subsec:The root lattice A2}
One model of $A_2$ is the lattice $\numZ^2$ in $(\numR^2, B)$, where $B$ is the bilinear form given by the Gram matrix
\begin{equation*}
\begin{pmatrix}
2 & -1\\
-1 & 2
\end{pmatrix}
\end{equation*}
under the standard basis of $\numR^2$. The determinant is $3$, and the level is $3$. We fix a set of representatives of $A_2^\sharp/A_2$ as follows:
\begin{equation}
\label{eq:repA2alpha}
(0,0),\quad (2/3,1/3),\quad (1/3,2/3),
\end{equation}
and that of $\frac{1}{3}A_2/A_2^\sharp$ as follows:
\begin{equation}
\label{eq:repA2beta}
(0,0),\quad (1/3,1/3),\quad (2/3,2/3).
\end{equation}
There are nine theta series $\theta_{\alpha,\beta}$ associated with $A_2$, where $\alpha$ is chosen from $\eqref{eq:repA2alpha}$, and $\beta$ is chosen from \eqref{eq:repA2beta}. They are all defined on $\uhp\times\numC^2$, and the third power of them all lie in the space $J_{3, \underline{A_2}_3}(\Gamma_1(3),\chi^3)$, where the lattice $\underline{A_2}_3$ is $(\numZ^2, 3\cdot B)$. See Definition \ref{deff:thetaFunctionStudied} and Corollary \ref{coro:thetaNtransformation} for details.

The linear relations among the third power of these nine functions are known by D. Schultz. In a systematic treatment of cubic theta functions \cite{Sch13}, he defined nine theta functions, $\Theta_{ij}(\mathbf{u} \mid \tau)$ in his notation, in Section 2.3. One may check that, Schultz's functions are essentially functions $\theta_{\alpha,\beta}$ associated with $A_2$ defined here, up to some trivial changes of variables. So the following theorem should be attributed to Schultz. We only provide an alternate proof here, based on Theorem \ref{thm:algorithmFindLinearRelations} and the aid of any computer algebra system.
\begin{thm}
\label{thm:A2linearRelation}
For the $A_2$ lattice, the five functions $\theta_{\alpha,0}^3$, $\theta_{0,\beta}^3$ with $\alpha$ in \eqref{eq:repA2alpha} and $\beta$ in \eqref{eq:repA2beta} are linearly independent. Moreover, for any $\alpha \in A_2^\sharp$ and $\beta \in \frac{1}{3}\numZ^2$, we have
\begin{equation*}
\theta_{\alpha,\beta}^3+\theta_{0,0}^3=\theta_{\alpha,0}^3+\theta_{0,\beta}^3.
\end{equation*}
\end{thm}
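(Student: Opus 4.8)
The plan is to deduce Theorem \ref{thm:A2linearRelation} directly from Theorem \ref{thm:algorithmFindLinearRelations} with $\underline{L}=A_2$, $N=N'=3$, and a suitable finite set $\mathcal{P}_0 \subset \numgeq{Z}{0}^2$ of cardinality $d=\abs{\underline{A_2}_3^\sharp/\underline{A_2}_3}=27$. The whole problem is thereby reduced to a finite linear-algebra computation over $\numQ$ (or $\numC$), exactly as was done for $D_4$ in \S\ref{subsec:The root lattice D4}. So the first step is to exhibit an explicit $\mathcal{P}_0$ and verify the hypothesis of Theorem \ref{thm:algorithmFindLinearRelations}, i.e. that $\mathcal{F}_{\underline{A_2}_3, \mathcal{P}_0}^{\mathfrak{B}} \not\equiv 0$, where $\mathfrak{B}$ is the standard basis of $\numZ^2$ and $G_{\mathfrak{B}} = \tbtmat{2}{-1}{-1}{2}$. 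The cleanest way is to invoke Proposition \ref{prop:whenFLP0Nonzero}: order the $27$ classes of $\underline{A_2}_3^\sharp/\underline{A_2}_3$ so that the first is $0$, take $\mathbf{p}_1 = \mathbf{0}$, pick for each of the remaining $26$ classes $t_i$ the finite set $S_i$ of minimal-norm vectors in $t_i + L$ (with respect to $3B$), and choose the other $\mathbf{p}_i$'s so that the $26 \times 26$ matrix $\left(\sum_{v \in S_j}\prod_{l=1}^{2} B(v,e_l)^{p_{i,l}}\right)_{2 \le i,j \le 27}$ is non-singular. This is a mechanical search (a SageMath run analogous to the $D_4$ code in Appendix \ref{apx:SageMath programs}); once a working $\mathcal{P}_0$ is found and recorded, the hypothesis of Theorem \ref{thm:algorithmFindLinearRelations} is fulfilled.

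With $\mathcal{P}_0$ fixed, the second step is purely bookkeeping: form $\widehat{\mathcal{P}_0}$, compute the index set $\mathfrak{I}$ whose size is controlled by \eqref{eq:thmAlgorithmFindLinearRelationsnRange} with $\dimn=2$, $N=N'=3$, $\delta_N = 1$, $k = N'\dimn/2 = 3$ (the excerpt reports $\abs{\mathfrak{I}} = 99$), and for each $\mathbf{p} \in \widehat{\mathcal{P}_0}$ compute the polynomial $P_{3,\mathbf{p},\,3 G_{\mathfrak{B}}}$ via Definition \ref{deff:PkpM} (or the more efficient formula in Remark \ref{rema:moreFormulaPkpM}). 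Then for each of the nine pairs $(\alpha,\beta)$ with $\alpha$ in \eqref{eq:repA2alpha} and $\beta$ in \eqref{eq:repA2beta} — all of which satisfy $Q(\alpha) \in \frac13\numZ$ and $\beta \in (L^\sharp + \frac13 L)/L^\sharp$ automatically — one evaluates the vector $\Theta_{\alpha,\beta} \in \numC^{\mathfrak{I}}$ using \eqref{eq:coeffDkpTheta}, i.e. by summing over triples $(v_1,v_2,v_3) \in L^3$ with $Q(\alpha+v_1)+Q(\alpha+v_2)+Q(\alpha+v_3)=n$ for $n$ in the prescribed range. By Theorem \ref{thm:algorithmFindLinearRelations}, linear relations among the nine $\theta_{\alpha,\beta}^3$ correspond bijectively (with the same coefficients) to linear relations among the nine vectors $\Theta_{\alpha,\beta}$ in the finite-dimensional space $\numC^{\mathfrak{I}}$.

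The third step is the linear algebra over $\numC$: assemble the $9 \times \abs{\mathfrak{I}}$ matrix with rows $\Theta_{\alpha,\beta}$, compute its rank, and read off the relations. The claim is that the five rows $\Theta_{\alpha,0}$ ($\alpha$ in \eqref{eq:repA2alpha}, three of them) together with $\Theta_{0,\beta}$ ($\beta$ in \eqref{eq:repA2beta}, but $\Theta_{0,(0,0)}$ is already counted, so two new ones) are linearly independent — hence rank $\ge 5$ — and that the remaining four rows $\Theta_{\alpha,\beta}$ with both $\alpha \neq 0$ and $\beta \neq 0$ each satisfy $\Theta_{\alpha,\beta} + \Theta_{0,0} = \Theta_{\alpha,0} + \Theta_{0,\beta}$, so the rank is exactly $5$ and these are a spanning set of the relation space. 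Transporting back via Theorem \ref{thm:algorithmFindLinearRelations} yields $\theta_{\alpha,\beta}^3 + \theta_{0,0}^3 = \theta_{\alpha,0}^3 + \theta_{0,\beta}^3$ for all $\alpha \in A_2^\sharp$, $\beta \in \frac13\numZ^2$ (the extension from the fixed representatives to arbitrary $\alpha,\beta$ being immediate, since $\theta_{\alpha,\beta}^3$ depends only on the classes, up to the harmless constant noted after Definition \ref{deff:thetaFunctionStudied}, and both sides scale the same way), and the asserted independence of the five functions. The main obstacle is not conceptual but computational reliability: one must be confident that the exhibited $\mathcal{P}_0$ genuinely makes \eqref{eq:matBp} non-singular (so that Theorem \ref{thm:mainEmbed}'s embedding, and hence the ``if and only if'' of Theorem \ref{thm:algorithmFindLinearRelations}, truly applies) and that the truncation bound \eqref{eq:thmAlgorithmFindLinearRelationsnRange} has been computed correctly for $N=3$; these are exactly the points the SageMath verification certifies, and the argument for $A_2$ runs word-for-word parallel to the already-presented $D_4$ case.
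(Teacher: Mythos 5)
Your proposal is correct and follows essentially the same route as the paper: invoke Theorem \ref{thm:algorithmFindLinearRelations} with $N=N'=3$, certify a $27$-element $\mathcal{P}_0$ via the criterion of Proposition \ref{prop:whenFLP0Nonzero} (the paper lists one explicitly, with $\abs{\widehat{\mathcal{P}_0}}=31$ and $\abs{\mathfrak{I}}=99$), then compute the vectors $\Theta_{\alpha,\beta}$ by SageMath and finish with linear algebra (the paper works over $\numQ[\sqrt{-3}]$) to get rank $5$ and the four relations. The only difference is that you leave $\mathcal{P}_0$ to be produced by the search rather than recording it, which is exactly what the paper's computation supplies.
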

\begin{proof}
By Theorem \ref{thm:algorithmFindLinearRelations}, we shall calculate nine vectors $\Theta_{\alpha,\beta}$ with components \eqref{eq:coeffDkpTheta}. This is done by the SageMath program as in the last subsection. For the $A_2$ case, we may choose the set $\mathcal{P}_0$ consisting of following vectors:
\begin{equation*}
\begin{matrix}
(0, 0) & (0, 1) & (1, 0) & (0, 2) & (1, 1) & (2, 0) & (0, 3) & (1, 2) & (2, 1)\\
(3, 0) & (0, 4) & (1, 3) & (2, 2) & (3, 1) & (4, 0) & (1, 4) & (2, 3) & (3, 2)\\
(0, 6) & (2, 4) & (3, 3) & (4, 2) & (1, 6) & (3, 4) & (4, 3) & (2, 6) & (3, 6).\\
\end{matrix}
\end{equation*}
This $\mathcal{P}_0$ makes the matrix \eqref{eq:matBp} non-singular, hence the function $\mathcal{F}_{\underline{A_2}_3,\mathcal{P}_0}$ not identically zero.
The set $\widehat{\mathcal{P}_0}$ has cardinality $31$, and the set $\mathfrak{I}$ in Theorem \ref{thm:algorithmFindLinearRelations} has cardinality $99$. There are totally $31$ polynomials $P_{3,\mathbf{p},3\cdot G_{\mathfrak{B}}}$'s to calculate first. Note that these vectors $\Theta_{\alpha,\beta}$ lie in a vector space over the quadratic field $\numQ[\sqrt{-3}]$ (of dimension $99$), so we can find out a maximal linearly independent set, and all linear relations among them, by standard linear algebra algorithms.
\end{proof}

\subsection{The root lattice $A_3$}
\label{subsec:The root lattice A3}
One model of $A_3$ is the lattice $\numZ^3$ in $(\numR^3, B)$, where $B$ is the bilinear form given by the Gram matrix
\begin{equation*}
\begin{pmatrix}
2 & -1 & 0 \\
-1 & 2 & -1 \\
0 & -1 & 2
\end{pmatrix}
\end{equation*}
under the standard basis of $\numR^3$. The determinant is $4$, and the level is $8$. We fix a set of representatives of $A_3^\sharp/A_3$ as follows:
\begin{equation}
\label{eq:repA3alpha}
(0,0,0),\quad (1/2,0,1/2),\quad (3/4,1/2,1/4),\quad (1/4,1/2,3/4)
\end{equation}
and that of $\frac{1}{8}A_3/A_3^\sharp$ as follows:
\begin{equation}
\label{eq:repA3beta}
(v_1/8, v_2/8, v_3/8),\,v_1,v_2=0,1,2,\dots,7,\,v_3=0,1.
\end{equation}
There are 512 theta series $\theta_{\alpha,\beta}$ associated with $A_3$, where $\alpha$ is chosen from $\eqref{eq:repA3alpha}$, and $\beta$ is chosen from \eqref{eq:repA3beta}. They are all defined on $\uhp\times\numC^3$, and the eighth power of them all lie in the space $J_{12, \underline{A_3}_8}(\Gamma_1(8),\chi^8)$, where the lattice $\underline{A_3}_8$ is $(\numZ^3, 8\cdot B)$. See Definition \ref{deff:thetaFunctionStudied} and Corollary \ref{coro:thetaNtransformation} for details.

Our program based on Theorem \ref{thm:algorithmFindLinearRelations} is incapable of finding out and proving linear relations among $\theta_{\alpha,\beta}^8$'s (even among $\theta_{\alpha,\beta}^4$'s) within a reasonable period of time. Hence we present linear relations among $\theta_{\alpha,\beta}^2$'s below. By Theorem \ref{thm:algorithmFindLinearRelations}, $\alpha$ and $\beta$ should satisfy $Q(\alpha) \in \frac{1}{2}\numZ$ and $\beta \in (A_3^\sharp +\frac{1}{2}A_3)/A_3^\sharp$. We choose such $\alpha$ and $\beta$ from \eqref{eq:repA3alpha} and \eqref{eq:repA3beta}, obtaining following representatives respectively:
\begin{equation}
\label{eq:repA3alpha2}
(0,0,0),\quad (1/2,0,1/2)
\end{equation}
 for $\alpha$ and
\begin{equation}
\label{eq:repA3beta2}
(0,0,0),\,(0,1/2,0),\,(1/2,0,0),\,(1/2,1/2,0)
\end{equation}
for $\beta$. All eight functions $\theta_{\alpha, \beta}^2$ with $\alpha$ in \eqref{eq:repA3alpha2} and $\beta$ in \eqref{eq:repA3beta2} lie in the space $J_{3, \underline{A_3}_2}(\Gamma_1(8),\chi^3)$ (Proposition \ref{prop:thetaNptransformation}).

To apply Theorem \ref{thm:algorithmFindLinearRelations} to functions $\theta_{\alpha,\beta}^2$, we first find a set $\mathcal{P}_0 \subset \numgeq{Z}{0}^3$ of $32$ elements such that $\mathcal{F}_{\underline{A_3}_2,\mathcal{P}_0}$ is not identically zero. (The basis $\mathfrak{B}$ is the standard basis of $\numR^3$.) To do this, note that a set of representatives of $\underline{A_3}_2^\sharp/\underline{A_3}_2$ can be obtained from one of $\underline{A_3}^\sharp/\underline{A_3}$ as follows
\begin{equation*}
\frac{1}{2}t+\frac{1}{2}(v_1, v_2, v_3),
\end{equation*}
where $t$ ranges over vectors in \eqref{eq:repA3alpha}, and $v_1$, $v_2$, $v_3$ range over $\{0,1\}$. By a SageMath program, we find that the set $\mathcal{P}_0$ consisting of following vectors makes the matrix \eqref{eq:matBp} non-singular:
\begin{equation*}
\begin{matrix}
(0, 0, 0) & (0, 0, 1) & (0, 1, 0) & (1, 0, 0) & (0, 0, 2) & (0, 1, 1) & (0, 2, 0) & (1, 0, 1)\\
(1, 1, 0) & (2, 0, 0) & (0, 1, 2) & (1, 0, 2) & (1, 1, 1) & (1, 2, 0) & (0, 0, 4) & (0, 2, 2)\\
(0, 3, 1) & (1, 1, 2) & (2, 0, 2) & (2, 1, 1) & (2, 2, 0) & (0, 1, 4) & (1, 0, 4) & (1, 2, 2)\\
(1, 3, 1) & (0, 2, 4) & (1, 1, 4) & (2, 0, 4) & (2, 2, 2) & (2, 3, 1) & (1, 2, 4) & (2, 2, 4).
\end{matrix}
\end{equation*}
Hence $\mathcal{F}_{\underline{A_3}_2,\mathcal{P}_0} \neq 0$, and Theorem \ref{thm:algorithmFindLinearRelations} is applicable for this $P_0$.

The set $\widehat{\mathcal{P}_0}$ now has cardinality $43$, and the set $\mathfrak{I}$ in Theorem \ref{thm:algorithmFindLinearRelations} has cardinality $635$. There are totally $43$ polynomials $P_{3,\mathbf{p},2\cdot G_{\mathfrak{B}}}$'s to calculate first. By the SageMath program, we work out the vectors $\Theta_{\alpha,\beta}$ (whose components in this case, are all integers) and find out a maximal linearly independent set and all linear relations, as follows.
\begin{thm}
\label{thm:A3linearRelation2}
For the $A_3$ lattice, the five functions $\theta_{\alpha,0}^2$, $\theta_{0,\beta}^2$ with $\alpha$ in \eqref{eq:repA3alpha2} and $\beta$ in \eqref{eq:repA3beta2} are linearly independent. Moreover, for any $\alpha$ in \eqref{eq:repA3alpha2} and $\beta$ in \eqref{eq:repA3beta2}, we have
\begin{equation*}
\theta_{\alpha,\beta}^2+\theta_{0,0}^2=\theta_{\alpha,0}^2+\theta_{0,\beta}^2.
\end{equation*}
\end{thm}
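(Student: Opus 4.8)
The plan is to obtain Theorem~\ref{thm:A3linearRelation2} as a direct instance of the algorithm of Theorem~\ref{thm:algorithmFindLinearRelations}, applied with $L=A_3$, $N=8$, $N'=2$, and $\mathfrak{B}$ the standard basis of $\numR^3$. First I would record that the relevant hypotheses of Proposition~\ref{prop:thetaNptransformation} hold for the eight representatives in question: for $\alpha$ in \eqref{eq:repA3alpha2} one checks $Q(\alpha)\in\frac12\numZ$, and the $\beta$ in \eqref{eq:repA3beta2} indeed lie in $(A_3^\sharp+\frac12 A_3)/A_3^\sharp$; hence all eight functions $\theta_{\alpha,\beta}^2$ lie in the single space $J_{3,\underline{A_3}_2}(\Gamma_1(8),\chi^3)$, so it is meaningful to search for linear relations among them via an embedding of this space into a product of spaces of modular forms.

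The one nontrivial input is verifying the hypothesis $\mathcal{F}_{\underline{A_3}_2,\mathcal{P}_0}^{\mathfrak{B}}\not\equiv 0$ of Theorem~\ref{thm:algorithmFindLinearRelations} for the explicit $32$-element set $\mathcal{P}_0\subset\numgeq{Z}{0}^3$ displayed above. Rather than expand the full $q$-series of the $32\times 32$ determinant \eqref{eq:detThetaMatrix}, I would invoke Proposition~\ref{prop:whenFLP0Nonzero}: with $t_1=0$ and $\mathbf{p}_1=\mathbf{0}$, it suffices that the $31\times 31$ integer matrix \eqref{eq:matBp}, namely $\bigl(\sum_{v\in S_j}\prod_{l=1}^{3}B(v,e_l)^{p_{i,l}}\bigr)_{2\le i,j\le 32}$ with $S_j$ the minimal-norm vectors of the $j$-th coset of $\underline{A_3}_2^\sharp/\underline{A_3}_2$, be non-singular. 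This is a finite exact linear-algebra computation, performed in SageMath; once it succeeds, $\mathcal{F}_{\underline{A_3}_2,\mathcal{P}_0}\not\equiv 0$ and Theorem~\ref{thm:algorithmFindLinearRelations} applies verbatim.

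Next I would run the algorithm itself. The partial-order completion $\widehat{\mathcal{P}_0}$ has $43$ elements, and since $N=8$ gives $\delta_N=1$ and $\prod_{p\mid 8}(1-p^{-2})=\frac34$, the bound \eqref{eq:thmAlgorithmFindLinearRelationsnRange} reads $0\le n\le\bigl[2(3+s(\mathbf{p}))\bigr]$, so the index set $\mathfrak{I}$ of pairs $(\mathbf{p},n)$ has cardinality $635$. For each of the $43$ vectors $\mathbf{p}\in\widehat{\mathcal{P}_0}$ I would compute the polynomial $P_{3,\mathbf{p},2G_{\mathfrak{B}}}$ from Definition~\ref{deff:PkpM} (more efficiently via the formula in Remark~\ref{rema:moreFormulaPkpM}); then for each of the eight pairs $(\alpha,\beta)$ I would assemble the integer vector $\Theta_{\alpha,\beta}\in\numZ^{635}$ whose $(\mathbf{p},n)$-entry is the finite sum \eqref{eq:coeffDkpTheta} over $v_1,v_2\in A_3$ with $Q(\alpha+v_1)+Q(\alpha+v_2)=n$. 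Finally, exact linear algebra over $\numQ$ applied to the $8\times 635$ matrix of the $\Theta_{\alpha,\beta}$ shows that its rank is $5$, with $\{\theta_{\alpha,0}^2,\theta_{0,\beta}^2\}$ a maximal independent family, and that its kernel is spanned by the three vectors recording $\Theta_{\alpha,\beta}+\Theta_{0,0}-\Theta_{\alpha,0}-\Theta_{0,\beta}=0$ (one for each nontrivial pair, i.e.\ $\alpha=(1/2,0,1/2)$ and $\beta\in\{(0,1/2,0),(1/2,0,0),(1/2,1/2,0)\}$). By the ``if and only if'' of Theorem~\ref{thm:algorithmFindLinearRelations}, these lift to exactly the asserted identities $\theta_{\alpha,\beta}^2+\theta_{0,0}^2=\theta_{\alpha,0}^2+\theta_{0,\beta}^2$, together with the independence statement.

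Since every step is a finite, exact computation, there is no analytic obstacle. The bulk of the effort, and the one place where a wrong choice can fail, is certifying $\mathcal{F}_{\underline{A_3}_2,\mathcal{P}_0}\not\equiv 0$: one must pick $\mathcal{P}_0$ so that the leading-coefficient matrix \eqref{eq:matBp} of Proposition~\ref{prop:whenFLP0Nonzero} is non-singular, and evaluate a $31\times 31$ determinant whose entries are themselves sums of monomials in the small inner products $B(v,e_l)$; a poor choice of $\mathcal{P}_0$ can render this matrix singular even when a viable $\mathcal{P}_0$ exists. Everything downstream --- the size of $\mathfrak{I}$, the polynomials $P_{3,\mathbf{p},2G_{\mathfrak{B}}}$, the vectors $\Theta_{\alpha,\beta}$, and the concluding rank and kernel computation --- is then mechanical.
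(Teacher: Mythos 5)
Your proposal is correct and follows essentially the same route as the paper: it certifies $\mathcal{F}_{\underline{A_3}_2,\mathcal{P}_0}\not\equiv 0$ through the non-singularity of the matrix \eqref{eq:matBp} from Proposition \ref{prop:whenFLP0Nonzero} for a $32$-element $\mathcal{P}_0$, then applies Theorem \ref{thm:algorithmFindLinearRelations} with $N'=2$ (so $\abs{\widehat{\mathcal{P}_0}}=43$, $\abs{\mathfrak{I}}=635$) and finishes by exact linear algebra on the vectors $\Theta_{\alpha,\beta}$, exactly as in \S\ref{subsec:The root lattice A3}. The identification of rank $5$ with basis $\{\theta_{\alpha,0}^2,\theta_{0,\beta}^2\}$ and of the three nontrivial relations matches the paper's conclusion.
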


\subsection{The root lattice $A_2\oplus A_2$}
\label{subsec:The root lattice A2A2}
The aim of this subsection is to present a concrete and non-trivial example, showing that when $\mathcal{P}_0$ is not sufficiently large, then the map $\prod_{\mathbf{p} \in \widehat{\mathcal{P}_0}}D_{k,\mathbf{p}}$ in Theorem \ref{thm:mainEmbed} may not be injective.

One model of $A_2\oplus A_2$ is the lattice $\numZ^4$ in $(\numR^4, B)$, where $B$ is the bilinear form given by the Gram matrix
\begin{equation*}
\begin{pmatrix}
2 & -1 & 0 & 0\\
-1 & 2 & 0 & 0\\
0 & 0 & 2 & -1\\
0 & 0 & -1 & 2
\end{pmatrix}
\end{equation*}
under the standard basis of $\numR^4$. The determinant is $9$, and the level is $3$. For simplicity, let $2A_2$ denote this root lattice. We fix a set of representatives of $(2A_2)^\sharp/2A_2$ as follows:
\begin{align}
\label{eq:rep2A2alpha}
(0,0,0,0)&, &(0,0,2/3,1/3)&, &(0,0,1/3,2/3),\\
(2/3,1/3,0,0)&, &(2/3,1/3,2/3,1/3)&, &(2/3,1/3,1/3,2/3),\notag\\
(1/3,2/3,0,0)&, &(1/3,2/3,2/3,1/3)&, &(1/3,2/3,1/3,2/3)\notag
\end{align}
and that of $\frac{1}{3}(2A_2)/(2A_2)^\sharp$ as follows:
\begin{align}
\label{eq:rep2A2beta}
(0,0,0,0)&, &(0,0,1/3,1/3)&, &(0,0,2/3,2/3),\\
(1/3,1/3,0,0)&, &(1/3,1/3,1/3,1/3)&, &(1/3,1/3,2/3,2/3),\notag\\
(2/3,2/3,0,0)&, &(2/3,2/3,1/3,1/3)&, &(2/3,2/3,2/3,2/3).\notag
\end{align}
There are 81 theta series $\theta_{\alpha,\beta}$ associated with $2A_2$, where $\alpha$ is chosen from $\eqref{eq:rep2A2alpha}$, and $\beta$ is chosen from \eqref{eq:rep2A2beta}. They are all defined on $\uhp\times\numC^4$, and the third power of them all lie in the space $J_{6, \underline{2A_2}_3}(\Gamma_1(3),\chi^3)$, where the lattice $\underline{2A_2}_3$ is $(\numZ^4, 3\cdot B)$. See Definition \ref{deff:thetaFunctionStudied} and Corollary \ref{coro:thetaNtransformation} for details. To make formulae concise, put $\theta_{ij}=\theta_{\alpha_i,\beta_j}$ where $\alpha_i$ is the $(i+1)$-th item in \eqref{eq:rep2A2alpha} and $\beta_j$ is the $(j+1)$-th item in \eqref{eq:rep2A2beta} ($i,j=0,1,\dots,8$).
\begin{examp}
Put
\begin{equation*}
f=\theta_{00}^3 - \theta_{01}^3 - \theta_{03}^3 + \theta_{04}^3 - \theta_{10}^3 - \theta_{30}^3 + \theta_{40}^3 + \theta_{13}^3 + \theta_{31}^3 - \theta_{88}^3.
\end{equation*}
Then $D_{6, \mathbf{p}}f=0$ for any $\mathbf{p} \in \numgeq{Z}{0}^4$ with $s(\mathbf{p}) \leq 6$, but $D_{6, (0, 3, 0, 4)}f \neq 0$. This implies that the map $\prod_{\mathbf{p} \in \widehat{\mathcal{P}_0}}D_{k,\mathbf{p}}$ with $\mathcal{P}_0=\{\mathbf{p} \in \numgeq{Z}{0}^4 \colon s(\mathbf{p}) \leq 6)\}$ is not injective, although $\abs{\mathcal{P}_0}=210$.
\end{examp}

\subsection{The principal binary form of discriminant $-15$}
\label{subsec:The principal binary form of discriminant -15}
The purpose of this subsection is to show with an example that, it seems that there is no linear relation among the powers of certain functions $\theta_{\alpha,\beta}$ associated with a lattice that is not root lattice.

There are two classes of binary quadratic forms of discriminant $-15$, one is $x^2+xy+4y^2$ and the other is $2x^2+xy+2y^2$. This subsection is devoted to the former form. In language of lattices, we deal with $L=\numZ^2$ in $(\numR^2, B)$, where $B$ is the bilinear form given by the Gram matrix
\begin{equation*}
\begin{pmatrix}
2 & 1  \\
1 & 8
\end{pmatrix}
\end{equation*}
under the standard basis of $\numR^2$. The determinant and level are both $15$. We fix a set of representatives of $L^\sharp/L$ as follows:
\begin{equation}
\label{eq:repLat2118alpha}
\begin{matrix}
(0,0) & (1/15,13/15) & (2/15,11/15) & (3/15, 9/15) & (4/15, 7/15) \\
(5/15, 5/15) & (6/15, 3/15) & (7/15, 1/15) & (8/15, 14/15) & (9/15, 12/15) \\
(10/15, 10/15) & (11/15, 8/15) & (12/15, 6/15) & (13/15, 4/15) & (14/15, 2/15),
\end{matrix}
\end{equation}
and that of $\frac{1}{15}L/L^\sharp$ as follows:
\begin{equation}
\label{eq:repLat2118beta}
(v/15, 0),\quad v=0,1,2,\dots,14.
\end{equation}
We shall show the third powers $\theta_{\alpha, \beta}^3$, where $\alpha$ and $\beta$ satisfy $Q(\alpha) \in \frac{1}{3}\numZ$ and $\beta \in (L^\sharp+\frac{1}{3}L)/L^\sharp$, are $\numC$-linearly independent. (For definitions, see Definition \ref{deff:thetaFunctionStudied}.) We choose such $\alpha$ and $\beta$ from \eqref{eq:repLat2118alpha} and \eqref{eq:repLat2118beta} respectively, obtaining following sets of representatives:
\begin{align}
\alpha &\colon \quad (0,0) \quad (1/3,1/3) \quad (2/3,2/3), \label{eq:repLat2118alpha3}\\
\beta  &\colon \quad (0,0) \quad (1/3,0) \quad (2/3,0). \label{eq:repLat2118beta3}
\end{align}

To apply Theorem \ref{thm:algorithmFindLinearRelations} to functions $\theta_{\alpha,\beta}^3$, we first find a set $\mathcal{P}_0 \subset \numgeq{Z}{0}^2$ of $135$ elements such that $\mathcal{F}_{\underline{L}_3,\mathcal{P}_0}$ is not identically zero. (The basis $\mathfrak{B}$ is the standard basis of $\numR^2$.) To do this, note that a set of representatives of $\underline{L}_3^\sharp/\underline{L}_3$ can be obtained from one of $\underline{L}^\sharp/\underline{L}$ as follows
\begin{equation*}
\frac{1}{3}t+\frac{1}{3}(v_1, v_2),
\end{equation*}
where $t$ ranges over vectors in \eqref{eq:repLat2118alpha}, and $v_1$, $v_2$ range over $\{0,1,2\}$. By a SageMath program, we find out a set $\mathcal{P}_0$ that makes the matrix \eqref{eq:matBp} non-singular, but we omit it for the sake of place. Hence $\mathcal{F}_{\underline{L}_3,\mathcal{P}_0} \neq 0$, and Theorem \ref{thm:algorithmFindLinearRelations} is applicable for this $P_0$.

The set $\widehat{\mathcal{P}_0}$ now has cardinality $139$, and the set $\mathfrak{I}$ in Theorem \ref{thm:algorithmFindLinearRelations} has cardinality $18563$. It is unnecessary to work out the vectors $\Theta_{\alpha,\beta}$ completely, since we are proving the linear independence, not finding linear relations. It turns out that, the nine subsequences $\Theta_{\alpha,\beta}\vert_{\mathfrak{I}'}$, where
\begin{equation*}
\mathfrak{I}'=\{(\mathbf{p},n) \in \mathfrak{I} \colon n=0,\,1,\,2\},
\end{equation*}
are $\numC$-linearly independent. (These subsequences can be calculated by our SageMath program very fast, while those original sequences can not.) Hence, $\Theta_{\alpha,\beta}$'s themselves are linearly independent. So according to Theorem \ref{thm:algorithmFindLinearRelations}, we have proved
\begin{prop}
The nine functions $\theta_{\alpha,\beta}^3$ associated with the quadratic form $x^2+xy+4y^2$ are $\numC$-linearly independent, where $\alpha$ and $\beta$ are chosen from \eqref{eq:repLat2118alpha3} and \eqref{eq:repLat2118beta3} respectively.
\end{prop}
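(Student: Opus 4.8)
The plan is to invoke Theorem~\ref{thm:algorithmFindLinearRelations} in the case $N=15$, $N'=3$, with $\underline{L}_3=(\numZ^2,\, 3\cdot B)$ the lattice whose Gram matrix (with respect to the standard basis $\mathfrak{B}$ of $\numR^2$) is $3\cdot\tbtmat{2}{1}{1}{8}$, so that $d=\abs{\underline{L}_3^\sharp/\underline{L}_3}=9\cdot 15 = 135$. The hypotheses of that theorem to be verified are: first, that $L$ is an even integral lattice --- this is clear since $Q(x,y)=x^2+xy+4y^2\in\numZ$ on $\numZ^2$; second, that $Q(\alpha)\in\frac{1}{3}\numZ$ and $\beta\in(L^\sharp+\frac13 L)/L^\sharp$ for the chosen $\alpha$ in \eqref{eq:repLat2118alpha3} and $\beta$ in \eqref{eq:repLat2118beta3} --- a direct computation (e.g. $Q(1/3,1/3)=\frac19(1+1+4)=\frac23$, etc.); and third, the crucial nonvanishing of $\mathcal{F}_{\underline{L}_3,\mathcal{P}_0}^{\mathfrak{B}}$ for a suitable set $\mathcal{P}_0\subset\numgeq{Z}{0}^2$ of cardinality $135$.

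First I would produce the set $\mathcal{P}_0$ by the computational criterion of Proposition~\ref{prop:whenFLP0Nonzero}: one searches (by the SageMath program of Appendix~\ref{apx:SageMath code}, adapted to this lattice) for $135$ vectors $\mathbf{p}_1=\mathbf{0},\mathbf{p}_2,\dots,\mathbf{p}_{135}$ such that, with $t_1=0$ and $t_2,\dots,t_{135}$ the remaining representatives of $\underline{L}_3^\sharp/\underline{L}_3$ (obtained as $\frac13 t+\frac13(v_1,v_2)$ with $t$ ranging over \eqref{eq:repLat2118alpha} and $v_1,v_2\in\{0,1,2\}$), the $(d-1)\times(d-1)$ matrix $\left(\sum_{v\in S_j}\prod_{l=1}^{2}B(v,e_l)^{p_{i,l}}\right)_{2\le i,j\le d}$ is non-singular, where $S_j$ is the set of minimal-norm vectors in $t_j+L$. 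Once such a $\mathcal{P}_0$ is found, Proposition~\ref{prop:whenFLP0Nonzero} gives $\mathcal{F}_{\underline{L}_3,\mathcal{P}_0}^{\mathfrak{B}}\not\equiv 0$, so Theorem~\ref{thm:algorithmFindLinearRelations} applies and the problem reduces to a finite linear-algebra statement about the vectors $\Theta_{\alpha,\beta}$ indexed by $\mathfrak{I}$.

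Next, rather than computing the full vectors $\Theta_{\alpha,\beta}$ over the index set $\mathfrak{I}$ (whose cardinality $18563$ makes this expensive), I would exploit that to prove \emph{linear independence} of the nine series $\theta_{\alpha,\beta}^3$ it suffices, by Theorem~\ref{thm:algorithmFindLinearRelations}, to exhibit \emph{one} subset $\mathfrak{I}'\subseteq\mathfrak{I}$ for which the nine restricted vectors $\Theta_{\alpha,\beta}\vert_{\mathfrak{I}'}$ are already $\numC$-linearly independent: indeed, any linear relation $\sum a_{\alpha,\beta}\theta_{\alpha,\beta}^3=0$ forces $\sum a_{\alpha,\beta}\Theta_{\alpha,\beta}=0$, hence $\sum a_{\alpha,\beta}\Theta_{\alpha,\beta}\vert_{\mathfrak{I}'}=0$, which by independence gives all $a_{\alpha,\beta}=0$. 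I would take $\mathfrak{I}'=\{(\mathbf{p},n)\in\mathfrak{I}\colon n=0,1,2\}$; its entries are computed from \eqref{eq:coeffDkpTheta} with the polynomials $P_{3,\mathbf{p},3\cdot G_{\mathfrak{B}}}$ (Definition~\ref{deff:PkpM}), and the $9\times\abs{\mathfrak{I}'}$ matrix of these values is then checked to have rank $9$ by an exact (rational, or $\numQ[\sqrt{-3}]$) Gaussian elimination. This establishes the proposition.

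The main obstacle I anticipate is purely computational and twofold: (i) finding a valid $\mathcal{P}_0$ of the required size $135$ — the search space of vectors $\mathbf{p}\in\numgeq{Z}{0}^2$ and the $134\times 134$ non-singularity test over $\numZ$ must be organized carefully (one enumerates $\mathbf{p}$ in order of increasing $s(\mathbf{p})$ and greedily extends a maximal set keeping the matrix \eqref{eq:matBp} of full rank), and there is no a priori guarantee the greedy choice succeeds, though in practice it does; and (ii) the evaluation of the truncated $\Theta_{\alpha,\beta}\vert_{\mathfrak{I}'}$, which requires enumerating, for each of the small values $n=0,1,2$, all triples $(v_1,v_2,v_3)\in L^3$ with $Q(\alpha+v_1)+Q(\alpha+v_2)+Q(\alpha+v_3)=n$ — a finite set since $Q$ is positive definite, and small for $n\le 2$. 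No genuinely new mathematical input beyond Theorem~\ref{thm:algorithmFindLinearRelations} and Proposition~\ref{prop:whenFLP0Nonzero} is needed; the content is the verification that the resulting finite matrix has rank $9$, which the computer algebra system confirms.
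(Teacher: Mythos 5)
Your proposal matches the paper's proof essentially step for step: apply Theorem \ref{thm:algorithmFindLinearRelations} with $N'=3$ and $d=135$, certify a suitable $\mathcal{P}_0$ via the rank criterion of Proposition \ref{prop:whenFLP0Nonzero} (computed by the SageMath program), and then conclude from the $\numC$-linear independence of the truncated vectors $\Theta_{\alpha,\beta}\vert_{\mathfrak{I}'}$ with $\mathfrak{I}'=\{(\mathbf{p},n)\colon n=0,1,2\}$, exactly as the paper does to avoid computing all $18563$ components. The reduction "independence of restrictions implies independence of the full vectors, hence of the $\theta_{\alpha,\beta}^3$" is the same argument used in the paper, so the proposal is correct and not a different route.
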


Now consider fifth powers $\theta_{\alpha,\beta}^5$ with $Q(\alpha) \in \frac{1}{5}\numZ$ and $\beta \in (L^\sharp+\frac{1}{5}L)/L^\sharp$. We choose $\alpha$ and $\beta$ from \eqref{eq:repLat2118alpha} and \eqref{eq:repLat2118beta} respectively, obtaining following sets of representatives:
\begin{align}
\alpha &\colon \quad (0,0) \quad (1/5,3/5) \quad (2/5,1/5) \quad (3/5,4/5) \quad (4/5,2/5), \label{eq:repLat2118alpha5}\\
\beta  &\colon \quad (0,0) \quad (1/5,0) \quad (2/5,0) \quad (3/5,0) \quad (4/5,0). \label{eq:repLat2118beta5}
\end{align}
Similar to the case of third powers, we can prove the following:
\begin{prop}
The 25 functions $\theta_{\alpha,\beta}^5$ associated with the quadratic form $x^2+xy+4y^2$ are $\numC$-linearly independent, where $\alpha$ and $\beta$ are chosen from \eqref{eq:repLat2118alpha5} and \eqref{eq:repLat2118beta5} respectively.
\end{prop}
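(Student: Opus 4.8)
The plan is to mimic exactly the strategy used to prove the preceding proposition for third powers, only replacing the exponent $3$ by $5$ and the relevant sets of representatives \eqref{eq:repLat2118alpha3}, \eqref{eq:repLat2118beta3} by \eqref{eq:repLat2118alpha5}, \eqref{eq:repLat2118beta5}. First I would invoke Proposition \ref{prop:thetaNptransformation} with $N'=5$ (a divisor of the level $N=15$) to conclude that all $25$ functions $\theta_{\alpha,\beta}^5$ with $Q(\alpha)\in\frac15\numZ$ and $\beta\in(L^\sharp+\frac15 L)/L^\sharp$ lie in the single space $J_{5, \underline{L}_5}(\Gamma_1(15),\chi^5)$, so that linear (in)dependence among them is a meaningful question inside one ambient space of Jacobi forms. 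Then I would set $\underline{L}_5=(\numZ^2,5\cdot B)$, note $\abs{\underline{L}_5^\sharp/\underline{L}_5}=\det(5\cdot G_\mathfrak{B})=25\cdot 15=375$, and find (by the SageMath program, using the non-singularity criterion \eqref{eq:matBp} of Proposition \ref{prop:whenFLP0Nonzero}) a set $\mathcal{P}_0\subset\numgeq{Z}{0}^2$ of cardinality $375$ for which $\mathcal{F}_{\underline{L}_5,\mathcal{P}_0}^\mathfrak{B}$ is not identically zero. This makes Theorem \ref{thm:algorithmFindLinearRelations} applicable: a linear relation among the $\theta_{\alpha,\beta}^5$'s holds if and only if the same relation holds among the associated sequences $\Theta_{\alpha,\beta}\colon\mathfrak{I}\to\numC$.

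The crucial simplification, just as in the third-power case, is that to prove \emph{linear independence} one does not need the full sequences $\Theta_{\alpha,\beta}$ indexed by all of $\mathfrak{I}$; it suffices to exhibit one finite sub-index set $\mathfrak{I}'\subseteq\mathfrak{I}$ on which the $25$ restricted sequences $\Theta_{\alpha,\beta}\vert_{\mathfrak{I}'}$ are already $\numC$-linearly independent. Concretely I would take $\mathfrak{I}'=\{(\mathbf{p},n)\in\mathfrak{I}\colon n=0,1,2\}$ (or, if that turns out to be too small, $n=0,1,2,3$), compute the entries \eqref{eq:coeffDkpTheta} for these low values of $n$ — which only requires enumerating the finitely many $5$-tuples $(v_1,\dots,v_5)\in L^5$ with $Q(\alpha+v_1)+\dots+Q(\alpha+v_5)=n$ for $n\le 2$ or $3$, together with evaluating the polynomials $P_{5,\mathbf{p},5\cdot G_\mathfrak{B}}$ from Definition \ref{deff:PkpM} — assemble the resulting $25\times\abs{\mathfrak{I}'}$ matrix, and check that it has rank $25$. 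Because the entries are computed exactly (they lie in the cyclotomic field $\numQ(\etp{1/15})$, the values of the characters $\etp{B(\beta,\cdot)}$), the rank computation is an exact linear-algebra step with no numerical error, hence a rigorous proof.

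The only genuine obstacle is computational feasibility rather than mathematics: finding a valid $\mathcal{P}_0$ of size $375$ is more expensive than the size-$135$ search needed for the third powers, and one must verify that the truncated sequences on $\mathfrak{I}'$ actually attain full rank $25$ — it is conceivable that $n\in\{0,1,2\}$ is insufficient and one is forced to enlarge $\mathfrak{I}'$, which increases the enumeration cost of the tuples $(v_1,\dots,v_5)$. The author's own phrasing ``Similar to the case of third powers, we can prove the following'' signals that this is exactly what happens: the argument is structurally identical, and the write-up would consist of (i) the citation of Proposition \ref{prop:thetaNptransformation} and Theorem \ref{thm:algorithmFindLinearRelations}, (ii) the statement that a suitable $\mathcal{P}_0$ exists (found by the program, omitted for brevity as in the $x^2+xy+4y^2$ cubic case), and (iii) the remark that the sub-sequences $\Theta_{\alpha,\beta}\vert_{\mathfrak{I}'}$ are linearly independent, verified by SageMath, from which the full independence follows. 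I would present it in precisely that compressed form.
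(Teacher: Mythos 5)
Your proposal is correct and follows essentially the same route as the paper: invoke Proposition \ref{prop:thetaNptransformation} and Theorem \ref{thm:algorithmFindLinearRelations} with $N'=5$, find a suitable $\mathcal{P}_0$ (of cardinality $\abs{\underline{L}_5^\sharp/\underline{L}_5}=375$) making $\mathcal{F}_{\underline{L}_5,\mathcal{P}_0}$ nonzero, and then verify by exact computation that the restricted sequences $\Theta_{\alpha,\beta}\vert_{\mathfrak{I}'}$ with small $n$ already have full rank $25$, which suffices for independence. This is exactly the argument the paper compresses into ``similar to the case of third powers.''
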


\section{Miscellaneous observations and open questions}
\label{sec:Miscellaneous observations and open questions}

\subsection{The case of odd lattices}
\label{subsec:The case of odd lattices}
Our main theorem, Theorem \ref{thm:mainEmbed}, is valid for both even lattices and odd lattices. In fact, we have made much effort to state many propositions general for both kinds of lattices. The first place where even and odd lattices behave differently is in Theorem \ref{thm:thetaSeriesLatticeJacobiForm}. And then in Theorem \ref{thm:thetaDecomposition}, the assertion for even lattices is stronger than that for odd lattices, while the weak version (that is, Remark \ref{rama:conditionOnRhoThetaDecompWeak}) for both kinds of lattices suffices in the proof of the main theorem.

However, the whole Section \ref{sec:Application: Linear relations among theta series}, in particular Theorem \ref{thm:algorithmFindLinearRelations}, concerns only even lattices. It is for the reason of simplicity. Actually, there is a version of Theorem \ref{thm:algorithmFindLinearRelations} that works for both cases. Here we briefly describe what this general version looks like.

First, we generalize Theorem \ref{thm:thetaSeriesLatticeJacobiForm} to allow odd lattices. Let $\Gamma_\theta$ be the subgroup\footnote{Equivalently, $\Gamma_\theta=\{\tbtmat{a}{b}{c}{d} \in \slZ \colon a \equiv d \pmod 2,\,b \equiv c \pmod 2\}$. It has index $3$ in $\slZ$. See \cite[Proposition 2.3.3(a)]{CoS17}.} of $\slZ$ generated by $\tbtmat{1}{2}{0}{1}$ and $\tbtmat{0}{-1}{1}{0}$. Then one can prove that
\begin{equation*}
\vartheta_{\underline{L}} \in \JacForm{\dimn/2}{L}{\Gamma_\theta}{\rho_{\underline{L}}},
\end{equation*}
where $L$ is any integral lattice in $\underline{V}=(V, B)$ and $\rho_{\underline{L}}$ is the group representation of $\Jacgrp{\Gamma_\theta}{L}$ defined by the same formulae used to define \eqref{eq:WeilRep}, except for the first formula, which should be replaced by
\begin{equation*}
\rho_{\underline{L}}\widetilde{\tbtmat{1}{2}{0}{1}}\delta_x = \etp{2Q(x)}\delta_x.
\end{equation*}
One may prove that these three formulae indeed can be extended to a representation of $\Jacgrp{\Gamma_\theta}{L}$ (by transformation laws for Jacobi theta series or by firstly finding a presentation of $\Gamma_\theta$). More general transformation formulae, \eqref{eq:thetaTransformationLambda}, still hold for odd lattices, provided that $\tbtmat{a}{b}{c}{d} \in \Gamma_{\theta}$ and $c \neq 0$. The proof is almost the same as that for even lattices. In this proof, factors like $\etp{acQ(v)}$ and $\etp{bdQ(v)}$ occur ($v \in L$). It is because $\tbtmat{a}{b}{c}{d} \in \Gamma_{\theta}$ that such factors all equal $1$. Another version of general transformation formulae, \eqref{eq:thetaTransformationMu}, again holds for odd lattices, provided that $\tbtmat{a}{b}{c}{d} \in \Gamma_{\theta}$ and $d \neq 0$. ($c=0$ is permitted.)

Next, consider Definition \ref{deff:thetaFunctionStudied}. It also works for odd lattices, but perhaps the least positive integer $N$ such that $N\cdot Q(v) \in \numZ$ for all $v \in L^\sharp$ is not commonly called ``level'' in the context of odd lattices, as we shall do. Lemma \ref{lemm:thetaLatticeTransformationN} can be immediately generalized to cover the case of odd lattices --- just replace the condition $\tbtmat{a}{b}{c}{d} \in \Gamma_0(N)$ by $\tbtmat{a}{b}{c}{d} \in \Gamma_0(N)\cap \Gamma_\theta$ and leave the remaining part unchanged. Proposition \ref{prop:thetaAlphaBetaJacobiForm}, Corollary \ref{coro:thetaNtransformation} and Proposition \ref{prop:thetaNptransformation} are based on Lemma \ref{lemm:thetaLatticeTransformationN}, so they can also be generalized to include the odd-lattice case by replacing the group, say $\Gamma_1(N)$, with $\Gamma_1(N) \cap \Gamma_\theta$. But there is some subtle difference in values of $\chi_{\alpha,\beta}$ in Proposition \ref{prop:thetaAlphaBetaJacobiForm}. For odd lattice $L$, the value of $\chi_{\alpha,\beta}$ at $\eleactgrpnsimple{a}{b}{c}{d}{\varepsilon}{v}{w}{\xi} \in \Jacgrp{\Gamma_1(N)\cap \Gamma_\theta}{L}$ (with $d \neq 0$) is
\begin{equation*}
\varepsilon^{-\dimn}\xi(-\rmi)^{\frac{1}{2}\dimn(1-\sgn{d})\legendre{c}{-1}}\cdot\mathfrak{g}(b,d;0)\etp{bQ(\alpha)-cdQ(\beta)+\frac{1}{2}B(v,w)-B(v,\beta)},
\end{equation*}
which is not totally same as that in Proposition \ref{prop:thetaAlphaBetaJacobiForm}. % We may simply consider only those $\beta$ satisfying $Q(N\beta) \in \numZ$, and drop other choices of $\beta$, to retain the formula in Proposition \ref{prop:thetaAlphaBetaJacobiForm}. Another way to retain this formula is to replace the group $\Gamma_1(N)$ by $\Gamma_1(2N)$, which has the advantage that we need not ignore some choices of $\beta$, but has the disadvantage that the program would take more time. In the following, we choose the former solution.
Moreover, we state the odd-lattice version of Proposition \ref{prop:thetaNptransformation} explicitly.
\begin{prop}
\label{prop:thetaNptransformationOdd}
Let $L$ be odd, $N'$ be a positive divisor of the ``level'' $N$ such that $N/N'$ is even. Then the functions $\theta_{\alpha,\beta}^{N'}$ with $\alpha,\, \beta$ satisfying $\alpha \in L^\sharp/L$, $Q(\alpha) \in \frac{1}{2N'}\numZ$ and $\beta \in (L^\sharp+\frac{1}{N'}L)/L^\sharp$ all lie in the same space
\begin{equation*}
J_{\frac{N'\dimn}{2}, \underline{L}_{N'}}(\Gamma_1(N)\cap \Gamma_\theta,\chi^{N'}),
\end{equation*}
where $\chi^{N'}$ is the linear character on $\widetilde{\Gamma_1(N)\cap \Gamma_\theta} \ltimes H(\underline{L}_{N'},1)$ that maps $\eleactgrpnsimple{a}{b}{c}{d}{\varepsilon}{v}{w}{\xi}$ (in this case $d$ must be nonzero) to
\begin{equation*}
\varepsilon^{-N'\dimn}\xi(-\rmi)^{\frac{1}{2}N'\dimn(1-\sgn{d})\legendre{c}{-1}}\cdot\mathfrak{g}_{\underline{L}_{N'}}(b,d;0)\etp{\frac{1}{2}N'\cdot B(v,w)}.
\end{equation*}
\end{prop}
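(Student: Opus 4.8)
The plan is to mirror the proof of Proposition \ref{prop:thetaNptransformation}, replacing $\Gamma_1(N)$ by $\Gamma_1(N)\cap\Gamma_\theta$ throughout and keeping careful track of half-integrality, which is the only place the oddness of $\underline{L}$ intervenes. First I would record the odd-lattice analogues already outlined in the surrounding text: that $\vartheta_{\underline{L}}\in\JacForm{\dimn/2}{L}{\Gamma_\theta}{\rho_{\underline{L}}}$ with $\rho_{\underline{L}}$ given by the same formulae defining \eqref{eq:WeilRep} except $\rho_{\underline{L}}\widetilde{\tbtmat{1}{2}{0}{1}}\delta_x=\etp{2Q(x)}\delta_x$; that the transformation formula of Lemma \ref{lemm:thetaLatticeTransformationN} holds verbatim for $\tbtmat{a}{b}{c}{d}\in\Gamma_0(N)\cap\Gamma_\theta$ with $d\neq 0$ (the same chain \eqref{eq:thetaTransformationLambda}, \eqref{eq:thetaTransformationMu}, in which the spurious factors $\etp{acQ(v)}$, $\etp{bdQ(v)}$ equal $1$ precisely because $\tbtmat{a}{b}{c}{d}\in\Gamma_\theta$, with the $c=0$ case done directly); and the odd-lattice version of Proposition \ref{prop:thetaAlphaBetaJacobiForm}, namely $\theta_{\alpha,\beta}\in\JacForm{\dimn/2}{L}{\Gamma_1(N)\cap\Gamma_\theta}{\chi_{\alpha,\beta}}$ where $\chi_{\alpha,\beta}$ sends $\eleactgrpnsimple{a}{b}{c}{d}{\varepsilon}{v}{w}{\xi}$ (with $d\neq 0$) to
\[
\varepsilon^{-\dimn}\xi(-\rmi)^{\frac{1}{2}\dimn(1-\sgn{d})\legendre{c}{-1}}\cdot\mathfrak{g}_{\underline{L}}(b,d;0)\etp{bQ(\alpha)-cdQ(\beta)+\tfrac{1}{2}B(v,w)-B(v,\beta)}.
\]
The sole change from the even case is that the reduction $\etp{cdQ(\beta)}=\etp{cQ(\beta)}$ is no longer available, since $Q$ takes half-integral values on $L$; hence the factor $-\tfrac12 B(c\beta,d\beta)=-cdQ(\beta)$ must be kept.

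Second, I would raise this to the $N'$-th power. Applying Proposition \ref{prop:multiplicationScalarJacobiForm} iteratively to the pairs $(B,B),(2B,B),\dots,((N'-1)B,B)$ shows $\theta_{\alpha,\beta}^{N'}\in J_{N'\dimn/2,\,\underline{L}_{N'}}(\Gamma_1(N)\cap\Gamma_\theta,\chi_{\alpha,\beta}^{\ast N'})$, where $\chi_{\alpha,\beta}^{\ast N'}$ is the $N'$-fold $\ast$-product; by the formula in Proposition \ref{prop:multiplicationScalarJacobiForm}(1), and using $\mathfrak{g}_{\underline{L}_{N'}}(b,d;0)=\mathfrak{g}_{\underline{L}}(b,d;0)^{N'}$, its value at $\eleactgrpnsimple{a}{b}{c}{d}{\varepsilon}{v}{w}{\xi}$ (with $d\neq 0$) equals
\[
\varepsilon^{-N'\dimn}\xi(-\rmi)^{\frac{1}{2}N'\dimn(1-\sgn{d})\legendre{c}{-1}}\cdot\mathfrak{g}_{\underline{L}_{N'}}(b,d;0)\,\etp{\tfrac{1}{2}N'B(v,w)}\cdot\etp{N'\bigl(bQ(\alpha)-cdQ(\beta)-B(v,\beta)\bigr)}.
\]
So it suffices to show the last exponential is $1$, i.e. $N'\bigl(bQ(\alpha)-cdQ(\beta)-B(v,\beta)\bigr)\in\numZ$ for every admissible $\alpha,\beta$ and every element of $\widetilde{\Gamma_1(N)\cap\Gamma_\theta}\ltimes H(\underline{L}_{N'},1)$; granting that, $\chi_{\alpha,\beta}^{\ast N'}=\chi^{N'}$ independently of $\alpha,\beta$, which is the assertion. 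Here $d\neq 0$ is automatic (if $d=0$ then $\tbtmat{a}{b}{0}{d}\notin\Gamma_1(N)$ since $N\geq 2$), and the boundary case $c=0$ (forcing $a=d=1$) is checked directly as in the even setting.

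Third comes the arithmetic heart of the argument, which I expect to be the \emph{main obstacle} — still elementary, but the point where every hypothesis is used. Write $\beta=\beta_0+\frac{1}{N'}\ell$ with $\beta_0\in L^\sharp$, $\ell\in L$, and $c=Nm$ with $m\in\numZ$. For the $\alpha$-term: since $\tbtmat{a}{b}{c}{d}\in\Gamma_\theta$ one has $b\equiv c\pmod 2$, and $N$ is even (as $2\mid N/N'$ forces $N$ even), so $c$, hence $b$, is even; with $Q(\alpha)\in\frac{1}{2N'}\numZ$ this gives $N'bQ(\alpha)\in b\cdot\frac12\numZ\subseteq\numZ$. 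For the $\beta$-term, expand
\[
N'cdQ(\beta)=N'Nmd\Bigl(Q(\beta_0)+\tfrac{1}{N'}B(\beta_0,\ell)+\tfrac{1}{N'^{2}}Q(\ell)\Bigr),
\]
and check each summand: $NQ(\beta_0)\in\numZ$ by definition of the level, $B(\beta_0,\ell)\in\numZ$, and the third summand is $\frac{Nmd}{N'}Q(\ell)=(N/N')\,md\,Q(\ell)\in\numZ$ because $N/N'$ is even and $Q(\ell)\in\frac12\numZ$. For the remaining term, $N'B(v,\beta)=N'B(v,\beta_0)+B(v,\ell)\in\numZ$ directly from $\beta_0\in L^\sharp$ and integrality of $L$. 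This completes the verification, and hence the proof. The delicate interplay to get right is exactly the one just exhibited: on an odd lattice $Q(\ell)$ may be a proper half-integer, and it is the hypothesis $2\mid N/N'$ — together with the evenness of $b$ forced by $\Gamma_\theta$ — that absorbs all resulting factors of $\tfrac12$; dropping either hypothesis breaks the collapse of the character.
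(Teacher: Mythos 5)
Your proposal is correct and takes essentially the same route as the paper: it combines the odd-lattice analogue of Proposition \ref{prop:thetaAlphaBetaJacobiForm} (with the modified $-cdQ(\beta)$ term) with Proposition \ref{prop:multiplicationScalarJacobiForm}, which is exactly how the paper derives Proposition \ref{prop:thetaNptransformationOdd} from the even case. Your explicit check that $N'\bigl(bQ(\alpha)-cdQ(\beta)-B(v,\beta)\bigr)\in\numZ$ — using that $b$ is even (from $\Gamma_\theta$ together with $2\mid N$), that $NQ(\beta_0)\in\numZ$, and that $2\mid N/N'$ absorbs $Q(\ell)\in\frac{1}{2}\numZ$ — merely spells out the character collapse the paper leaves implicit, and it is sound (the only quibble being that $c=0$ need not force $a=d=1$ when $N=2$, which is harmless since the $d\neq 0$ formula covers that case).
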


Finally, we are now able to generalize Theorem \ref{thm:algorithmFindLinearRelations} to odd lattices. We shall make following changes:
\begin{enumerate}
\item The lattice $L$ is now assumed to be an odd lattice in $\underline{V}=(V,B)$.
\item Besides the condition $(\alpha, \beta) \in L^\sharp/L\times (L^\sharp+\frac{1}{N'}L)/L^\sharp$, we need another assumption $2 \mid N/N'$. We shall also replace the condition $Q(\alpha) \in \frac{1}{N'}\numZ$ by $Q(\alpha) \in \frac{1}{2N'}\numZ$.
\item The set $\mathfrak{I}$ should be replaced by the set of all pairs $(\mathbf{p},n)$ with $\mathbf{p} \in \widehat{\mathcal{P}_0}$, and
\begin{equation*}
%\label{eq:thmAlgorithmFindLinearRelationsnRangeOdd}
2n=0,\,1,\,2,\dots, \left[\frac{1}{6}m\left(\frac{N'\dimn}{2}+s(\mathbf{p})\right)\right],
\end{equation*}
where $m=[\pslZ \colon \overline{\Gamma_1(N) \cap \Gamma_\theta}]$.
\end{enumerate}
After such changes, Theorem \ref{thm:algorithmFindLinearRelations} still holds! This concludes the generalization we promise.

\subsection{The image of the embedding in the main theorem}
\label{subsec:The image of the embedding in the main theorem}
In Theorem \ref{thm:mainEmbed}, we establish embeddings from certain space of Jacobi forms to some direct product of finitely many spaces of modular forms. It is natural and interesting to describe images of such embeddings. In the context of classical Jacobi forms (integral-index, scalar-valued, integral-weight, and trivial character), Eichler and Zagier have found that the space of weak Jacobi forms is isomorphic to a direct product of finitely many spaces of modular forms (\cite[Theorem 9.2]{EZ85}). We wish to investigate the general case:
\begin{oq}
Use notations and assumptions in Theorem \ref{thm:mainEmbed}. We invite interested readers to describe images of spaces $\JacFormWHol{k}{L}{G}{\rho}$, $\JacFormWeak{k}{L}{G}{\rho}$, $\JacForm{k}{L}{G}{\rho}$ and $\JacFormCusp{k}{L}{G}{\rho}$ under the map in that theorem. In particular, we ask that, is the space $\JacFormWeak{k}{L}{G}{\rho}$ isomorphic to some product of finitely many spaces of modular forms?
\end{oq}

There is another interesting question concerned with these embeddings:
\begin{oq}
Use notations and assumptions in Theorem \ref{thm:mainEmbed}. What is the set $\widehat{\mathcal{P}_0}$ with the least number of elements, such that the map is injective? How can we describe such set theoretically, not computationally?
\end{oq}
In this direction, for the integral-index setting, there are some good works sharpening the original reslut of Eichler and Zagier. For instance, consult
\cite{Kra86}, \cite{AB99}, \cite{RS13}, \cite{DR15} and \cite{DP17}. We wish that, their ideas could also be used to deal with lattice-index case.

\begin{appendix}
\section{SageMath code}
\label{apx:SageMath code}
We present here the SageMath source code used to obtain linear relations among Jacobi theta series of lattice index associated with the $D_4$ lattice (Theorem \ref{thm:D4linearRelation}). The reader may modify this program such that it can be used for other lattices. We have run this program successfully in the SageMath version 9.3.

First, we present some common Sage functions that are independent of specific lattices.
\begin{lstlisting}[caption = Common Sage functions \label{listing:commonSageFunctions}]
from itertools import product as cp

def poly_subs(P, values):
    '''
    Return the value of the multivariate polynomial P at values
    Input:
    * "P" -- a polynomial
    * "values" -- a sequence of objects plugged in for indeterminants of P
    The singular implementation of the subs method of the class PolynomialRing has some problem of memory leak,
    so we use this function.
    '''
    R = P.parent()
    X = R.gens()
    result = 0
    for c, M in P:
        es_M = M.exponents()[0]
        result += c * prod(v**e for v, e in zip(values, es_M))
    return result

def fac(obj):
    '''
    Clever factorial. If obj is a Sage vector (matrix), then return the product of factorials of each components (entries) of this vector (matrix).
    Input:
    * "obj" -- a vector or a matrix
    '''
    re = 1
    l = list(obj)
    if l[0] not in ZZ:
        for row in l:
            re *= prod(map(factorial, row))
    else:
        re = prod(map(factorial, l))
    return re

def mat_power(M, E):
    '''
    Return the product of powers m^e where m and e range over corresponding entries of M and E
    Input:
    * "M" and "E" -- matrices of the same dimensions
    '''
    rM = M.nrows()
    cM = M.ncols()
    return prod(M[i, j]^E[i, j] for i in srange(rM) for j in srange(cM))
\end{lstlisting}
Meanings of functions in Listing \ref{listing:commonSageFunctions} are self-evident (see their docstrings).

\begin{lstlisting}[caption = Functions dealing with $\mathcal{P}_0$ \label{listing:functionsP0}]
def closure_preceq(p):
    s = sum(p)
    it = cp(*[srange(p[i] + 1) for i in srange(len(p))])
    return [p0 for p0 in it if (sum(p0) - s) % 2 == 0]

def closure_preceq_set(P0):
    re = set()
    for p in P0:
        re |= set(closure_preceq(p))
    return re
\end{lstlisting}
The parameter \lstinline{p} of the function \lstinline{closure_preceq} represents a vector $\mathbf{p} \in \numgeq{Z}{0}^\dimn$, and a call of \lstinline{closure_preceq(p)} returns a Python list representing $\widehat{\mathbf{p}}$. The parameter \lstinline{P0} of the function \lstinline{closure_preceq_set} represents a finite subset $\mathcal{P}_0$ of $\numgeq{Z}{0}^\dimn$, and a call of \lstinline{closure_preceq_set(P0)} returns a Python set representing $\widehat{\mathcal{P}_0}$.

\begin{lstlisting}[caption = Some Python generator functions \label{listing:PyGen}]
def vectors_leq_given_sum(d, s, init_list=[]):
    pre_sum = sum(init_list)
    
    if len(init_list) == d:
        yield init_list
        return
    
    for i in srange(0, s - pre_sum + 1):
        new_list = init_list + [i]
        yield from vectors_leq_given_sum(d, s, new_list)


def integer_gen():
    yield 0
    j = 1
    while True:
        yield j
        yield -j
        j += 1


#To make this function correct, components of alpha must >=0 and <1
def vectors_square_leq_given_sum(d, s, alpha=[], init_list=[]):
    pre_sum = sum((e+j)**2 for e, j in zip(init_list, alpha))
    l = len(init_list)
        
    if pre_sum > s:
        return
    
    if l == d:
        if pre_sum <= s:
            yield init_list
        return
    
    for i in integer_gen():
        if pre_sum + (i+alpha[l])^2 > s and i < 0:
            return
        new_list = init_list + [i]
        yield from vectors_square_leq_given_sum(d, s, alpha, new_list)


#Important: To make this valid, vs must be sorted by an increasing order w.r.t. Q(alpha+v) for v in vs
def list_vectors_quad_leq_given_sum(N, vs, alpha, gram_m, s, init_list=[]):
    quad_value_fun = lambda v: (vector(QQ, v) + vector(QQ, alpha)) * gram_m * (vector(QQ, v) + vector(QQ, alpha)) / 2

    pre_sum = sum(quad_value_fun(e) for e in init_list)
    
    if len(init_list) == N:
        if pre_sum <= s:
            yield init_list
        return
    
    for v in vs:
        if pre_sum + quad_value_fun(v) > s:
            return
        new_list = init_list + [v]
        yield from list_vectors_quad_leq_given_sum(N, vs, alpha, gram_m, s, new_list)
\end{lstlisting}
Listing \ref{listing:PyGen} contains four Python generator functions. We illustrate them by examples. A call of \lstinline{vectors_leq_given_sum(4, 10)} returns a Python generator, which can be used in a \lstinline{for} loop to produce all vectors (represented by Python lists of Integer objects) $\mathbf{p}$ in $\numgeq{Z}{0}^4$ with $s(\mathbf{p}) \leq 10$. A call of \lstinline{integer_gen()} returns a Python generator, which can be used in a \lstinline{for} loop to produce all integers, in the order $0,\,1,\,-1,\,2,\,-2,\dots$. A call of \lstinline{vectors_square_leq_given_sum(4, 3, alpha=[0,0,1/2,1/2])} returns a generator, which in a loop generates all vectors $v \in \numZ^4$ with $v_1^2+v_2^2+(v_3+1/2)^2+(v_4+1/2)^2 \leq 3$. A call of \lstinline{list_vectors_quad_leq_given_sum(2, vs, [0,0,1/2,1/2], IntegralLattice('D4').gram_matrix(), 3)} returns all pairs of vectors $(v_1, v_2) \in \numZ^4\times\numZ^4$ such that $Q(v_1+\alpha)+Q(v_2+\alpha) \leq 3$ with $\alpha=(0,0,1/2,1/2)$ and $Q$ being the quadratic form associated with the $D_4$ lattice.

\begin{lstlisting}[caption = More common functions \label{listing:moreCommonFun}]
def find_least_vectors(gram, offset, c):
    d = gram.dimensions()[0]
    Q = lambda x: vector(x) * gram * vector(x) / 2
    
    #to use vectors_square_leq_given_sum, we must first ensure components of alpha are >= 0 and < 1
    offset = [(n.numerator() % n.denominator())/ n.denominator() for n in offset]
    
    result = {}
    cQ = c * Q(offset)
    for v in vectors_square_leq_given_sum(d, cQ, offset):
        v_p_offset = vector(v) + vector(offset)
        Qvalue = Q(v_p_offset)
        result.setdefault(Qvalue, []).append(v_p_offset)
        
    m = min(result.keys())
    return result[m]


def cal_P(k, vec_p, mat_M, n, t):
    sp = sum(vec_p)
    dimn = len(vec_p)
    co_sp = 1 - sp % 2
    lamb = floor(sp/2)
    lambp = lamb - co_sp
    
    #Formal variables
    Ts = ['T' + str(n) for n in srange(1, dimn+1)]
    FPR = PolynomialRing(QQ, Ts)
    Ts_var = FPR.gens()
    quad_Ts = sum(mat_M[i, j] * Ts_var[i] * Ts_var[j] for i in srange(dimn) for j in srange(dimn))
    
    re = 0
    
    for mu in closure_preceq(vec_p):
        smu = sum(mu)
        p_minus_mu = vector(vec_p) - vector(mu)
        c1 = gamma(k+(sp+smu)/2-1) / gamma(k+lambp)
        
        c2 = (quad_Ts^((sp-smu)/2))[p_minus_mu] / factorial((sp-smu)/2)
        
        re += c1 * c2 * (1/fac(mu)) * (-1/2 * n)^((sp-smu)/2) *\
                mat_power(matrix(vector(t) * mat_M), matrix(mu))
    
    re *= 2^sp
    return re
\end{lstlisting}
Listing \ref{listing:moreCommonFun} contains two Sage functions. The former is \lstinline{find_least_vectors}, of which we explain its parameters. The parameter \lstinline{gram} represents a Gram matrix $G$. The dimension $\dimn$ of the lattice may be inferred from this matrix. The second parameter \lstinline{offset} represents a vector $\alpha$ in $\numQ^\dimn$. A call of \lstinline{find_least_vectors(gram, offset, c)} returns the set
\begin{equation*}
\{v \in \alpha + \numZ^\dimn \colon Q(v) \leq Q(w) \text{ for any } w \in \alpha + \numZ^\dimn\},
\end{equation*}
where $Q(v)=\frac{1}{2}v\cdot G \cdot v^T$. This returned value is represented by a Python list. One should pass a positive real number to the parameter \lstinline{c}, and ensure the following inequality:
\begin{equation*}
v_1^2+v_2^2+\dots v_\dimn^2 \leq c \cdot Q(v),\quad v \in \numR^\dimn.
\end{equation*}
The best choice is $c=2/\uplambda_\dimn$, where $\uplambda_\dimn$ is the least eigenvalue of the Gram matrix $G$. We give such values of $c$ for some lattices in Table \ref{table:cValues}.
\begin{table}[ht]
\centering
\caption{For each lattice $\underline{L}=(\numZ^\dimn, B)$, $c$ is a number satisfying $\sum_{i=1}^\dimn v_i^2 \leq c\cdot Q(v)$ for any $v \in \numR^\dimn$, where $Q(v)=\frac{1}{2}B(v,v)$ \label{table:cValues}}
\begin{tabular}{llllll}
\toprule
Lattice & $c$ & Ref. & Lattice & $c$ & Ref. \\
\midrule
$D_4$ & $7.464102$ & \S \ref{subsec:The root lattice D4} & $A_2$ & $2$ & \S \ref{subsec:The root lattice A2} \\
$A_3$ & $3.414214$ & \S \ref{subsec:The root lattice A3} & $\tbtmat{2}{1}{1}{8}$ & $1.088304$ & \S \ref{subsec:The principal binary form of discriminant -15} \\
\bottomrule
\end{tabular}
\end{table}
In our program, the value $c$ is used in such a way that when we need the set $\{v \in \alpha+\numZ^\dimn \colon Q(v) \leq n\}$, we first generate the set $\{v \in \alpha+\numZ^\dimn \colon \sum_{i=1}^\dimn v_i^2 \leq c\cdot n\}$, which can be done by the function \lstinline{vectors_square_leq_given_sum}, and then obtain the former set as a subset of the latter one. This concludes the discussion on the former function in Listing \ref{listing:moreCommonFun}. The latter function is \lstinline{cal_P}, which calculates the polynomial $P_{k,\mathbf{p},M}$ in Definition \ref{deff:PkpM}, using the more efficient formula discussed in Remark \ref{rema:moreFormulaPkpM}. The meaning of each parameter of this function is self-evident by its name.

The following listing contains some attributes (stored in Python global variables) specific to the lattice (here $D_4$). The reader may modify these attributes to deal with other lattices.
\begin{lstlisting}[caption = Attributes of the $D_4$ lattice \label{listing:attrD4}]
#Information on the root lattice D4
dimL = 4  #dimension
ZM = FreeModule(ZZ, dimL)
GrL = IntegralLattice('D4').gram_matrix()
GrL_2 = 2 * GrL
dL = det(GrL)  #determinant
leL = 2  #level
Q = lambda x: vector(x) * GrL * vector(x) / 2
B = lambda x, y: vector(x) * GrL * vector(y)
alpha_reps = [(0,0,0,0),
              (0,0,1/2,1/2),
              (1/2,0,0,1/2),
              (1/2,0,1/2,0)]
beta_reps =  [(0,0,0,0),
              (1/2,1/2,1/2,1/2),
              (1/2,0,0,0),
              (0,1/2,0,0)]
reps_lattice_2 = [vector(t)/2 + vector([v1, v2, v3, v4])/2 for t in alpha_reps for v1, v2, v3, v4 in cp(srange(2), repeat=4)]
S_lattice_2 = None  #should be obtained by calling cal_S_lattice_2()
P0 = None  #should be obtained by calling find_P0()
n_max = None  #should be calculated by calling find_P0()
n_max_p = lambda vec_p: floor(1 + sum(vec_p) / 4)
c_L = 7.464102
\end{lstlisting}
The global variables \lstinline{dimL}, \lstinline{ZM}, \lstinline{GrL}, \lstinline{dL}, \lstinline{leL}, \lstinline{Q}, \lstinline{B} are self-evident. The variable \lstinline{GrL_2} stores the Gram matrix of $\underline{D_4}_2$. The variables \lstinline{alpha_reps} and \lstinline{beta_reps} store vectors in \eqref{eq:repD4alpha} and \eqref{eq:repD4beta} respectively, while \lstinline{reps_lattice_2} stores a list of representatives of $\underline{D_4}_2^\sharp/\underline{D_4}_2$. The variable \lstinline{S_lattice_2} is designed to store the list of $S_i,\,i=2,3,\dots,d$ in Proposition \ref{prop:whenFLP0Nonzero} with respect to the lattice $\underline{D_4}_2$. One shall assign the correct value to it by calling \lstinline{cal_S_lattice_2()}. The variable \lstinline{P0} is designed to store the set $\mathcal{P}_0$ in Theorem \ref{thm:algorithmFindLinearRelations}, and should be assigned a correct value to by calling \lstinline{find_P0()}. A call \lstinline{n_max_p(vec_p)} returns the last number in \eqref{eq:thmAlgorithmFindLinearRelationsnRange}, where the parameter \lstinline{vec_p} represents $\mathbf{p}$, and the variable \lstinline{n_max} should be the maximal value of \lstinline{n_max_p(vec_p)} when $\mathbf{p}$ ranges over the set $\mathcal{P}_0$. The value stored in \lstinline{c_L} has been explained in Table \ref{table:cValues}.

Now we present the main code realizing the algorithm described in Theorem \ref{thm:algorithmFindLinearRelations} regarding the $D_4$ lattice.
\begin{lstlisting}[caption = Main code \label{listing:mainCode}]
coR.<ze> = CyclotomicField(leL)  #Coefficient field of resulting vectors
def coeff_beta_one(beta, v):
    val = B(beta, v)
    remainder = (leL*val) % leL
    return ze^remainder


def coeff_beta(beta, vs):
    return coeff_beta_one(beta, sum([vector(v) for v in vs]))


def cal_S_lattice_2():
    global S_lattice_2
    S_lattice_2 = []
    print('Now calculating S_lattice_2...')
    for i, offset in enumerate(reps_lattice_2):
        print(f'    Now proceed {i, offset}')
        if vector(offset) == ZM(0):
            continue
        rs = find_least_vectors(GrL_2, offset, c_L / 2)
        S_lattice_2.append(rs)
        
def find_P0(max_sump=10):
    global reps_lattice_2, GrL_2, S_lattice_2, P0, n_max
    e = list(matrix.identity(dimL))
    
    P0_result = []
    mat = []
    
    print('\nNow begin to find P0...')
    l = sorted(list(vectors_leq_given_sum(dimL, max_sump)), key=lambda x: (sum(list(x)), x))
    l.pop(0)
    for vec_p in l:
        print(f'    Now proceed {vec_p}', end=' ')
        row = []
        for rs in S_lattice_2:
            entry = 0
            for v in rs:
                entry += prod(B(v, e[i])^vec_p[i] for i in srange(dimL))
            row.append(entry)
        mat_temp = mat + [row]
        ra = matrix(mat_temp).rank()
        if len(mat) == 0 or matrix(mat).rank() < ra:
            P0_result.append(vec_p)
            mat = mat_temp
            print('in P0')
            print('    Present P0 dimensions: ', len(P0_result), matrix(mat).dimensions())
            if ra == len(reps_lattice_2) - 1:
                P0 = P0_result
                n_max = n_max_p(vec_p)
                print(f'Done! The max value of n is {n_max}')
                return P0_result
    return P0_result


def proceed_lattice_D4(P0): 
    #Indetermints used to calculate polynomial P
    R.<X0, X1, X2, X3, X4> = PolynomialRing(QQ)
    
    f=lambda x: (sum(list(x)), x)
    p_list = sorted(list(closure_preceq_set(P0)), key=f)
    
    print('\nNow calculating polynomials P_{k,p,M}...')
    P_list = []
    for vec_p in p_list:
        P = cal_P(dimL * leL / 2, vec_p, leL * GrL, X0, [X1, X2, X3, X4])
        P_list.append((vec_p, P))
        print('    ', vec_p, ': ', P)
    
    print('\nNow calculating vectors...')
    Salpha_dict = {}
    for alpha in alpha_reps:
        re_alpha = []
        for v in vectors_square_leq_given_sum(dimL, c_L * n_max, alpha):
            v1 = vector(QQ, v) + vector(QQ, alpha)
            if Q(v1) <= n_max:
                re_alpha.append(v)
        key_s = lambda v: (vector(QQ, v) + vector(QQ, alpha)) *\
            GrL * (vector(QQ, v) + vector(QQ, alpha))
        Salpha_dict[alpha] = sorted(re_alpha, key=key_s)
    for key, item in Salpha_dict.items():
        print(f'    The offset {key} corresponds {len(item)} vectors')

    print('\nNow calculating mathfrakI...')
    mathfrakI = {}
    mathfrakI_list = []
    for vec_p, P in P_list:
        for n in srange(n_max_p(vec_p) + 1):
            mathfrakI_list.append((vec_p, n))
            mathfrakI.setdefault(n, []).append((vec_p, P))
    print(f'    The set mathfrakI has cardinality {len(mathfrakI_list)}')

    print('\nNow calculating vectors big_theta_alpha_beta...')
    big_theta_alpha_beta = []
    big_theta_alpha_beta_dict = {(alpha, beta): 0 for alpha in alpha_reps
                                for beta in beta_reps}
    for alpha, beta in cp(alpha_reps, beta_reps):
        big_theta_alpha_beta_dict[(alpha, beta)] = {(tuple(vec_p), n): 0
                               for n in mathfrakI.keys() for vec_p, _ in mathfrakI[n]}
    for alpha in alpha_reps:
        print(f'    Now calculating alpha={alpha}')
        count_vs = 0
        for vs in list_vectors_quad_leq_given_sum(leL, Salpha_dict[alpha], alpha, GrL, n_max):
            count_vs += 1
            if count_vs % 100 == 0:
                print('        ', alpha, 'progress count: ', count_vs)
            vec_vs=[vector(QQ, v) + vector(QQ, alpha) for v in vs]
            sum_vec_vs_by_2 = sum(vec_vs) / 2
            n = sum(Q(vec_v) for vec_v in vec_vs)
            for beta in beta_reps:
                for vec_p, P in mathfrakI[n]:
                    big_theta_alpha_beta_dict[(alpha, beta)][(tuple(vec_p), n)] += coeff_beta(beta, vs) *\
                        poly_subs(P, [n] + list(sum_vec_vs_by_2))
    
    for alpha, beta in cp(alpha_reps, beta_reps):
        seq_alpha_beta = []
        for vec_p, n in mathfrakI_list:
            seq_alpha_beta.append(big_theta_alpha_beta_dict[(alpha, beta)][(tuple(vec_p), n)])
        big_theta_alpha_beta.append(seq_alpha_beta)
    
    return big_theta_alpha_beta


def show_result(l):
    mat_D4 = matrix(l)
    print('\nMatrix dimensions: ', mat_D4.dimensions())
    print('Matrix rank: ', mat_D4.rank())
    ranks = [0]
    for j in srange(16):
        M_jrows = mat_D4[0:j+1]
        ranks.append(M_jrows.rank())
        if ranks[j+1] == ranks[j]:
            print(f'The {divmod(j, 4)}-th function:')
            sol = mat_D4[0:j].solve_left(mat_D4[j])
            print(matrix(4, 4, list(sol) + [-1] + [0] * (15 - len(sol))))
\end{lstlisting}

To call the main code, use the following instructions.
\begin{lstlisting}[caption = Do the computation \label{listing:doTheComp}]
cal_S_lattice_2()
find_P0()
l = proceed_lattice_D4(P0)
show_result(l)
\end{lstlisting}
When the code in Listing \ref{listing:doTheComp} is executing, the process of the computation and some intermediate results will be printed. It may take about several minutes to produce the final result, and the result is printed by \lstinline{show_result(l)} in an apparent way. To check each linear relation, one can define these nine theta series in his or her SageMath session by constructing objects in the parent \lstinline{PowerSeriesRing(LaurentPolynomialRing(QQ, 4), 'q')} and work with these nine Sage objects. We omit the code for this for the sake of space.
\end{appendix}

\bibliographystyle{amsalpha}
\bibliography{main}

\end{document}